\theoremstyle{definition}
\newtheorem{thm}{Theorem}[section]
\newtheorem{lem}[thm]{Lemma}
\newtheorem{defn}[thm]{Definition}
\newtheorem{rem}[thm]{Remark}
\newtheorem{prop}[thm]{Proposition}
\newtheorem{cor}[thm]{Corollary}
\newtheorem{conv}[thm]{Convention}
\def \Z {\mathbb{Z}[i]}
\def \P {\mathbb{P}[i]}
\def \N {\mathbb{N}}
\def \A {\alpha}
\def \B {\beta}
\def \C {\gamma}
\def \D {\kappa}
\def \d {\delta}
\def \e {\epsilon}
\def \R {R_{\tilde{N}}}
\def \c {\bold{c}}
\def \a {\bold{a}}
\def \b {\bold{b}}
\def \v {\bold{v}}
\def \w {\bold{w}}
\def \x {\bold{\xi}}
\def \k {\bold{k}}
\def \p {\bold{p}}
\def \l {\bold{\lambda}}
\def \ps {\bold{\psi}}
\title[A structure theorem for multiplicative functions and applications]{A structure theorem for multiplicative functions over the Gaussian integers and applications}
\author{Wenbo Sun}
\address{Department of Mathematics, Northwestern University, 2033 Sheridan Road Evanston, IL 60208-2730, USA}
\email{swenbo@math.northwestern.edu}
\thanks{The author is partially supported by NSF grant 1200971‏} 
\begin{document}

\maketitle

\begin{abstract}
We prove a structure theorem for multiplicative functions on the Gaussian integers, showing that every bounded multiplicative function on the Gaussian integers can be decomposed into a term which is approximately periodic and another which has a small $U^{3}$-Gowers uniformity norm. We apply this to prove partition regularity results over the Gaussian integers for certain equations involving quadratic forms in three variables. For example, we show that for any finite coloring of the Gaussian integers, there exist distinct nonzero elements $x$ and $y$ of the same color such that $x^{2}-y^{2}=n^{2}$ for some Gaussian integer $n$. The analog of this statement over $\mathbb{Z}$ remains open.
\end{abstract}

\section{Introduction}
\subsection{Structure theory in the finite setting}
The structure theorem for functions on $\mathbb{Z}^{d}$ is an important tool in additive combinatorics. It has been studied extensively in ~\cite{FH}, ~\cite{14}, ~\cite{15}, ~\cite{22}, ~\cite{5},~\cite{7},~\cite{6}, ~\cite{Sz} and ~\cite{T}. Roughly speaking, the structure theorem says that every function $f$ can be decomposed into one part with a good uniformity property, meaning it has a small Gowers norm, and another with a good structure, meaning it is a nilsequence with bounded complexity.

A natural question to ask is: can we get a better decomposition for functions $f$ satisfying special conditions? For example, Green, Tao and Ziegler (~\cite{5},~\cite{7},~\cite{6}) gave a refined decomposition result for the von Mangoldt function $\Lambda$. They showed that under some modification, one can take the structured part to be the constant 1.

In this paper, we focus on the class of multiplicative functions:

\begin{defn}[Multiplicative function]
  Let $\mathbb{F}$ be a number field. A \emph{multiplicative function} is a function $\chi\colon\mathbb{F}\rightarrow\mathbb{C}$ that satisfies $\chi(mn)=\chi(m)\chi(n)$ for all $m,n\in\mathbb{F}$. We denote the family of multiplicative functions of modulus 1 by $\mathcal{M}_{\mathbb{F}}$.
\end{defn}

 Recent work of Frantzikinakis and Host ~\cite{FH} provided a decomposition result for
bounded multiplicative functions on $\mathbb{Z}$.
 They showed that for any multiplicative function $\chi$ on $\mathbb{Z}$ and any $d\in\mathbb{N}$, one can decompose $\chi$ into the sum of two functions $\chi_{s}+\chi_{u}$ plus an error term such that $\chi_{s}$ is an ``approximately periodic" function and $\chi_{u}$ has small $U^{d}$-Gowers norm. Moreover, $\chi_{s}$ can be written as the convolution of $\chi$ and a well-behaved function.

 Frantzikinakis and Host asked in ~\cite{FH} whether the structure theorem still holds for multiplicative functions in a general number field $\mathbb{F}$. In this paper, we give an affirmative answer for the case when $d=3$ and $\mathbb{F}=\Z$, where $\Z$ denotes the set of Gaussian integers. We show that any multiplicative function $\chi$ of $\mathbb{Z}[i]$ can be decomposed into the sum of two functions $\chi_{s}+\chi_{u}$ plus an error term such that $\chi_{s}$ is an "approximately periodic" function and $\chi_{u}$ has a small $U^{3}$-Gowers norm (see Section 2 for definitions). Moreover, $\chi_{s}$ can be written as the convolution of $\chi$ and a well-behaved function. The precise statement is Theorem \ref{nU3s}.

\subsection{Partition regularity results for quadratic forms} We use this decomposition result to determine some combinatorial consequences.
Determining whether an algebraic equation (or a system of equations) is partition regular is widely studied in Ramsey theory.
In this article, we restrict our attention to polynomials in three variables. Specifically, we study the following question:
suppose $p(x,y,z)$ is a polynomial of 3 variables over some number field $\mathbb{F}$. For any finite coloring of $\mathbb{F}$, can we find distinct numbers $x,y,z\in\mathbb{F}$ of the same color such that $p(x,y,z)=0$?

The case when the polynomial $p$ is linear and $\mathbb{F}=\mathbb{Z}$  was completely solved by Rado ~\cite{Rado}: for $a,b,c\in\mathbb{Z}$, the equation
$ax+by+cz=0$ is partition regular if and only if either $a+b, b+c, a+c$ or $a+b+c$ is 0 (the original result is stated for $\mathbb{N}$ but a similar result holds for $\mathbb{Z}$).
However, little is known for equations of higher degrees or over number fields other than $\mathbb{Z}$.

An easier and related question is to study the partition regularity of some polynomial equation under the relaxed condition that one of the variables is allowed to vary freely in $\mathbb{F}$ and not necessarily lie in the same piece of the partition. We define:
\begin{defn}[Partition regularity]
  Let $\mathbb{F}$ be a number field, or a subset of some number field. An equation $p(x,y,n)\colon \mathbb{F}^{3}\rightarrow\mathbb{C}$ is \emph{partition regular} in $\mathbb{F}$ if
  for any partition of $\mathbb{F}$ into finitely many disjoint sets, for some $n\in \mathbb{F}$, one of the cells contains distinct $x$ and $y$ that satisfy $p(x,y,n)=0$.
\end{defn}
 It is a classical result of Furstenberg ~\cite{7} and Sark\"{o}zy ~\cite{25} that the equation $x-y=n^{2}$ is partition regular in $\mathbb{N}$. Bergelson and Leibman ~\cite{BL} provided other examples of translation invariant equations by proving a polynomial version of the van der Waerden Theorem. However, little is
known for the case which is not translation invariant. A result of Khalfalah and Szemer\'{e}di ~\cite{KS} is that the equation $x+y=n^{2}$ is partition regular in $\mathbb{N}$.

Recent work of Frantzikinakis and Host ~\cite{FH} showed the connection between the decomposition result for multiplicative functions on $\mathbb{Z}$ and partition regularity problems for certain equations. For example, they proved that the equation $ax^{2}+by^{2}=n^{2}$ is partition regular in $\mathbb{N}$ if $a,b,a+b$ are non-zero square integers (for example, $a=16, b=9$). However, the partition regularity result in $\mathbb{N}$ for other quadratic equations, for example, $x^{2}+y^{2}=n^{2}$ or $x^{2}-y^{2}=n^{2}$, remains open.

Another question is to seek partition regularity results for number fields other than $\mathbb{Z}$. In this paper, we restrict our attention to the field of Gaussian integers $\mathbb{Z}[i]$. Since $\mathbb{N}$ is a subset of $\mathbb{Z}[i]$, every polynomial equation which is partition regular in $\mathbb{N}$ will also be partition regular in $\mathbb{Z}[i]$. In this paper, we show that there are certain quadratic equations for which we do not currently know if they are partition regular in $\mathbb{N}$, but are partition regular in $\mathbb{Z}[i]$. For example, it is not known whether the equation $p(x,y,n)=x^{2}-y^{2}-n^{2}$ is partition regular or not in $\mathbb{N}$, but we show (we remove 0 from $\Z$ to avoid trivial solutions):
\begin{cor}\label{c11}
  The equation $p(x,y,n)=x^{2}-y^{2}-n^{2}$ is partition regular in $\Z\backslash \{0\}$. Equivalently, the system of equations
  \begin{equation} \nonumber
  \begin{split}
          & p_{1}((x_{1},x_{2}),(y_{1},y_{2}),(n_{1},n_{2}))=x_{1}^{2}-x_{2}^{2}-y_{1}^{2}+y_{2}^{2}-n_{1}^{2}+n_{2}^{2};
          \\& p_{2}((x_{1},x_{2}),(y_{1},y_{2}),(n_{1},n_{2}))=x_{1}x_{2}-y_{1}y_{2}-n_{1}n_{2}
   \end{split}
   \end{equation}
is partition regular in $\mathbb{Z}^{2}\backslash \{(0,0)\}$.
\end{cor}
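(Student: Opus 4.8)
The plan is to follow the method of Frantzikinakis and Host, with Theorem \ref{nU3s} as the main input.

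\emph{Step 1 (reduction to a density statement and a parametrization).} By a standard pigeonholing over a multiplicative F{\o}lner sequence, it is enough to show that every $A\subseteq\Z\setminus\{0\}$ of positive multiplicative density contains distinct $x,y$ with $x^{2}-y^{2}=n^{2}$ for some $n\in\Z$. Writing $x^{2}-y^{2}=(x-y)(x+y)$ and using that $\Z$ is a PID, one records the family of solutions
\begin{equation} \nonumber
x=d(\rho+i\sigma)(\rho-i\sigma),\qquad y=d(\sigma-\rho)(\sigma+\rho),\qquad n=2d\rho\sigma,
\end{equation}
where $d,\rho,\sigma\in\Z$ are free, subject only to mild non-degeneracy so that $x\neq y$ and $x,y,n\neq 0$. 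The decisive feature of working over $\Z$ rather than $\mathbb{Z}$ is that, since $-1=i^{2}$, \emph{both} $x$ and $y$ are products of two linear forms in $\rho,\sigma$ with coefficients in $\Z$; over $\mathbb{Z}$ the form $\rho^{2}+\sigma^{2}$ is irreducible and one is left with a genuinely higher-complexity problem.

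\emph{Step 2 (passage to multiplicative functions).} Because the configuration is invariant under the dilation $(x,y,n)\mapsto(dx,dy,dn)$, the average of $\mathbf{1}_{A}(x)\,\mathbf{1}_{A}(y)$ over $d,\rho,\sigma$ in suitable ranges can be analyzed by expressing $\mathbf{1}_{A}$ through multiplicative functions of modulus $1$, as in \cite{FH}. Multiplicativity splits $\chi\big(d(\rho+i\sigma)(\rho-i\sigma)\big)=\chi(d)\chi(\rho+i\sigma)\chi(\rho-i\sigma)$, so the $d$-variable only produces a factor of modulus $1$, and the whole question reduces to a uniform lower bound, over the admissible $\chi$, for averages of the shape
\begin{equation} \nonumber
\mathbb{E}_{\rho,\sigma}\;\chi(\rho+i\sigma)\,\chi(\rho-i\sigma)\,\overline{\chi(\sigma-\rho)}\;\overline{\chi(\sigma+\rho)},
\end{equation}
an average of a product of four linear forms in two variables that are pairwise linearly independent.

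\emph{Step 3 (applying the structure theorem).} Decompose $\chi=\chi_{s}+\chi_{u}+\text{(error)}$ by Theorem \ref{nU3s}. A generalized von Neumann inequality — the four forms above have Cauchy--Schwarz complexity two — shows that any term containing a $\chi_{u}$ factor is controlled by $\|\chi_{u}\|_{U^{3}}$ and hence is negligible; this is exactly why the case $d=3$ of the structure theorem is the relevant one, and the error term is negligible by its smallness in $L^{2}$. One is thus reduced to the structured average $\mathbb{E}_{\rho,\sigma}\,\chi_{s}(\rho+i\sigma)\chi_{s}(\rho-i\sigma)\overline{\chi_{s}(\sigma-\rho)}\;\overline{\chi_{s}(\sigma+\rho)}$, and since $\chi_{s}$ is approximately periodic of some period $\tilde N$ this becomes, up to a small error, a finite average over $(\rho,\sigma)\in(\Z/\tilde N\Z)^{2}$.

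\emph{Step 4 (positivity of the main term — the main obstacle).} It remains to bound this local average below by a positive constant independent of $\tilde N$. By the Chinese Remainder Theorem it factors over the prime-power divisors of $\tilde N$, and at each prime one must count solutions of the prescribed system of four linear forms modulo a prime power, weighted by $\chi_{s}$; here one uses that $\chi_{s}$ is a convolution of $\chi$ with a non-negative, well-behaved kernel and so retains enough multiplicative structure for the weighted count to be bounded below, and that the system is locally solvable everywhere \emph{with no archimedean obstruction} — $\Z$ has no real embeddings, and the forms $\rho\pm i\sigma$, $\rho\pm\sigma$ remain in general position modulo every prime of $\Z$ not dividing $2$. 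This is precisely the step that breaks down over $\mathbb{Z}$, where the ordering of $\mathbb{R}$ (positivity of $x^{2}-y^{2}$) obstructs the analogous lower bound, which is why that case remains open. I expect this local positivity, together with making the reduction in Step 2 from the coloring/density statement to honest multiplicative functions fully rigorous, to be the most delicate points; the remainder is a lengthy but fairly routine adaptation of the Frantzikinakis--Host machinery to $\Z$.
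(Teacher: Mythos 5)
Your Steps 1--3 follow the paper's architecture: reparametrize the solution set as a product of $\Z$-linear forms in two new variables (the paper uses $x=\C\A(\A+2\B)$, $y=\C(\A+(1+i)\B)(\A+(1-i)\B)$, but your $x=d(\rho+i\sigma)(\rho-i\sigma)$, $y=d(\sigma-\rho)(\sigma+\rho)$ is the same thing after a change of variables), pass to multiplicative functions via the Furstenberg correspondence and the spectral theorem, and then decompose $\chi=\chi_s+\chi_u+\chi_e$ by Theorem~\ref{nU3s}, disposing of the $\chi_u$-terms via a generalized von~Neumann inequality (four pairwise independent linear forms, $U^3$ control -- this matches the paper's estimate (\ref{temp8})). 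You also correctly identify that the reason this works over $\Z$ but not over $\mathbb{Z}$ is the factorization of $\rho^2+\sigma^2$ into Gaussian linear forms. Formally, the paper obtains the corollary as an instance of Theorem~\ref{c1} with $a=1$, $b=c=-1$, $d=e=f=0$, but this is the same argument packaged differently.

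Step~4, however, is not the paper's argument and, as written, does not work. The positivity of the main term
\[
\liminf_{N\to\infty}\mathbb{E}_{(\A,\B)}\int_{\mathcal{M}_{\Z}}\chi_{N,s}(\A+\C_1\B)\chi_{N,s}(\A+\C_2\B)\overline{\chi_{N,s}(\A+\C_3\B)}\;\overline{\chi_{N,s}(\A+\C_4\B)}\,d\nu(\chi)>0
\]
is \emph{not} obtained by a local (CRT, solvability-mod-$p^k$) argument. First, $\tilde N$ is a prime by Convention~\ref{conv}, so there is nothing to factor via CRT. More fundamentally, $\chi_s$ is approximately periodic modulo an integer $Q=Q(\e)$ that is \emph{bounded independently of $N$}, not modulo $\tilde N$; the whole point is that $Q$ is fixed while $N\to\infty$. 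The positivity comes from an entirely different mechanism: one restricts the $\B$-average to a sub-progression where $\B\equiv 0\ (\mathrm{mod}\ Q)$ and $\B$ is small, so that all four factors are within $O(R/N)$ of $\chi_{N,s}(\A)$ and the integrand collapses to $|\chi_{N,s}(\A)|^4$; one then uses that the spectral measure $\nu$ is a \emph{positive} measure with $\nu(\{\mathbf{1}\})>0$ and $\int\chi(x)\overline{\chi(y)}\,d\nu\geq 0$ (Lemma~\ref{37}), which gives the uniform lower bound. Your proposal never invokes these two hypotheses on $\nu$, which are exactly what Proposition~\ref{c4} requires and what the Furstenberg correspondence delivers, so the lower bound you are asserting has no source. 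Relatedly, your explanation for why the $\mathbb{Z}$-case remains open ("the ordering of $\mathbb{R}$ obstructs the lower bound") contradicts your own Step~1 and is not the actual obstruction: the positivity step is identical in both number fields; what fails over $\mathbb{Z}$ is the parametrization of $x$ and $y$ as products of two linear forms.
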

The statement in full generality is given in Theorem \ref{c1}. The crucial property of the equation $p(x,y,n)=x^{2}-y^{2}-n^{2}$ is that its solutions can be parameterized as
  \begin{equation} \nonumber
  \begin{split}
          x=\C\A(\A+2\B), y=\C(\A+(1+i)\B)(\A+(1-i)\B),\A,\B,\C\in\Z.
   \end{split}
   \end{equation}
With the help of the $U^{3}$-structure theorem in this paper, for most choices of $\C_{i},\C'_{i},1\leq i\leq 2$, one can find patterns $\C\prod_{i=1}^{2}(\A+\C_{i}\B)$ and $\C\prod_{i=1}^{2}(\A+\C'_{i}\B)$ in the same cell for any finite partition of $\Z$.

In general, if Theorem \ref{nU3s} holds for $U^{s+1}$ Gowers norm for some $s\geq 3$, then for most choices of $\C_{i},\C'_{i},1\leq i\leq s$, one can find patterns $\C\prod_{i=1}^{s}(\A+\C_{i}\B)$ and $\C\prod_{i=1}^{s}(\A+\C'_{i}\B)$ in the same cell for any finite partition of $\Z$. Therefore it is natural to ask: does Theorem \ref{nU3s} hold for $U^{s}$ Gowers norms for $s\geq 3$?

\subsection{Outline and method of the paper}
The precise statements of the main theorems are given in Section 2.
The method for deriving partition regularity results from the structure theorem in this paper is the same as ~\cite{FH}, and we briefly review this method in Section 3. We prove the main number theory input needed to derive the structure theorem in Section 4.

The remaining sections are devoted to the proof of the $U^{3}$-structure theorem (Theorem \ref{nU3s}). The first step is to prove the $U^{2}$-structure theorem: we show in Section 5 that every bounded multiplicative function $\chi$ on $\Z$ can be decomposed into a term $\chi_{s}$ which is approximately periodic and another term $\chi_{u}$ which has a small $U^{2}$-Gowers uniformity norm. The result in this section extends Theorem 3.3 in ~\cite{FH} to the 2 dimensional case. Their ideas are similar, but an additional technique dealing with the sum of exponential functions on a convex set in $\mathbb{Z}^{2}$ rather than an interval in $\mathbb{Z}$ is needed in our setting.

The second step is to show that if $\chi_{u}$ has sufficiently small $U^{2}$-norm, then it also has a small $U^{3}$-norm. To do so, we first introduce the inverse and factorization theorems in Section 6, which allow us to convert the estimate of Gowers uniformity norms to a problem about the correlation of multiplicative functions with polynomial sequences which is carefully studied in Section 7. In the end, we provide the complete proof of the structure theorem with the help of all these materials in Section 8.

Figure 1 illustrates the various dependence of the results in this paper:

\setlength{\unitlength}{0.5mm}
\begin{picture}(300,260)
\put(70,250){\textbf{Theorem \ref{c1}}}
\put(68,230){\vector(1,1){15}}
\put(35,220){Corollary \ref{c2}}
\put(102,230){\vector(-1,1){15}}
\put(90,220){Lemma \ref{ge}}
\put(50,200){\vector(0,1){15}}
\put(35,190){Corollary \ref{c3}}
\put(33,170){\vector(1,1){15}}
\put(67,170){\vector(-1,1){15}}
\put(0,160){Theorem \ref{FC}}
\put(55,160){Proposition \ref{c4}}
\put(123,162){\vector(-1,0){15}}
\put(125,160){\textbf{Theorem \ref{nU3s}}}
\put(140,140){\vector(0,1){15}}
\put(125,130){Theorem \ref{nU3w}}
\put(55,130){Theorem \ref{nU2}}
\put(108,132){\vector(1,0){15}}
\put(0,90){Corollary \ref{mIU3}}
\put(70,90){Corollary \ref{mF}}
\put(140,90){Proposition \ref{est2}}
\put(210,90){Proposition \ref{est1}}
\put(45,100){\vector(3,1){80}}
\put(115,100){\vector(1,1){27}}
\put(180,100){\vector(-1,1){27}}
\put(225,100){\vector(-2,1){55}}
\put(0,60){Theorem \ref{inv2}}
\put(70,60){Theorem \ref{mFo}}
\put(210,60){Proposition \ref{last}}
\put(25,70){\vector(0,1){15}}
\put(95,70){\vector(0,1){15}}
\put(235,70){\vector(0,1){15}}
\put(210,30){Lemma \ref{appro2}}
\put(235,40){\vector(0,1){15}}
\put(210,250){Frequently used properties:}
\put(210,235){Lemma \ref{katai}}
\put(210,220){Theorem \ref{Lei}}
\put(210,205){Theorem \ref{inv}}
\put(100,20){Figure 1: roadmap of the paper}
\end{picture}

It is worth noting that most of the methods in this paper other than the ones in Section 7, work for $s\geq 3$, but there are certain results, for example Corollary \ref{mF} (see Remark \ref{mF2}), that require more work in order to be applied for the general case.

While the outline of Sections 6, 7 and 8 can be viewed as an extension of the corresponding parts of ~\cite{FHel} restricted to the case of nilmanifolds of order 2, there are two key differences between this paper and ~\cite{FHel}. The first difference is that this paper uses the inverse theorem from ~\cite{Sz} for subsets of $\mathbb{Z}^{2}$, while the paper ~\cite{FHel} used the inverse theorem from ~\cite{6} for subsets of $\mathbb{Z}$. The second and the most crucial difference is that there is an additional difficulty to overcome in this paper, and this is the content of Proposition \ref{last}.
  Roughly speaking, we wish to show that for a nilmanifold $X$ of order 2 (see Section 6.1), if a sequence of elements $(a^{n},b^{n})_{n\in\N}$ is not totally equidistributed on $X\times X$ (see Definition \ref{equid}), then the sequences
  $(a^{n})_{n\in\N}$ and $(b^{n})_{n\in\N}$ are not totally equidistributed on $X$ with only a few exceptions. The case when the Kronecker parts of $a$ and $b$ are parallel to each other was proved implicitly in ~\cite{FH}, but we need new techniques for the general case. This problem occupies the bulk of the paper.

  By the Inverse Leibman Theorem (Theorem \ref{inv}), it suffices to show that if the sequence $(a^{n},b^{n})_{n\in\N}$ is not totally equidistributed on $X\times X$, then
  both the coordinates of $a$ and $b$ are (roughly speaking) sufficiently ``linearly dependent over $\mathbb{Q}$"  with only a few exceptions.
  The main tool of the proof is a modified version of the quantitative Leibman Theorem proved in ~\cite{er}, ~\cite{er2} and ~\cite{GT}, which says that if $(a^{n},b^{n})_{n\in\N}$ is not totally equidistributed on $X\times X$, then the coordinates of $(a,b)$ is sufficiently ``linearly dependent over $\mathbb{Q}$". But this is not enough for our purpose as we need the coordinates of both $a$ and $b$ to be sufficiently linearly dependent.

  To overcome this obstacle, we use Leibman Theorem repeatedly instead of using it only once: if a sequence $(a^{n},b^{n})_{n\in\N}$ is not totally equidistributed on $X\times X$,
  we can find some $(a',b')$ lying in a submanifold $Y$ of $X\times X$ such that the sequence $(a'^{n},b'^{n})_{n\in\N}$ is not totally equidistributed on $Y$, and $(a',b')$ is ``close to'' $(a,b)$. We then continue this process with $X\times X$ replaced by $Y$. Roughly speaking, the ``smaller" a system is, the more ``linearly dependent over $\mathbb{Q}$" a non-equidistributed sequence on this system will be. We show that with only a few exceptions, if we do this process sufficiently many times and reduce the problem from $X\times X$ to a sufficiently ``small" system, we can obtain enough information to deduce that both the coordinates of $a$ and $b$ are sufficiently ``linearly dependent over $\mathbb{Q}$", which proves Proposition \ref{last}.

\textbf{Acknowledgment.} The author thanks Nikos Frantizinakis for comments about this paper, Bryna Kra for the patient guidance and all the useful advice, and the referee for the careful reading and helpful suggestions.

\section{Precise statement of the main theorems}
Before precisely stating the main theorems, we give some notation and review some definitions.
Denote $R_{x}=\{\A=a+bi\in\Z\colon 1\leq a, b\leq x\}$.
We follow the following convention throughout this paper:

\begin{conv}\label{conv}
Whenever we write an element in $\Z$ in the form $a+bi$, we always assume without explicitly writing this that $a,b\in\mathbb{Z}$.

Throughout, we assume that an integer $\ell\in\mathbb{N}$ is given: its precise value depends on the applications we have in mind. We consider $\ell$ as fixed and the dependence on $\ell$ is always left implicit.
For $N\in\N$, let $\tilde{N}$ be the
smallest prime number (in $\N$) greater than $100\ell N$. Therefore, by Bertrand's postulate, $\tilde{N}\leq 200\ell N$.
Throughout this paper, we always assume $\tilde{N}$ is the integer dependent on $N$ defined as above.

For any function $f\colon\R\rightarrow\mathbb{C}$, we use the convention that $f(a+bi)=f((a+\tilde{N})+bi)=f(a+(b+\tilde{N})i)$, meaning that the addition is taken mod $\R$.
\end{conv}

The reason that we work on the set $\R$ rather than $R_{N}$ is that if $\vert\gamma\vert^{2}<\tilde{N}$, the map $\B\to\C\B$ is a bijection from $\R$ to itself (see also the discussion after Proposition \ref{c4} for the reason).
We start with the definitions of the convolution product and the Fourier transformation on $\R$:

\begin{defn}[Convolution product] The convolution product of two functions $f,g\colon\R\rightarrow\mathbb{C}$ is defined by
  \begin{equation}\nonumber
  \begin{split}
   f*g(\A)=\mathbb{E}_{\B\in \R}f(\A-\B)g(\B),
  \end{split}
  \end{equation}
where for any function $f\colon S\rightarrow\mathbb{C}$ on a finite set $S$, we denote $\mathbb{E}_{x\in S}f(x)=\frac{1}{\vert S\vert}\sum_{x\in S}f(x)$.
Here $\vert S\vert$ denotes the cardinality of $S$.
\end{defn}

\begin{defn}[Fourier transformation]  For any $\A=\A_{1}+\A_{2}i,\B=\B_{1}+\B_{2}i\in\R$, write
  \begin{equation}\nonumber
  \begin{split}
   \A\circ_{N}\B=\frac{1}{\tilde{N}}(\A_{1}\B_{1}+\A_{2}\B_{2}).
  \end{split}
  \end{equation}
  For any function $f\colon \R\rightarrow\mathbb{C}$ and $\x=\xi_{1}+\xi_{2}i\in \R$, we let $\widehat{f}\colon\R\rightarrow\mathbb{C}$ denote the \emph{Fourier transformation} of $f$
  \begin{equation}\nonumber
  \begin{split}
   \widehat{f}(\x)=\mathbb{E}_{\A\in \R}f(\A)e(-\A\circ_{N}\x)
  \end{split}
  \end{equation}
for all $\x\in\R$, where $e(x)=\exp(2\pi ix)$ for all $x\in\mathbb{R}$.
\end{defn}

\begin{defn}[Gowers uniformity norms]
For $d\geq 1$, we define the \emph{$d$-th Gowers uniformity norm} of $f$ on $\R$ inductively by
  \begin{equation}\nonumber
    \begin{split}
      \Vert f\Vert_{U^{1}(\R)}=\Bigl\vert\mathbb{E}_{\A\in \R}f(\A)\Bigr\vert
    \end{split}
  \end{equation}
and
  \begin{equation}\nonumber
    \begin{split}
      \Vert f\Vert_{U^{d+1}(\R)}=\Bigl(\mathbb{E}_{\B\in \R}\Vert f_{\B}\cdot\overline{f}\Vert_{_{U^{d}(\R)}}^{2^{d}}\Bigr)^{1/2^{d+1}}
    \end{split}
  \end{equation}
for $d\geq 1$, where $\overline{f}$ denotes the conjugate of $f$ and $f_{\B}(\A)=f(\B+\A)$ for all $\A\in\R$.
\end{defn}
Gowers ~\cite{G} showed that this defines a norm on functions on $\mathbb{Z}_{N}$ for $d>1$. These norms were later used by Green, Tao, Ziegler and others in studying the primes (see, for example, ~\cite{5},~\cite{65} and ~\cite{6}). Analogous semi-norms were defined in the ergodic setting by Host and Kra ~\cite{HK}. It is worth noting that for each of these uses, there is a corresponding decomposition theorem.

A direct computation shows that for any function $f$ on $\R$, we have
\begin{equation}\label{F}
    \begin{split}
      \Vert f\Vert_{\R}^{4}=\sum_{\xi\in \R}\vert\widehat{f}(\xi)\vert^{4}.
    \end{split}
  \end{equation}

\begin{defn}[Kernel]
A function $\phi\colon \R\rightarrow\mathbb{C}$ is a \emph{kernel} if it is non-negative and $\mathbb{E}_{\A\in\R}\phi(\A)=1$. The set $\{\x\in \R\colon \widehat{\phi}(\x)\neq 0\}$ is called the \emph{spectrum} of $\phi$.
\end{defn}

Our main decomposition result is:

\begin{thm}\label{nU3s}($U^{3}$ decomposition theorem).
  For every positive finite measure $\nu$ on the group $\mathcal{M}_{\Z}$, every function $F\colon\mathbb{N}\times\mathbb{N}\times\mathbb{R}^{+}\rightarrow\mathbb{R}^{+}$, every $\e>0$, and every $N\in\mathbb{N}$ sufficiently large depending only on $F$ and $\e$, there exist positive integers $Q=Q(F,N,\e,\nu)$ and $R=R(F,N,\e,\nu)$ bounded by a constant depending only on $F$ and $\e$ such that for every $\chi\in\mathcal{M}_{\Z}$, the function $\chi_{N}\coloneqq\chi\cdot\bold{1}_{R_{N}}$ can be written as
  \begin{equation}\nonumber
   \begin{split}
     \chi_{N}(\A)=\chi_{N,s}(\A)+\chi_{N,u}(\A)+\chi_{N,e}(\A)
   \end{split}
  \end{equation}
for all $\A\in\R$ such that

  (i) $\chi_{N,s}=\chi_{N}*\phi_{N,1}$ and $\chi_{N,s}+\chi_{N,e}=\chi_{N}*\phi_{N,2}$, where $\phi_{N,1}$ and $\phi_{N,2}$ are kernels of $\R$ that are independent of $\chi$, and the convolution product is defined on $\R$;

  (ii) $\vert\chi_{N,s}(\A+Q)-\chi_{N,s}(\A)\vert, \vert\chi_{N,s}(\A+Qi)-\chi_{N,s}(\A)\vert\leq\frac{R}{N}$ for every $\A\in \R$;

  (iii) $\Vert\chi_{N,u}\Vert_{U^{3}(\R)}\leq\frac{1}{F(Q,R,\e)}$;

  (iv) $\mathbb{E}_{\A\in \R}\int_{\mathcal{M}_{\Z}}\vert\chi_{N,e}(\A)\vert d\nu(\chi)\leq\e$.
\end{thm}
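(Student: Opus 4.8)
The plan is to follow the two–step scheme of \cite{FH} and \cite{FHel}: first establish a $U^{2}$ decomposition of $\chi_{N}$ by Fourier analysis on $\R$, and then bootstrap it to a $U^{3}$ decomposition using the inverse theorem for the $U^{3}$–norm together with the fact that a bounded multiplicative function on $\Z$ does not correlate with a genuinely two–step nilsequence. For the $U^{2}$ step I would invoke the $U^{2}$ structure theorem (Theorem \ref{nU2}), which extends Theorem 3.3 of \cite{FH} to $\Z$: writing $\chi_{N}=\chi\cdot\bold{1}_{R_{N}}$ and viewing it on $\R$, one inspects the Fourier coefficients $\widehat{\chi_{N}}(\x)$; a Hal\'asz/K\'atai--type input (Lemma \ref{katai}, together with the number--theoretic estimate of Section 4) shows that after integrating against $\nu$ only boundedly many frequencies carry appreciable mass and that these lie near rationals with bounded denominator, so that convolving $\chi_{N}$ with a Fej\'er--type kernel on $\R$ supported near those rationals produces an approximately periodic $\chi_{N,s}$ with controlled period $Q$, controlled error scale $R$, and small $U^{2}$--remainder. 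The genuinely new point relative to $\mathbb{Z}$ is the estimation of sums of exponentials over a convex region of $\mathbb{Z}^{2}$ rather than over an interval, which only affects the shape of the kernels.

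The heart of the argument is Theorem \ref{nU3w}: if the uniform part produced above has sufficiently small $U^{2}(\R)$--norm, then it also has small $U^{3}(\R)$--norm. Assuming otherwise, the $U^{3}$--inverse theorem over $\mathbb{Z}^{2}$ (Theorem \ref{inv2}, following \cite{Sz}) produces a polynomial nilsequence $(g(n)\Gamma)_{n}$ on a nilmanifold $X=G/\Gamma$ of order $2$ and bounded complexity with which $\chi_{N,u}$ correlates, and the factorization theorem (Theorem \ref{mFo}) reduces to the case where this sequence is totally equidistributed on a subnilmanifold. Exploiting that $\B\mapsto\C\B$ is a bijection of $\R$ whenever $\vert\C\vert^{2}<\tilde{N}$ --- the very reason for working on $\R$ rather than on $R_{N}$ --- together with the multiplicativity of $\chi$, one rewrites the correlation of $\chi$ with a single nilsequence as a correlation with the \emph{diagonal} sequence $(a^{n},b^{n})_{n}$ on $X\times X$. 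If that correlation is carried by the Kronecker part of $X$, then $\chi_{N,u}$ already has a large $U^{2}$--norm, contradicting the hypothesis; if it genuinely uses the second step of $X$, it is excluded by the correlation estimates of Section 7 (Corollaries \ref{mIU3} and \ref{mF}, Propositions \ref{est1} and \ref{est2}), which assert that $\chi$ is asymptotically orthogonal to families of genuinely two--step nilsequences after averaging over dilates.

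\textbf{The main obstacle} is Proposition \ref{last}: to pass from non--total--equidistribution of the diagonal sequence $(a^{n},b^{n})_{n}$ on $X\times X$ to quantitative rational linear dependence of the coordinates of \emph{both} $a$ and $b$, whereas the quantitative Leibman theorem of \cite{er}, \cite{er2} and \cite{GT} directly yields only linear dependence of the pair $(a,b)$. The plan is a descent: from non--equidistribution on $X\times X$ one locates a proper subnilmanifold $Y$ of $X\times X$ carrying some $(a',b')$ close to $(a,b)$ with $(a'^{n},b'^{n})_{n}$ non--equidistributed on $Y$, and then repeats the step with $Y$ in place of $X\times X$; each stage forces a new rational relation, and after a bounded number of stages --- when the ambient system has become small enough --- one has accumulated enough relations on $a$ and on $b$ separately, outside a controlled exceptional set of pairs (essentially those whose Kronecker parts are parallel, a case already treated implicitly in \cite{FH}). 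Controlling the complexity and the exceptional set through this iteration is where most of the effort goes, and it is also what confines Section 7 to $s=3$ (cf.\ Remark \ref{mF2}).

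Finally, granting Theorem \ref{nU3w}, the decomposition theorem follows by a soft iteration over a bounded number of scales: one runs the $U^{2}$ construction, tests whether its uniform part already has $U^{3}(\R)$--norm below $1/F(Q,R,\e)$, and, if not, refines the kernel (enlarging $Q$ and $R$); the number of refinements is bounded in terms of $F$ and $\e$ alone, by pigeonholing on the bounded energies $\sum_{\x}\vert\widehat{\chi_{N}\ast\phi}(\x)\vert^{2}$ of the successive structured parts. The two kernels $\phi_{N,1}$ and $\phi_{N,2}$ of conclusion (i) are two consecutive such approximations, with $\chi_{N}\ast\phi_{N,2}$ the finer one, so that $\chi_{N,e}=\chi_{N}\ast\phi_{N,2}-\chi_{N}\ast\phi_{N,1}$ absorbs the tail that is small in $L^{1}(\nu)$ on average, giving (iv), while (ii) and (iii) record the periodicity of $\chi_{N,s}$ and the uniformity of $\chi_{N,u}$ at the terminal scale.
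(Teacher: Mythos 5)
Your outline follows the paper's own proof essentially step by step: a $U^{2}$ decomposition via K\'atai and Fej\'er kernels (Theorem \ref{nU2}), the weak $U^{3}$ theorem via Szegedy's inverse theorem, the Green--Tao factorization, and the correlation estimates of Section 7 with the iterated-Leibman descent of Proposition \ref{last} as the main obstacle, followed by an energy-increment iteration to pass to the full Theorem \ref{nU3s}. Two small corrections of emphasis: the kernel in Theorem \ref{nU2} is chosen uniformly over all $\chi\in\mathcal{M}_{\Z}$ and the measure $\nu$ only enters in the final energy-increment step, not in the $U^{2}$ construction; and the bijectivity of $\B\mapsto\C\B$ on $\R$ is used only in the partition-regularity application (Proposition \ref{c4}), while the reduction to the diagonal sequence $(g'(p\A),g'(q\A))$ in Section 7 comes directly from K\'atai's lemma and multiplicativity, not from that bijection.
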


\section{Partition regularity on $\Z$}
In this section, we explain how the $U^{3}$-decomposition result of Theorem \ref{nU3s} can be applied to deduce the partition regularity property of Theorem \ref{c1}. We show the following partition regularity result:
\begin{thm}\label{c1}(Partition regularity theorem for a special class of equations on $\Z$).
  Let $p$ be a quadratic equation of the form
    \begin{equation}\nonumber
    \begin{split}
      p(x,y,n)=ax^{2}+by^{2}+cn^{2}+dxy+exn+fyn
    \end{split}
  \end{equation}
  for some $a,b,c,d,e,f\in\Z$. If all of $\sqrt{e^{2}-4ac}, \sqrt{f^{2}-4bc},\sqrt{(e+f)^{2}-4c(a+b+d)}$ belong to $\Z$, then
  $p(x,y,n)$ is partition regular in $\Z$.
\end{thm}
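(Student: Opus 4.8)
The plan is to derive Theorem~\ref{c1} from two ingredients: the partition regularity consequence of the $U^{3}$-structure theorem, and an algebraic parameterization of the zero set of $p$. From the chain of results ending in Corollary~\ref{c2} (obtained from Theorem~\ref{nU3s} via Proposition~\ref{c4}, Theorem~\ref{FC} and Corollary~\ref{c3}) one has: for all pairs of shift-tuples $(\gamma_{1},\gamma_{2}),(\gamma_{1}',\gamma_{2}')\in\Z^{2}$ outside a certain degenerate set, and every finite partition of $\Z$, some cell contains both $\gamma(\alpha+\gamma_{1}\beta)(\alpha+\gamma_{2}\beta)$ and $\gamma(\alpha+\gamma_{1}'\beta)(\alpha+\gamma_{2}'\beta)$ for suitable $\alpha,\beta,\gamma\in\Z$. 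So it is enough to exhibit two shift-tuples in the non-degenerate regime, together with a polynomial $n=n(\alpha,\beta,\gamma)$ taking values in $\Z$, such that
\[
 p\bigl(\gamma(\alpha+\gamma_{1}\beta)(\alpha+\gamma_{2}\beta),\ \gamma(\alpha+\gamma_{1}'\beta)(\alpha+\gamma_{2}'\beta),\ n(\alpha,\beta,\gamma)\bigr)\equiv 0 ;
\]
taking $x,y$ equal to the two patterns then produces a monochromatic solution, with $x\neq y$ and $x,y\neq 0$ forced by choosing the two tuples distinct and restricting to $\beta\gamma\neq 0$. The construction of this parameterization is Lemma~\ref{ge}.

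For Lemma~\ref{ge} I would argue as follows. Assume first $c\neq 0$; the case $c=0$, where the hypothesis is automatic (the radicands are $e^{2},f^{2},(e+f)^{2}$) and $p$ is at most linear in $n$, is degenerate and is disposed of by a separate, elementary argument. For $c\neq 0$, multiplying by $4c$ shows that $p(x,y,n)=0$ is equivalent to
\[
 \bigl(2cn+ex+fy\bigr)^{2}=Q(x,y),\qquad Q(x,y):=(e^{2}-4ac)x^{2}+(2ef-4cd)xy+(f^{2}-4bc)y^{2},
\]
a binary quadratic form over $\Z$ in $x,y$. The three hypotheses say exactly that $Q(1,0)=e^{2}-4ac$, $Q(0,1)=f^{2}-4bc$, and $Q(1,1)=(e+f)^{2}-4c(a+b+d)$ are squares in $\Z$; equivalently, the conic $\{Q(x,y)=w^{2}\}\subset\mathbb{P}^{2}$ carries three $\Z$-points and is rationally parameterized. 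I would choose the parameterization so that, up to a common scalar $\gamma$, each of $x$ and $y$ factors over $\Z$ into two linear forms $\alpha+\gamma_{i}\beta$. Fixing a convenient configuration of the shifts — say, forcing one shift of $y$ to vanish, or, when $Q(1,1)=0$ so that $Q$ itself splits, taking the two shift-tuples to have equal sums — one writes $x/\gamma=\alpha^{2}+s\alpha\beta+p\beta^{2}$, $y/\gamma=\alpha^{2}+s'\alpha\beta+p'\beta^{2}$, and solves the system forcing $Q(x/\gamma,y/\gamma)$ to be the square $h(\alpha,\beta)^{2}$ of a binary quadratic form: in the chosen configuration the top- and bottom-degree coefficients match automatically, leaving at most three equations which can be solved in a one-parameter family together with a sign choice. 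One then sets $n=\dfrac{\gamma\bigl(\pm h(\alpha,\beta)-e\,x/\gamma-f\,y/\gamma\bigr)}{2c}$. Integrality of $n$ and of the coefficients of the patterns is arranged by restricting the free parameters to a fixed congruence class — for instance, taking $\gamma$ divisible by $2c$ — which does not affect the conclusion of Corollary~\ref{c2}.

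It remains to use the available freedom (the one-parameter family, the sign, the choice of which shift to kill, the option of interchanging the roles of $x$ and $y$, and the substitutions $\alpha\mapsto\alpha+c'\beta$, $\beta\mapsto c''\beta$ which translate and rescale all shifts) to arrange simultaneously that the discriminants $s^{2}-4p$ and $s'^{2}-4p'$ are squares in $\Z$ — so that $x$ and $y$ really are products of linear forms of the prescribed shape — and that the resulting (unequal) shift-tuples avoid the degenerate set on which Corollary~\ref{c2} is applied. A finite case analysis over the vanishing loci of the relevant resultants should achieve this; feeding the tuples so obtained into Corollary~\ref{c2} and Lemma~\ref{ge} completes the proof.

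I expect the main obstacle to be exactly this coordination step, together with the parameterization itself: one must solve the ``quartic is a perfect square'' system over $\Z$ while keeping both $x$ and $y$ genuine products of shifted linear forms \emph{with a common leading scalar}, keeping the shift-tuples out of the degenerate set, and keeping $n$ in $\Z$ — and then separately handling the degenerate case $c=0$. The model equation $p=x^{2}-y^{2}-n^{2}$ illustrates the mechanism and the role of $\Z$: there $Q(x,y)=4x^{2}-4y^{2}$ (so $Q(1,1)=0$ and $Q$ splits), the choice $x=\gamma\alpha(\alpha+2\beta)$, $y=\gamma(\alpha+(1+i)\beta)(\alpha+(1-i)\beta)$ gives $Q(x,y)=\bigl(4i\gamma\beta(\alpha+\beta)\bigr)^{2}$, hence $n=2i\gamma\beta(\alpha+\beta)$ — an identity with no analogue over $\mathbb{Z}$, since the square root involves $i$.
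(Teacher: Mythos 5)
Your proposal is correct and takes essentially the same route as the paper: both reduce Theorem~\ref{c1} to the Ramsey-type statement of Corollary~\ref{c2} (reached from Theorem~\ref{nU3s} via Proposition~\ref{c4}, the Furstenberg correspondence Theorem~\ref{FC}, and Corollaries~\ref{c3}--\ref{c2}) together with the algebraic parameterization Lemma~\ref{ge}. The paper omits the proof of Lemma~\ref{ge} entirely, deferring it to Appendix C of~\cite{FH}, whereas you sketch it by completing the square in $n$ to get $(2cn+ex+fy)^{2}=Q(x,y)$ and reading off the three square conditions as $Q(1,0),Q(0,1),Q(1,1)\in(\Z)^{2}$; your identification of the admissibility constraint and the $c=0$ degeneracy as the residual work matches what that appendix handles.
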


It is worth noting that the same result holds with $\Z$ replaced by $\Z\backslash \{0\}$ by using a similar argument.
Since the equation $x^{2}-y^{2}-n^{2}$ satisfies the hypothesis of this theorem by setting $a=1,b=c=-1,d=e=f=0$, we obtain Corollary \ref{c11}.

\begin{defn}[Admissibility]
 A 4-tuple of Gaussian integers $(\C_{1},\C_{2},\C_{3},\C_{4})\in\Z^{4}$ is \emph{admissible} if $\C_{1}\neq \C_{2}$, $\C_{3}\neq \C_{4}$ and $\{\C_{1},\C_{2}\}\neq \{\C_{3},\C_{4}\}$.
\end{defn}
We begin with the multiple recurrence property for multiplicative functions:

\begin{prop}\label{c4}(Multiple recurrence property for multiplicative functions).
  Let $(\C_{1},\C_{2},\C_{3},\C_{4})\in\Z^{4}$ be an admissible 4-tuple. Let $\nu$ be a positive finite measure on the group $\mathcal{M}_{\Z}$ such that $\nu(\{\bold{1}\})>0$ and
       $$\int_{\mathcal{M}_{\Z}}\chi(x)\overline{\chi}(y)d\nu(\chi)\geq 0$$ 
for all $x,y\in \Z$. 
 Then there exist $\A,\B\in\Z$ such that
  $(\A+\C_{1}\B)(\A+\C_{2}\B)$ and $(\A+\C_{3}\B)(\A+\C_{4}\B)$ are distinct and nonzero, and
    \begin{equation}\nonumber
  \begin{split}
    \int_{\mathcal{M}_{\Z}}{\chi(\A+\C_{1}\B)\chi(\A+\C_{2}\B)\overline{\chi}(\A+\C_{3}\B)
    \overline{\chi}(\A+\C_{4}\B)}d\nu(\chi)>0.
  \end{split}
  \end{equation}
  Moreover, if $\C_{1},\C_{2},\C_{3},\C_{4}$ are pairwise distinct, we have that
  \begin{equation}\label{temp9}
  \begin{split}
    \liminf_{N\rightarrow\infty}\mathbb{E}_{(\A,\B)\in\Theta_{N}}\int_{\mathcal{M}_{\Z}}{\chi(\A+\C_{1}\B)\chi(\A+\C_{2}\B)\overline{\chi}(\A+\C_{3}\B)
    \overline{\chi}(\A+\C_{4}\B)}d\nu(\chi)>0,
  \end{split}
  \end{equation}
  where $\Theta_{N}=\{(\A,\B)\in R_{N}\times  R_{N}\colon \A+\C_{i}\B\in R_{N}, i=1,2,3,4\}$.
\end{prop}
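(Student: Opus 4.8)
The plan is to deduce Proposition \ref{c4} from the $U^3$-decomposition theorem (Theorem \ref{nU3s}), following the strategy of Frantzikinakis and Host. The key observation is that the multilinear average in \eqref{temp9} is controlled by the $U^3$-Gowers norm of the functions involved: since the linear forms $\A+\C_i\B$ for $i=1,2,3,4$ have Cauchy--Schwarz complexity $2$ (there are four forms in two variables, pairwise linearly independent when the $\C_i$ are distinct), a standard van der Corput / generalized von Neumann argument shows that if one of the four functions has small $U^3(\R)$-norm, then the average over $\Theta_N$ (suitably embedded in $\R\times\R$) is small. So first I would set up the transference: replace $\chi$ by $\chi_N=\chi\cdot\bold{1}_{R_N}$, pass to the group $\R=\mathbb{Z}_{\tilde N}\times\mathbb{Z}_{\tilde N}$ where the dilations $\B\mapsto\C\B$ are bijections, and reduce the claim \eqref{temp9} to a lower bound for an average of the form $\mathbb{E}_{\A,\B\in\R}\int \prod \chi_N^{\pm}(\A+\C_i\B)\,d\nu$ up to a controlled loss, using the fact that $\Theta_N$ has density bounded below in $R_N\times R_N$ (this requires $|\C_i|^2<\tilde N$, which holds for $N$ large).

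Next I would apply Theorem \ref{nU3s} to each $\chi$ with a measure $\nu$ and a growth function $F$ to be chosen, decomposing $\chi_N=\chi_{N,s}+\chi_{N,u}+\chi_{N,e}$. Substituting into the multilinear average and expanding, the terms containing at least one factor $\chi_{N,u}$ are negligible by the generalized von Neumann estimate together with condition (iii), provided $F$ is chosen to grow fast enough relative to $Q,R$; the terms containing a factor $\chi_{N,e}$ are negligible by condition (iv) and boundedness of the other factors (after integrating in $\chi$ and using that $\nu$ is a finite measure). This leaves the main term, the average of $\prod\chi_{N,s}^{\pm}(\A+\C_i\B)$ over $\A,\B$. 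Here I use condition (ii): $\chi_{N,s}$ is within $R/N$ of a $Q$-periodic function, so on each residue class mod $Q$ (in both coordinates) $\chi_{N,s}$ is essentially constant; partitioning $\R\times\R$ according to the residues of $\A$ and $\B$ mod $Q$ and using that $\chi_{N,s}(\A+\C_i\B)$ depends only on $(\A,\B)\bmod Q$, the main term is a convex combination of quantities $\int \prod \chi_{N,s}^{\pm}(c_i)\,d\nu$ over finitely many tuples. The crucial point is that the ``diagonal'' contribution — where $\A\equiv 0$, so $\A+\C_i\B$ all equal $\C_i\B$, or more robustly a positivity argument — forces this to be strictly positive: since $\chi_{N,s}=\chi_N*\phi_{N,1}$ is an average of translates of $\chi_N$, and the hypothesis $\int\chi(x)\overline\chi(y)\,d\nu\ge 0$ makes the relevant expansion a sum of nonnegative terms, while the term coming from $x=y$ type coincidences contributes a definite positive amount weighted by $\nu(\{\bold 1\})>0$.

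I would then run this as a two-step argument as in \cite{FH}: first establish the qualitative statement (existence of a single $(\A,\B)$ with the integral positive and the two products distinct and nonzero) by a compactness/pigeonhole argument that also handles excluding the degenerate pairs (those with $(\A+\C_1\B)(\A+\C_2\B)=0$, or equal to the other product, or $\A=0$, etc. — a density-zero set of bad pairs that can be discarded since the main term is bounded away from $0$); then, when the $\C_i$ are pairwise distinct, upgrade to the $\liminf$ statement \eqref{temp9} by noting the lower bound obtained is uniform in $N$. The main obstacle I expect is the generalized von Neumann step: one must verify that the specific system $\{\A+\C_i\B\}_{i=1}^4$ of four pairwise-independent linear forms in two variables over $\R$ is genuinely controlled by the $U^3$-norm (Cauchy--Schwarz complexity exactly $2$), and that the embedding of $\Theta_N\subset R_N\times R_N$ into $\R\times\R$ together with the mod-$\tilde N$ wraparound convention does not destroy this control — this is where the choice of $\tilde N$ prime and much larger than the $|\C_i|^2$ is used, and where care is needed because the forms must be ``well-spread'' mod $\tilde N$. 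A secondary technical point is bookkeeping the dependence of $F$ on $Q,R$ so that the error terms are beaten by the main term, which is only bounded below in terms of $Q,R,\nu$; this circularity is resolved exactly as in \cite{FH} by the structure of Theorem \ref{nU3s}, where $Q$ and $R$ are bounded in terms of $F$ and $\e$ alone.
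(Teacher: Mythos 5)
Your proposal is correct and takes essentially the same route as the paper: decompose via Theorem \ref{nU3s}, expand the average into $3^4$ terms, kill the $\chi_{N,u}$ terms by a generalized von Neumann estimate showing the four forms $\A+\C_j\B$ are $U^3(\R)$-controlled (the paper's inequality \eqref{temp8}), absorb the $\chi_{N,e}$ terms into $O(\e)$ via condition (iv), and bound the all-$\chi_{N,s}$ main term below using approximate $Q$-periodicity together with the positivity hypotheses $\nu(\{\bold 1\})>0$ and $\int\chi(x)\overline\chi(y)\,d\nu\ge 0$, as in ~\cite{FH} Propositions 10.3--10.5.
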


 The proof is similar to Proposition 10.3 and Proposition 10.4 in ~\cite{FH}. We omit the proof but stress the differences. First of all, we need to use Theorem \ref{nU3s} to decompose $\chi_{N}$ into the sum $\chi_{N,s}+\chi_{N,u}+\chi_{N,e}$. Then we can expand the left hand side of (\ref{temp9}) into 81 terms. By a similar argument as Proposition 10.5 in ~\cite{FH}, we have that
   \begin{equation}\nonumber
  \begin{split}
    \liminf_{N\rightarrow\infty}\mathbb{E}_{(\A,\B)\in\Theta_{N}}\int_{\mathcal{M}_{\Z}}{\chi_{N,s}(\A+\C_{1}\B)\chi_{N,s}(\A+\C_{2}\B)\overline{\chi_{N,s}}(\A+\C_{3}\B)
    \overline{\chi_{N,s}}(\A+\C_{4}\B)}d\nu(\chi)
  \end{split}
  \end{equation}
is bounded below by a positive number which is independent of $\e$. So
 it suffices to show that all other terms are negligible. A term is obvious $O(\e)$ if it contents the expression $\chi_{N,e}$. It then suffices to show that all terms containing the expression $\chi_{N,u}$ are negligible, which holds immediately if one can show that
      \begin{equation}\label{temp8}
    \begin{split}
      \Bigl\vert\mathbb{E}_{\A,\B\in \R}\bold{1}_{R_{N}}(\B)\prod_{j=0}^{3}a_{j}(\A+\C_{j}\B)\Bigr\vert
      \leq C\min_{1\leq j\leq 4}(\Vert a_{j}\Vert_{U^{3}(\R)})^{1/3}+\frac{10}{\tilde{N}}
    \end{split}
  \end{equation}
for all functions $a_{1},\dots,a_{4}$ on $\R$ with $\Vert a_{j}\Vert_{L^{\infty}(\R)}\leq 1$ for $j=1,\dots,4$, where $C>0$ depends only on $\C_{1},\dots,\C_{4}$ (the exponent on the right hand side of (\ref{temp8}) is $\frac{1}{2}$ for the 1-dimensional case and is $\frac{1}{3}$ for the 2-dimensional case). The proof of it is a straightforward generalization of Lemma 10.7 in ~\cite{FH} and Theorem 2.1 in ~\cite{FH} in the 2-dimensional case, and so we are done. It is worth noting that this is the only proposition in which we need to use the fact that 
the map $\B\to\C\B$ is a bijection from $\R$ to itself if $\vert\C\vert^{2}<\tilde{N}$.

In order to transform Theorem \ref{c1} to a density regularity property for dilation invariant densities,
we review some definitions adapted to $\Z$:

\begin{defn}[Multiplicative F${\o}$lner sequence]
  The sequence $\{\Phi_{N}\}_{N\in\mathbb{N}}$ of finite subsets of $\Z$ is a \emph{multiplicative F${\o}$lner sequence} if for every $\A\in\Z$,
  \begin{equation}\nonumber
  \begin{split}
    \lim_{N\rightarrow\infty}\frac{\vert \A\Phi_{N}\triangle\Phi_{N}\vert}{\vert \Phi_{N}\vert}=0,
  \end{split}
  \end{equation}
  where $\A\Phi_{N}=\{\A x\colon x\in\Phi_{N}\}$.
\end{defn}

\begin{defn}[Multiplicative density]
  The \emph{multiplicative density} $d_{mult}(E)$ of a subset $E$ of $\Z$ (with respect to the multiplicative F${\o}$lner sequence $\{\Phi_{N}\}_{N\in\mathbb{N}}$) is defined to be
  \begin{equation}\nonumber
  \begin{split}
    d_{mult}(E)=\limsup_{N\rightarrow\infty}\frac{\vert E\cap\Phi_{N}\vert}{\vert \Phi_{N}\vert}.
  \end{split}
  \end{equation}
\end{defn}

\begin{defn}[Action by dilation]
  An \emph{action by dilation} on a probability space $(X,\mathcal{B},\mu)$ is a family $\{T_{\A}\}_{\A\in\Z}$ of invertible measure preserving transformations of $(X,\mathcal{B},\mu)$ that satisfy $T_{1}=id$ and $T_{\A}\cdot T_{\B}=T_{\A\B}$ for all $\A,\B\in\Z$. Note that this can be extended to a measure preserving action $\{T_{z}\}_{z\in\mathbb{Q}[i]}$ by defining $T_{\A/\B}=T_{\A}T_{\B}^{-1}$ for all $\A,\B\in\Z, \B\neq 0$, where $\mathbb{Q}[i]$ is the set of Gaussian rational numbers.
\end{defn}

Since $\Z$ with multiplication is a discrete amenable semi-group, we make use of (see, for example,  Theorem 2.1 in ~\cite{BM2} and Theorem 6.4.17 in ~\cite{B3}):

\begin{thm}\label{FC}(Furstenberg correspondence principle).
  Let $E$ be a subset of $\mathbb{Z}[i]$. Then there exist an action by dilation $\{T_{\A}\}_{\A\in\Z}$ on a probability space $(X,\mathcal{B},\mu)$ and a set $A\in\mathcal{B}$ with $\mu(A)=d_{mult}(E)$ such that for all $k\in\mathbb{N}$ and for all $\A_{1},\dots,\A_{k}\in\Z$, we have
    \begin{equation}\nonumber
  \begin{split}
    d_{mult}(\A_{1}^{-1}E\cap\dots\cap \A_{k}^{-1}E)\geq\mu(T_{\A_{1}}^{-1}A\cap\dots\cap T_{\A_{k}}^{-1}A),
  \end{split}
  \end{equation}
  where $\A^{-1}E=\{x\in\Z\colon \A x\in E\}$.
\end{thm}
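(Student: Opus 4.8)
The statement to prove is the Furstenberg correspondence principle (Theorem \ref{FC}) for the multiplicative semigroup action on $\Z$. The plan is to adapt the standard correspondence principle for amenable (semi)group actions to this setting, which is essentially bookkeeping once one realizes $(\Z\setminus\{0\},\times)$ is a countable cancellative amenable semigroup so that the general machinery of ~\cite{BM2} and ~\cite{B3} applies directly.

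\textbf{Approach.} First I would set up the universal dynamical model. Let $X=\{0,1\}^{\Z}$ with the product topology, and let $\Z$ act on $X$ by the dilation shift: for $\A\in\Z$ (nonzero, so that the map $x\mapsto \A x$ is injective on $\Z$), define $(T_{\A}\omega)(z)=\omega(\A z)$ for $\omega\in X$ and $z\in\Z$. One checks $T_{1}=\mathrm{id}$ and $T_{\A}T_{\B}=T_{\A\B}$, so this is an action by dilation in the sense of the definition above. Next, let $\xi=\bold{1}_{E}\in X$ be the indicator of $E$, viewed as a point of $X$, and let $A=\{\omega\in X\colon\omega(1)=1\}$, a clopen subset of $X$; note $T_{\A}^{-1}A=\{\omega\colon\omega(\A)=1\}$, so $\xi\in T_{\A}^{-1}A$ iff $\A\in E$. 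The orbit closure $Y=\overline{\{T_{\A}\xi\colon \A\in\Z\}}$ is a compact $\{T_{\A}\}$-invariant (more precisely, forward-invariant under the semigroup; one takes a suitable closure making it genuinely invariant, or works with the semigroup action throughout as in ~\cite{BM2}) subset of $X$.

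\textbf{Construction of the measure.} The key step is to produce an invariant measure $\mu$ on $Y$ charging the relevant cylinder intersections with the right mass. Fix a multiplicative F\o lner sequence $\{\Phi_{N}\}$ along which $d_{mult}(E)=\lim_{N\to\infty}\frac{|E\cap\Phi_{N}|}{|\Phi_{N}|}$ (pass to a subsequence so the $\limsup$ is attained as a limit), and consider the averaged empirical measures
\begin{equation}\nonumber
\mu_{N}=\frac{1}{|\Phi_{N}|}\sum_{\A\in\Phi_{N}}\delta_{T_{\A}\xi}.
\end{equation}
By weak-$*$ compactness of the space of Borel probability measures on the compact space $X$, pass to a further subsequence so that $\mu_{N}\to\mu$ weak-$*$. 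The F\o lner property of $\{\Phi_{N}\}$ together with cancellativity of the semigroup gives that $\mu$ is invariant under each $T_{\B}$: for a continuous $f$, $\int f\circ T_{\B}\,d\mu_{N}-\int f\,d\mu_{N}=\frac{1}{|\Phi_{N}|}\big(\sum_{\A\in \B\Phi_{N}}-\sum_{\A\in\Phi_{N}}\big)f(T_{\A}\xi)$, which is $O\!\big(\frac{|\B\Phi_{N}\triangle\Phi_{N}|}{|\Phi_{N}|}\big)\to 0$. Invertibility of the $T_{\A}$ on the support can be arranged by the standard device of passing to the natural extension, or by extending the semigroup action to the group $\mathbb{Q}[i]^{\times}$ as indicated in the definition of action by dilation; either way one obtains genuinely invertible measure preserving transformations on a probability space $(X,\mathcal B,\mu)$.

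\textbf{The density inequality.} Finally I would verify $\mu(A)=d_{mult}(E)$ and the multiple recurrence inequality. Since $\bold{1}_{A}$ is continuous, $\mu(A)=\lim_{N}\mu_{N}(A)=\lim_{N}\frac{1}{|\Phi_{N}|}\#\{\A\in\Phi_{N}\colon T_{\A}\xi\in A\}=\lim_{N}\frac{|E\cap\Phi_{N}|}{|\Phi_{N}|}=d_{mult}(E)$. For the recurrence inequality, fix $\A_{1},\dots,\A_{k}\in\Z$. Then $\bold{1}_{T_{\A_{1}}^{-1}A\cap\cdots\cap T_{\A_{k}}^{-1}A}$ is continuous, and for each $\A\in\Z$ we have $T_{\A}\xi\in T_{\A_{1}}^{-1}A\cap\cdots\cap T_{\A_{k}}^{-1}A$ iff $\A_{j}\A\in E$ for all $j$, i.e.\ iff $\A\in \A_{1}^{-1}E\cap\cdots\cap \A_{k}^{-1}E$. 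Hence
\begin{equation}\nonumber
\mu\big(T_{\A_{1}}^{-1}A\cap\cdots\cap T_{\A_{k}}^{-1}A\big)=\lim_{N}\frac{|\A_{1}^{-1}E\cap\cdots\cap \A_{k}^{-1}E\cap\Phi_{N}|}{|\Phi_{N}|}\leq d_{mult}(\A_{1}^{-1}E\cap\cdots\cap \A_{k}^{-1}E),
\end{equation}
the last step because $d_{mult}$ is defined with a $\limsup$ over the full F\o lner sequence while the left side is a limit over a subsequence. This is exactly the asserted inequality.

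\textbf{Main obstacle.} The only genuinely delicate point is the passage from the multiplicative semigroup $(\Z\setminus\{0\},\times)$ to invertible transformations: the maps $x\mapsto\A x$ are not surjective on $\Z$, so one must either invoke the general amenable-cancellative-semigroup correspondence principle of ~\cite{BM2} / Theorem 6.4.17 of ~\cite{B3} as a black box, or explicitly extend the action to the group of Gaussian rationals (as the definition of action by dilation anticipates, via $T_{\A/\B}=T_{\A}T_{\B}^{-1}$) and check consistency. Everything else — F\o lner averaging, weak-$*$ compactness, continuity of cylinder indicators, and the $\limsup$-versus-$\lim$ comparison — is routine, which is why in the final write-up it is reasonable simply to cite ~\cite{BM2} and ~\cite{B3} for the general principle and note that the dilation action on $\Z$ is a special case.
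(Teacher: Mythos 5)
Your proposal is correct and takes essentially the same approach as the paper. The paper does not spell out a proof of Theorem~\ref{FC}; it invokes the general amenable-cancellative-semigroup correspondence principle by citing ~\cite{BM2} and ~\cite{B3}, and your explicit construction (the shift space $\{0,1\}^{\Z}$ with the dilation shift, F${\o}$lner-averaged empirical measures $\mu_{N}$, a weak-$*$ limit $\mu$, and the continuity of the clopen cylinder indicators to transfer densities to measures) is exactly the standard argument underlying those citations, specialized to the multiplicative semigroup of $\Z$.
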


For every $f\in L^{2}(\mu)$, by the spectral theorem, there exists a positive finite measure $\nu$ (called the \emph{spectral measure  of $f$}) on the group of
multiplicative functions $\mathcal{M}_{\Z}$ such that for all $\A,\B\in\Z$,
\begin{equation}\nonumber
  \begin{split}
    \int_{X}{T_{\A}f\cdot T_{\B}\overline{f}}d\mu=\int_{X}{T_{\A/\B}f\cdot \overline{f}}d\mu=\int_{\mathcal{M}_{\Z}}{\chi(\A/\B)}d\nu(\chi)
    =\int_{\mathcal{M}_{\Z}}{\chi(\A)\overline{\chi}(\B)}d\nu(\chi).
  \end{split}
  \end{equation}
The following lemma can be deduced by the same argument on page 63 of ~\cite{FH}:
\begin{lem}\label{37}
For every measurable set $A$ with positive measure, the spectral measure $\nu$ of the function $\bold{1}_{A}$ satisfies the condition described in Proposition \ref{c4}, i.e. 
$\nu(\{\bold{1}\})>0$ and $$\int_{\mathcal{M}_{\Z}}\chi(x)\overline{\chi}(y)d\nu(\chi)\geq 0$$ 
for all $x,y\in \Z$.
\end{lem}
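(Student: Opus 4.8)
The plan is to follow the spectral-theoretic argument from~\cite{FH} essentially verbatim, since the key structural facts about $\mathcal{M}_{\Z}$ parallel those for $\mathcal{M}_{\mathbb{Z}}$. Recall that $\nu$ is the spectral measure of $\bold{1}_{A}$ for the action by dilation $\{T_{\A}\}_{\A\in\Z}$. The two conditions to verify are independent, so I would treat them separately.

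For the positivity condition, the starting point is the spectral identity displayed just before the lemma: for all $\A,\B\in\Z$,
\begin{equation}\nonumber
\int_{\mathcal{M}_{\Z}}\chi(\A)\overline{\chi}(\B)\,d\nu(\chi)=\int_{X}T_{\A}\bold{1}_{A}\cdot T_{\B}\bold{1}_{A}\,d\mu=\mu(T_{\A}^{-1}A\cap T_{\B}^{-1}A)\geq 0,
\end{equation}
using that $T_{\A}\bold{1}_{A}=\bold{1}_{T_{\A}^{-1}A}$ is a $\{0,1\}$-valued function, so the integrand is a product of two indicator functions and hence nonnegative; the integral is then literally the measure of an intersection. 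Strictly speaking the displayed spectral identity is stated for $\A/\B$, i.e.\ for nonzero $\B$, but since $\{\bold{1}\}\in\mathcal{M}_{\Z}$ and $\chi(0)$ may be taken to be $0$ for $\chi\neq\bold{1}$ and $1$ for $\chi=\bold{1}$ (consistent with $\chi(0)=\chi(0\cdot 0)=\chi(0)^2$), the cases where $x$ or $y$ is $0$ reduce to checking $\nu(\{\bold{1}\})\cdot(\text{something}\geq 0)\geq 0$, which follows once the second condition is established. In fact the cleanest route is to note that $\int \chi(x)\overline{\chi}(y)\,d\nu = \langle T_x \bold{1}_A, T_y \bold{1}_A\rangle_{L^2(\mu)}$ defines (via the GNS/spectral construction) a positive-definite-type object whose values on the commutative semigroup generated by the $T_\A$ are automatically nonnegative because $\bold{1}_A\geq 0$ pointwise and each $T_\A$ preserves pointwise nonnegativity.

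For the condition $\nu(\{\bold{1}\})>0$: the mass $\nu(\{\bold{1}\})$ equals the squared norm of the projection of $\bold{1}_{A}$ onto the subspace of $L^{2}(\mu)$ on which the dilation action acts trivially, equivalently the ergodic average $\lim_{N\to\infty}\mathbb{E}_{\A\in\Phi_N}T_{\A}\bold{1}_{A}$ paired with $\bold{1}_A$, where $\{\Phi_N\}$ is the multiplicative F\o lner sequence. By the mean ergodic theorem for the amenable semigroup action, this average converges in $L^2$ to the conditional expectation $\mathbb{E}(\bold{1}_A\mid \mathcal{I})$ onto the invariant $\sigma$-algebra, and $\nu(\{\bold{1}\})=\int \mathbb{E}(\bold{1}_A\mid\mathcal{I})\cdot \bold{1}_A\,d\mu=\int (\mathbb{E}(\bold{1}_A\mid\mathcal{I}))^2\,d\mu \geq (\int \bold{1}_A\,d\mu)^2=\mu(A)^2>0$ by Cauchy--Schwarz (Jensen), using $\mu(A)>0$. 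This is exactly the argument on page~63 of~\cite{FH}, and I would simply cite it, recording that nothing in it uses any arithmetic special to $\mathbb{Z}$ as opposed to $\Z$ --- only that $\Z$ under multiplication is a countable amenable (cancellative) semigroup, which was already invoked to state Theorem~\ref{FC}.

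The main obstacle, such as it is, is purely bookkeeping: making sure the spectral measure is genuinely supported on $\mathcal{M}_{\Z}$ (multiplicative functions of modulus $1$ on the Gaussian integers) rather than on some larger dual object, and handling the value at $0\in\Z$ consistently so that Proposition~\ref{c4}'s hypotheses are met on the nose. Both points are resolved the same way they are in~\cite{FH}: the multiplicativity $T_{\A}T_{\B}=T_{\A\B}$ forces the spectral characters to be multiplicative, boundedness of the $T_\A$ in $L^2$ forces modulus $1$, and one restricts attention to $\Z$ (or discards $0$, as the paper does to avoid trivialities). I therefore expect the proof to be a short paragraph citing~\cite{FH} with the remark that the argument is insensitive to the replacement of $\mathbb{Z}$ by $\Z$.
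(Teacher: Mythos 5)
Your proposal is correct and takes the same route as the paper, which itself merely cites the argument on page 63 of~\cite{FH}; your spelled-out spectral identity (giving $\int\chi(\A)\overline{\chi}(\B)\,d\nu = \mu(T_\A^{-1}A\cap T_\B^{-1}A)\geq 0$) and the mean-ergodic-plus-Cauchy--Schwarz bound $\nu(\{\bold{1}\})\geq\mu(A)^{2}>0$ are exactly the content of that citation. The only loose end is the $x=0$ or $y=0$ case, where your bookkeeping around $\chi(0)$ is slightly muddled, but since $\nu$ is a positive measure the needed nonnegativity there is automatic and the argument stands.
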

 
  Therefore, we can deduce the following corollary from Proposition \ref{c4}:
\begin{cor}\label{c3}
  Let $(\C_{1},\C_{2},\C_{3},\C_{4})\in\Z^{4}$ be an admissible 4-tuple. Let $\{T_{\A}\}_{\A\in\Z}$ be an action by dilation on a probability space $(X,\mathcal{B},\mu)$. Then for every $A\in\mathcal{B}$ with $\mu(A)>0$, there exist $\A,\B\in\Z$ such that $(\A+\C_{1}\B)(\A+\C_{2}\B)$ and $(\A+\C_{3}\B)(\A+\C_{4}\B)$ are distinct and non-zero, and
    \begin{equation}\nonumber
  \begin{split}
    \mu(T_{(\A+\C_{1}\B)(\A+\C_{2}\B)}^{-1}A\cap T_{(\A+\C_{3}\B)(\A+\C_{4}\B)}^{-1}A)>0.
  \end{split}
  \end{equation}
  Moreover, if $\C_{1},\C_{2},\C_{3},\C_{4}$ are pairwise distinct, we have that
  \begin{equation}\nonumber
  \begin{split}
    \liminf_{N\rightarrow\infty}\mathbb{E}_{(\A,\B)\in\Theta_{N}}\mu(T_{(\A+\C_{1}\B)(\A+\C_{2}\B)}^{-1}A\cap T_{(\A+\C_{3}\B)(\A+\C_{4}\B)}^{-1}A)>0,
  \end{split}
  \end{equation}
  where $\Theta_{N}=\{(\A,\B)\in R_{N}\times  R_{N}\colon \A+\C_{i}\B\in R_{N}, i=1,2,3\}$.
\end{cor}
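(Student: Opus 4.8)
The plan is to transfer Proposition \ref{c4} --- which is stated on the spectral side, i.e.\ for measures on $\mathcal{M}_{\Z}$ --- to the dynamical system $(X,\mathcal{B},\mu,\{T_{\A}\}_{\A\in\Z})$, following the same route used in \cite{FH} to deduce the analogous dynamical statement from the spectral one.

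First I would fix $A\in\mathcal{B}$ with $\mu(A)>0$, set $f:=\bold{1}_{A}$, and let $\nu$ be the spectral measure of $f$ furnished by the spectral theorem, so that $\int_{X}T_{\A}f\cdot T_{\B}\overline{f}\,d\mu=\int_{\mathcal{M}_{\Z}}\chi(\A)\overline{\chi}(\B)\,d\nu(\chi)$ for all $\A,\B\in\Z$. The properties of $\nu$ needed to run Proposition \ref{c4} are already on hand: since $\mu(A)>0$, Lemma \ref{37} gives that $\nu$ is a positive finite measure on $\mathcal{M}_{\Z}$ with $\nu(\{\bold{1}\})>0$ and $\int_{\mathcal{M}_{\Z}}\chi(x)\overline{\chi}(y)\,d\nu(\chi)\geq 0$ for all $x,y\in\Z$; that is, $\nu$ satisfies the hypotheses of Proposition \ref{c4}.

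The second step is the dictionary between the two sides. For $\A,\B\in\Z$ such that $m:=(\A+\C_{1}\B)(\A+\C_{2}\B)$ and $n:=(\A+\C_{3}\B)(\A+\C_{4}\B)$ are both nonzero, multiplicativity of each $\chi\in\mathcal{M}_{\Z}$ collapses the four-fold product $\chi(\A+\C_{1}\B)\chi(\A+\C_{2}\B)\overline{\chi}(\A+\C_{3}\B)\overline{\chi}(\A+\C_{4}\B)$ to $\chi(m)\overline{\chi}(n)$, and the spectral identity above (with $f$ real, so $\overline{f}=f$) identifies $\int_{\mathcal{M}_{\Z}}\chi(m)\overline{\chi}(n)\,d\nu(\chi)$ with $\int_{X}T_{m}f\cdot T_{n}f\,d\mu=\mu(T_{m}^{-1}A\cap T_{n}^{-1}A)$. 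For the first assertion of the corollary I would then simply invoke the first conclusion of Proposition \ref{c4}: it provides $\A,\B$ for which $m$ and $n$ are distinct and nonzero and $\int_{\mathcal{M}_{\Z}}\chi(\A+\C_{1}\B)\chi(\A+\C_{2}\B)\overline{\chi}(\A+\C_{3}\B)\overline{\chi}(\A+\C_{4}\B)\,d\nu(\chi)>0$, and the dictionary turns this into $\mu(T_{m}^{-1}A\cap T_{n}^{-1}A)>0$.

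For the ``moreover'' statement, under the extra hypothesis that $\C_{1},\dots,\C_{4}$ are pairwise distinct, I would apply the $\liminf$ conclusion (\ref{temp9}) of Proposition \ref{c4}, average the dictionary over $(\A,\B)\in\Theta_{N}$, and let $N\to\infty$. The one point that needs a sentence of justification is that the dictionary is valid only off the exceptional set where some $\A+\C_{i}\B=0$ or $m=n$; using admissibility one checks that this set is contained in a bounded number of lines $\{\A=c\B\}$, hence meets $\Theta_{N}$ in $O(N^{2})$ points, while $|\Theta_{N}|\asymp N^{4}$ and the integrand is everywhere bounded by $\nu(\mathcal{M}_{\Z})=\mu(A)$. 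Thus $\mathbb{E}_{(\A,\B)\in\Theta_{N}}\int_{\mathcal{M}_{\Z}}\chi(\A+\C_{1}\B)\chi(\A+\C_{2}\B)\overline{\chi}(\A+\C_{3}\B)\overline{\chi}(\A+\C_{4}\B)\,d\nu(\chi)$ and $\mathbb{E}_{(\A,\B)\in\Theta_{N}}\mu(T_{m}^{-1}A\cap T_{n}^{-1}A)$ differ by $o(1)$, so the positivity of the $\liminf$ in (\ref{temp9}) transfers to the statement to be proved. Given that Proposition \ref{c4} is already available, I do not expect any genuine obstacle here: all of the difficulty has been absorbed into Proposition \ref{c4} (and ultimately into Theorem \ref{nU3s}), and what remains is the routine spectral bookkeeping just outlined.
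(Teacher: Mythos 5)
Your proposal is correct and matches the paper's intent: the paper states Corollary~\ref{c3} immediately after Lemma~\ref{37} with only the phrase ``Therefore, we can deduce the following corollary from Proposition~\ref{c4},'' and the spectral dictionary plus the $o(1)$ exceptional-set argument you supply is exactly the omitted bookkeeping. There is no divergence from the paper's route.
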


\begin{cor}\label{c2}
  Let $\C_{0}\in\Z$ with $\C_{0}\neq 0$ and let $(\C_{1},\C_{2},\C_{3},\C_{4})\in\Z^{4}$ be an admissible 4-tuple. Then for any partition of $\Z$ into finitely many cells, there exist $\A,\B,\C\in\Z$ such that $\C\C_{0}(\A+\C_{1}\B)(\A+\C_{2}\B)$ and $\C\C_{0}(\A+\C_{3}\B)(\A+\C_{4}\B)$ are distinct and nonzero, and they belong to the same cell.
\end{cor}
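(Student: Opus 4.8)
The plan is to feed the recurrence statement of Corollary~\ref{c3} into the Furstenberg correspondence principle (Theorem~\ref{FC}), absorbing the extra dilation factor $\C_{0}$ by passing to a suitable preimage of the relevant set inside the dynamical system. Given a partition $\Z=E_{1}\cup\dots\cup E_{r}$ into finitely many cells, I would first observe, by the pigeonhole principle applied to the multiplicative density (which satisfies $\sum_{j}d_{mult}(E_{j})\geq d_{mult}(\Z)=1$), that some cell $E$ has $d_{mult}(E)>0$. Applying Theorem~\ref{FC} to $E$ then produces an action by dilation $\{T_{\A}\}_{\A\in\Z}$ on a probability space $(X,\mathcal{B},\mu)$ and a set $A\in\mathcal{B}$ with $\mu(A)=d_{mult}(E)>0$ such that
\[
d_{mult}(\A_{1}^{-1}E\cap\dots\cap\A_{k}^{-1}E)\geq\mu(T_{\A_{1}}^{-1}A\cap\dots\cap T_{\A_{k}}^{-1}A)
\]
for all $\A_{1},\dots,\A_{k}\in\Z$.

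Next I would set $A'\coloneqq T_{\C_{0}}^{-1}A$, which satisfies $\mu(A')=\mu(A)>0$ since $T_{\C_{0}}$ is invertible and measure preserving, and apply Corollary~\ref{c3} to $A'$ and the admissible tuple $(\C_{1},\C_{2},\C_{3},\C_{4})$. This produces $\A,\B\in\Z$ for which $m_{1}\coloneqq(\A+\C_{1}\B)(\A+\C_{2}\B)$ and $m_{2}\coloneqq(\A+\C_{3}\B)(\A+\C_{4}\B)$ are distinct and nonzero and $\mu(T_{m_{1}}^{-1}A'\cap T_{m_{2}}^{-1}A')>0$. From $T_{\C_{0}m_{i}}=T_{\C_{0}}T_{m_{i}}$, hence $T_{\C_{0}m_{i}}^{-1}A=T_{m_{i}}^{-1}A'$, the last inequality becomes $\mu(T_{\C_{0}m_{1}}^{-1}A\cap T_{\C_{0}m_{2}}^{-1}A)>0$, and feeding $\A_{1}=\C_{0}m_{1}$, $\A_{2}=\C_{0}m_{2}$ into the correspondence inequality yields $d_{mult}\bigl((\C_{0}m_{1})^{-1}E\cap(\C_{0}m_{2})^{-1}E\bigr)>0$. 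Since a set of positive multiplicative density is infinite, the set $S=(\C_{0}m_{1})^{-1}E\cap(\C_{0}m_{2})^{-1}E=\{\C\in\Z\colon\C\C_{0}m_{1}\in E,\ \C\C_{0}m_{2}\in E\}$ contains some $\C\neq0$. For this $\C$ the two numbers $\C\C_{0}(\A+\C_{1}\B)(\A+\C_{2}\B)$ and $\C\C_{0}(\A+\C_{3}\B)(\A+\C_{4}\B)$ lie in the same cell $E$, are nonzero because $\C,\C_{0},m_{1},m_{2}$ are all nonzero, and are distinct because equality would force $m_{1}=m_{2}$; this is exactly the assertion of the corollary.

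I expect the deduction to be essentially routine once Corollary~\ref{c3} is available. The one point that deserves genuine attention is the insertion of the factor $\C_{0}$: a recurrence statement about the transformations $T_{m_{i}}$ does not by itself give membership of $\C\C_{0}m_{i}$ in $E$, and the trick is to absorb $\C_{0}$ into the system by running Corollary~\ref{c3} on $A'=T_{\C_{0}}^{-1}A$ rather than on $A$. The remaining work is the short verification that this substitution preserves positivity of measure and that the distinctness and non-vanishing conclusions are not destroyed by multiplying through by $\C\C_{0}$.
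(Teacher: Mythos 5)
Your proof is correct and takes essentially the same approach as the paper: the paper applies Corollary~\ref{c3} directly to $A$ and then uses the measure-preserving property of $T_{\C_0}$ to get $\mu(T_{\C_0 m_1}^{-1}A\cap T_{\C_0 m_2}^{-1}A)=\mu(T_{m_1}^{-1}A\cap T_{m_2}^{-1}A)>0$, whereas you apply Corollary~\ref{c3} to $A'=T_{\C_0}^{-1}A$ and unwind, which is the same calculation in a slightly different order. The remaining steps (correspondence principle, positivity of density implies a nonzero $\C$ exists, distinctness and non-vanishing after multiplying by $\C\C_0$) match the paper exactly.
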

\begin{proof} For any partition of $\Z$ into finitely many cells, one of the cells $E$ has positive multiplicative density. Let $A$ be the set in Theorem \ref{FC} corresponding to $E$.
By Corollary \ref{c3},
there exist $\A,\B\in\Z$ such that $(\A+\C_{1}\B)(\A+\C_{2}\B)$ and $(\A+\C_{3}\B)(\A+\C_{4}\B)$ are distinct and nonzero, and
\begin{equation}\nonumber
  \begin{split}
    \mu(T_{\C_{0}(\A+\C_{1}\B)(\A+\C_{2}\B)}^{-1}A\cap T_{\C_{0}(\A+\C_{3}\B)(\A+\C_{4}\B)}^{-1}A)=\mu(T_{(\A+\C_{1}\B)(\A+\C_{2}\B)}^{-1}A\cap T_{(\A+\C_{3}\B)(\A+\C_{4}\B)}^{-1}A)>0.
  \end{split}
  \end{equation}
  So $d_{mult}(\C_{0}^{-1}(\A+\C_{1}\B)^{-1}(\A+\C_{2}\B)^{-1}E\cap\C_{0}^{-1}(\A+\C_{3}\B)^{-1}(\A+\C_{4}\B)^{-1}E)>0$. Thus there exists $\C\in\Z$ such that
  $\C\C_{0}(\A+\C_{1}\B)(\A+\C_{2}\B)$ and $\C\C_{0}(\A+\C_{3}\B)(\A+\C_{4}\B)$ are distinct and nonzero, and they both belong to $E$.
\end{proof}

Thus the proof of Theorem \ref{c1} has now been reduced to the following lemma. Since the proof of it is identical to the one in Appendix C in ~\cite{FH}, we omit it:

\begin{lem}\label{ge}(The general solution for a special class of equations).
  If $p$ is a quadratic equation satisfying the condition in Theorem \ref{c1}, then there exists an admissible 4-tuple $(\C_{1},\C_{2},\C_{3},\C_{4})\in\Z^{4}$ such that for all $\C',\A,\B\in\Z$, the elements $x=\C'\C_{0}(\A+\C_{1}\B)(\A+\C_{2}\B)$ and $y=\C'\C_{0}(\A+\C_{3}\B)(\A+\C_{4}\B)$
  satisfy $p(x,y,n)=0$ for some $n\in\Z$, where $\C_{0}\in\Z,\C_{0}\neq 0$.
\end{lem}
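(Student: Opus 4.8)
Following Appendix~C of~\cite{FH}, the plan is to show that the hypothesis on the three square roots is exactly what allows the solution set of $p(x,y,n)=0$ to be parametrised in the stated product form. Write $\Delta_{1}=e^{2}-4ac$, $\Delta_{2}=f^{2}-4bc$ and $\Delta_{3}=(e+f)^{2}-4c(a+b+d)$, so that the hypothesis reads $\sqrt{\Delta_{1}},\sqrt{\Delta_{2}},\sqrt{\Delta_{3}}\in\Z$, and note the algebraic identity $2ef-4cd=\Delta_{3}-\Delta_{1}-\Delta_{2}$. I would first treat the main case $c\neq0$. Completing the square in $n$ rewrites $p(x,y,n)=0$ as $(2cn+ex+fy)^{2}=D(x,y)$ with
\begin{equation}\nonumber
D(x,y):=\Delta_{1}x^{2}+\Delta_{2}y^{2}+(\Delta_{3}-\Delta_{1}-\Delta_{2})xy ,
\end{equation}
so $p(x,y,n)=0$ has a solution $n\in\Z$ precisely when $D(x,y)$ is the square of a Gaussian integer $m$ with $2c\mid m-ex-fy$. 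It therefore suffices to exhibit an admissible $(\C_{1},\C_{2},\C_{3},\C_{4})\in\Z^{4}$ so that, putting $L_{j}=\A+\C_{j}\B$, $\C_{0}=2c$, $x=\C'\C_{0}L_{1}L_{2}$ and $y=\C'\C_{0}L_{3}L_{4}$, the quartic binary form
\begin{equation}\nonumber
\Psi(\A,\B):=\Delta_{1}L_{1}^{2}L_{2}^{2}+\Delta_{2}L_{3}^{2}L_{4}^{2}+(\Delta_{3}-\Delta_{1}-\Delta_{2})L_{1}L_{2}L_{3}L_{4}
\end{equation}
is the square of a binary form with coefficients in $\Z$; for then $D(x,y)=(\C'\C_{0})^{2}\Psi(\A,\B)$ and $n=\C'\bigl(\pm\sqrt{\Psi(\A,\B)}-eL_{1}L_{2}-fL_{3}L_{4}\bigr)$ lies in $\Z$ for all $\C',\A,\B\in\Z$.

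To construct the tuple I would factor the quadratic $\Delta_{1}t^{2}+(\Delta_{3}-\Delta_{1}-\Delta_{2})t+\Delta_{2}=\Delta_{1}(t-r_{1})(t-r_{2})$ over $\overline{\mathbb{Q}}$, giving $\Psi=\Delta_{1}(L_{1}L_{2}-r_{1}L_{3}L_{4})(L_{1}L_{2}-r_{2}L_{3}L_{4})$. If $r_{1}=r_{2}$ (equivalently $\Delta_{3}=(\sqrt{\Delta_{1}}\pm\sqrt{\Delta_{2}})^{2}$) then $\Psi=(\sqrt{\Delta_{1}}\,L_{1}L_{2}\pm\sqrt{\Delta_{2}}\,L_{3}L_{4})^{2}$ for the appropriate sign and \emph{any} admissible tuple. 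Otherwise I would impose that each factor $L_{1}L_{2}-r_{i}L_{3}L_{4}$ be the square of a linear form: requiring its discriminant as a binary quadratic form in $\A,\B$ to vanish for $i=1,2$, and eliminating $r_{1},r_{2}$ using $r_{1}r_{2}=\Delta_{2}/\Delta_{1}$ and $r_{1}+r_{2}=(\Delta_{1}+\Delta_{2}-\Delta_{3})/\Delta_{1}$, reduces this to two polynomial relations among $\sqrt{\Delta_{1}},\sqrt{\Delta_{2}},\sqrt{\Delta_{3}}$ and the differences $\C_{1}-\C_{2},\C_{3}-\C_{4},\C_{1}-\C_{3}$, one solution of which is
\begin{equation}\nonumber
(\C_{1},\C_{2},\C_{3},\C_{4})=\bigl(0,\,-2\sqrt{\Delta_{2}},\,\sqrt{\Delta_{1}}-\sqrt{\Delta_{2}}-\sqrt{\Delta_{3}},\,-\sqrt{\Delta_{1}}-\sqrt{\Delta_{2}}-\sqrt{\Delta_{3}}\bigr)\in\Z^{4}
\end{equation}
(for $p=x^{2}-y^{2}-n^{2}$ this yields $(0,-4i,2-2i,-2-2i)$, a parametrisation equivalent to the one quoted in the introduction). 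For this tuple the product of the leading coefficients $1-r_{1}$ and $1-r_{2}$ equals $\Delta_{3}/\Delta_{1}$ (which degenerates to $(\Delta_{1}-\Delta_{2})^{2}/\Delta_{1}^{2}$ when $\Delta_{3}=0$), so $\Psi$ is $\Delta_{3}$ (resp.\ $(\Delta_{1}-\Delta_{2})^{2}$) times the square of a binary form over $\mathbb{Q}[i]$, hence itself the square of a $\mathbb{Q}[i]$-form; as $\Psi\in\Z[\A,\B]$ and $\Z[\A,\B]$ is integrally closed, $\sqrt{\Psi}\in\Z[\A,\B]$, which is what we needed.

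It then remains to verify admissibility of the displayed tuple — it fails only when $\sqrt{\Delta_{1}}=0$, or $\sqrt{\Delta_{2}}=0$, or $\Delta_{3}=0$ together with $\sqrt{\Delta_{1}}=\pm\sqrt{\Delta_{2}}$, the last being a repeated-root case already handled — and to dispose of the leftover degenerate configurations by the same kind of ad hoc factorisation of $\Psi$ used in~\cite{FH}: the cases $\sqrt{\Delta_{1}}=0$ or $\sqrt{\Delta_{2}}=0$ with $c\neq0$, and the case $c=0$, where $\Delta_{1}=e^{2}$, $\Delta_{2}=f^{2}$, $\Delta_{3}=(e+f)^{2}$, the hypothesis is automatic, and $p$ is linear in $n$, so one instead arranges for $ex+fy$ to divide $ax^{2}+by^{2}+dxy$ along the parametrisation. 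Since the lemma only asserts the existence of one admissible tuple realising the parametrisation, these sub-cases do not affect the main argument. The one genuinely delicate step is the construction and verification of the tuple in the generic case above; everything else is the routine bookkeeping of keeping all the quantities inside $\Z$ and arranging divisibility by $2c$ — which is exactly why the full argument can be imported from~\cite{FH}.
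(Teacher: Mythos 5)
Your reconstruction follows exactly the route the paper defers to (Appendix~C of~\cite{FH}): complete the square in $n$, reduce to making the quartic binary form $\Psi(\A,\B)$ a perfect square, and exhibit a tuple. Your proposed tuple $\bigl(0,\,-2\sqrt{\Delta_{2}},\,\sqrt{\Delta_{1}}-\sqrt{\Delta_{2}}-\sqrt{\Delta_{3}},\,-\sqrt{\Delta_{1}}-\sqrt{\Delta_{2}}-\sqrt{\Delta_{3}}\bigr)$ does the job in the generic case: a direct computation shows that the condition that the discriminant of $L_{1}L_{2}-rL_{3}L_{4}$ (as a quadratic in $\A$) vanish is precisely the equation $\Delta_{1}r^{2}+(\Delta_{3}-\Delta_{1}-\Delta_{2})r+\Delta_{2}=0$, so for this tuple both factors $L_{1}L_{2}-r_{i}L_{3}L_{4}$ are squares of linear forms; then $\Psi$ has leading $\A$-coefficient $\Delta_{3}$, a square in $\Z$, and Galois descent plus Gauss's lemma (for the UFD $\Z[\A,\B]$) puts $\sqrt{\Psi}$ in $\Z[\A,\B]$. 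I also checked that your tuple specialises, for $p=x^{2}-y^{2}-n^{2}$, to a parametrisation obtained from the one in the introduction by the substitution $\B\mapsto -2i\B$ (and rescaling $\C'$), so the example is consistent. In this sense you have rederived the concrete construction that the paper silently cites.

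One warning about the parts you wave off. Admissibility of your tuple fails precisely when $\sqrt{\Delta_{1}}=0$, $\sqrt{\Delta_{2}}=0$, or ($\Delta_{3}=0$ and $\sqrt{\Delta_{1}}=\pm\sqrt{\Delta_{2}}$); you correctly observe that the last is the repeated-root case, already handled. But for $\Delta_{1}=0$ or $\Delta_{2}=0$ the parametrisation appears to break for \emph{every} admissible tuple, not just yours. If $\Delta_{1}=\Delta_{2}=0$ (with $c\neq 0$, e.g.\ $p=y^{2}+n^{2}-xy+2yn$, which satisfies the stated hypothesis with $\sqrt{\Delta_{3}}=2$), then $\Psi=\Delta_{3}\,L_{1}L_{2}L_{3}L_{4}$, and admissibility precludes $L_{1}L_{2}L_{3}L_{4}$ from being the square of a binary form. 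Likewise $p=(x-y)^{2}$ satisfies the hypothesis with $c=e=f=0$ yet cannot be parametrised in the required form, and indeed $x^{2}-y^{2}=0$ is not even partition regular over $\Z\setminus\{0\}$ since $x\mapsto -x$ can be split by a $2$-colouring. So ``dispose of the leftover degenerate configurations by the same kind of ad hoc factorisation'' is not available: the lemma as written needs a nondegeneracy hypothesis — that $\Delta_{1},\Delta_{2},\Delta_{3}$ are all nonzero — which matches the ``$a,b,a+b$ nonzero squares'' hypothesis from~\cite{FH} that the paper quotes but does not reproduce in Theorem~\ref{c1}. This is a gap inherited from the source statement rather than introduced by you, but it should be surfaced, not absorbed into the reference.
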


\section{Katai's Lemma}
The rest of the paper is devoted to the proof of Theorem \ref{nU3s}. In this section, we prove the key number theoretic input
that we need in later sections.

Denote $s(x)=\Bigl\vert \{\A\in\Z\colon \mathcal{N}(\A)\leq x\bigr\}\Bigr\vert$, where $\mathcal{N}(a+bi)=\sqrt{a^{2}+b^{2}}$ is the norm of $a+bi$. Let $\P$ be the set of primes in $\Z$.

\begin{defn}
For any finite subset $\mathcal{P}=\{p_{1},\dots,p_{k}\}$ of $\P$, denote $A_{\mathcal{P}}=\sum_{p\in \mathcal{P}}\frac{1}{\mathcal{N}(p)^{2}}$. For all $\A\in\Z$, write $\omega_{\mathcal{P}}(\A)=\sum_{p|\A,p\in\mathcal{P}}1$.
\end{defn}

It is worth noting that $\omega_{\mathcal{P}}$ is an additive function, meaning $\omega_{\mathcal{P}}(\A\B)=\omega_{\mathcal{P}}(\A)+\omega_{\mathcal{P}}(\B)$ for any $\A,\B\in\Z$ coprime to each other.
We need the following Turan-Kubilius Lemma for $\Z$ (see, for example, Lemma 9.3 in ~\cite{FH} for the proof):

\begin{lem}\label{TK} Let $x\in\mathbb{N}$ be sufficiently large with respect to $\mathcal{P}$. Then
\begin{equation}\label{TK2}
  \begin{split}
    \frac{1}{s(x)}\sum_{\mathcal{N}(\A)\leq x}\vert\omega_{\mathcal{P}}(\A)-A_{\mathcal{P}}\vert\leq C A_{\mathcal{P}}^{1/2}
  \end{split}
\end{equation}
for some universal constant $C$.
\end{lem}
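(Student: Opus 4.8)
\textbf{Proof proposal for Lemma \ref{TK} (Turán–Kubilius for $\Z$).}

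The plan is to mimic the classical second-moment argument for the Turán–Kubilius inequality, transplanted to the Gaussian integers, where the role of the counting function of $\{1,\dots,x\}$ is played by $s(x)=\#\{\A\in\Z:\mathcal N(\A)\le x\}$. First I would record the asymptotic $s(x)=\pi x^{2}+O(x)$ (Gauss circle problem), and more importantly its ``localized'' version: for a fixed prime $p\in\P$, the number of $\A$ with $\mathcal N(\A)\le x$ and $p\mid\A$ is $\frac{s(x)}{\mathcal N(p)^{2}}+O\!\big(\frac{x}{\mathcal N(p)}\big)$, since the multiples of $p$ in the disc of radius $x$ form (after dividing by $p$) the Gaussian integers in a disc of radius $x/\mathcal N(p)$. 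Likewise, for two distinct primes $p,q\in\mathcal P$, the count of $\A$ with $\mathcal N(\A)\le x$ divisible by both is $\frac{s(x)}{\mathcal N(p)^{2}\mathcal N(q)^{2}}+O\!\big(\frac{x}{\mathcal N(p)\mathcal N(q)}\big)$, because $pq\mid\A$ for coprime $p,q$. Since $\mathcal P$ is finite and $x$ is taken sufficiently large with respect to $\mathcal P$, all these $O(\cdot)$ error terms are $o(s(x))$ uniformly in the (bounded) number of pairs involved.

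Next I would compute the first two moments of $\omega_{\mathcal P}(\A)=\sum_{p\in\mathcal P,\,p\mid\A}1$ over the disc. Writing $\omega_{\mathcal P}(\A)=\sum_{p\in\mathcal P}\mathbf 1_{p\mid\A}$, the localized count above gives
\[
\frac{1}{s(x)}\sum_{\mathcal N(\A)\le x}\omega_{\mathcal P}(\A)=\sum_{p\in\mathcal P}\frac{1}{\mathcal N(p)^{2}}+o(1)=A_{\mathcal P}+o(1),
\]
and expanding the square and using the two-prime count,
\[
\frac{1}{s(x)}\sum_{\mathcal N(\A)\le x}\omega_{\mathcal P}(\A)^{2}
=\sum_{p\in\mathcal P}\frac{1}{\mathcal N(p)^{2}}+\sum_{\substack{p,q\in\mathcal P\\ p\neq q}}\frac{1}{\mathcal N(p)^{2}\mathcal N(q)^{2}}+o(1)
=A_{\mathcal P}+A_{\mathcal P}^{2}-\sum_{p\in\mathcal P}\frac{1}{\mathcal N(p)^{4}}+o(1).
\]
Hence the variance satisfies
\[
\frac{1}{s(x)}\sum_{\mathcal N(\A)\le x}\big(\omega_{\mathcal P}(\A)-A_{\mathcal P}\big)^{2}
=A_{\mathcal P}-\sum_{p\in\mathcal P}\frac{1}{\mathcal N(p)^{4}}+o(1)\le A_{\mathcal P}+o(1),
\]
for $x$ large enough with respect to $\mathcal P$.

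Finally I would apply the Cauchy–Schwarz inequality to pass from the $L^{2}$ estimate to the $L^{1}$ estimate in \eqref{TK2}:
\[
\frac{1}{s(x)}\sum_{\mathcal N(\A)\le x}\big\vert\omega_{\mathcal P}(\A)-A_{\mathcal P}\big\vert
\le\Big(\frac{1}{s(x)}\sum_{\mathcal N(\A)\le x}\big(\omega_{\mathcal P}(\A)-A_{\mathcal P}\big)^{2}\Big)^{1/2}
\le\big(A_{\mathcal P}+o(1)\big)^{1/2}\le C A_{\mathcal P}^{1/2},
\]
after absorbing the $o(1)$ into the constant (one may first assume $A_{\mathcal P}$ bounded below, the case $A_{\mathcal P}$ small being handled by taking $x$ large so that $o(1)\le A_{\mathcal P}$, or simply by enlarging $C$). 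I do not expect a genuine obstacle here; the only point requiring care is the uniformity of the lattice-point error terms, i.e. making precise that ``$x$ sufficiently large with respect to $\mathcal P$'' forces each of the finitely many $O(x/\mathcal N(p))$ and $O(x/(\mathcal N(p)\mathcal N(q)))$ terms to be negligible relative to $s(x)\asymp x^{2}$ — which is immediate once $\mathcal P$ is fixed. Since the excerpt explicitly permits citing Lemma 9.3 of \cite{FH} for the proof, one could alternatively deduce the Gaussian-integer case from the general framework there; I have sketched the self-contained second-moment route instead.
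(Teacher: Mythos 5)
Your second-moment computation followed by Cauchy--Schwarz is exactly the standard Turán--Kubilius argument, and it is the route the paper implicitly endorses by pointing to Lemma 9.3 of \cite{FH} for the proof. One small point worth tightening: two distinct primes of $\mathcal{P}$ need not be coprime in $\Z$ (associates $p$ and $up$, with $u$ a unit, share the same norm and hence both lie in a norm-window set $\mathcal{P}$), so for such pairs the joint divisibility count is $\frac{s(x)}{\mathcal{N}(p)^{2}}+o(s(x))$ rather than $\frac{s(x)}{\mathcal{N}(p)^{2}\mathcal{N}(q)^{2}}+o(s(x))$; since each prime has at most three proper associates, this only enlarges the variance by an additional $O(A_{\mathcal{P}})$, which is absorbed into the universal constant $C$ and does not affect the conclusion.
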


If $x\in\mathbb{N}$ is sufficiently large with respect to $\mathcal{P}$. Let $z=\sqrt{2}x$. By Lemma \ref{TK},
  \begin{equation}\nonumber
  \begin{split}
    \qquad\frac{1}{x^{2}}\sum_{\A\in R_{x}}\vert\omega_{\mathcal{P}}(\A)-A_{\mathcal{P}}\vert
    \leq \frac{s(z)}{x^{2}}\cdot\frac{1}{s(z)}\sum_{\mathcal{N}(\A)\leq z}\vert\omega_{\mathcal{P}}(\A)-A_{\mathcal{P}}\vert
    \leq \frac{s(z)}{x^{2}}C A_{\mathcal{P}}^{1/2}\leq 8C A_{\mathcal{P}}^{1/2}.
  \end{split}
\end{equation}
Thus we have:

\begin{cor}\label{TK3} Let $x\in\mathbb{N}$ be sufficiently large with respect to $\mathcal{P}$. Then
\begin{equation}\nonumber
  \begin{split}
    \frac{1}{x^{2}}\sum_{\A\in R_{x}}\vert\omega_{\mathcal{P}}(\A)-A_{\mathcal{P}}\vert\leq 8 C A_{\mathcal{P}}^{1/2}
  \end{split}
\end{equation}
for some universal constant $C$.
\end{cor}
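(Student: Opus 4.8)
The plan is to obtain Corollary \ref{TK3} as an immediate consequence of the Turán--Kubilius Lemma (Lemma \ref{TK}) by transferring the estimate from a norm ball to the box $R_{x}$.

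First I would set $z=\sqrt{2}\,x$ (rounded up to an integer if one insists on applying Lemma \ref{TK} only at integer scales, which changes nothing) and note the containment $R_{x}\subseteq\{\A\in\Z\colon\mathcal{N}(\A)\leq z\}$: if $\A=a+bi$ with $1\leq a,b\leq x$, then $\mathcal{N}(\A)=\sqrt{a^{2}+b^{2}}\leq\sqrt{2x^{2}}=z$. Since every summand $\vert\omega_{\mathcal{P}}(\A)-A_{\mathcal{P}}\vert$ is nonnegative, enlarging the range of summation only increases the total, so
\[
\sum_{\A\in R_{x}}\vert\omega_{\mathcal{P}}(\A)-A_{\mathcal{P}}\vert\leq\sum_{\mathcal{N}(\A)\leq z}\vert\omega_{\mathcal{P}}(\A)-A_{\mathcal{P}}\vert .
\]
Because $z$ differs from $x$ by a fixed factor, $z$ is sufficiently large with respect to $\mathcal{P}$ whenever $x$ is, so Lemma \ref{TK} applies at scale $z$ and bounds the right-hand side by $C\,s(z)\,A_{\mathcal{P}}^{1/2}$.

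It then remains to control $s(z)/x^{2}$. Here $s(z)$ is just the number of Gaussian integers in the disc of radius $z$, so by the elementary Gauss circle bound $s(z)=\pi z^{2}+O(z)=2\pi x^{2}+O(x)$; since $2\pi<8$ this gives $s(z)\leq 8x^{2}$ for all $x$ sufficiently large. Dividing the previous inequality by $x^{2}$ yields
\[
\frac{1}{x^{2}}\sum_{\A\in R_{x}}\vert\omega_{\mathcal{P}}(\A)-A_{\mathcal{P}}\vert\leq\frac{s(z)}{x^{2}}\,C\,A_{\mathcal{P}}^{1/2}\leq 8C\,A_{\mathcal{P}}^{1/2},
\]
which is the assertion.

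I do not expect any real obstacle here: the only two inputs are the containment $R_{x}\subset\{\mathcal{N}\leq\sqrt{2}\,x\}$ and the crude count $s(\sqrt{2}\,x)\leq 8x^{2}$, both of which are elementary, and the only point requiring a word is that the hypothesis ``$x$ sufficiently large with respect to $\mathcal{P}$'' passes to $z=\sqrt{2}\,x$ because the ratio is a universal constant. The constant $C$ in the conclusion is the one furnished by Lemma \ref{TK} (multiplied by $8$), and no property of the additive function $\omega_{\mathcal{P}}$ beyond nonnegativity of its deviation from $A_{\mathcal{P}}$ is used.
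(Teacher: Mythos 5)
Your proof is correct and follows the paper's argument exactly: set $z=\sqrt{2}\,x$, use $R_{x}\subset\{\mathcal{N}(\A)\leq z\}$ to compare with the norm-ball average controlled by Lemma \ref{TK}, and then bound $s(z)/x^{2}\leq 8$. The only difference is that you spell out the Gauss circle estimate justifying $s(z)\leq 8x^{2}$, which the paper leaves implicit.
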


The following is the classification of Gaussian primes, see, for example, ~\cite{GP} for the proof:
\begin{thm}\label{cls}
  The prime elements $p$ of $\Z$, up to a multiple of unit elements, is of one of the following three forms:

  (i) $p=1+i$;

  (ii) $p=a+bi, a^{2}+b^{2}=p_{0}, p_{0}\equiv 1 \mod 4$;

  (iii) $p=p_{0}, p_{0}\equiv 3 \mod 4$.

  Here $p_{0}$ is a prime in $\mathbb{Z}$.
\end{thm}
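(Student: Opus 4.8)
The plan is to derive the classification from the fact that $\Z$ is a Euclidean domain. Write $N(a+bi)=a^{2}+b^{2}=\mathcal{N}(a+bi)^{2}$ for the (field) norm; it is multiplicative and takes values in $\mathbb{Z}_{\geq 0}$. First I would recall that $(\Z,N)$ is Euclidean: given $\A,\B\in\Z$ with $\B\neq 0$, round $\A/\B\in\mathbb{Q}[i]$ to a nearest Gaussian integer $q$, so that $N(\A/\B-q)\leq 1/2<1$ and hence $\A=q\B+\rho$ with $N(\rho)<N(\B)$. Therefore $\Z$ is a principal ideal domain, in particular a unique factorization domain, and its units are exactly the elements of norm $1$, namely $\{1,-1,i,-i\}$. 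An immediate consequence, used repeatedly below, is that any $\A\in\Z$ whose norm $N(\A)$ is a rational prime must itself be a Gaussian prime.

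Next I would reduce the problem to the factorization of rational primes. If $\pi$ is a Gaussian prime, it divides $N(\pi)=\pi\overline{\pi}$; factoring $N(\pi)$ in $\mathbb{N}$ and using primality of $\pi$ shows $\pi$ divides some rational prime $p$, and this $p$ is unique because distinct rational primes are coprime in $\mathbb{Z}$, hence in $\Z$. So it suffices, for each rational prime $p$, to determine the factorization of $p$ in $\Z$: writing $p=u\pi_{1}\cdots\pi_{k}$ with $u$ a unit and taking norms gives $p^{2}=\prod_{j}N(\pi_{j})$, so either $k=1$ and $p$ is (an associate of) a Gaussian prime, or $k=2$ and $p$ is a product of two Gaussian primes each of norm $p$.

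Then I would treat the three cases. For $p=2$, the identity $2=-i(1+i)^{2}$ together with $N(1+i)=2$ shows that $1+i$ is prime and that $2$ ramifies; this is form (i). For $p\equiv 3\pmod{4}$, if $p$ were not prime in $\Z$ we could write $p=\A\B$ with neither factor a unit, whence $N(\A)=N(\B)=p$ and $p=a^{2}+b^{2}$ for some $a,b\in\mathbb{Z}$; but a sum of two squares is never $\equiv 3\pmod{4}$, a contradiction, so $p$ is a Gaussian prime, giving form (iii). For $p\equiv 1\pmod{4}$, I would invoke the two-squares theorem: by Euler's criterion $(-1)^{(p-1)/2}=1$, so $-1$ is a square modulo $p$, say $p\mid m^{2}+1=(m+i)(m-i)$; since $p$ divides neither $m+i$ nor $m-i$ in $\Z$, it fails Euclid's lemma and cannot be a Gaussian prime, so $p=\A\B$ with both factors non-units and, taking norms, $p=a^{2}+b^{2}=(a+bi)(a-bi)$ with $a\pm bi$ Gaussian primes of norm $p$ --- this is form (ii). (Alternatively one can avoid Euler's criterion by a Thue pigeonhole argument producing $x,y$ not both zero with $\vert x\vert,\vert y\vert<\sqrt{p}$ and $p\mid x-my$, so that $0<x^{2}+y^{2}<2p$ and $p\mid x^{2}+y^{2}$, forcing $x^{2}+y^{2}=p$.)

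Finally, combining the reduction with the case analysis yields the statement: each Gaussian prime divides a unique rational prime $p$, and up to a unit it equals $1+i$ (if $p=2$), a factor $a+bi$ with $a^{2}+b^{2}=p$ (if $p\equiv 1\pmod{4}$), or $p$ itself (if $p\equiv 3\pmod{4}$), while conversely all of these are primes. I expect the only real obstacle to be the case $p\equiv 1\pmod{4}$, i.e. Fermat's two-squares theorem; every other step is a formal consequence of $\Z$ being a Euclidean domain.
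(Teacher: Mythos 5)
The paper does not give its own argument for Theorem \ref{cls}; it simply cites Neukirch's \emph{Algebraic Number Theory} for the proof. Your proposal supplies exactly the standard textbook proof that such a citation points to: establish that $(\Z, N)$ is Euclidean (hence a UFD with units $\{\pm 1,\pm i\}$), reduce to determining how each rational prime factors by observing that every Gaussian prime lies over a unique rational prime and that the norm identity $p^{2}=\prod_{j}N(\pi_{j})$ forces the factorization to have length $1$ or $2$, and then handle the three residue classes of $p$ modulo $4$, with the genuinely nontrivial input being Fermat's two-squares theorem for $p\equiv 1\pmod 4$ (via $-1$ being a quadratic residue, or Thue's lemma). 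The argument is correct and complete at the level of detail given, and it is essentially the same route as the standard reference the paper invokes.
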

\begin{prop}\label{ne}
  Let $p=p_{1}+p_{2}i, q=q_{1}+q_{2}i\in\P$. If $\mathcal{N}(p)=\mathcal{N}(q)$, then either (i) $p$ and $q$ conjugate with each other; or (ii) $\vert p_{1}\vert=\vert q_{1}\vert, \vert p_{2}\vert=\vert q_{2}\vert$. In particular, for any $p\in\P$, the cardinality of the set $\mathcal{D}_{p}\coloneqq\{q\in\P\colon \mathcal{N}(p)=\mathcal{N}(q)\}$ is at most 4.
\end{prop}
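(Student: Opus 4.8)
The statement to prove is Proposition~\ref{ne}: if $p = p_1 + p_2 i$ and $q = q_1 + q_2 i$ are Gaussian primes with $\mathcal{N}(p) = \mathcal{N}(q)$, then either they are conjugate, or $|p_1| = |q_1|$ and $|p_2| = |q_2|$; consequently $|\mathcal{D}_p| \le 4$.

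The plan is to argue directly from the classification of Gaussian primes in Theorem~\ref{cls} together with the classical fact that a prime $p_0 \equiv 1 \bmod 4$ has an essentially unique representation as a sum of two squares. First I would reduce to the three cases of Theorem~\ref{cls}. If $\mathcal{N}(p)^2 = 2$, then $p$ is a unit multiple of $1+i$, so $q$ (having the same norm) is also a unit multiple of $1+i$; writing out the four associates $\pm 1 \pm i$ one checks that any two of them are either equal, conjugate, or differ only by signs of the coordinates, and the same holds for the form $p_0 \equiv 3 \bmod 4$ where $p$ is real and $q$ is forced to be a unit multiple of the same rational prime, giving $q \in \{\pm p_0, \pm p_0 i\}$, all of which satisfy $|p_1| = |q_1|, |p_2| = |q_2|$ (with one of the two being zero). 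The main case is type (ii): $\mathcal{N}(p)^2 = p_1^2 + p_2^2 = p_0$ with $p_0 \equiv 1 \bmod 4$ a rational prime, and likewise $q_1^2 + q_2^2 = p_0$.

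The key step is then: the representation of $p_0$ as an ordered sum of two squares is unique up to order and signs. This is the standard uniqueness of representation as a sum of two squares for primes, which follows from unique factorization in $\Z$: in $\Z$ the prime $p_0$ factors as $\pi \bar\pi$ up to units, and $\pi$ is determined up to units and conjugation, hence $\{|p_1|, |p_2|\} = \{|q_1|, |q_2|\}$. So either $|p_1| = |q_1|$ and $|p_2| = |q_2|$ (case (ii) of the proposition), or $|p_1| = |q_2|$ and $|p_2| = |q_1|$; in the latter case $q$ is, up to sign changes of coordinates, $\pm p_2 \pm p_1 i$, and one checks that multiplying $p = p_1 + p_2 i$ by the unit $i$ gives $-p_2 + p_1 i$ and by $-i$ gives $p_2 - p_1 i$, so $q$ is an associate of $\bar p$ or of $p$; sorting through the unit multiples shows $q$ is either conjugate to $p$ or already of the form with $|p_1| = |q_1|, |p_2| = |q_2|$. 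For the cardinality bound: by the dichotomy just established, every $q \in \mathcal{D}_p$ has $\{|q_1|, |q_2|\} = \{|p_1|, |p_2|\}$; when $|p_1| \ne |p_2|$ and both are nonzero this leaves at most the $8$ sign-and-order combinations, but $q_1 + q_2 i$ and $-(q_1 + q_2 i)$ are associates whose... — more carefully, among the candidates the condition that $q$ be a Gaussian integer with the prescribed coordinate absolute values, and the observation that $\bar p$ and its negatives collapse, cuts the count to $4$. Concretely, $\mathcal{D}_p \subseteq \{p, \bar p, -p, -\bar p\}$ up to the order swap which itself equals multiplication by $\pm i$ composed with conjugation, so $|\mathcal{D}_p| \le 4$; I would finish by exhibiting these four explicitly.

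The main obstacle I anticipate is purely bookkeeping: carefully tracking the interplay between the eight sign/order combinations of $(\pm p_1, \pm p_2)$ and the four units $\{1, -1, i, -i\}$ acting on $p$ and $\bar p$, to verify that the set of Gaussian primes of a given norm is exactly one or two unit-orbits (the orbit of $p$ and possibly the orbit of $\bar p$) and therefore has size at most $4$. There is no hard number theory beyond uniqueness of the sum-of-two-squares representation for a prime, which I would cite (e.g. from \cite{GP}) rather than reprove. A small point to handle with care is the degenerate subcase $p_1 = \pm p_2$, which cannot actually occur for a prime of type (ii) since then $p_0 = 2p_1^2$ is not prime unless $p_1 = \pm 1$, i.e. $p_0 = 2$, already covered by type (i); noting this keeps the case analysis clean.
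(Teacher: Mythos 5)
Your proposal takes a genuinely different route from the paper for the central type-(ii) case. The paper proves the needed uniqueness from scratch by an elementary algebraic manipulation: it sets $A=p_1+q_1$, $B=p_1-q_1$, $C=p_2+q_2$, $D=p_2-q_2$, observes $p_1^2-q_1^2=q_2^2-p_2^2$ (hence $AB=\pm CD$), extracts a gcd and obtains a factorization of $4p_0$ as $(a^2+d^2)(b^2+c^2)$, and then exploits primality of $p_0$ to force one factor to be $2$. You instead invoke, as a black box, the standard fact that a prime $p_0\equiv 1\pmod 4$ has an essentially unique representation as a sum of two squares, which you correctly attribute to unique factorization in $\Z$ (the factorization $p_0=\pi\bar\pi$ with $\pi$ unique up to units and conjugation). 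That shortcut is perfectly valid and quicker; the paper's version is more self-contained but is really just an elementary re-derivation of the same uniqueness statement. You also handle types (i) and (iii) by direct enumeration of unit multiples, which matches the paper's implicit treatment.

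One small caution on the final bookkeeping, which you yourself flag as muddled: with $\P$ taken literally as the full set of Gaussian primes, the set $\mathcal{D}_p$ for a type-(ii) prime consists of the $4$ associates of $\pi$ together with the $4$ associates of $\bar\pi$, so $|\mathcal{D}_p|=8$ rather than $4$; the bound $4$ only holds if one works with primes modulo units (or modulo the coordinate swap, as you gesture at). This looseness is already present in the paper's own statement and proof, and is harmless because the only downstream use (in Katai's lemma) needs $|\mathcal{D}_p|$ bounded by an absolute constant. Likewise the stated dichotomy "conjugate or coordinate-wise equal absolute values" fails literally for associates such as $p=1-2i$, $q=2+i$ (here $q=ip$, neither condition holds); again, what both you and the paper actually establish, namely $\{|p_1|,|p_2|\}=\{|q_1|,|q_2|\}$, is exactly what is needed. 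So the core of your argument is sound and the approach is a legitimate alternative; in a polished write-up you would want to either fix the constant to $8$ or make the "up to units" normalization explicit.
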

\begin{proof}
  Since the norm of the three types of primes in Theorem \ref{cls} are different, $\mathcal{N}(p)=\mathcal{N}(q)$ implies that $p$ and $q$ are of the same type.
  If they are of type (i) or (iii), then we are done, and $\vert\mathcal{D}_{p}\vert\leq 4$. If they are of type (ii), then $p_{1}^{2}+p_{2}^{2}=q_{1}^{2}+q_{2}^{2}=p_{0}$ for some prime $p_{0}\equiv 1 \mod 4$. Let $A=p_{1}+q_{1}, B=p_{1}-q_{1}, C=p_{2}+q_{2}, D=p_{2}-q_{2}$, then $AB=CD$. If one of $A,B,C,D$ is 0, then we are done. If $ABCD\neq 0$, let $a>0$ be the greatest common divisor of $A$ and $C$ and suppose $A=ab, C=ac$ for some $b,c\in\mathbb{Z}$. Thus $bB=cD$ and $b|D, c|B$. Assume that $B=d'c, D=bd$. Then $AB=CD$ implies $d=d'$. So $p_{0}=p_{1}^{2}+p_{2}^{2}=(\frac{A+B}{2})^{2}+(\frac{C-D}{2})^{2}=(\frac{ab+cd}{2})^{2}+(\frac{ac-bd}{2})^{2}=\frac{(a^{2}+d^{2})(b^{2}+c^{2})}{4}$. Since $p_{0}$ is a prime, one of $a^{2}+d^{2}, b^{2}+c^{2}$ equals to either 1,2, or 4.
  Since $ABCD\neq 0$, one of $a^{2}+d^{2}, b^{2}+d^{2}$ must equal to 2. Suppose $a^{2}+d^{2}=2$. Then $a,d=\pm 1$. So $A=\pm D, B=\pm C$. In any case, we have $\vert p_{1}\vert=\vert q_{1}\vert, \vert p_{2}\vert=\vert q_{2}\vert$, and $\vert\mathcal{D}_{p}\vert\leq 4$.
\end{proof}

The following theorem is a variation of Lemma 9.4 in ~\cite{FH} which is tailored for our purpose. We include the proof for completion:

\begin{lem}\label{katai}(Katai's Lemma).
  For every $\e>0$ and $K_{0}\in\mathbb{N}$, there exist $\d=\d(\e,K_{0})>0$ and $K=K(\e,K_{0})>K_{0}$ such that the following holds: If $N$ is sufficiently large with respect to $K$ and $f\colon R_{N}\rightarrow\mathbb{C}$ is a function with $\vert f\vert\leq 1$, and
    \begin{equation}\label{kt4}
  \begin{split}
    \max_{p,q\in\P, K_{0}<\mathcal{N}(p)<\mathcal{N}(q)<K}\frac{1}{\vert R_{N}\vert}\Bigl\vert\sum_{\A\in R_{N}/p\cap R_{N}/q}f(p\A)\overline{f}(q\A)\Bigr\vert<\d,
  \end{split}
\end{equation}
then
    \begin{equation}\nonumber
  \begin{split}
    \sup_{\chi\in\mathcal{M}_{\Z}}\Bigl\vert\mathbb{E}_{\A\in R_{N}}\chi(\A)f(\A)\Bigr\vert<\e,
  \end{split}
\end{equation}
where $R_{N}/p=\{\A\in\Z\colon p\A\in R_{N}\}$.
\end{lem}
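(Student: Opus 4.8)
The plan is to follow the classical strategy of Kátai (and its use in Daboussi--Delange and in Bourgain--Sarnak--Ziegler), adapted to $\Z$, combining the Turán--Kubilius estimate from Corollary \ref{TK3} with the hypothesis on bilinear sums. Fix $\chi\in\mathcal{M}_{\Z}$ with $|\mathbb{E}_{\A\in R_{N}}\chi(\A)f(\A)|$ large; we want to derive a contradiction for suitable $\d,K$. First I would choose a finite set of Gaussian primes $\mathcal{P}=\{p\in\P\colon K_{0}<\mathcal{N}(p)<K\}$ with $A_{\mathcal{P}}=\sum_{p\in\mathcal{P}}\mathcal{N}(p)^{-2}$ large (this is possible since $\sum_{p\in\P}\mathcal{N}(p)^{-2}$ diverges, being comparable to $\sum_{p_{0}\text{ rational prime}}p_{0}^{-1}$ up to the classification in Theorem \ref{cls}); the size of $A_{\mathcal{P}}$ will be calibrated against $\e$ at the end. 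By Corollary \ref{TK3}, for $N$ large the number of $\A\in R_{N}$ with $\omega_{\mathcal{P}}(\A)=0$, or more generally with $|\omega_{\mathcal{P}}(\A)-A_{\mathcal{P}}|$ much smaller than $A_{\mathcal{P}}$, is $o(|R_{N}|)$; so for all but $O(A_{\mathcal{P}}^{-1/2}|R_{N}|)$ values of $\A$ we have $\omega_{\mathcal{P}}(\A)\geq \tfrac12 A_{\mathcal{P}}$, i.e. $\A$ has a prime factor in $\mathcal{P}$.

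The heart of the argument is a second-moment (Cauchy--Schwarz / van der Corput) computation. Writing $S=\sum_{\A\in R_{N}}\chi(\A)f(\A)$ and using $\chi(p\A)=\chi(p)\chi(\A)$, one bounds $|S|$ by $O(A_{\mathcal{P}}^{-1/2}|R_{N}|)$ plus $\mathbb{E}_{p\in\mathcal{P}}$ (weighted by $\mathcal{N}(p)^{-2}$) of $|\sum_{\A}\chi(p\A)f(p\A)|$, via the standard device of detecting a prime factor $p\in\mathcal{P}$ of $\A$ and reweighting; more precisely one compares $\sum_{\A\in R_{N}}\chi(\A)f(\A)\cdot\omega_{\mathcal{P}}(\A)$ with $A_{\mathcal{P}}\cdot S$ using Turán--Kubilius, and then rewrites $\sum_{\A}\chi(\A)f(\A)\omega_{\mathcal{P}}(\A)=\sum_{p\in\mathcal{P}}\sum_{\B\in R_{N}/p}\chi(p\B)f(p\B)$. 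Squaring the inner sum over $p$ and expanding the square gives a double sum over $p,q\in\mathcal{P}$ of $\sum_{\B}\chi(p)\overline{\chi(q)}f(p\B)\overline{f(q\B)}$ over $\B\in R_{N}/p\cap R_{N}/q$. The diagonal terms $p=q$ contribute $O(A_{\mathcal{P}})$ after normalization (using $|f|\leq 1$ and $\sum_{p}|R_{N}/p|\asymp A_{\mathcal{P}}|R_{N}|$); the off-diagonal terms $p\neq q$ are each at most $\d|R_{N}|$ by hypothesis \eqref{kt4}, so they contribute at most $\d A_{\mathcal{P}}^{2}|R_{N}|^{2}$. Putting the pieces together: $A_{\mathcal{P}}^{2}|S|^{2}/|R_{N}|^{2}$ is, up to the Turán--Kubilius error $O(A_{\mathcal{P}}^{-1/2})$ plus the diagonal $O(A_{\mathcal{P}})$, bounded by $\d A_{\mathcal{P}}^{2}$, whence
\begin{equation}\nonumber
\Bigl|\mathbb{E}_{\A\in R_{N}}\chi(\A)f(\A)\Bigr|^{2}\ll \d+\frac{1}{A_{\mathcal{P}}}.
\end{equation}
Choosing $A_{\mathcal{P}}$ large (which fixes $K=K(\e,K_{0})$, since finitely many primes suffice) and then $\d=\d(\e,K_{0})$ small makes the right side $<\e^{2}$, uniformly in $\chi$, which is the desired conclusion.

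Two technical points need care, and the first is the likely main obstacle. (1) \emph{Counting with multiplicity and the torus/box mismatch.} We are working on the box $R_{N}$ rather than a disk, and the dilated set $R_{N}/p=\{\A\colon p\A\in R_{N}\}$ is a rotated box of area $|R_{N}|/\mathcal{N}(p)^{2}$; one needs $\sum_{p\in\mathcal{P}}|R_{N}/p|=(A_{\mathcal{P}}+o(1))|R_{N}|$ and, more delicately, that in the expansion of $\sum_{\A}\chi(\A)f(\A)\omega_{\mathcal{P}}(\A)$ each $\A$ is counted exactly $\omega_{\mathcal{P}}(\A)$ times — this is just reindexing $(\A,p)\mapsto \B=\A/p$, valid since $p\mid\A$ — but boundary effects from $R_{N}$ versus $R_{N}/p$ must be absorbed into the $o(|R_{N}|)$ error, which is fine for $N$ large with respect to $K$. (2) \emph{Divergence of $\sum\mathcal{N}(p)^{-2}$ over Gaussian primes.} This follows from Theorem \ref{cls}: the split primes $p=a+bi$ with $a^{2}+b^{2}=p_{0}\equiv1\ (4)$ come in pairs with $\mathcal{N}(p)^{2}=p_{0}$, and $\sum_{p_{0}\equiv1\ (4)}1/p_{0}=\infty$ by Dirichlet, so $A_{\mathcal{P}}$ can indeed be made arbitrarily large using primes with norm in $(K_{0},K)$. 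With these in hand the estimate above goes through, and the uniformity in $\chi\in\mathcal{M}_{\Z}$ is automatic because $\chi$ enters only through the unimodular scalars $\chi(p)\overline{\chi(q)}$, which are bounded by $1$ and are discarded by the triangle inequality before invoking \eqref{kt4}.
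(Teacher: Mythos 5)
Your proposal follows essentially the same strategy as the paper: apply Turán--Kubilius (Corollary \ref{TK3}) to replace $S(N)=\sum_{\A\in R_N}\chi(\A)f(\A)$ by the weighted sum $H(N)=\sum_{\A}\chi(\A)f(\A)\omega_{\mathcal{P}}(\A)$, rewrite the latter as $\sum_{\B}\chi(\B)\sum_{p\in\mathcal{P},\,p\B\in R_N}\chi(p)f(p\B)$, apply Cauchy--Schwarz, expand the square, and split into diagonal and off-diagonal. The end-game (pick $K$ so $A_{\mathcal{P}}$ is large, then pick $\d$ small) is also the same, and the two technical points you flag are correctly identified as absorbable.

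However, there is a genuine gap in how you split the expanded double sum. You write that ``the off-diagonal terms $p\neq q$ are each at most $\d|R_N|$ by hypothesis \eqref{kt4},'' but \eqref{kt4} only bounds pairs with $\mathcal{N}(p)<\mathcal{N}(q)$ (and by symmetry $\mathcal{N}(p)>\mathcal{N}(q)$): it says nothing about distinct primes $p\neq q$ of the \emph{same} norm, which do exist in $\Z$ — e.g.\ $1+2i$ and $2+i$, or a split prime and its conjugate. For such pairs the bilinear sum is not controlled by the hypothesis at all, so your claimed bound on the off-diagonal is false as stated. The paper closes exactly this gap with Proposition \ref{ne}: for any Gaussian prime $p$ there are at most $4$ primes of the same norm, so the equal-norm (but $p\neq q$) pairs can be absorbed into the diagonal contribution, costing only a bounded multiple of $A_{\mathcal{P}}|R_N|$. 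Without invoking this (or an equivalent counting argument), the split in your proposal is incorrect, even though the overall shape of the argument is right and the remainder of your reasoning matches the paper. You should explicitly treat the three regimes $p=q$, $p\neq q$ with $\mathcal{N}(p)=\mathcal{N}(q)$, and $\mathcal{N}(p)\neq\mathcal{N}(q)$, and cite the multiplicity bound for the middle one.
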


\begin{proof}
 Let $\mathcal{P}=\{p\in\P\colon K_{0}<\mathcal{N}(p)<K\}$. Fix $\chi\in\mathcal{M}_{\Z}$. Let
\begin{equation}\nonumber
  \begin{split}
    S(N)=\sum_{\A\in R_{N}}\chi(\A)f(\A),
  \end{split}
\end{equation}

\begin{equation}\nonumber
  \begin{split}
    H(N)=\sum_{\A\in R_{N}}\chi(\A)f(\A)\omega_{\mathcal{P}}(\A).
  \end{split}
\end{equation}
 By Corollary \ref{TK3},
\begin{equation}\label{kt1}
  \begin{split}
    \vert H(N)-A_{\mathcal{P}}S(N)\vert\leq 8CN^{2}A_{\mathcal{P}}^{1/2}
  \end{split}
\end{equation}
for some universal constant $C$.
Notice that
\begin{equation}\nonumber
  \begin{split}
    H(N)=\sum_{p\in\mathcal{P}}\sum_{\B\in R_{N}/p}\chi(p\B)f(p\B)=\sum_{\B\in R_{N}}\chi(\B)\Sigma_{\B},
  \end{split}
\end{equation}
where $\Sigma_{\B}=\sum_{p\in\mathcal{P}\cap R_{N}/\B}\chi(p)f(p\B)$. By the Cauchy-Schwartz Inequality,
\begin{equation}\label{kt2}
  \begin{split}
    \vert H(N)\vert^{2}\leq\Bigl(\sum_{\mathcal{N}(\B)\leq 2N}\vert\chi(\B)\vert^{2}\Bigr)\Bigl(\sum_{\mathcal{N}(\B)\leq 2N}\vert\Sigma_{\B}\vert^{2}\Bigr)\leq 4CN^{2}\sum_{\mathcal{N}(\B)\leq 2N}\vert\Sigma_{\B}\vert^{2}
  \end{split}
\end{equation}
since $R_{N}\subset \{\B\colon N(\B)\leq 2N\}$.
It is easy to deduce that $\sum_{p\in\mathcal{P}}\vert R_{N}/p\vert\leq C'RN^{2}A_{\mathcal{P}}$ for some universal constant $C'>0$. Thus

\begin{equation}\label{kt3}
  \begin{split}
    &\qquad\sum_{\mathcal{N}(\B)\leq 2N}\vert\Sigma_{\B}\vert^{2}=\sum_{\mathcal{N}(\B)\leq 2N}\sum_{p,q\in\mathcal{P}\cap R_{N}/\B}\chi(p)\overline{\chi}(q)f(p\B)\overline{f}(q\B)
    \\&\leq C'RN^{2}A_{\mathcal{P}}+\sum_{p,q\in\mathcal{P},\mathcal{N}(p)\neq \mathcal{N}(q)}\Bigl\vert \sum_{\B\in R_{N}/p\cap R_{N}/q}f(p\B)\overline{f}(q\B)\Bigr\vert,
  \end{split}
\end{equation}
where we used Proposition \ref{ne}. Combining (\ref{kt1}),(\ref{kt2}) and (\ref{kt3}), we obtain

\begin{equation}\nonumber
  \begin{split}
    \frac{A_{\mathcal{P}}^{2}\vert S(N)\vert^{2}}{N^{4}}\leq C''\Bigl(1+A_{\mathcal{P}}+\sum_{p,q\in\mathcal{P},\mathcal{N}(p)\neq \mathcal{N}(q)}\frac{1}{N^{2}}\Bigl\vert \sum_{\B\in R_{N}/p\cap R_{N}/q}f(p\B)\overline{f}(q\B)\Bigr\vert\Bigr)
  \end{split}
\end{equation}
for some universal constant $C''>0$. Since $\sum_{p\in\P}\frac{1}{\mathcal{N}(p)^{2}}=\infty$, the lemma follows by picking $K$ first and then $\d$ appropriately.
\end{proof}

\section{$U^{2}$ non-uniformity}\label{u2}

Before proving the $U^{3}$ decomposition result, we start with a $U^{2}$-decomposition result.
We prove the following theorem in this section:

\begin{thm}\label{nU2}($U^{2}$ decomposition theorem).
  For every $\e>0$, there exist positive integers $Q=Q(\e),R=R(\e),N_{0}=N_{0}(\e)$ such that for $N\geq N_{0}$, there exists a kernel $\phi_{N,\e}$ on $\R$ with the following property:
  For every $\chi\in\mathcal{M}_{\Z}$, writing $\chi_{N,s}=\chi_{N}*\phi_{N,\e}$ and $\chi_{N,u}=\chi_{N}-\chi_{N,s}$, we have

  (i)$\vert\chi_{N,s}(\A+Q)-\chi_{N,s}(\A)\vert, \vert\chi_{N,s}(\A+Qi)-\chi_{N,s}(\A)\vert\leq\frac{R}{N}$ for every $\A\in \R$;

  (ii)$\Vert\chi_{N,u}\Vert_{U^{2}(\R)}\leq\e$.

  Moreover, for every $N\in\mathbb{N}, \x\in \R$ and $0<\e'\leq\e$, we have $\widehat{\phi_{N,\e'}}(\x)\geq\widehat{\phi_{N,\e}}(\x)\geq 0$.
\end{thm}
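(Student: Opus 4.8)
\textbf{Proof plan for the $U^{2}$ decomposition theorem (Theorem \ref{nU2}).}

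The plan is to follow the strategy of Theorem 3.3 in ~\cite{FH}, adapting it from intervals in $\mathbb{Z}$ to the convex region $R_{N}\subset\mathbb{Z}^{2}$. The starting point is the Fourier-analytic reformulation of the $U^{2}$ norm: by the identity $\Vert f\Vert_{U^{2}(\R)}^{4}=\sum_{\x\in\R}\vert\widehat{f}(\x)\vert^{4}$ and the easy bound $\sum_{\x}\vert\widehat{f}(\x)\vert^{2}\leq\Vert f\Vert_{L^2}^2\le 1$, smallness of $\Vert f\Vert_{U^{2}}$ is controlled by the size of the largest Fourier coefficients. So the job is to extract from $\chi_{N}$ the ``few large frequencies'' and package them into an approximately periodic piece. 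The kernel $\phi_{N,\e}$ will be chosen so that its spectrum is a suitable box of frequencies of bounded size: concretely, I would take $\phi_{N,\e}$ to be (a normalization of) a product of two one-dimensional Fej\'er-type kernels, so that $\widehat{\phi_{N,\e}}(\x)$ is an explicit nonnegative function of $\x$, supported on frequencies $\x=\xi_1+\xi_2 i$ with $\vert\xi_1\vert,\vert\xi_2\vert$ at most some threshold $R=R(\e)$, and satisfying $\widehat{\phi_{N,\e}}(\x)$ close to $1$ on the smaller box of ``low'' frequencies. The monotonicity clause $\widehat{\phi_{N,\e'}}(\x)\geq\widehat{\phi_{N,\e}}(\x)\geq 0$ for $\e'\le\e$ is then built in by choosing the threshold parameters to be monotone in $\e$ and the kernel to have nonnegative Fourier transform, which the Fej\'er choice guarantees.

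The main number-theoretic input is that a multiplicative function of modulus $1$ cannot have large Fourier coefficients at ``irrational'' or high frequencies; this is where Katai's Lemma (Lemma \ref{katai}) enters. The argument runs: if $\widehat{\chi_N}(\x)$ were large for some $\x$ not close to a rational with small denominator, then setting $f(\A)=e(-\A\circ_N \x)\bold 1_{R_N}(\A)$ one checks that the hypothesis \eqref{kt4} of Katai's Lemma holds (the bilinear sums $\sum_\A f(p\A)\overline{f}(q\A)$ become geometric-type sums over the convex set $R_N/p\cap R_N/q$ with a nonzero phase $p\x$ versus $q\x$, which are small once $p\ne q$), and then the conclusion of Katai forces $\vert\mathbb{E}_\A \chi(\A)f(\A)\vert=\vert\widehat{\chi_N}(\x)\vert$ to be small — a contradiction. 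Hence all large frequencies of $\chi_N$ lie within $R/N$ of points $\frac{a}{Q}+\frac{b}{Q}i$ with $Q$ bounded; this is exactly the statement that the projection $\chi_{N,s}=\chi_N*\phi_{N,\e}$ onto those frequencies is approximately $Q$-periodic in both the real and imaginary directions, giving (i), while the complement $\chi_{N,u}=\chi_N-\chi_{N,s}$ has all Fourier coefficients at most $\e^{2}$ (say) in absolute value outside a negligible set, giving $\Vert\chi_{N,u}\Vert_{U^2}\le\e$ via the $\ell^4$/$\ell^2$ interpolation above. Turning ``frequencies near $a/Q+bi/Q$'' into a genuine bound $\vert\chi_{N,s}(\A+Q)-\chi_{N,s}(\A)\vert\le R/N$ uses $\vert e(Q\cdot(\A\circ_N\x))-1\vert\le 2\pi Q\cdot(\text{distance of }\x\text{ to the lattice }\frac1Q\mathbb Z^2)$ summed against $\sum_\x\vert\widehat{\chi_N}(\x)\widehat{\phi_{N,\e}}(\x)\vert\le 1$.

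The step I expect to be the main obstacle is the estimate on the bilinear/exponential sums needed to verify hypothesis \eqref{kt4}: in the one-dimensional case of ~\cite{FH} one is summing $e(\theta\A)$ over an interval and gets the clean bound $\min(|I|,\|\theta\|^{-1})$, but here $R_N/p\cap R_N/q$ is the intersection of two dilated copies of a square, i.e. a convex lattice polygon in $\mathbb{Z}^2$, and one needs a two-dimensional analogue: the sum of $e(\A\circ_N \x')$ over such a region is bounded by a product of one-dimensional geometric sums in each coordinate direction, hence by $\prod_{j=1}^2\min(\tilde N, \|\xi'_j\|^{-1})$ up to the boundary contribution of the polygon. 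Controlling the boundary terms — showing the region is ``convex enough'' that slicing it into rows and columns loses only an acceptable factor — is the extra technique the introduction alludes to. Once this exponential-sum lemma is in hand, the rest is the bookkeeping of choosing $K,\d$ from Katai's Lemma (which fixes $K_0$, hence $Q$), then $R$ from $K$ and $\e$, then $N_0$ large enough that all the ``$N$ sufficiently large'' clauses and the $R/N$ error bounds are meaningful, and verifying the asserted monotonicity of $\widehat{\phi_{N,\e}}$ in $\e$.
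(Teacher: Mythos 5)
Your overall strategy is the paper's: use Katai's Lemma to show that large Fourier coefficients of $\chi_N$ are confined to rationals of bounded denominator, build a Fej\'er-type kernel, and close the $\ell^4/\ell^2$ argument via the identity $\Vert f\Vert_{U^2}^4=\sum_{\x}\vert\widehat f(\x)\vert^4$. The slicing of the convex region $R_N/p\cap R_N/q$ into rows and columns to get one-dimensional geometric sums is also what the paper does in the proof of Corollary \ref{nF2}.

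But the explicit kernel you describe is wrong, and the error is not cosmetic. You say $\widehat{\phi_{N,\e}}$ should be ``supported on frequencies $\x=\xi_1+\xi_2 i$ with $\vert\xi_1\vert,\vert\xi_2\vert$ at most some threshold $R$'' and close to $1$ on a smaller box of low frequencies — i.e., a plain 2D Fej\'er kernel whose spectrum is a box around $\x=0$. That cannot be right, and you in fact contradict it two sentences later when you say $\chi_{N,s}$ is the projection onto frequencies near the points $\frac aQ+\frac bQ i$. Corollary \ref{nF2} says the large frequencies of $\chi_N$ lie near the full grid $\{a\tilde N/Q+b\tilde N/Q\, i : 0\le a,b<Q\}$; for $Q\ge 2$ these points are spread throughout $[\tilde N]^2$, nowhere near a small box around $0$. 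With your kernel, at a grid point $\x_0=\tilde N/Q+0\cdot i$ you would have $\widehat{\phi_{N,\e}}(\x_0)=0$, hence $\widehat{\chi_{N,u}}(\x_0)=\widehat{\chi_N}(\x_0)$, which can have size close to $1$, and the bound $\Vert\chi_{N,u}\Vert_{U^2}\le\e$ fails. The paper's fix, which your proposal must incorporate, is to take the 2D Fej\'er kernel $f_{N,m}$ and compose it with a dilation: define $\phi_{N,\e}(x+yi)=f_{N,m}(Q^*(x+yi))$ where $Q^*$ is the inverse of $Q$ modulo $\tilde N$ and $m=2QV\lceil\e^{-4}\rceil$. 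This has $\widehat{\phi_{N,\e}}(\x)=\widehat{f_{N,m}}(Q\x)$, so the spectrum is the union of $Q^2$ small boxes around the grid points $a\tilde N/Q+b\tilde N/Q\,i$, which is where all large frequencies of $\chi_N$ live. Only then does $\widehat{\phi_{N,\e}}(\x)\ge 1-\e^4$ at every $\x$ with $\vert\widehat{\chi_N}(\x)\vert\ge\e^2$, which is what the $\ell^4/\ell^2$ step needs.

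A secondary issue: the monotonicity $\widehat{\phi_{N,\e'}}\ge\widehat{\phi_{N,\e}}$ for $\e'\le\e$ is not simply a matter of ``choosing the thresholds monotone in $\e$''. For the dilated kernels to be comparable you need the spectrum of $\phi_{N,\e}$ to be contained in the spectrum of $\phi_{N,\e'}$, and since the spectrum is a neighborhood of the lattice $\frac{\tilde N}{Q(\e)}\mathbb Z^2$, you need $Q(\e')$ to be a multiple of $Q(\e)$, not merely $Q(\e')\ge Q(\e)$. The paper secures this by defining $Q(\e)$ to be of the form $k!$, a point your proposal would have to reproduce.
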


For any $x\in\mathbb{R}$, denote $\Vert x\Vert_{\mathbb{R}/\mathbb{Z}}=\min_{n\in\mathbb{Z}}\vert x-n\vert$. When there is no ambiguity, we write $\Vert x\Vert=\Vert x\Vert_{\mathbb{R}/\mathbb{Z}}$ for short.

We first explain what happens when the Fourier coefficient of $\chi$ is away from 0:

\begin{cor}\label{nF2}
  For every $\e$, there exist $Q=Q(\e),V=V(\e), N_{0}=N_{0}(\e)\in\mathbb{N}$ such that for every $N\geq N_{0}$, every $\chi\in\mathcal{M}_{\Z}$ and every $\x=\xi_{1}+\xi_{2}i\in \R$, if $\vert\widehat{\chi_{N}}(\xi)\vert\geq\e$, then $\Vert Q\xi_{i}/\tilde{N}\Vert\leq\frac{QV}{N}$ for $i=1,2$.
\end{cor}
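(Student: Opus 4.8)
The plan is to extract this statement as a direct consequence of the $U^{2}$ decomposition theorem (Theorem \ref{nU2}) together with the Fourier-analytic identity \eqref{F} and the elementary relationship between the $U^{2}$ norm and the $\ell^{4}$ norm of the Fourier transform. First I would apply Theorem \ref{nU2} with a parameter $\e_{0}$ to be chosen (say $\e_{0}=\e^{2}/4$ or similar), obtaining $Q=Q(\e_{0})$, $R=R(\e_{0})$, $N_{0}=N_{0}(\e_{0})$ and the kernel $\phi_{N,\e_{0}}$, so that $\chi_{N}=\chi_{N,s}+\chi_{N,u}$ with $\chi_{N,s}=\chi_{N}*\phi_{N,\e_{0}}$, $\|\chi_{N,u}\|_{U^{2}(\R)}\le\e_{0}$, and the approximate periodicity in (i). On the Fourier side, $\widehat{\chi_{N,s}}(\x)=\widehat{\chi_{N}}(\x)\,\widehat{\phi_{N,\e_{0}}}(\x)$, and since $\widehat{\phi_{N,\e_{0}}}$ has values in $[0,1]$ (it is a kernel, so $\widehat{\phi}(0)=1$ and more generally $|\widehat{\phi}(\x)|\le 1$; together with the stated nonnegativity $\widehat{\phi_{N,\e'}}\ge\widehat{\phi_{N,\e}}\ge 0$), we get $|\widehat{\chi_{N,u}}(\x)|=|\widehat{\chi_{N}}(\x)|\,(1-\widehat{\phi_{N,\e_{0}}}(\x))$.

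The key step is then a case split according to the size of $1-\widehat{\phi_{N,\e_{0}}}(\x)$ at the given frequency $\x$. By \eqref{F} applied to $\chi_{N,u}$, we have $\sum_{\xi}|\widehat{\chi_{N,u}}(\xi)|^{4}=\|\chi_{N,u}\|_{U^{2}(\R)}^{4}\le\e_{0}^{4}$, so in particular $|\widehat{\chi_{N,u}}(\x)|\le\e_{0}$ for every single $\x$. Hence if $1-\widehat{\phi_{N,\e_{0}}}(\x)\ge 1/2$, then $|\widehat{\chi_{N}}(\x)|\le 2|\widehat{\chi_{N,u}}(\x)|\le 2\e_{0}<\e$, contradicting the hypothesis $|\widehat{\chi_{N}}(\x)|\ge\e$ provided $\e_{0}$ is chosen smaller than $\e/2$. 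Therefore $\widehat{\phi_{N,\e_{0}}}(\x)>1/2$, i.e. $\x$ lies in the spectrum of $\phi_{N,\e_{0}}$ and moreover the kernel is not small there. The remaining step is to convert this spectral information into the Diophantine conclusion $\|Q\xi_{i}/\tilde N\|\le QV/N$: this is where the explicit construction of $\phi_{N,\e_{0}}$ in the proof of Theorem \ref{nU2} enters. The kernel there will be built (as in \cite{FH}) as a (normalized) Fejér-type kernel supported on frequencies $\x$ with $\|\xi_{i}/\tilde N\|$ small on a scale controlled by $Q$ and $R$; concretely, the approximate $Q$-periodicity in direction $1$ and $i$ forces $|1-e(Q\xi_{i}/\tilde N)|$ to be small whenever $\widehat{\phi_{N,\e_{0}}}(\x)$ is bounded away from $0$, and $|1-e(Q\xi_{i}/\tilde N)|$ small is equivalent to $\|Q\xi_{i}/\tilde N\|$ small. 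Tracking the constants yields $V=V(\e)$ depending only on $R(\e_{0})$ and $\e_{0}$, hence only on $\e$.

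The main obstacle is the last step: one has to read off from the precise form of the kernel $\phi_{N,\e_{0}}$ produced in Section 5 exactly how its spectrum relates to the parameters $Q$ and $R$, and in particular that membership of $\x$ in the (effective) spectrum — combined with the approximate periodicity of $\chi_{N,s}$ — pins down $\|Q\xi_{i}/\tilde N\|$ up to a factor $QV/N$. In the one-dimensional setting of \cite{FH} this is routine, but here the kernel lives on $\ZN$ and is a product (or convolution) of two one-dimensional objects, so one must check that the two real coordinates $\xi_{1},\xi_{2}$ decouple cleanly and that no cross terms spoil the bound; this is exactly the ``sum of exponentials on a convex set in $\mathbb{Z}^{2}$'' complication flagged in the outline. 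Once the structure of $\phi_{N,\e_{0}}$ is in hand, the argument above is immediate, so I would organize the section so that Corollary \ref{nF2} is stated and proved right after the construction of the kernel, with the case split on $\widehat{\phi_{N,\e_{0}}}(\x)$ as the only real content.
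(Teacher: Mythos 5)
Your proposal is circular in the context of this paper: Corollary~\ref{nF2} is the input needed to \emph{construct} the kernel $\phi_{N,\e}$ and to \emph{prove} Theorem~\ref{nU2}. In Section~\ref{u2}, Corollary~\ref{nF2} is established first (directly from Katai's Lemma), then the constants $Q(\e)$ and $V(\e)$ are defined via the conclusion of Corollary~\ref{nF2} (``It follows from Corollary~\ref{nF2} that $Q(\e)$ is well defined''), then $\phi_{N,\e}$ is built out of $Q(\e)$ and $V(\e)$, and finally the proof of Theorem~\ref{nU2} invokes Corollary~\ref{nF2} to control $\widehat{\chi_{N,u}}$. So you cannot take Theorem~\ref{nU2} as a black box and deduce Corollary~\ref{nF2} from it; the logical arrow points the other way. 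The flaw is especially acute in your final step, where you say the Diophantine conclusion is read off from ``the explicit construction of $\phi_{N,\e_{0}}$ in the proof of Theorem~\ref{nU2}'' — that explicit construction is literally defined using the output of Corollary~\ref{nF2}.

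The paper's actual proof goes directly through Lemma~\ref{katai}. Taking $f(\A)=e(-\A\circ_{N}\x)$, the hypothesis $\vert\widehat{\chi_N}(\x)\vert\ge\e$ and Katai's Lemma force, for some Gaussian primes $p,q$ with $\mathcal{N}(p)<\mathcal{N}(q)<K(\e)$, a lower bound $\ge\d$ on the normalized twisted exponential sum over the convex region $R_N/p\cap R_N/q$. Slicing that region into horizontal lines $U_j$ (and separately vertical lines $V_j$) reduces each slice to a geometric series, giving bounds of the form $C/(N\Vert\v\circ_N\x\Vert)$ and $C/(N\Vert\w\circ_N\x\Vert)$ where $\v=(p_1-q_1,p_2-q_2)$ and $\w=(q_2-p_2,p_1-q_1)$. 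Inverting the nonsingular $2\times 2$ linear system (determinant $\Delta=(p_1-q_1)^2+(p_2-q_2)^2\ne 0$) gives that $\xi_1/\tilde N$ and $\xi_2/\tilde N$ are within $O_{\e}(1/N)$ of rationals with denominator $\Delta$; setting $Q=\prod_{p,q}\mathcal{N}(p-q)^2$ over the finitely many relevant prime pairs clears all denominators and yields $\Vert Q\xi_i/\tilde N\Vert\le QV/N$. This direct argument is what your proposal is missing, and it is also exactly where the ``exponential sums on a convex set in $\mathbb{Z}^2$'' complication that you flagged actually gets handled.

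As a side remark: if Theorem~\ref{nU2} had an independent proof, a variant of your argument \emph{would} close. From $\vert\widehat{\chi_{N,u}}(\x)\vert\le\Vert\chi_{N,u}\Vert_{U^2(\R)}$ (by identity~(\ref{F})) one gets $\vert\widehat{\chi_{N,s}}(\x)\vert\ge\e/2$, and then the approximate periodicity (ii) of Theorem~\ref{nU2} translates on the Fourier side into $\vert e(Q\xi_i/\tilde N)-1\vert\cdot\vert\widehat{\chi_{N,s}}(\x)\vert\le R/N$, hence $\Vert Q\xi_i/\tilde N\Vert\lesssim R/(\e N)$. This is a cleaner route than appealing to the kernel's internal structure, but it still depends on Theorem~\ref{nU2} and so does not resolve the circularity within this paper.
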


\begin{proof}
  Let $\delta=\delta(\e,1)$ and $K=K(\e,1)$ be given by Lemma \ref{katai}. Let
    \begin{equation}\nonumber
  \begin{split}
   Q=\prod_{p,q\in\P,\mathcal{N}(p)<\mathcal{N}(q)<K}\mathcal{N}(p-q)^{2}\in\mathbb{N}.
  \end{split}
\end{equation}
Suppose that $N>K$. Let $q,p\in\P$ with $\mathcal{N}(p)<\mathcal{N}(q)<K$ and let $\xi\in \R$. Denote $p=p_{1}+p_{2}i,q=q_{1}+q_{2}i$. Let $\v=(p_{1}-q_{1},p_{2}-q_{2}), \w=(q_{2}-p_{2},p_{1}-q_{1})$. Let $f\colon\R\rightarrow\mathbb{C}, f(\A)=e(-\A\circ_{N}\x)$.

  For $i\in\mathbb{Z}$, denote $U_{j}=\{\A\in R_{N}/p\cap R_{N}/q\colon Im(\A)=j\}$. Then $ R_{N}/p\cap R_{N}/q=\bigcup_{j=-2N}^{2N}U_{j}$. By the convexity of $ R_{N}/p\cap R_{N}/q$, each $U_{j}$ can be written as $U_{j}=\{x+ji\colon a_{j}\leq x\leq b_{j}\}$ for some $a_{j}, b_{j}\in\mathbb{Z}$. We have
  \begin{equation}\nonumber
  \begin{split}
   &\qquad\frac{1}{\tilde{N}^{2}}\Bigl\vert\sum_{\A\in R_{N}/p\cap R_{N}/q}f(p\A)\overline{f}(q\A)\Bigr\vert
   \leq\frac{1}{{\tilde{N}^{2}}}\sum_{j=-2N}^{2N}\Bigl\vert\sum_{\A\in U_{j}}f(p\A)\overline{f}(q\A)\Bigr\vert
   \\&=\frac{1}{{\tilde{N}^{2}}}\sum_{j=-2N}^{2N}\Bigl\vert\sum_{x=a_{j}}^{b_{j}}e(-x\v\circ_{N}\x)e(-j\w\circ_{N}\x) \Bigr\vert
   \leq\frac{1}{{\tilde{N}^{2}}}\sum_{j=-2N}^{2N}
   \frac{2}{\Vert \v\circ_{N}\x\Vert}\leq\frac{C}{N\Vert\v\circ_{N}\x\Vert}
  \end{split}
\end{equation}
for some universal constant $C$.
On the other hand, writing $V_{j}=\{\A\in R_{N}/p\cap R_{N}/q\colon Re(\A)=i\}$, then a similar argument shows that 
$$\frac{1}{\tilde{N}^{2}}\Bigl\vert\sum_{\A\in R_{N}/p\cap R_{N}/q}f(p\A)\overline{f}(q\A)\Bigr\vert\leq\frac{C}{N\Vert\w\circ_{N}\x\Vert}.$$

If $\vert\widehat{\chi_{N}}(\xi)\vert\geq\e$, by Lemma \ref{katai}, there exist $p,q\in\P, \mathcal{N}(p)<\mathcal{N}(q)<K$ such that $\frac{1}{\tilde{N}^{2}}\Bigl\vert\sum_{\A\in R_{N}/p\cap R_{N}/q}f(p\A)\overline{f}(q\A)\Bigr\vert\geq \d$. Thus $\max\{\Vert\v\circ_{N}\x\Vert,\Vert\w\circ_{N}\x\Vert\}\leq \frac{C'}{N\d}$
for some universal constant $C'$.
Suppose
  \begin{equation}\nonumber
  \begin{split}
   &\v\circ_{N}\x=\frac{1}{\tilde{N}}((p_{1}-q_{1})\xi_{1}+(p_{2}-q_{2})\xi_{2})=a_{1}\pm\Vert\v\circ_{N}\x\Vert;
   \\&\w\circ_{N}\x=\frac{1}{\tilde{N}}((q_{2}-p_{2})\xi_{1}+(p_{1}-q_{1})\xi_{2})=a_{2}\pm\Vert\w\circ_{N}\x\Vert
  \end{split}
\end{equation}
for some $a_{1},a_{2}\in\mathbb{Z}$. Then
$\xi_{1}/\tilde{N}=(n_{1}+\d_{1})/\Delta, \xi_{2}/\tilde{N}=(n_{2}+\d_{2})/\Delta$,
where $\Delta=(p_{1}-q_{1})^{2}+(p_{2}-q_{2})^{2}\neq 0, n_{1},n_{2}\in\mathbb{Z}, \vert\d_{1}\vert, \vert\d_{2}\vert\leq\frac{KC''}{N\d}$ for some universal constant $C''$. Since
$\Delta$ divides $Q$, we get $\Vert Q\xi_{i}/\tilde{N}\Vert=\Vert Q\d_{i}/\tilde{N}\Vert\leq\frac{KC''}{N\d}$ for $i=1,2$. The corollary follows by taking $V=\lceil\frac{KC''}{\d}\rceil$ (recall that $\lceil x\rceil$ is the smallest integer which is not smaller than $x$).
\end{proof}

For every $\e>0$, define

  \begin{equation}\nonumber
  \begin{split}
   &\mathcal{A}(N,\e)=\Bigl\{\x\in \R\colon \sup_{\chi\in\mathcal{M}_{\Z}}\vert\widehat{\chi_{N}}(\x)\vert\geq\e^{2}\Bigr\};
   \\&W(N,q,\e)=\max_{\x=\xi_{1}+\xi_{2}i\in\mathcal{A}(N,\e)}\max_{i=1,2}N\Vert q\xi_{i}/\tilde{N}\Vert;
   \\&Q(\e)=\min_{k\in\mathbb{N}}\Bigl\{k!\colon\limsup_{N\rightarrow\infty}W(N,k!,\e)<\infty\Bigr\};
   \\&V(\e)=1+\Bigl\lfloor \frac{1}{Q(\e)}\limsup_{N\rightarrow\infty}W(N,Q(\e),\e)\Bigr\rfloor.
  \end{split}
\end{equation}

It follows from Corollary \ref{nF2} that $Q(\e)$ is well defined. Notice that for all $0<\e'\leq\e, Q(\e')\geq Q(\e)$ and $Q(\e')$ is a multiple of $Q(\e)$. Thus $V(\e')\geq V(\e)$. We fix the above choice of $Q(\e),V(\e)$ for the remaining of this section.

For every $m\geq 1, N>2m$, we define the function $f_{N,m}\colon \R\rightarrow\mathbb{C}$ by

  \begin{equation}\nonumber
  \begin{split}
   f_{N,m}(\A)=\sum_{-m\leq\xi_{1}\leq m}\sum_{-m\leq\xi_{2}\leq m}(1-\frac{\vert\xi_{1}\vert}{m})(1-\frac{\vert\xi_{2}\vert}{m})e(\A\circ_{N}(\x_{1}+\x_{2}i)).
  \end{split}
\end{equation}

Then it is easy to verify that $f_{N,m}$ is a kernel of $\R$. Let $Q_{\tilde{N}}(\e)^{*}$ be the unique integer in $\{1,\dots,\tilde{N}-1\}$ such that $Q(\e)Q_{\tilde{N}}(\e)^{*}\equiv 1\mod \tilde{N}$. For $N>4Q(\e)V(\e)\lceil\e^{-4}\rceil$, we define

  \begin{equation}\label{phi}
  \begin{split}
   \phi_{N,\e}(x+yi)=f_{N,2Q(\e)V(\e)\lceil\e^{-4}\rceil}(Q_{\tilde{N}}(\e)^{*}(x+yi)).
  \end{split}
\end{equation}

In other words, $f_{N,2Q(\e)V(\e)\lceil\e^{-4}\rceil}(x+yi)=\phi_{N,\e}(Q(\e)(x+yi))$. Then $\phi_{N,\e}$ is also a kernel of $\R$, and the spectrum of $\phi_{N,\e}$ is the set

  \begin{equation}\nonumber
  \begin{split}
   \Xi_{N,\e}=\Bigl\{\x=\xi_{1}+\xi_{2}i\in \R\colon\Bigl\Vert\frac{Q(\e)\xi_{i}}{\tilde{N}}\Bigr\Vert<\frac{2Q(\e)V(\e)\lceil\e^{-4}\rceil}{\tilde{N}}, i=1,2\Bigr\},
  \end{split}
\end{equation}
and we have $\widehat{\phi_{N,\e}}(\x)=\Bigl(1-\Bigl\Vert\frac{Q(\e)\xi_{1}}{\tilde{N}}\Bigr\Vert\frac{\tilde{N}}{2Q(\e)V(\e)\lceil\e^{-4}\rceil}\Bigr)
            \Bigl(1-\Bigl\Vert\frac{Q(\e)\xi_{2}}{\tilde{N}}\Bigr\Vert\frac{\tilde{N}}{2Q(\e)V(\e)\lceil\e^{-4}\rceil}\Bigr)$ if $\x\in\Xi_{N,\e}$ and $\widehat{\phi_{N,\e}}(\x)=0$ otherwise.

\begin{proof}[Proof of Theorem \ref{nU2}]
  We show that $\phi_{N,\e}$ defined in (\ref{phi}) satisfies all the requirements. Suppose $0<\e'\leq\e$. Since $Q(\e')\geq Q(\e), V(\e')\geq V(\e)$ and $Q(\e')$ is a multiple of $Q(\e)$, we have $\Xi_{N,\e}\subset\Xi_{N,\e'}$ and $\widehat{\phi_{N,\e'}}(\x)\geq\widehat{\phi_{N,\e}}(\x)$ for every $\x\in \R$.

  For every $\chi\in\mathcal{M}_{\Z},\x=\xi_{1}+\xi_{2}i\in \R$, if $\vert\widehat{\chi_{N}}(\x)\vert\geq\e^{2}$, then by the definition of $Q, \Vert\frac{Q\xi_{i}}{\tilde{N}}\Vert\leq QV/\tilde{N}, i=1,2$. Then $\widehat{\phi_{N,\e}}(\x)\geq(1-\e^{4}/2)^{2}\geq 1-\e^{4}$. So
  $\vert\widehat{\chi_{N}}(\x)-\widehat{\phi_{N,\e}*\chi_{N}}(\x)\vert\leq\e^{4}\leq\e^{2}$. The same estimate also holds if $\vert\widehat{\chi_{N}}(\x)\vert\leq\e^{2}$. Thus by identity (\ref{F}), we have
    \begin{equation}\nonumber
  \begin{split}
  &\qquad \Vert\chi_{N,u}\Vert_{U^{2}(\R)}^{4}=\sum_{\x\in \R}\vert\widehat{\chi_{N}}(\x)-\widehat{\phi_{N,\e}*\chi_{N}}(\x)\vert^{4}\leq\e^{4}\sum_{\x\in \R}\vert\widehat{\chi_{N}}(\x)-\widehat{\phi_{N,\e}*\chi_{N}}(\x)\vert^{2}
  \\&\leq\sum_{\x\in \R}\vert\widehat{\chi_{N}}(\x)\vert^{2}\leq\e^{4},
  \end{split}
\end{equation}
where the last estimate follows from Parseval's identity. This proves (ii).

Lastly, using Fourier inversion formula and the estimate $\vert e(x)-1\vert\leq 2\pi\Vert x\Vert$, we get
    \begin{equation}\nonumber
  \begin{split}
  &\vert\chi_{N,s}(\A+Q)-\chi_{N,s}(\A)\vert\leq\sum_{\x=\xi_{1}+\xi_{2}i\in \R}\vert\widehat{\phi_{N,\e}}(\x)\vert\cdot 2\pi\Bigl\Vert\frac{Q\xi_{1}}{\tilde{N}}\Bigr\Vert
  \leq\vert\Xi_{N,\e}\vert\cdot\frac{4\pi QV\lceil\e^{-4}\rceil}{\tilde{N}};
  \\& \vert\chi_{N,s}(\A+Qi)-\chi_{N,s}(\A)\vert\leq\sum_{\x=\xi_{1}+\xi_{2}i\in \R}\vert\widehat{\phi_{N,\e}}(\x)\vert\cdot 2\pi\Bigl\Vert\frac{Q\xi_{2}}{\tilde{N}}\Bigr\Vert
  \leq\vert\Xi_{N,\e}\vert\cdot\frac{ 4\pi QV\lceil\e^{-4}\rceil}{\tilde{N}}.
  \end{split}
\end{equation}
Since $\vert\Xi_{N,\e}\vert$ depends only on $\e$, the theorem follows by taking $R=\vert\Xi_{N,\e}\vert\cdot 4\pi QV\lceil\e^{-4}\rceil$.
\end{proof}

\section{Inverse and factorization theorems}\label{S4}
In this section, we state and prove some consequences of an inverse theorem by Szegedy ~\cite{Sz} and a factorization theorem by Green and Tao ~\cite{GT} that are particularly tailored for our use. The results in this section generalize Section 4 in ~\cite{FH} to the $\mathbb{Z}^{2}$ case.
Combining these results, we prove that a function that has $U^{3}$-norm bounded away from zero either has
$U^{2}$-norm bounded away from zero, or else correlates in a sub-progression with a totally
equidistributed polynomial sequence of order 2 of a very special form.

Essentially all definitions and results of this section extend without important changes
to arbitrary nilmanifolds. To ease notation, we restrict to the case of nilmanifolds of order 2 as these are the
only ones needed in this article.

\subsection{Filtration and Nilmanifolds}
We review some standard material on nilmanifolds.
\begin{defn}[Filtration]\label{filt}
Let $G$ be a connected, simply connected Lie group with identity element $e_{G}$ and let $d\in\mathbb{N}$. A \emph{filtration} on $G$ is a finite sequence $G_{\bullet}=\{G_{i}\}_{i=0}^{d+1}$ of closed connected subgroups of $G$ such that
\begin{equation}\nonumber
  \begin{split}
    G=G_{0}=G_{1}\supseteq G_{2}\supseteq\cdots\supseteq G_{d+1}=\{e_{G}\}
  \end{split}
\end{equation}
and $[G_{i},G_{j}]\subseteq G_{i+j}$ for all $i,j\geq 0$, where $[H,H']\coloneqq \{hh'h^{-1}h'^{-1}\colon h\in H, h'\in H'\}$ for all $H,H'<G$. The integer $d$ is called the \emph{order} of $G_{\bullet}$.
\end{defn}

\begin{rem}
It is worth noting that $G_{i}$ is not necessarily the $i$-th commutator subgroup of $G$.
\end{rem}

Let $X=G/\Gamma$ be a nilmanifold of order 2 with filtration $G_{\bullet}$, i.e. $G$ is a Lie group with $[G,G]\subset G_{2},[G,G_{2}]=\{e_{G}\}$, and $\Gamma$ is a discrete cocompact subgroup of $G$. From now on, we assume that $G$ is connected and simple connected. The nilmanifold $X$ is endowed with a base point $e_{X}$ which is the projection to $X$ of the unit element of $G$. The action of $G$ on $X$ is denoted by $(g,x)\rightarrow g\cdot x$. The \emph{Haar measure} $m_{X}$ of $X$ is the unique probability measure on $X$ that is invariant under this action.

We denote the dimension of $G$ by $m$ and the dimension of $G_{2}$ by $r$ and set $s=m-r$. We implicitly assume that $G$ is endowed with a \emph{Mal'cev basis} $\mathcal{X}$, meaning $\mathcal{X}$ is a basis $(\xi_{1},\dots,\xi_{m})$ of the Lie algebra $\mathfrak{g}$ of $G$ that has the following properties:

(i) The map $\phi\colon \mathbb{R}^{m}\rightarrow G$ given by
\begin{equation}\nonumber
  \begin{split}
    \phi(t_{1},\dots,\xi_{m})=\exp(t_{1}\xi_{1})\cdot\ldots\cdot\exp(t_{m}\xi_{m})
  \end{split}
\end{equation}
is a homeomorphism from $\mathbb{R}^{m}$ onto $G$;

(ii) $G_{2}=\phi(\{0\}^{s}\times\mathbb{R}^{r})$;

(iii) $\Gamma=\phi(\mathbb{Z}^{m})$.

  We call $\phi$ the \emph{Mal'cev homeomorphism} of $G$ (or $X$) and call $\mathbb{R}^{m}$ the \emph{domain} of $\phi$. Any submanifold $Y$ of $X$ can be realized as $Y=\phi(V)$ for some subspace $V$ of $\mathbb{R}^{m}$. Then $\phi|_{Y}$ naturally induces a Mal'cev homeomorphism of $Y$. We call this induced map \emph{induced Mal'cev homeomorphism} from $X$ to $Y$.

Let $\mathfrak{g}$ be endowed with the Euclidean structure making $\mathcal{X}$ an orthonormal basis. This induces a Riemannian structure on $G$ that is invariant under right translations. The group $G$ is endowed with the associated geodesic distance, which we denote by $d_{G}$. This distance is invariant under right translations.

Let the space $X=G/\Gamma$ be endowed with the quotient metric $d_{X}$. Writing $p\colon G\rightarrow X$ for the quotient map, the metric $d_{X}$ is defined by
    \begin{equation}\nonumber
  \begin{split}
  d_{X}(x,y)=\inf_{g,h\in G}\{d_{G}(g,h)\colon p(g)=x, p(h)=y\}.
  \end{split}
\end{equation}
Since $\Gamma$ is discrete, it follows that the infimum is attained. 
Throughout, we frequently use the fact that 
$\Vert f\Vert_{Lip(X)}\leq \Vert f\Vert_{\mathcal{C}^{1}(X)}$ for all smooth functions $f$ on $X$ (recall that $\mathcal{C}^{n}(X)$ is the space of functions on $X$ with continuous $n$-th derivative).

The following lemma rephrases Lemma 4.1 in ~\cite{FH}:

\begin{lem}\label{cts}(Continuity property).
  For every bounded subset $F$ of $G$, there exists $H>0$ such that

  (i) $d_{G}(g\cdot h,g\cdot h')\leq Hd_{G}(h,h')$ and $d_{X}(g\cdot x,g\cdot x')\leq Hd_{X}(x,x')$ for all $h,h'\in G, x,x'\in X$ and $g\in F$;

  (ii) for any $n\geq 1$, every $f\in\mathcal{C}^{n}(X)$ and every $g\in F$, writing $f_{g}(x)=f(g\cdot x)$, we have $\Vert f_{g}\Vert_{\mathcal{C}^{n}(X)}\leq H\Vert f\Vert_{\mathcal{C}^{n}(X)}$, where $\Vert \cdot\Vert_{\mathcal{C}^{n}(X)}$ is the usual $\mathcal{C}^{n}$-norm on $X$, and $\mathcal{C}^{n}(X)$ is the collection of functions with bounded $\mathcal{C}^{n}$-norm on $X$.
\end{lem}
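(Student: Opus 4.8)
The plan is to derive both parts from a single fact: as $g$ ranges over the closure $\overline{F}$ of $F$ — which is compact — left translation by $g$ is uniformly Lipschitz on $G$ and uniformly smooth. Everything else is a routine application of the chain rule.

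First I would bound the Lipschitz constant of a single left translation $L_{g}\colon h\mapsto gh$ on $G$. Since $d_{G}$ is induced by a \emph{right}-invariant Riemannian metric, the norm on $T_{h}G$ is the pushforward under right translation of the fixed Euclidean norm on $\mathfrak{g}=T_{e_{G}}G$. From the identity $R_{(gh)^{-1}}\circ L_{g}=C_{g}\circ R_{h^{-1}}$, where $R$ denotes right translation and $C_{g}(x)=gxg^{-1}$ is conjugation, differentiating at $h$ gives $\|(dL_{g})_{h}v\|_{gh}=\|\mathrm{Ad}(g)(dR_{h^{-1}})_{h}v\|_{e_{G}}\le\|\mathrm{Ad}(g)\|_{\mathrm{op}}\|v\|_{h}$ for all $v\in T_{h}G$. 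Hence $L_{g}$ is Lipschitz with constant at most $\|\mathrm{Ad}(g)\|_{\mathrm{op}}$, the operator norm with respect to the chosen Euclidean structure on $\mathfrak{g}$; since $g\mapsto\mathrm{Ad}(g)$ is continuous and $\overline{F}$ is compact, $H_{0}\coloneqq\sup_{g\in\overline{F}}\|\mathrm{Ad}(g)\|_{\mathrm{op}}<\infty$, and pushing a path from $h$ to $h'$ through $L_{g}$ shows $d_{G}(gh,gh')\le H_{0}d_{G}(h,h')$ for all $g\in F$. To transfer this to $X$: given $x,x'\in X$, lift to $a,a'\in G$ with $p(a)=x$, $p(a')=x'$ realizing the (attained) infimum in the definition of $d_{X}$; then $p(ga)=g\cdot x$ and $p(ga')=g\cdot x'$, so $d_{X}(g\cdot x,g\cdot x')\le d_{G}(ga,ga')\le H_{0}d_{G}(a,a')=H_{0}d_{X}(x,x')$. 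This proves (i) for any $H\ge H_{0}$.

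For (ii), I would use that the action map $G\times X\to X$ is smooth, so its restriction to the compact set $\overline{F}\times X$ has all partial derivatives bounded; in particular, for each $n$ the diffeomorphisms $L_{g}\colon X\to X$, $g\in\overline{F}$, have $\mathcal{C}^{n}$-norms bounded by a constant depending only on $F$ and $n$. Writing $f_{g}=f\circ L_{g}$ and applying the chain rule for higher derivatives, each derivative of $f_{g}$ of order $\le n$ at $x$ is a polynomial combination, with universal coefficients, of the derivatives of $L_{g}$ of order $\le n$ and the derivatives of $f$ of order $\le n$ evaluated at $g\cdot x$; bounding the former uniformly yields $\|f_{g}\|_{\mathcal{C}^{n}(X)}\le C(F,n)\|f\|_{\mathcal{C}^{n}(X)}$. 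Taking $H$ larger than $H_{0}$ and than these constants completes the argument.

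The only points needing care are correctly identifying $dL_{g}$ with $\mathrm{Ad}(g)$ for the right-invariant metric (rather than the left-invariant one) in the first step, and — if one really wants a single $H$ valid for every $n$ simultaneously — obtaining the $n$-uniformity in (ii); the latter follows from the standard normalization of the $\mathcal{C}^{n}$-norms in which the $k$-th derivative carries a decaying weight, and in any case letting $H$ depend also on $n$ is harmless for every later use of the lemma. None of this is a serious obstacle: the whole content is the compactness of $\overline{F}$ together with the smoothness of the action.
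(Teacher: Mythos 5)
Your proof is correct, and it is essentially the standard argument: the paper itself gives no proof of this lemma (it is quoted from Lemma 4.1 of ~\cite{FH}), and the intended justification there is exactly your combination of compactness of $\overline{F}$ (bounded sets have compact closure since the right-invariant metric is complete) with smoothness of the action, including the identification of $(dL_{g})_{h}$ with $\mathrm{Ad}(g)$ after right-translating back to the identity. One small caveat: your suggested repair of the $n$-uniformity via weighted $\mathcal{C}^{n}$-norms does not actually work --- the top-order chain-rule term still carries a factor $\Vert dL_{g}\Vert^{n}$, which grows with $n$ when $\Vert dL_{g}\Vert>1$ --- so the constant in (ii) should be allowed to depend on $n$; as you note, this is harmless, since in this paper the lemma is only invoked for the Lipschitz norm and for $\mathcal{C}^{2m}(X)$ with $m=\dim X$ fixed.
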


The following definitions are from ~\cite{GT}:

\begin{defn}[Vertical torus] We keep the same notations as above. The vertical torus is the sub-nilmanifold $G_{2}/(G_{2}\cap\Gamma)$.
The basis induces an isometric identification between $G_{2}$ and $\mathbb{R}^{r}$, and thus of the vertical torus endowed with the quotient metric,
with $\mathbb{T}^{r}$ endowed with its usual metric. Every $\bold{k}\in\mathbb{Z}^{r}$ induces a character $\bold{u}\rightarrow\bold{k}\cdot\bold{u}$ of
the vertical torus. A function $F$ on $X$ is a \emph{nilcharacter with frequency $\bold{k}$} if $F(\bold{u}\cdot x)=e(\bold{k}\cdot\bold{u})F(x)$ for every
$\bold{u}\in\mathbb{T}^{r}=G_{2}/(G_{2}\Gamma)$ and every $x\in X$. The nilcharacter is \emph{non-trivial} if its frequency is non-zero.
\end{defn}
\begin{defn}[Maximal torus and horizontal characters] Let $X=G/\Gamma$ be a nilmanifold of order 2 and let $m$ and $r$ be as above, and let $s=m-r$. The Mal'cev basis induces an isometric identification between the maximal torus $G/([G,G]\Gamma)$, endowed with the quotient metric, and $\mathbb{T}^{s}$, endowed with its usual metric. A \emph{horizontal character} is a continuous group homomorphism $\eta\colon G\rightarrow\mathbb{T}$ with trivial
restriction to $\Gamma$.
\end{defn}
A horizontal character $\eta$
factors through the maximal torus, and we typically abuse notation
 and think of $\eta$ as a
character of the maximal torus, and identify $\eta$ with an element
 $\bold{k}$ of $\mathbb{Z}^{s}$ by the following
rule $\bold{\A}\rightarrow\bold{k}\cdot\bold{\A}=k_{1}\A_{1}+\dots+k_{s}\A_{s}$
for $\bold{\A}=(\A_{1},\dots,\A_{s})\in\mathbb{T}^{s}$ and
$\bold{k}=(k_{1},\dots,k_{s})\in\mathbb{Z}^{s}$.
We define $\Vert\eta\Vert\coloneqq\vert k_{1}\vert+\dots+\vert k_{s}\vert$.

\subsection{Modified inverse theorem}
We first recall the Inverse Theorem proved by Szegedy in ~\cite{Sz}.
We recall the definition of polynomial sequences:
\begin{defn}[Polynomial sequences]
Let $t\in\mathbb{N}$. For $\vec{n}=(n_{1},\dots,n_{t}),\vec{m}=(m_{1},\dots,m_{t})\in\mathbb{N}^{t}$, denote
  $\binom{\vec{n}}{\vec{m}}=\prod_{i=1}^{t}\binom{n_{i}}{m_{i}}$.
For every $t$-tuple $\vec{n}=(n_{1},\dots,n_{t})\in\mathbb{Z}^{t}$, every group $G$ and every function $\phi\colon \mathbb{Z}^{t}\rightarrow G$, we denote $D_{\vec{n}}\phi\colon \mathbb{Z}^{t}\rightarrow G$ by $D_{\vec{n}}\phi(\vec{x})=\phi(\vec{x}+\vec{n})\phi^{-1}(\vec{x})$.

A function $\phi\colon \mathbb{Z}^{t}\rightarrow G$ is called a \emph{polynomial sequence} (or a \emph{polynomial map}) of degree $d$ with respect to the filtration $G_{\bullet}=\{G_{i}\}_{i=0}^{d+1}$ if $D_{\vec{h_{j}}}\dots D_{\vec{h_{1}}}\phi\in G_{j}$ for any $1\leq j\leq d+1$ and any $\vec{h_{1}},\dots,\vec{h_{j}}\in\mathbb{Z}^{t}$.
\end{defn}

For polynomials of degree 2, we have an explicit expression:
\begin{lem}\label{2p}(Corollary of Lemma 6.7 in ~\cite{GT}).
  A map $\phi\colon\mathbb{Z}^{2}\rightarrow G$ is polynomial of degree 2 with respect to the filtration $G_{\bullet}$ of order 2 if and only if it can be written as $\phi(m,n)=g_{0}g_{1,1}^{m}g_{1,2}^{n}g_{2,1}^{\binom{m}{2}}g_{2,2}^{mn}g_{2,3}^{\binom{n}{2}}$, where $g_{0},g_{1,1},g_{1,2}\in G, g_{2,1},g_{2,2},g_{2,3}\in G_{2}$.
\end{lem}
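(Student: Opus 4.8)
The plan is to read off both implications from the Taylor expansion of polynomial maps proved in Lemma 6.7 of ~\cite{GT}, specialized to the case $t=2$, degree $2$, and a filtration of order $2$. For the ``only if'' direction, suppose $\phi\colon\mathbb{Z}^{2}\to G$ is polynomial of degree $2$ with respect to $G_{\bullet}$. By Lemma 6.7 of ~\cite{GT}, $\phi$ can be written as $\phi(m,n)=\prod_{\vec{j}}g_{\vec{j}}^{\binom{(m,n)}{\vec{j}}}$, where the product is over the multi-indices $\vec{j}=(j_{1},j_{2})\in\mathbb{N}^{2}$ with $0\leq\vert\vec{j}\vert\coloneqq j_{1}+j_{2}\leq 2$ (taken in a fixed order), with $g_{\vec{j}}\in G_{\vert\vec{j}\vert}$ and $\binom{(m,n)}{\vec{j}}=\binom{m}{j_{1}}\binom{n}{j_{2}}$. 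There are exactly six such multi-indices: $(0,0),(1,0),(0,1)$ of weight $\leq 1$ and $(2,0),(1,1),(0,2)$ of weight $2$. Since $G_{\bullet}$ has order $2$ we have $G_{0}=G_{1}=G$, so $g_{(0,0)},g_{(1,0)},g_{(0,1)}\in G$, while $g_{(2,0)},g_{(1,1)},g_{(0,2)}\in G_{2}$; setting $g_{0}=g_{(0,0)}$, $g_{1,1}=g_{(1,0)}$, $g_{1,2}=g_{(0,1)}$, $g_{2,1}=g_{(2,0)}$, $g_{2,2}=g_{(1,1)}$, $g_{2,3}=g_{(0,2)}$ and evaluating the binomial coefficients (so that the exponents become $1,m,n,\binom{m}{2},mn,\binom{n}{2}$) yields the stated formula.

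The one point that needs care is that Lemma 6.7 of ~\cite{GT} fixes one particular ordering of the monomials, so one must check that the ordering appearing in the statement is admissible. This is immediate in an order-$2$ group: since $[G,G]\subseteq G_{2}$ and $[G,G_{2}]=\{e_{G}\}$, the subgroup $G_{2}$ is central, so permuting the six factors $g_{\vec{j}}^{\binom{(m,n)}{\vec{j}}}$ only produces correction terms which are products of commutators, hence lie in $[G,G]\subseteq G_{2}$; these can be absorbed into $g_{2,1},g_{2,2},g_{2,3}$ without affecting the displayed form. Apart from this bookkeeping, this direction presents no real difficulty, and it is the only subtlety in the proof.

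For the ``if'' direction, recall that the polynomial maps $\mathbb{Z}^{2}\to G$ adapted to a fixed filtration form a group under pointwise multiplication (~\cite{GT}). The constant map $(m,n)\mapsto g_{0}$ and the maps $(m,n)\mapsto g_{1,1}^{m}$, $(m,n)\mapsto g_{1,2}^{n}$ are polynomial of degree $\leq 1$ with respect to $G_{\bullet}$ (the last two are one-parameter subgroups $\mathbb{Z}\to G=G_{1}$ composed with a coordinate projection). Each of $(m,n)\mapsto g_{2,1}^{\binom{m}{2}}$, $(m,n)\mapsto g_{2,2}^{mn}$, $(m,n)\mapsto g_{2,3}^{\binom{n}{2}}$ is polynomial of degree $\leq 2$ with respect to $G_{\bullet}$: writing such a map as $g^{P}$ with $g\in G_{2}$ and $P\colon\mathbb{Z}^{2}\to\mathbb{Z}$ a polynomial of degree $2$ vanishing at the origin, one checks directly from the definition that $D_{\vec{h}}(g^{P})=g^{P(\cdot+\vec{h})-P}$ has exponent a polynomial of degree $\leq 1$, so its second discrete derivative is a constant element of $G_{2}$ and its third discrete derivative is $e_{G}\in G_{3}=\{e_{G}\}$. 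Multiplying these six polynomial maps, $\phi$ is a polynomial map $\mathbb{Z}^{2}\to G$ of degree $\leq 2$ with respect to $G_{\bullet}$, as required.
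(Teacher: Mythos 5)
Your proof is correct, and it follows the same route the paper intends: the paper states this lemma without proof as a direct corollary of the multiparameter Taylor expansion (Lemma 6.7 in ~\cite{GT}), and your argument simply fills in that derivation, with the ordering issue correctly disposed of by the centrality of $G_{2}$ (note $[g^{m},h^{n}]=[g,h]^{mn}$ is itself of the allowed monomial form) and the converse handled by the standard group property of polynomial maps from ~\cite{GT}. No gaps.
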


For every $N\in\mathbb{N}$, we write $[N]=\{1,\dots,N\}$.
We will use the following inverse theorem (Theorem 11 of ~\cite{Sz}):

\begin{thm}\label{inv2}(The inverse theorem for $\mathbb{Z}^{2}$ actions). For every $\e>0$, there exists $\d=\d(\e)>0, N_{0}=N_{0}(\e)\in\mathbb{N}$ and a nilmanifold $X=X(\e)$ of order 2 with respect to the filtration $G_{\bullet}$ such that for every $N\geq N_{0}$ and every $f\colon \R\rightarrow\mathbb{C}$ with
$\vert f\vert\leq 1$ and $\Vert f\Vert_{U^{3}(\R)}\geq\e$, there exist a function $\Phi\colon X\rightarrow\mathbb{C}$ with $\Vert\Phi\Vert_{Lip(X)}\leq 1$ and a polynomial sequence $g(m,n)\colon [\tilde{N}]\times[\tilde{N}]\rightarrow G$ of degree 2 on $G$ with respect to $G_{\bullet}$ such that $\vert\mathbb{E}_{m+ni\in\R}f(m+ni)\Phi(g(m,n)\cdot e_{X})\vert\geq\d$.
\end{thm}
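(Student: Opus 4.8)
Theorem \ref{inv2} is, up to the change of formalism between nilspaces and nilmanifolds, a restatement of the inverse theorem of Szegedy (Theorem 11 of \cite{Sz}) applied to the finite abelian group underlying $\R$, namely $\ZN$ with $\tilde N$ prime by Convention \ref{conv}. The plan is therefore to invoke that result and then carry out the translation into the nilmanifold language used throughout this paper.

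First I would apply \cite{Sz} directly: from the hypotheses $\vert f\vert\le 1$ and $\Vert f\Vert_{U^{3}(\R)}\ge\e$ it produces, for all $\tilde N$ large enough in terms of $\e$, a degree-$\le 2$ structure of bounded complexity correlating with $f$ at level $\d_{0}=\d_{0}(\e)>0$; in Szegedy's formulation this is a function factoring through a $2$-step nilspace (equivalently a degree-$2$ polynomial phase) whose rank is bounded solely in terms of $\e$. The fact that \cite{Sz} is available for a general finite abelian group, rather than only for $\mathbb{Z}$, is exactly what makes this applicable, since $\R$ is a two-dimensional object; this is the feature of \cite{Sz} that \cite{FHel} did not have at its disposal.

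The second step is the translation. For degree $2$ this is classical structure theory: a $2$-step nilspace of finite rank is modelled by $\mathbb{Z}^{2}$-polynomial orbits on a $2$-step nilmanifold $X=G/\Gamma$ whose dimension, filtration data and Mal'cev rationality are controlled by the rank, hence only by $\e$; the correlating function can then be written as $(m,n)\mapsto\Phi(g(m,n)\cdot e_{X})$ with $g\colon\mathbb{Z}^{2}\to G$ a polynomial sequence of degree $2$ with respect to a filtration $G_{\bullet}$ of order $2$, which by Lemma \ref{2p} has the explicit shape $g(m,n)=g_{0}g_{1,1}^{m}g_{1,2}^{n}g_{2,1}^{\binom{m}{2}}g_{2,2}^{mn}g_{2,3}^{\binom{n}{2}}$, and with $\Phi$ Lipschitz of norm bounded by the complexity. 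Renormalizing $\Phi$ by $\Vert\Phi\Vert_{Lip(X)}$ and absorbing the bounded loss into a smaller $\d=\d(\e)$ gives $\Vert\Phi\Vert_{Lip(X)}\le 1$; absorbing the finitely many bounded structural parameters (the standard way such inverse theorems are packaged) lets us fix a single $X=X(\e)$, which yields the statement.

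An alternative would bypass \cite{Sz} and reduce instead to the one-dimensional $U^{3}$ inverse theorem of Green and Tao \cite{6}, by Freiman-embedding a sub-box of $\ZN$ into a cyclic group $\mathbb{Z}_{M}$, transporting $f$, applying the $U^{3}(\mathbb{Z}_{M})$ inverse theorem, and pulling the nilsequence back. The main obstacle in either approach is the same and is essentially bookkeeping: the $U^{3}$ Gowers norm does not transfer exactly under such embeddings, so one must pass through the localized norms $U^{3}[N]$ and check that the parallelepiped structure defining $U^{3}(\ZN)$ is respected; and in the nilspace-to-nilmanifold translation one must verify that the filtration, the Mal'cev basis and $\Vert\Phi\Vert_{Lip(X)}$ are all controlled by $\e$ uniformly in $\tilde N$. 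Since \cite{Sz} already supplies exactly this uniformity for arbitrary finite abelian groups, the first route is the one I would carry out, using \cite{Sz} as the essential input and devoting the remaining work to the translation and renormalization described above.
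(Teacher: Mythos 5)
The paper does not prove Theorem \ref{inv2} at all: it is stated verbatim as Theorem 11 of \cite{Sz} and invoked as a black box, so your plan of importing Szegedy's inverse theorem for $\ZN$ is exactly the paper's approach. Your additional discussion of the nilspace-to-nilmanifold translation (and the alternative Green–Tao route) is a reasonable elaboration of what the paper leaves implicit, but it is not a different argument.
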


In this paper, we need the following modified version of the above theorem.
Its proof is similar to Corollary 4.3 in ~\cite{FHel} and the argument given in Step 1 in the proof of Proposition \ref{est1} in the next section, so we omit it:

\begin{cor}\label{mIU3} (Modified $U^{3}$-inverse theorem).
  For every $\e>0$, there exist $\d=\d(\e)>0, M=M(\e),N_{0}=N_{0}(\e)\in\mathbb{N}$ and a finite family $\mathcal{H}=\mathcal{H}(\e)$ of nilmanifolds of order 2, of dimension at most $M$ and having a vertical torus of dimension 1, such that: for every $N\geq N_{0}$, if $f\colon \R\rightarrow\mathbb{C}$ is a function with $\vert f\vert\leq 1$ and $\Vert f\Vert_{U^{3}(\R)}\geq\e$, then either

  (i) $\Vert f\Vert_{U^{2}(\R)}\geq\d$; or

  (ii) there exists a nilmanifold $X$ belonging to the family $\mathcal{H}$, a polynomial map $g\colon\mathbb{Z}^{2}\rightarrow G$ of degree 2 with respect to the filtration $G_{\bullet}$, and a nilcharacter $\ps$ of $X$ with frequency 1, such that $\Vert\ps\Vert_{\mathcal{C}^{2m}(X)}\leq 1$ and $\Bigl\vert\mathbb{E}_{m+ni\in\R}f(m+ni)\ps(g(m,n)\cdot e_{X})\Bigr\vert\geq\d$.
\end{cor}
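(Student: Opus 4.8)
The plan is to deduce Corollary \ref{mIU3} from the inverse theorem (Theorem \ref{inv2}) by a sequence of reductions, each of which peels off one of the extra properties demanded in conclusion (ii): a vertical torus of dimension exactly $1$, a nilcharacter (rather than an arbitrary Lipschitz function), and the $U^{2}$-dichotomy. First I would apply Theorem \ref{inv2} to $f$ with parameter $\e$, obtaining a nilmanifold $X=G/\Gamma$ of order $2$ from a finite list (finiteness comes because $X(\e)$ is a single nilmanifold depending only on $\e$, so the ``family'' is a singleton to begin with), a Lipschitz function $\Phi$ with $\Vert\Phi\Vert_{Lip(X)}\le 1$, and a degree-$2$ polynomial sequence $g(m,n)$ with $\vert\mathbb{E}_{m+ni\in\R}f(m+ni)\Phi(g(m,n)\cdot e_X)\vert\ge\d_0$ for $\d_0=\d_0(\e)$.

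The second step is to replace $\Phi$ by a nilcharacter. Decompose $\Phi$ along the vertical torus $G_2/(G_2\cap\Gamma)\cong\mathbb{T}^r$ into its Fourier components $\Phi=\sum_{\bold{k}\in\mathbb{Z}^r}\Phi_{\bold{k}}$, where $\Phi_{\bold{k}}$ is a nilcharacter with frequency $\bold{k}$; truncating to $\vert\bold{k}\vert\le K(\d_0)$ introduces an error controlled by the smoothness of $\Phi$ (here one upgrades $\Vert\Phi\Vert_{Lip}\le 1$ to a bound on $\Vert\Phi\Vert_{\mathcal{C}^{2m}}$ at the cost of a larger but still $\e$-dependent nilmanifold, via a standard smoothing convolution on $X$ as in Section 4 of \cite{FH}). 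By pigeonholing, some single component $\Phi_{\bold{k}}$ with $\bold{k}\neq 0$ (the $\bold{k}=0$ part is invariant under $G_2$, hence factors through the order-$1$ quotient $G/G_2$, and a correlation with it forces $\Vert f\Vert_{U^{2}(\R)}\ge\d$ — this is alternative (i)) already correlates with $f$ at level $\gg\d_0$. Finally, passing to the quotient of $X$ that collapses the kernel of the character $\bold{u}\mapsto\bold{k}\cdot\bold{u}$ inside $G_2$, one obtains a new order-$2$ nilmanifold with one-dimensional vertical torus and a nilcharacter $\ps$ of frequency $1$, and rescaling gives $\Vert\ps\Vert_{\mathcal{C}^{2m}}\le 1$; the push-forward of $g$ is still a degree-$2$ polynomial map by Lemma \ref{2p}. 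The dimensions of all intermediate nilmanifolds are bounded in terms of $\e$ only, so one can collect them into the finite family $\mathcal{H}(\e)$, establishing the uniform bound $M(\e)$.

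The main obstacle I expect is the smoothing step: Theorem \ref{inv2} only provides Lipschitz regularity for $\Phi$, whereas conclusion (ii) needs $\mathcal{C}^{2m}$-control on $\ps$, and this control must be uniform over the finitely many nilmanifolds produced. Carrying this out requires convolving $\Phi$ with a smooth bump supported near $e_G$ (using the continuity estimates of Lemma \ref{cts} to bound the resulting $\mathcal{C}^{2m}$-norm), checking that the correlation with $f$ is only mildly diminished, and verifying that the vertical-torus Fourier decomposition commutes with this smoothing so that the frequency-$\bold{k}$ structure is preserved. Since \cite{FHel} already handles the one-dimensional analog (Corollary 4.3 there) and the argument is essentially identical once one replaces the interval $[N]$ by the box $\R\subset\mathbb{Z}^2$ — the polynomial-sequence bookkeeping being the only genuinely two-dimensional ingredient, and that is exactly what Lemma \ref{2p} supplies — I would simply record these reductions and refer to \cite{FHel} and to Step 1 of the proof of Proposition \ref{est1} for the routine details, as the statement of the corollary indicates.
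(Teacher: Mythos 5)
Your reconstruction matches what the paper intends: the paper explicitly omits the proof and points to Corollary 4.3 of \cite{FHel} together with Step 1 of the proof of Proposition \ref{est1}, and your three reductions (smoothing the Lipschitz correlator to $\mathcal{C}^{2m}$, vertical Fourier expansion with truncation to a finite set of frequencies, then quotienting by the kernel of a single vertical character to get the one-dimensional vertical torus and a frequency-$1$ nilcharacter, with the $\bold{k}=\bold{0}$ component factoring through the maximal torus and yielding the $U^{2}$ alternative) are exactly that argument. One small inaccuracy: the smoothing step replaces $\Phi$ by a $\mathcal{C}^{2m}$ function on the \emph{same} nilmanifold $X(\e)$, rather than passing to a larger one — the enlargement of the family happens only at the vertical-quotient stage, where the finitely many $X_{\bold{k}}$ with $0<\Vert\bold{k}\Vert\leq K(\e)$ give $\mathcal{H}(\e)$.
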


\subsection{Modified factorization theorem}
We next review the factorization theorem proved in ~\cite{GT}.
For any $t$-tuple $\vec{N}=(N_{1},\dots,N_{t})\in\mathbb{N}^{t}$, write $[\vec{N}]=[N_{1}]\times\dots\times[N_{t}]$.
We need some definitions before we state the theorem:

\begin{defn}[Totally equidistributed sequences]\label{equid}
  Let $X=G/\Gamma$ be a nilmanifold and $\vec{N}=(\tilde{N},\dots,N_{t})\in\mathbb{N}^{t}$ be a $t$-tuple. A sequence $g\colon[\vec{N}]\rightarrow G$ is called \emph{totally $\e$-equidistributed} if
    \begin{equation}\label{33}
  \begin{split}
  \Bigl\vert\mathbb{E}_{\vec{n}\in[\vec{N}]}\bold{1}_{P_{1}\times\dots\times P_{t}}(\vec{n})F(g(\vec{n})\cdot e_{X})\Bigr\vert\leq\e
  \end{split}
\end{equation}
for all $F\in Lip(X)$ with $\Vert F\Vert_{Lip(X)}\leq 1$ and $\int F dm_{X}=0$ and all arithmetic progressions $P_{i}$ in $[N_{i}]$.
\end{defn}

Modulo a change in the constants, our definition of total equidistribution is equivalent
to the one given in ~\cite{GT}.

\begin{rem}
 We only uses the case $t\leq 2$ and $N_{1}=N_{2}$ if $t=2$. But we state some of the definitions and results in 
full generality in case of further researches.
\end{rem}

\begin{defn}[Smooth sequences]
  Given a nilmanifold $G/\Gamma, M\in\mathbb{N}$, and $\vec{N}=(N_{1},\dots,N_{t})\in\mathbb{N}^{t}$, we say that the sequence $\e\colon[\vec{N}]\rightarrow G$ is \emph{$(M,\vec{N})$-smooth} if for every $\vec{n}\in[\vec{N}]$, we have $d_{G}(\e(\vec{n}),\bold{1}_{G})\leq M$ and $d_{G}(\e(\vec{n}),\e(\vec{n}-\vec{e_{i}}))\leq M/N_{i}$ for all $1\leq i\leq t$, where the $i$-th coordinate of $\vec{e_{i}}$ is 1 and all other coordinates are 0.
\end{defn}

\begin{defn}[Rational sequences]
  We say an element $g\in G$ is \emph{$Q$-rational} for some $Q\in\mathbb{N}$ if there exists $m\leq Q$ such that $g^{m}\in\Gamma$. We say that $g$ is \emph{rational} if it is $Q$-rational for some $Q\in\mathbb{N}$.

  We say that a sequence $\gamma\colon[\vec{N}]\rightarrow G$ is \emph{$Q$-rational} if for every $\vec{n}\in[\vec{N}], \gamma(\vec{n})$ is $Q$-rational.
\end{defn}

\begin{defn}[Rational subgroup]
A \emph{rational subgroup} $G'$ of $G$ is a closed and connected subgroup of $G$ such
that its Lie algebra $\mathfrak{g}'$ admits a base that has rational coordinates in the Mal'cev basis
of $G$.
\end{defn}

\begin{defn}[Filtration of subgroups]
Suppose $G$ is a group with filtration $G_{\bullet}=\{G_{i}\}_{i=0}^{d+1}$ and $G'$ is a subgroup of $G$. We denote $G'_{\bullet}=\{G_{i}\cap G\}_{i=0}^{d+1}$. This is a filtration of $G'$. We call $G'_{\bullet}$ the filtration induced by $G_{\bullet}$.
\end{defn}

In ~\cite{er}, ~\cite{er2} and ~\cite{GT}, the next result is stated only for a function with the form $\omega(M)=M^{-A}$ for some $A>0$, but the same proof works for arbitrary functions $\omega\colon\mathbb{N}\rightarrow\mathbb{R}^{+}$. Recall the fact that if $G'$ is a subgroup of $G$, then $G'/(G'\cap\Gamma)$ is a sub-nilmanifold of $G/\Gamma$.

\begin{thm}\label{mFo}(Factorization of polynomial sequences, Theorem 10.2 in ~\cite{GT}). Suppose that $X=G/\Gamma$ is a nilmanifold of order 2 with respect to the filtration $G_{\bullet}$. For every $M\in\mathbb{N}$, there exists a finite collection $\mathcal{F}(M)$ of sub-nilmanifolds of $X$, each of the form $X'=G'/\Gamma'$, where $G'$ is a rational subgroup of
$G$ and $\Gamma'=G'\cap\Gamma$, such that the following holds:

For every function $\omega\colon\mathbb{N}\rightarrow\mathbb{R}^{+}$ and every $M_{0}\in\mathbb{N}$, there exists $M_{1}=M_{1}(M_{0},X,\omega)\in\mathbb{N}$ such that for every $N\in\mathbb{N}$ and every polynomial sequence of degree 2  $(g(m,n))_{(m,n)\in[\tilde{N}]\times[\tilde{N}]}$ in $G$ with respect to the filtration $G_{\bullet}$, there exist $M\in\mathbb{N}$ with $M_{0}\leq M\leq M_{1}$, a nilmanifold $X'\in\mathcal{F}(M)$, and a decomposition
\begin{equation}\nonumber
  \begin{split}
  g(m,n)=\e(m,n)g'(m,n)\gamma(m,n), (m,n)\in[\tilde{N}]\times[\tilde{N}],
  \end{split}
\end{equation}
where $\e,g',\gamma$ are polynomials of degree 2 with respect to the filtration $G_{\bullet}$ such that

(i) $\e$ is $(M,(\tilde{N},\tilde{N}))$-smooth;

(ii) $(g'(m,n))_{(m,n)\in[\tilde{N}]\times[\tilde{N}]}$ takes values in $G'$, and the finite sequence $(g'(m,n)\cdot e_{X'})_{(m,n)\in[\tilde{N}]\times[\tilde{N}]}$ is totally $\omega(M)$-equidistributed in $X'$ with the metric $d_{X'}$;

(iii) $\gamma\colon[\tilde{N}]\times[\tilde{N}]\rightarrow G$ is $M$-rational, and $(\gamma(m,n)\cdot e_{X})_{(m,n)\in[\tilde{N}]\times[\tilde{N}]}$ is doubly periodic with periods at most $M$.
\end{thm}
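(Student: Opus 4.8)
The plan is to follow the proof of Theorem 10.2 of \cite{GT} (equivalently the argument underlying the factorization theorems of \cite{er}, \cite{er2}), adapting it in two routine ways: from one to two parameters, and from the special weight $\omega(M)=M^{-A}$ to an arbitrary $\omega\colon\N\to\mathbb{R}^{+}$. The engine of the whole argument is the quantitative Leibman theorem for $\mathbb{Z}^{2}$: there is an increasing function $M\mapsto M^{\ast}(M,X,\omega)$ such that whenever the orbit $(g(m,n)\cdot e_{X})_{(m,n)\in[\tilde N]\times[\tilde N]}$ fails to be totally $\omega(M)$-equidistributed in an order-$2$ nilmanifold $X=G/\Gamma$, there is a nontrivial horizontal character $\eta\colon G\to\mathbb{T}$ with $\Vert\eta\Vert\leq M^{\ast}$ such that the polynomial $(m,n)\mapsto\eta(g(m,n))\in\mathbb{T}$ has small smoothness norm in the sense of \cite{GT}. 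This is proved exactly as in the one-parameter case, by van der Corput plus induction on the degree and on $\dim G$; only the notation changes.

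Granting this, I would induct on $\dim G$. If the orbit is already totally $\omega(M_{0})$-equidistributed, take $X'=X$, $M=M_{0}$, and let $\e$ and $\gamma$ be trivial. Otherwise, feed the character $\eta$ produced by the Leibman theorem into the standard ``polynomial in $\mathbb{T}$'' decomposition: a degree-$2$ polynomial $\mathbb{Z}^{2}\to\mathbb{R}$ of bounded smoothness norm can be written as the sum of a polynomial with $O(1/\tilde N)$-small coefficients (the smooth part), a polynomial with rational coefficients of bounded height (the rational part), and a constant. Lifting this splitting through $\eta$ and absorbing the constant, I would write $g(m,n)=\e(m,n)\,\tilde g(m,n)\,\gamma(m,n)$ with $\e$ smooth, $\gamma$ rational of bounded height and bounded period, and $\tilde g$ taking values in the rational subgroup $G'=\ker\eta$ (after conjugating by the bounded rational element $\gamma$, which again produces a rational subgroup, of strictly smaller dimension). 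Applying the inductive hypothesis to $\tilde g$ inside $G'/(G'\cap\Gamma)$, with the new complexity threshold $M^{\ast}$ in place of $M_{0}$, one obtains a further decomposition inside a sub-nilmanifold $X''$; recombining the at most $\dim G$ smooth factors into one smooth factor, the rational factors into one rational factor, and keeping the single totally equidistributed factor, and using the order-$2$ commutator identities to check that all three resulting sequences remain polynomial of degree $\le 2$, yields the asserted decomposition. The finite collection $\mathcal{F}(M)$ is simply the finite set of rational subgroups (with their induced filtrations) of height at most $M$ that can occur as some iterated $\ker\eta$, and $M$ stays in a bounded range $[M_{0},M_{1}]$ because the recursion terminates after at most $\dim G$ steps, at which point $M_{1}$ is an at-most-$\dim G$-fold iterate of the maps $M\mapsto M^{\ast}(M,X',\omega)$ over the finitely many sub-nilmanifolds $X'$ that arise, hence a function of $(M_{0},X,\omega)$ alone.

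This last point is precisely where arbitrariness of $\omega$ costs nothing: the recursion is a finite loop of length $\le\dim G$, so the construction only ever evaluates $\omega$ at the finitely many complexities generated along the way, and no monotonicity or decay hypothesis on $\omega$ is used; the proof in \cite{GT} applies verbatim with $M^{-A}$ replaced throughout by $\omega(M)$. Accordingly, in the paper I would simply cite Theorem 10.2 of \cite{GT} together with this observation, which is the content of the remark preceding the statement.

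I expect the genuine work here to be bookkeeping rather than any new idea. One must set up the two-parameter quantitative Leibman theorem with constants explicit enough to drive the induction; one must control how the Mal'cev basis, the rationality parameters, and the quotient metric $d_{X'}$ transform under passage to $G'=\ker\eta$ and under conjugation by the bounded rational part; and one must verify the two-parameter analogue of the smooth/rational/low-frequency splitting of polynomials into $\mathbb{T}$, with the quantitative control needed to keep the induction closed. None of these steps is conceptually new over \cite{GT}, but they are notationally heavy, and getting the dependence of $M_{1}$ on $(M_{0},X,\omega)$ --- and the \emph{independence} of $M_{1}$ from $N$ and from $g$ --- correct is the part that requires care.
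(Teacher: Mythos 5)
Your proposal matches the paper's approach: the paper does not reprove this result but simply cites Theorem 10.2 of \cite{GT}, with the remark preceding the statement that the identical proof works for arbitrary $\omega\colon\mathbb{N}\to\mathbb{R}^{+}$ in place of $M^{-A}$ because nothing in the recursion uses any special feature of that weight --- exactly the observation you close with. (A minor note: Theorem 10.2 of \cite{GT} is already stated in the multiparameter setting, so the ``from one to two parameters'' adaptation you mention is not actually required.)
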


We use the following corollary of the previous result that gives a more precise
factorization for a certain explicit class of polynomial sequences. The proof is similar to the discussion in ~\cite{FHel}:

\begin{cor}\label{mF}(Modified factorization theorem).
  Let $X=G/\Gamma$ be a nilmanifold of order 2 with respect to the filtration $G_{\bullet}$ and with vertical torus of dimension 1. For every $M\in\mathbb{N}$, there exists a finite collection $\mathcal{F}(M)$ of sub-nilmanifolds of $X$, each of the form $X'=G'/\Gamma'$ with filtration $G'_{\bullet}$, where $G'$ is a rational subgroup of $G$ and $\Gamma'=G'\cap\Gamma$, and either

  (i) $G'$ is an abelian rational subgroup of $G$; or

  (ii) $G'$ is a non-abelian rational subgroup of $G$ and $G_{2}'/(G_{2}'\cap\Gamma')$ has dimension 1, such that the following holds:

  For every $\omega\colon\mathbb{N}\rightarrow\mathbb{R}^{+}$ and every $M_{0}\in\mathbb{N}$, there exists $M_{1}=M_{1}(M_{0},X,\omega)\in\mathbb{N}$ such that for every $N\in\mathbb{N}$ and every polynomial $g\colon\mathbb{Z}^{2}\rightarrow G$ of degree 2 with respect to the filtration $G_{\bullet}$, there exist $M\in\mathbb{N}$ with $M_{0}\leq M\leq M_{1}$, a nilmanifold $X'\in\mathcal{F}(M)$, and a decomposition
\begin{equation}\nonumber
  \begin{split}
  g(m,n)=\e(m,n)g'(m,n)\gamma(m,n), (m,n)\in[\tilde{N}]\times[\tilde{N}]
  \end{split}
\end{equation}
such that

(iii) $\e$ is $(M,(\tilde{N},\tilde{N}))$-smooth;

(iv) for any $(m,n)\in[\tilde{N}]\times[\tilde{N}]$,
\begin{equation}\nonumber
  \begin{split}
  g'(m,n)=g_{0}g_{1,1}^{m}g_{1,2}^{n}g_{2,1}^{\binom{m}{2}}g_{2,2}^{mn}g_{2,3}^{\binom{n}{2}},
  \end{split}
\end{equation}
where $g_{0}, g_{1,1}, g_{1,2},g_{2,1},g_{2,2},g_{2,3}\in G'$, and moreover $g_{2,1},g_{2,2},g_{2,3}\in G'_{2}$ in case (ii), and $(g'(m,n)\cdot e_{X'})_{(m,n)\in[\tilde{N}]\times[\tilde{N}]}$ is totally $\omega(M)$-equidistributed in $X'$ with the metric $d_{X'}$;

(v) $\gamma\colon[\tilde{N}]\times[\tilde{N}]\rightarrow G$ is $M$-rational, and $(\gamma(m,n))_{(m,n)\in[\tilde{N}]\times[\tilde{N}]}$ is doubly periodic with periods at most $M$.
\end{cor}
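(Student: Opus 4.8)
The plan is to deduce Corollary \ref{mF} directly from Theorem \ref{mFo}, adding only two routine postprocessing steps: one that shows the sub-nilmanifolds produced by Theorem \ref{mFo} automatically obey the dichotomy (i)/(ii) once the vertical torus of $X$ is one-dimensional, and one that rewrites the equidistributed factor in the explicit coordinate form of Lemma \ref{2p} with all coefficients inside the relevant rational subgroup.

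First I would take $\mathcal{F}(M)$ to be exactly the collection furnished by Theorem \ref{mFo} applied to $X$. Every member has the form $X'=G'/\Gamma'$ with $G'$ a rational subgroup of $G$ --- hence closed and connected, and (being a connected subgroup of the simply connected nilpotent group $G$) itself simply connected --- and $\Gamma'=G'\cap\Gamma$, equipped with the induced filtration $G'_{\bullet}=\{G_{i}\cap G'\}$. I claim each such $X'$ satisfies (i) or (ii). The hypothesis that the vertical torus $G_{2}/(G_{2}\cap\Gamma)$ has dimension $1$ means $\dim\mathfrak{g}_{2}=1$. If $\mathfrak{g}'$ is abelian, i.e. $G'$ is abelian, we are in case (i). Otherwise $[\mathfrak{g}',\mathfrak{g}']\neq\{0\}$; since $G$ has order $2$ we have $[\mathfrak{g}',\mathfrak{g}']\subseteq[\mathfrak{g},\mathfrak{g}]\subseteq\mathfrak{g}_{2}$, so the one-dimensionality of $\mathfrak{g}_{2}$ forces $[\mathfrak{g}',\mathfrak{g}']=\mathfrak{g}_{2}$, hence $\mathfrak{g}_{2}\subseteq\mathfrak{g}'$ and $G_{2}=\exp\mathfrak{g}_{2}\subseteq\exp\mathfrak{g}'=G'$. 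Then the induced filtration has $G'_{2}=G_{2}\cap G'=G_{2}$, so $G'_{2}/(G'_{2}\cap\Gamma')=G_{2}/(G_{2}\cap\Gamma)$ is one-dimensional and we are in case (ii).

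Next, given $\omega\colon\mathbb{N}\rightarrow\mathbb{R}^{+}$ and $M_{0}\in\mathbb{N}$, I would invoke Theorem \ref{mFo} to obtain $M$ with $M_{0}\leq M\leq M_{1}=M_{1}(M_{0},X,\omega)$, a member $X'=G'/\Gamma'\in\mathcal{F}(M)$, and a factorization $g(m,n)=\e(m,n)g'(m,n)\gamma(m,n)$ on $[\tilde{N}]\times[\tilde{N}]$ with $\e$ being $(M,(\tilde{N},\tilde{N}))$-smooth (this is (iii)), with $g'$ a degree-$2$ polynomial sequence valued in $G'$ whose orbit $(g'(m,n)\cdot e_{X'})$ is totally $\omega(M)$-equidistributed in $X'$, and with $\gamma$ being $M$-rational and doubly periodic with periods at most $M$ (this is (v)). It remains to put $g'$ into the shape required by (iv). Here the key observation is that a degree-$2$ polynomial sequence for $G_{\bullet}$ that takes values in $G'$ is automatically a degree-$2$ polynomial sequence for $G'_{\bullet}$: each iterated derivative $D_{\vec{h_{j}}}\cdots D_{\vec{h_{1}}}g'$ lies in $G_{j}$ by the degree-$2$ hypothesis for $G_{\bullet}$, and lies in $G'$ because $G'$ is a group and $g'$ is $G'$-valued, hence lies in $G_{j}\cap G'=G'_{j}$. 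Applying Lemma \ref{2p} to $G'$ with its order-$2$ filtration $G'_{\bullet}$ then yields
\[
g'(m,n)=g_{0}\,g_{1,1}^{m}\,g_{1,2}^{n}\,g_{2,1}^{\binom{m}{2}}\,g_{2,2}^{mn}\,g_{2,3}^{\binom{n}{2}}
\]
with $g_{0},g_{1,1},g_{1,2}\in G'$ and $g_{2,1},g_{2,2},g_{2,3}\in G'_{2}=G_{2}\cap G'$; in particular $g_{2,1},g_{2,2},g_{2,3}\in G'_{2}$ in case (ii), and all six elements lie in $G'$ in both cases. This establishes (iv) and finishes the argument; note that $M_{1}$ depends only on $M_{0}$, $X$ and $\omega$, as required.

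I do not expect a genuine obstacle here: the analytically substantial content is entirely contained in Theorem \ref{mFo}. The two points that need care are the dichotomy argument --- which uses that $\mathfrak{g}_{2}$ is one-dimensional, that in an order-$2$ group all commutators lie in $G_{2}$, and that $\exp$ is a diffeomorphism on the simply connected group $G$ so that $\mathfrak{g}_{2}\subseteq\mathfrak{g}'$ upgrades to $G_{2}\subseteq G'$ --- and the passage to the induced filtration $G'_{\bullet}$, which is exactly what keeps every coordinate of $g'$ inside $G'$ rather than merely inside $G$. As indicated in Remark \ref{mF2}, it is precisely this last kind of bookkeeping, together with the control of the vertical structure, that becomes substantially more delicate for nilmanifolds of order $>2$.
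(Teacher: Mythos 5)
Your proof is correct, and the overall skeleton matches the paper's: take $\mathcal{F}(M)$ from Theorem \ref{mFo}, note that the dichotomy (i)/(ii) is automatic once the vertical torus of $X$ is one-dimensional (a non-trivial closed connected $G'_2\leq G_2\cong\mathbb{R}$ must equal $G_2$), and then rewrite $g'$ in the Taylor form of Lemma \ref{2p}. The one place you genuinely diverge is in how you force the Taylor coefficients into $G'$. The paper applies Lemma \ref{2p} to $(G,G_\bullet)$, obtaining $g_0,g_{1,1},g_{1,2}\in G$ and $g_{2,1},g_{2,2},g_{2,3}\in G_2$, and then reaches into the expansion by hand with iterated differences ($g_0=g'(0,0)$, $\partial_{(1,0)}^2 g''(m,0)=g_{2,1}$, etc.) to show one by one that each coefficient lies in $G'$, using along the way that $G_2$ is central since $G$ has order $2$; it finishes by separately invoking $G'_2=G_2$ in case (ii). You instead observe abstractly that a $G'$-valued degree-$2$ polynomial sequence for $G_\bullet$ is automatically a degree-$2$ polynomial sequence for the induced filtration $G'_\bullet$ (all iterated derivatives stay in the subgroup $G'$ and also in $G_j$, hence in $G'_j$), and then apply Lemma \ref{2p} directly to $(G',G'_\bullet)$. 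This is cleaner, sidesteps the centrality computation, and in fact delivers the slightly stronger conclusion $g_{2,1},g_{2,2},g_{2,3}\in G'_2$ in both cases (i) and (ii), not merely in case (ii). The two routes use the same ingredients; yours just packages the coefficient-containment step at a higher level of abstraction while the paper carries it out explicitly.
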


\begin{proof}
Let the integers $M_{1}, M$ and the nilmanifold $X'=G'/\Gamma'\in\mathcal{F}
(M)$ be given by Theorem \ref{mFo}. Note that the sequence
$(g(m,n))_{(m,n)\in[\tilde{N}]\times[\tilde{N}]}$ is a degree 2 polynomial sequence in $G$
with respect to the filtration $G_{\bullet}$. Let
$(g'(m,n))_{(m,n)\in[\tilde{N}]\times[\tilde{N}]}$ be the sequence given by the decomposition
of Theorem \ref{mFo}. By Lemma \ref{2p}, we can write
\begin{equation}\nonumber
  \begin{split}
  g'(m,n)=g_{0}g_{1,1}^{m}g_{1,2}^{n}g_{2,1}^{\binom{m}{2}}g_{2,2}^{mn}g_{2,3}^{\binom{n}{2}}
  \end{split}
\end{equation}
for some $g_{0}, g_{1,1}, g_{1,2}\in G, g_{2,1},g_{2,2},g_{2,3}\in G_{2}$.
It remains to show that
$g_{0}, g_{1,1}, g_{1,2}\in G'$, $g_{2,1},g_{2,2},g_{2,3}\in G'_{2}$
for case (ii).

Since $g_{0}=g(0,0)$, we have $g_{0}\in G'$. So the sequence
$g''(m,n)=g_{0}^{-1}g'(m,n)$ also takes values in $G'$.
Denote
$$\partial_{(x,y)}g(m,n)=g(m+x,n+y)g(m,n)^{-1}.$$ Since $G$ is of order 2,
$G_{2}$ is included in the center of $G$. Therefore, we obtain
\begin{equation}\nonumber
  \begin{split}
  \partial_{(1,0)}g''(m,0)=g_{1,1}g_{2,1}^{m} \text{ and }
  \partial_{(1,0)}^{2}g''(m,0)=g_{2,1}.
  \end{split}
\end{equation}
It follows that $g_{1,1},g_{2,1}\in G'$. Similarly,
$g_{1,2},g_{2,3}\in G'$. Thus the sequence
$g'''(m,n)=g_{2,2}^{m,n}$ also takes values in $G'$, which implies
$g_{2,2}\in G'$.

If we are in case (i), then $G'$ is abelian and we are done. If
$G'$ is not abelian, then $G'_{2}$ is a non-trivial subgroup of $G_{2}$.
Moreover, $G'_{2}$ is closed and connected, and by hypothesis $G_{2}$
is isomorphic to the torus $\mathbb{T}$. It follows that $G'_{2}=G_{2}$.
Hence $g_{2,1},g_{2,2},g_{2,3}\in G'_{2}$. This finishes the proof.
\end{proof}

\begin{rem}\label{mF2}
  We remark that Corollary \ref{mF} cannot be generalized to nilmanifolds of order $s>2$ by this method as our proof relies on the fact that $G_{2}$ lies in the center of $G$, which only holds for $s=2$.
\end{rem}

\section{Correlation of multiplicative functions with polynomial sequences}
The main goal of this section is to establish some correlation estimates needed in the next section. We show that multiplicative
functions do not correlate with a class of totally equidistributed polynomial
sequences of order 2. The precise statements appear in Propositions \ref{est2} and \ref{est1}.

\begin{defn}[Smoothness norms]
  Suppose $g\colon\mathbb{Z}^{t}\rightarrow\mathbb{R}/\mathbb{Z}$ is a polynomial map with Taylor expansion
  \begin{equation}\nonumber
  \begin{split}
  g(\vec{n})=\sum_{\vec{j}}\binom{\vec{n}}{\vec{j}}a_{\vec{j}},
  \end{split}
\end{equation}
where $a_{\vec{j}}\in\mathbb{R}/\mathbb{Z}$, and the sum is taken over all $\vec{j}=(j_{1},\dots,j_{t})$ such that $j_{1}+\dots+j_{t}\leq d$ for some $d\in\mathbb{N}$. For any $t$-tuple $\vec{N}=(N_{1},\dots,N_{t})\in\mathbb{N}^{t}$, denote
  \begin{equation}\nonumber
  \begin{split}
  \Vert g\Vert_{C^{\infty}[\vec{N}]}=\sup_{\vec{j}\neq\vec{0}}\vec{N}^{\vec{j}}\Vert a_{\vec{j}}\Vert_{\mathbb{R}/\mathbb{Z}},
  \end{split}
\end{equation}
where $\vec{N}^{\vec{j}}=N_{1}^{j_{1}}\cdot\ldots\cdot N_{t}^{j_{t}}$.
\end{defn}

The following lemma modifies Lemma 8.4 of ~\cite{GT} for our purposes, and its proof is similar to Lemma 5.1 of
~\cite{FH}, so we omit it.
\begin{lem}\label{51}
 Let $\vec{N}=(N_{1},N_{2})\in\mathbb{N}^{2}, d,q,r\in\mathbb{N}$ and $a_{1},a_{2},b_{1},b_{2}$ be
integers with $0<\vert a_{1}\vert, \vert a_{2}\vert\leq q$ and $\vert b_{1}\vert\leq r N_{1}, \vert b_{2}\vert\leq r N_{2}$.
There exist $C=C(d,q,r)>0$ and $\ell=\ell(q,d)\in\mathbb{N}$ such that if
$\phi\colon\mathbb{Z}^{2}\to\mathbb{R}/\mathbb{T}$ is a polynomial map of degree at most $d$ and 
$\psi$ is given by $\psi(n_{1},n_{2})=\psi(a_{1}n_{1}+b_{1},a_{2}n_{2}+b_{2})$, then
$$\Vert\ell\phi\Vert_{C^{\infty}[\vec{N}]}\leq C\Vert\psi\Vert_{C^{\infty}[\vec{N}]}.$$
\end{lem}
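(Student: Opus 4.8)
The plan is to follow the argument of Lemma~8.4 in \cite{GT} (see also Lemma~5.1 in \cite{FH}); the two‑variable statement here is a routine product extension of the one‑variable one, so I describe the latter and indicate the bookkeeping needed for the former. Throughout we work in the binomial‑coefficient basis. Writing the Taylor expansions
\begin{equation}\nonumber
\phi(m_{1},m_{2})=\sum_{j_{1}+j_{2}\leq d}\binom{m_{1}}{j_{1}}\binom{m_{2}}{j_{2}}a_{(j_{1},j_{2})}
\quad\text{and}\quad
\psi(n_{1},n_{2})=\sum_{k_{1}+k_{2}\leq d}\binom{n_{1}}{k_{1}}\binom{n_{2}}{k_{2}}b_{(k_{1},k_{2})}
\end{equation}
with coefficients in $\mathbb{R}/\mathbb{Z}$ (and recalling that $\psi(n_{1},n_{2})=\phi(a_{1}n_{1}+b_{1},a_{2}n_{2}+b_{2})$), we use that $n\mapsto\binom{an+b}{j}$ is integer‑valued and $\{\binom{n}{k}\}_{k\geq 0}$ is a $\mathbb{Z}$‑basis of the integer‑valued polynomials in one variable: there are \emph{integers} $p_{j,k}(a,b)$ (for $0\leq k\leq j$) with $\binom{an+b}{j}=\sum_{k=0}^{j}p_{j,k}(a,b)\binom{n}{k}$. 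Substituting $m_{i}=a_{i}n_{i}+b_{i}$ into the expansion of $\phi$ and re‑expanding gives, by uniqueness of the binomial expansion, the triangular relations
\begin{equation}\nonumber
b_{(k_{1},k_{2})}=\sum_{\substack{j_{1}\geq k_{1},\ j_{2}\geq k_{2}\\ j_{1}+j_{2}\leq d}}p_{j_{1},k_{1}}(a_{1},b_{1})\,p_{j_{2},k_{2}}(a_{2},b_{2})\,a_{(j_{1},j_{2})}.
\end{equation}
Comparing leading terms yields the diagonal coefficient $p_{k,k}(a,b)=a^{k}$, and expanding $\binom{an+b}{j}=\frac{1}{j!}\prod_{i=0}^{j-1}(an+b-i)$ and passing to the binomial basis (with conversion matrices depending only on $d$) gives, for $|a|\leq q$ and $|b|\leq rN$, the estimate $|p_{j,k}(a,b)|\leq C(d,q,r)\,N^{\,j-k}$; the exponent $j-k$ is exactly what the hypothesis $|b|\leq rN$ produces.

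The second step is to invert this triangular system. Fix $\ell=\ell(q,d)$, a sufficiently large power of $q!$, so that $a_{1}^{k_{1}}a_{2}^{k_{2}}$ divides $\ell$ — and divides any bounded power of $\ell$ that the induction below produces — for all $|a_{1}|,|a_{2}|\leq q$ and $k_{1}+k_{2}\leq d$. I claim that for every $(k_{1},k_{2})\neq(0,0)$ with $k_{1}+k_{2}\leq d$,
\begin{equation}\nonumber
N_{1}^{k_{1}}N_{2}^{k_{2}}\,\bigl\Vert\ell\,a_{(k_{1},k_{2})}\bigr\Vert\leq C(d,q,r)\,\Vert\psi\Vert_{C^{\infty}[\vec{N}]},
\end{equation}
which is precisely the assertion $\Vert\ell\phi\Vert_{C^{\infty}[\vec{N}]}\leq C(d,q,r)\Vert\psi\Vert_{C^{\infty}[\vec{N}]}$. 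One proves this by downward induction on $k_{1}+k_{2}$. At the top level $k_{1}+k_{2}=d$ the relation collapses to $b_{(k_{1},k_{2})}=a_{1}^{k_{1}}a_{2}^{k_{2}}\,a_{(k_{1},k_{2})}$; multiplying by the integer $\ell/(a_{1}^{k_{1}}a_{2}^{k_{2}})$ and using the triangle inequality for $\Vert\cdot\Vert$ gives $\Vert\ell a_{(k_{1},k_{2})}\Vert\leq\ell\Vert b_{(k_{1},k_{2})}\Vert$, and $N_{1}^{k_{1}}N_{2}^{k_{2}}\Vert b_{(k_{1},k_{2})}\Vert\leq\Vert\psi\Vert_{C^{\infty}[\vec{N}]}$ since $(k_{1},k_{2})\neq(0,0)$. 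For the inductive step one solves the triangular relation for $a_{(k_{1},k_{2})}$, multiplies through by an appropriate power of $\ell$, and observes that all of the resulting rescaled off‑diagonal coefficients are integers (this is the point of taking $\ell$ highly divisible); the error terms therefore involve only the already‑controlled quantities $\Vert\ell a_{(j_{1},j_{2})}\Vert$ with $j_{1}+j_{2}>k_{1}+k_{2}$, and the bound $|p_{j_{i},k_{i}}|\leq C(d,q,r)N_{i}^{\,j_{i}-k_{i}}$ combines with the inductive hypothesis to close the estimate. Note that the multi‑index $(0,0)$ never arises as an error term when $(k_{1},k_{2})\neq(0,0)$, so the (genuinely uncontrolled) coefficient $a_{(0,0)}$ is never needed.

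The main obstacle — and the reason the factor $\ell$ cannot be removed from the statement — is that the affine substitution $m_{i}\mapsto a_{i}n_{i}+b_{i}$ is not invertible on $\mathbb{R}/\mathbb{Z}$‑valued polynomials: dividing by $a^{k}$ is ill‑defined modulo $1$. Multiplying by the highly divisible integer $\ell$ clears exactly the denominators the inversion creates, while crucially keeping every rescaled transition coefficient an integer, so that in the induction the error terms are controlled by the already‑bounded $\Vert\ell a_{(j_{1},j_{2})}\Vert$ rather than by the a~priori meaningless $\Vert a_{(j_{1},j_{2})}\Vert$. The only other point requiring attention is the polynomial‑in‑$N$ growth of $p_{j,k}(a,b)$ with exponent equal to the index drop $j-k$, which is where the hypothesis $|b_{i}|\leq rN_{i}$ is used; the rest is routine bookkeeping, and this is why we follow \cite{GT}, \cite{FH} and omit the details.
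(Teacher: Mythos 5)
The paper omits the proof of this lemma, pointing the reader to Lemma 8.4 of \cite{GT} and Lemma 5.1 of \cite{FH}, and your sketch follows exactly that route: triangular change of variables in the binomial basis with the key diagonal term $a_1^{k_1}a_2^{k_2}$, the bound $|p_{j,k}(a,b)|\le C(d,q,r)N^{j-k}$, and a downward induction with a highly divisible $\ell$ to clear the denominators that the inversion creates, so the substance is correct and matches the intended argument. One phrase is a little garbled — ``divides any bounded power of $\ell$ that the induction below produces'' — since the induction actually \emph{accumulates} powers of the base divisor (one per level), so the clean way to state it is that $\ell$ should be taken to be $\ell_0^d$ where $\ell_0$ is the lcm of the $a_1^{k_1}a_2^{k_2}$'s; this is cosmetic and does not affect correctness.
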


The next result is a variation of Theorem 8.6 in ~\cite{GT}. It provides a convenient criterion for establishing equidistribution properties
of polynomial sequences on nilmanifolds:

\begin{thm}\label{Lei}(A variation of the quantitative Leibman Theorem).
  Let $X=G/\Gamma$ be a nilmanifold of order 2 and $t\in\mathbb{N}$. Then for any $\e>0$ small enough, there exists $D=D(X,\e,t)>0$ such that for any $N\in\mathbb{N}$ and any polynomial sequence $g\colon [\tilde{N}]^{t}\rightarrow G$, if $(g(\vec{n})\cdot e_{X})_{\vec{n}\in[\tilde{N}]^{t}}$ is not totally $\e$-equidistributed, then there exists a horizontal character $\eta=\eta(X,\e,t)$ such that $0<\Vert\eta\Vert\leq D$ and $\Vert \eta\circ g\Vert_{C^{\infty}[\tilde{N}]^{t}}\leq D$.
\end{thm}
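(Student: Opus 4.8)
The plan is to deduce this from the quantitative Leibman theorem of Green and Tao (Theorem 8.6 in \cite{GT}) by a routine translation of the hypothesis and conclusion into the normalizations used here. First I would recall what the Green--Tao statement gives: if a polynomial sequence $(g(\vec n)\cdot e_X)_{\vec n\in[N]^t}$ fails to be totally $\delta$-equidistributed on $X$, then there is a nontrivial horizontal character $\eta$ with $\Vert\eta\Vert\ll \delta^{-O_{X,t}(1)}$ such that $\Vert\eta\circ g\Vert_{C^\infty[N]^t}\ll \delta^{-O_{X,t}(1)}$. The only genuine discrepancy between that statement and the one claimed here is bookkeeping: here the cube is $[\tilde N]^t$ rather than $[N]^t$, the notion of ``totally equidistributed'' is phrased with Lipschitz test functions of mean zero restricted to product sets of arithmetic progressions (Definition \ref{equid}) rather than to sub-progressions of $[N]$, and we want to package the two quantitative bounds into a single constant $D=D(X,\e,t)$. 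Since $\tilde N$ is comparable to $N$ up to the fixed constant $200\ell$, and since the two notions of total equidistribution agree up to a change of constants (as already noted in the remark following Definition \ref{equid}), none of this affects the qualitative shape of the bound.

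The key steps, in order, would be: (1) fix $X$, $t$ and $\e$; apply the Green--Tao theorem with an auxiliary parameter $\delta=\delta(\e)$ chosen so that failure of total $\e$-equidistribution in the sense of Definition \ref{equid} implies failure of total $\delta$-equidistribution in the Green--Tao sense on $[\tilde N]^t$ (this is where the ``change of constants'' between the two definitions is invoked, together with the observation that a product of arithmetic progressions inside $[\tilde N]^t$ is covered by their framework). (2) Read off from their conclusion a nontrivial horizontal character $\eta$ with $0<\Vert\eta\Vert\le D_1(X,\delta,t)$ and $\Vert\eta\circ g\Vert_{C^\infty[\tilde N]^t}\le D_2(X,\delta,t)$. (3) Set $D(X,\e,t)=\max\{D_1,D_2\}$, which depends only on $X$, $\e$ (through $\delta$) and $t$, as required; the hypothesis ``$\e$ small enough'' is just what is needed so that the Green--Tao theorem applies and $\delta<1$.

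One small point that deserves a sentence of care is that in Definition \ref{equid} the first coordinate range is pinned to $\tilde N$ while the others are general $N_i$, whereas the statement of Theorem \ref{Lei} takes all $t$ coordinates equal to $\tilde N$; since the latter is the only case used, one just specializes, and I would phrase the proof directly in the cube $[\tilde N]^t$ so this issue does not arise.

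I do not expect any serious obstacle here: the entire content is already contained in Theorem 8.6 of \cite{GT}, and the work is limited to matching normalizations. The only place requiring minor attention is verifying that the equivalence of the two total-equidistribution notions is uniform enough that the implied constant in $\delta=\delta(\e)$ depends on nothing but $X$, $\e$ and $t$ — but this is exactly the content of the standard reduction (splitting a Lipschitz function of mean zero along the product of progressions, or equivalently noting that indicators of progressions are themselves approximable), and it introduces only polynomial losses in $\e$, which are absorbed into $D$.
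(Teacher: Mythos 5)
Your proposal rests on the claim that Theorem 8.6 of \cite{GT} is already stated under the hypothesis that the sequence fails to be \emph{totally} $\delta$-equidistributed; this is not the case, and the paper's own remark following Theorem \ref{Lei} says so explicitly: the result in \cite{GT}, \cite{er}, \cite{er2} is stated only under the stronger hypothesis that the sequence is not $\e$-equidistributed, i.e.\ that (\ref{33}) fails with each $P_i = [\tilde N]$. Since ``totally $\e$-equidistributed'' is a stronger property, ``not totally $\e$-equidistributed'' is a weaker hypothesis, and so Theorem \ref{Lei} is a genuine strengthening of the cited result, not a renormalization of it. The ``change of constants'' you invoke addresses a different and much smaller matter (reconciling the paper's Definition \ref{equid} with Green--Tao's definition of total equidistribution), not the gap between ``totally equidistributed'' and ``equidistributed,'' which is where the real content lies.

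To close that gap one has to do the following. From failure of total $\e$-equidistribution one extracts a product of arithmetic progressions $P_1\times\dots\times P_t$ on which $g$ fails to be equidistributed; after an affine reparametrization, $g$ restricted to that box becomes a polynomial sequence on a genuine cube, and only then can one apply the cube version of the quantitative Leibman theorem. One obtains a horizontal character $\eta$ whose smoothness is controlled in terms of the side-lengths $|P_i|$, and one must then transfer this $C^\infty$-bound back to the original parametrization on $[\tilde N]^t$; this is precisely the role of a lemma of the type of Lemma \ref{51} and requires that the $|P_i|$ be bounded below by a fixed fraction of $\tilde N$. Guaranteeing that one can choose such sub-progressions (without the constants degenerating) is itself an argument, and the paper indicates that it follows the pattern of Lemma 3.1 in \cite{er2} combined with Theorem 5.2 of \cite{FH}. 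None of these steps appear in your proposal, so as written the argument does not establish Theorem \ref{Lei}.
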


\begin{rem}
This theorem is stated in ~\cite{er}, ~\cite{er2} and ~\cite{GT} under the stronger hypothesis that the sequence is not ``$\e$-equidistributed in $X$", meaning (\ref{33}) fails for $P_{i}=[N],1\leq i\leq t$.
This stronger result can be obtained by using Theorem 5.2 ~\cite{FH} combined with a similar argument in Lemma 3.1 in ~\cite{er2} (see also this argument in Step 3 of Proposition \ref{est1}). We omit the proof. 
\end{rem}

The following is a partial converse of the above result:

\begin{thm}\label{inv}(Inverse Leibman Theorem).
  Let $X=G/\Gamma$ be a nilmanifold of order 2 and $d\in\mathbb{N}$. There exists $C=C(X,d)>0$ such that for every $D>0$, every 2-tuple $\vec{N}=(N_{1},N_{2})\in\mathbb{N}^{2}$ with both $N_{1}$ and $N_{2}$ sufficiently large depending only on $X$ and $D$, and every polynomial map $g\colon [\vec{N}]\rightarrow G$ of degree at most $d$, if there exists a non-trivial horizontal character $\eta$ of $X$ with $\Vert\eta\Vert\leq D$ and $\Vert \eta\circ g\Vert_{C^{\infty}[\vec{N}]}\leq D$, then the sequence $(g(\vec{n})\cdot e_{X})_{\vec{n}\in[\vec{N}]}$ is not totally $CD^{-3}$-equidistributed in $X$.
\end{thm}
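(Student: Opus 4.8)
The plan is to show directly that the existence of the horizontal character $\eta$ prevents total equidistribution by exhibiting a specific Lipschitz test function that defeats the defining inequality of Definition \ref{equid}. First I would set $F(x) = e(\eta(x))$, where we abuse notation and view $\eta$ as a character of the maximal torus $G/([G,G]\Gamma)\cong\mathbb{T}^{s}$, lifted to $X$; since $\eta$ is non-trivial, $\int_{X} F\, dm_{X} = 0$, and because $\Vert\eta\Vert\leq D$ the Lipschitz norm $\Vert F\Vert_{Lip(X)}$ is bounded by $O_{X}(D)$ (this uses the identification of the maximal torus with its standard metric and the fact that $\eta$ is a homomorphism, so its Lipschitz constant on $G$ is controlled by $\Vert\eta\Vert$). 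To normalize we work with $F/\Vert F\Vert_{Lip(X)}$; the factor $\Vert F\Vert_{Lip(X)}^{-1} \gg_{X} D^{-1}$ is where one power of $D$ in the final $D^{-3}$ is lost.

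Next I would compute $\mathbb{E}_{\vec{n}\in[\vec{N}]} F(g(\vec{n})\cdot e_{X})$ by pushing through the horizontal character. The key point is that $\eta\circ g$, being a polynomial map $[\vec{N}]\to\mathbb{R}/\mathbb{Z}$ of degree at most $d$, can be written via its Taylor expansion $\eta(g(\vec{n})) = \sum_{\vec{j}} \binom{\vec{n}}{\vec{j}} a_{\vec{j}}$, and the hypothesis $\Vert\eta\circ g\Vert_{C^{\infty}[\vec{N}]}\leq D$ says precisely that $\vec{N}^{\vec{j}}\Vert a_{\vec{j}}\Vert_{\mathbb{R}/\mathbb{Z}}\leq D$ for all $\vec{j}\neq\vec{0}$. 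By a standard van der Corput / Weyl-sum type estimate for $\mathbb{E}_{\vec{n}\in[\vec{N}]} e(\eta(g(\vec{n})))$ over a box (applied one variable at a time, using that each one-dimensional exponential sum of a polynomial whose top-degree coefficient is essentially rational with small height and whose other coefficients are small is bounded below in absolute value), one obtains $\bigl|\mathbb{E}_{\vec{n}\in[\vec{N}]} e(\eta(g(\vec{n})))\bigr| \gg_{X,d} D^{-1}$ once $N_{1},N_{2}$ are large enough depending on $X$ and $D$. Here, rather than reprove this I would invoke the relevant quantitative estimate from \cite{GT} (their Lemma 8.4 / the discussion around it) in the two-variable box form; combined with the normalization above this already gives a violation of (\ref{33}) with $P_{i}=[N_{i}]$ at level $\gg_{X,d} D^{-2}$.

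The remaining step is to upgrade "not equidistributed on the full box $[N_{1}]\times[N_{2}]$" to "not \emph{totally} equidistributed," i.e.\ to get the failure of (\ref{33}) for the full box from the definition which quantifies over all sub-progressions. But this direction is trivial: total equidistribution is a \emph{stronger} hypothesis than equidistribution on the full box (take $P_{i}=[N_{i}]$), so a witness against full-box equidistribution is automatically a witness against total equidistribution. Thus the only genuine content is the lower bound on the exponential sum, and tracking constants through the normalization of $F$ and through the Weyl estimate yields the claimed exponent: one power of $D$ from $\Vert F\Vert_{Lip}^{-1}$ and two more from the box exponential-sum lower bound, giving $CD^{-3}$ for a suitable $C = C(X,d)$.

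**Main obstacle.** The main obstacle is bookkeeping rather than conceptual: one must make sure the lower bound $\gg D^{-2}$ for $|\mathbb{E}_{\vec{n}} e(\eta(g(\vec{n})))|$ over a genuine two-dimensional box (not a single interval) is correctly extracted from the one-variable Weyl-type estimates — the coefficients $a_{\vec{j}}$ with mixed indices $\vec{j}$ couple the two variables, so one fixes $n_{1}$, bounds the $n_{2}$-sum from below for most $n_{1}$ using that the $n_{2}$-polynomial has small coefficients, and then averages; keeping the dependence on $D$ linear-in-inverse at each stage (so that the product is $D^{-2}$ and not a worse power) is the point that needs care. Making "$N_{1},N_{2}$ sufficiently large depending only on $X$ and $D$" explicit is exactly what lets these lower bounds survive.
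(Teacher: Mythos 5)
Your proposal correctly identifies the test function $F(x)=e(\eta(x))$, the fact that $\int F\,dm_{X}=0$ since $\eta$ is non-trivial, and the Lipschitz normalization that costs one power of $D$. But the central claim on which your argument rests --- that the full-box exponential sum satisfies $\bigl|\mathbb{E}_{\vec{n}\in[\vec{N}]}e(\eta(g(\vec{n})))\bigr|\gg_{X,d}D^{-1}$ --- is false, and there is no Weyl-type estimate that gives such a lower bound. The hypothesis $\Vert\eta\circ g\Vert_{C^{\infty}[\vec{N}]}\leq D$ only forces the Taylor coefficients $a_{\vec{j}}$ to satisfy $\|a_{\vec{j}}\|_{\mathbb{R}/\mathbb{Z}}\leq D/\vec{N}^{\vec{j}}$; a coefficient of genuine size $\sim 1/N_{i}$ for a degree-$1$ term is allowed, and then the phase makes on the order of $D$ full revolutions over $[N_{i}]$, so the full-box sum can vanish. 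Concretely, take $\eta\circ g(n_{1},n_{2})=Dn_{1}/N_{1}$ with $D$ a positive integer coprime to $N_{1}$: then $\Vert\eta\circ g\Vert_{C^{\infty}[\vec{N}]}=D$ but the full-box average is exactly $0$.

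This is precisely why you cannot dismiss the ``totally'' in total equidistribution as trivial. The paper's proof does not bound the full-box sum from below; it observes that for $(m,n)$ in the sub-box $[cN_{1}/D]\times[cN_{2}/D]$ the phase $\eta(g(m,n))$ moves by at most $1/2$ away from $\eta(g(0,0))$, so that $|\mathbb{E}_{m\leq cN_{1}/D,\,n\leq cN_{2}/D}e(\eta(g(m,n)))|\geq 1/2$, and it is the indicator $\bold{1}_{[cN_{1}/D]\times[cN_{2}/D]}$ --- a product of arithmetic progressions, exactly what Definition \ref{equid} quantifies over --- that produces the violation of total equidistribution at scale $\frac{c^{2}}{4D^{2}}$. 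Your plan has the sub-box/full-box roles reversed: the sub-progression structure is not a formality you can collapse by noting that total equidistribution implies full-box equidistribution; it is the mechanism by which any nontrivial lower bound is obtained at all. So as written the proposal has a genuine gap at the lower-bound step, and it cannot be repaired without reintroducing the restriction to a sub-box of side $\sim N_{i}/D$.
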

\begin{rem}
  It is worth noting that Theorem \ref{Lei} and \ref{inv} hold for general nilmanifolds of order $s, s\in\mathbb{N}$. But we only need the case $s=2$ in this paper.
\end{rem}
\begin{proof}
  Since $\Vert \eta\circ g\Vert_{C^{\infty}[\vec{N}]}\leq D$, we have that
    \begin{equation}\nonumber
  \begin{split}
  \eta(g(m,n))=\sum_{\vec{j}}\binom{(m,n)}{\vec{j}}a_{\vec{j}},
  \end{split}
\end{equation}
where $\Vert a_{\vec{j}}\Vert\leq\frac{D}{N_{1}^{j_{1}}N_{2}^{j_{2}}}$ for all $0< j_{1}+j_{2}\leq d$. Thus
$\vert e(\eta(g(0,0)))-e(\eta(g(m,n)))\vert\leq 1/2$ for $1\leq m\leq cN_{1}/D, 1\leq n\leq cN_{2}/D$, where $c$ is a constant depending only on $d$. Suppose that $N_{1},N_{2}\geq\frac{8D}{c}$. Then
    \begin{equation}\nonumber
  \begin{split}
  \Bigl\vert\mathbb{E}_{m\leq cN_{1}/D, n\leq cN_{2}/D}e(\eta(g(m,n)))\Bigr\vert\geq\frac{1}{2},
  \end{split}
\end{equation}
which inplies that
    \begin{equation}\nonumber
  \begin{split}
  \Bigl\vert\mathbb{E}_{(m,n)\in[\vec{N}]}\bold{1}_{[cN_{1}/D]\times[cN_{2}/D]}(m,n)
    e(\eta(g(m,n)))\Bigr\vert\geq\frac{c^{2}}{2D^{2}}-\frac{c}{D}(\frac{1}{N_{1}}+\frac{1}{N_{2}})
\geq\frac{c^{2}}{4D^{2}}.
  \end{split}
\end{equation}

 Since $\Vert\eta\Vert\leq D$, the function $x\rightarrow e(\eta(x))$ defined on $X$ is Lipschitz with constant at most $C'D$ for some $C'=C'(X,d)$, and has integral 0 since $\eta$ is non-trivial. Therefore, the sequence $(g(\vec{n})\cdot e_{X})_{\vec{n}\in[\vec{N}]}$ is not totally $CD^{-3}$-equidistributed with $C=c^{2}/4C'$.
\end{proof}

The following lemma is an extension of Corollary 5.5 in ~\cite{FH} and the proof is similar:
\begin{lem}\label{shift}(Shifting the nilmanifold).
  Let $X=G/\Gamma$ be a nilmanifold of order 2, $G'$ be a rational subgroup
 of $G, h\in G$ be a rational element. Denote
 $X'=G'\cdot e_{X}, e_{Y}=h\cdot e_{X}, Y=G'\cdot e_{Y}$.
Then for every $\e>0$, there exists $\d=\d(G',X,h,\e)>0$ such
that for every polynomial $(g'(m,n))_{(m,n)\in[\tilde{N}]\times[\tilde{N}]}$ on
$G'$ of degree 2 with respect to $G'_{\bullet}$, if $(g'(m,n)\cdot e_{X})_{{m,n}\in[\tilde{N}]\times[\tilde{N}]}$ is totally $\d$-equidistributed in $X'$, then $(g'(m,n)\cdot e_{Y})_{(m,n)\in[\tilde{N}]\times[\tilde{N}]}$ is totally $\e$-equidistributed in $Y$.
\end{lem}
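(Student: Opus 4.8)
The plan is to argue by contraposition and to convert equidistribution statements into statements about horizontal characters via Theorems~\ref{Lei} and~\ref{inv}, since such characters transfer easily under conjugation by the rational element $h$. Throughout, everything will be independent of the particular sequence $g'$, which is what the statement requires.

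First I would conjugate $Y$ back to a fixed sub-nilmanifold of $X$ passing through $e_{X}$. Put $G''=h^{-1}G'h$ and $\Gamma''=G''\cap\Gamma$. Since $h$ is rational, $\mathrm{Ad}(h^{-1})$ is rational in the Mal'cev basis, so $G''$ is a rational subgroup, $\Gamma''$ is a cocompact lattice in it, and $X''\coloneqq G''/\Gamma''=G''\cdot e_{X}$ is a sub-nilmanifold of order $\le 2$ \emph{depending only on $G',h,X$}. The map $\Phi\colon X''\to Y$, $\Phi(x)=h\cdot x$, is a bijection that intertwines the $G''$-action on $X''$ with the $G'$-action on $Y$ via conjugation by $h$, pushes $m_{X''}$ onto $m_{Y}$, and — by Lemma~\ref{cts} applied to the bounded sets $\{h\}$ and $\{h^{-1}\}$ — is bi-Lipschitz with a constant $H=H(h,X)$. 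Setting $g''(m,n)=h^{-1}g'(m,n)h$, conjugation by $h^{-1}$ is a group isomorphism $G'\to G''$ carrying the induced filtration $G'_{\bullet}$ onto $G''_{\bullet}$ (here one uses that $G_{2}$ is central in $G$), so $g''$ is again a degree-$2$ polynomial sequence, now on $G''$, and $g'(m,n)\cdot e_{Y}=\Phi\bigl(g''(m,n)\cdot e_{X}\bigr)$.

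Next, suppose $(g'(m,n)\cdot e_{Y})_{(m,n)\in[\tilde N]\times[\tilde N]}$ fails to be totally $\e$-equidistributed in $Y$, witnessed by some $F\in Lip(Y)$ with $\Vert F\Vert_{Lip(Y)}\le 1$, $\int_{Y}F\,dm_{Y}=0$, and arithmetic progressions $P_{1},P_{2}$. Transporting $F$ through $\Phi$ yields a test function on $X''$ showing $(g''(m,n)\cdot e_{X})$ is not totally $(\e/H)$-equidistributed in $X''$; since $X''$ is fixed, Theorem~\ref{Lei} produces a nontrivial horizontal character $\eta''$ of $X''$ with $\Vert\eta''\Vert\le D$ and $\Vert\eta''\circ g''\Vert_{C^{\infty}[\tilde N]^{2}}\le D$ for some $D=D(G',X,h,\e)$ (we may harmlessly assume $\e$ small enough for Theorem~\ref{Lei} to apply). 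I would then pull $\eta''$ back to $G'$ by $\eta'(g)\coloneqq\eta''(h^{-1}gh)$: this is a continuous homomorphism trivial on $[G',G']=h[G'',G'']h^{-1}$, and it satisfies $\eta'\circ g'=\eta''\circ g''$, so $\Vert\eta'\circ g'\Vert_{C^{\infty}[\tilde N]^{2}}\le D$. The character $\eta'$ need not be trivial on $\Gamma'=G'\cap\Gamma$, but after multiplying by a suitable integer $Q'=Q'(h,X)$ it is, so $\bar\eta\coloneqq Q'\eta'$ is an honest nontrivial horizontal character of $X'$ with $\Vert\bar\eta\Vert$ and $\Vert\bar\eta\circ g'\Vert_{C^{\infty}[\tilde N]^{2}}$ bounded in terms of $G',X,h,\e$. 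Feeding $\bar\eta$ into Theorem~\ref{inv} for the nilmanifold $X'$ then gives that $(g'(m,n)\cdot e_{X})$ is not totally $\d$-equidistributed in $X'$ for a suitable $\d=\d(G',X,h,\e)$ (with $N$ taken large enough for Theorems~\ref{Lei} and~\ref{inv} to apply), which is the desired contrapositive.

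The hard part will be the last transfer, i.e.\ verifying that the pulled-back character $\eta'$ becomes trivial on $\Gamma'$ after multiplication by an integer bounded only in terms of $h$ and $X$; this is precisely where the rationality of $h$ is essential. The key input is the standard fact (as in~\cite{GT}) that conjugation by a $Q$-rational element carries $\Gamma$ to a lattice commensurable with $\Gamma$ with index bounded polynomially in $Q$; intersecting with $G''$ shows that $h^{-1}\Gamma' h$ lies inside a lattice of $G''$ that is commensurable with $\Gamma''$ with controlled index, and since $\eta''$ is integer-valued on $\Gamma''$ it then takes values in $\tfrac1{Q'}\mathbb{Z}/\mathbb{Z}$ on $h^{-1}\Gamma' h$. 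The remaining points — that the intrinsic metrics on $Y$, $X''$ and $X'$ are mutually comparable up to constants depending only on $G',h,X$, and that conjugation by $h$ acts on the maximal tori by a bounded rational matrix, so that $\Vert\bar\eta\Vert$ is controlled — are routine and only affect the implicit constants.
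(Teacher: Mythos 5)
Your proof is correct and follows the same route as Corollary~5.5 of \cite{FH}, which the paper cites for this lemma: conjugate by $h$ to a fixed sub-nilmanifold through $e_{X}$, extract a horizontal character from non-equidistribution via Theorem~\ref{Lei}, transfer it back to $X'$ using the rationality of $h$ to control the commensurability index, and conclude with Theorem~\ref{inv}. The one delicate point, clearing denominators to make the transferred character trivial on $\Gamma'$ with a bound depending only on $h$ and $X$, is handled correctly by your commensurability argument.
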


By Lemma B.6 in the appendix in ~\cite{FH}, $\Gamma\cap G'$ is co-compact in $G'$ and thus
$X'$ is a closed sub-manifold of $X$. In a similar fashion, $(h\Gamma h^{-1}\cap G')$
is co-compact in $G'$ and $Y$ is a closed sub-nilmanifold of $X$.

We are now going to prove the two main results of this section that give asymptotic orthogonality
of multiplicative functions to some totally equidistributed polynomial sequences. These results
are used later in the proof of Theorem ~\ref{nU3s} to treat each of the two distinct cases arising
from an application of Corollary \ref{mF}. Both proofs are based on Katai's orthogonality
criterion (Lemma \ref{katai}) and the quantitative Leibman Theorem (Theorem \ref{Lei}).

\subsection{Correlation of multiplicative functions with polynomial sequences: abelian case}
The goal of this subsection is to prove the following proposition:
\begin{prop}\label{est2}(Correlation property for abelian case). Let $X=\mathbb{T}^{s}$ for some $s\in\mathbb{N}$.
  For any $\D>0$, there exist $\sigma=\sigma(s,\D)>0,N_{0}=N_{0}(s,\D)\in\mathbb{N}$ such that for every $N\geq N_{0}$ and every totally $\sigma$-equidistributed degree 2 polynomial $(g(m,n))_{(m,n)\in[\tilde{N}]\times[\tilde{N}]}$ in $X$ of the form
  \begin{equation}\nonumber
  \begin{split}
    g(m,n)=\bold{\A}_{0}+\bold{\A}_{1}m+\bold{\A}_{2}n+\bold{\B}_{1}\binom{m}{2}+\bold{\B}_{2}mn+\bold{\B}_{3}\binom{n}{2},
  \end{split}
  \end{equation}
we have that
  \begin{equation}\nonumber
  \begin{split}
    \sup_{\B,\chi,\Phi,P}\Bigl\vert\mathbb{E}_{\A\in \R}\bold{1}_{P}(\A)\chi(\A)\Phi(g'(\B+\A)\cdot e_{X})\Bigr\vert<\D,
  \end{split}
  \end{equation}
  where $g'\colon \R\rightarrow G, g'(a+bi)=g(a,b)$, and the sup is taken over all $\B\in \R, \chi\in\mathcal{M}_{\Z}, \Phi\in Lip(X)$ with $\Vert\Phi\Vert_{Lip(X)}\leq 1$ and $\int\Phi dm_{X}=0$, and all $P=\{a+bi\colon a\in P_{1}, b\in P_{2}\}$, where $P_{i}$ is an arithmetic progression in $[\tilde{N}]$ for $i=1,2$.
\end{prop}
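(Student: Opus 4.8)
The plan is to apply Katai's orthogonality criterion (Lemma \ref{katai}) to reduce the estimate to a bound on a two-variable quadratic exponential sum, which is then treated by Weyl differencing; the input that closes the argument is the way multiplication by a Gaussian prime acts on quadratic forms, combined with the fact that Lemma \ref{katai} only involves primes $p,q$ of \emph{different} norm. Fix $\D$ and $X=\mathbb{T}^{s}$. Approximating $\Phi$ uniformly by a trigonometric polynomial of degree $L=L(s,\D)$ and using $\int\Phi\,dm_{X}=0$ to discard the constant term, it suffices to prove that
$$
\Bigl\vert\mathbb{E}_{\A\in\R}\bold{1}_{P}(\A)\,\chi(\A)\,e\bigl(\k\cdot g'(\B+\A)\bigr)\Bigr\vert<\D'
$$
for some $\D'=\D'(s,\D)$, uniformly over nonzero $\k\in\mathbb{Z}^{s}$ with $\vert\k\vert\le L$, over $\chi\in\mathcal{M}_{\Z}$, $\B\in\R$, and over boxes $P$ of arithmetic progressions. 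Applying Lemma \ref{katai} on $\R$ to $f(\A)=\bold{1}_{P}(\A)e(\k\cdot g'(\B+\A))$, with $\e$ there equal to $\D'$ and $K_{0}=1$, and denoting by $K$ and $\d$ the resulting constants, it is enough to show that for every pair of Gaussian primes $p,q$ with $1<\mathcal{N}(p)<\mathcal{N}(q)<K$,
$$
\frac{1}{\vert\R\vert}\Bigl\vert\sum_{\A\in\R/p\cap\R/q}\bold{1}_{P}(p\A)\,\bold{1}_{P}(q\A)\,e\bigl(\psi_{p,q}(\A)\bigr)\Bigr\vert<\d,\qquad \psi_{p,q}(\A):=\k\cdot g'(\B+p\A)-\k\cdot g'(\B+q\A);
$$
since multiplication by $p$ is a linear map $L_{p}$ on $\mathbb{Z}^{2}$ and $g$ has degree $2$, $\psi_{p,q}$ is, in the coordinates of $\A$, a polynomial of degree $2$.

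For the exponential sum: the summation region is cut out by boundedly many conditions of the form ``an affine form in the coordinates of $\A$ lies in an arithmetic progression'', and if the modulus of $P_{1}$ or $P_{2}$ exceeds a bound depending only on $\d$ then $\vert P\vert$ is too small relative to $\vert\R\vert$ for the average to reach $\d$; so one may assume all moduli bounded and, after the bounded-index affine change of variables of Lemma \ref{51}, that the region is a convex subset of $\mathbb{Z}^{2}$. A Weyl-differencing argument -- relying on the estimates for exponential sums over convex subsets of $\mathbb{Z}^{2}$ developed in Section \ref{u2}, and observing that linearising the quadratic over boundedly many pieces perturbs its linear Taylor coefficients only by $O(1/\tilde N)$, which is exactly the precision at which those coefficients must be controlled -- then shows that the average above is $<\d$ unless there is a nonzero integer $\ell$, bounded in terms of $\d$, with $\Vert\ell\,\psi_{p,q}\Vert_{C^{\infty}[\tilde N]^{2}}\le D$ for some $D=D(\d)$.

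Assume this second alternative holds for some pair $(p,q)$. In the basis $\{\,\vert z\vert^{2},\ \mathrm{Re}(z^{2}),\ \mathrm{Im}(z^{2})\,\}$ of the real quadratic forms on $\mathbb{R}^{2}=\mathbb{C}$, the operator $L_{p}$ acts by multiplication by $\mathcal{N}(p)^{2}$ on the $\vert z\vert^{2}$-coordinate and by the Gaussian integer $p^{2}$ on the $z^{2}$-coordinate, so the degree-$2$ part of $\psi_{p,q}$ corresponds to the pair $\bigl((\mathcal{N}(p)^{2}-\mathcal{N}(q)^{2})\lambda_{0},\ (p^{2}-q^{2})\mu\bigr)$, where $(\lambda_{0},\mu)$ are the $\vert z\vert^{2}$- and $z^{2}$-coefficients of $\k$ applied to the quadratic part of $g$. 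Since Lemma \ref{katai} forces $\mathcal{N}(p)\ne\mathcal{N}(q)$, hence $p\ne\pm q$, the integer $\mathcal{N}(p)^{2}-\mathcal{N}(q)^{2}$ and the Gaussian integer $p^{2}-q^{2}$ are nonzero and bounded; multiplying $\ell\,\psi_{p,q}$ by their conjugates to turn them into positive integers, the bound on $\Vert\ell\,\psi_{p,q}\Vert_{C^{\infty}}$ produces a bounded positive integer $m$ with $\Vert m(\k\cdot\bold{\B}_{j})\Vert\le C/\tilde N^{2}$ for $j=1,2,3$, where $C=C(s,\D)$. The degree-$1$ part of $\psi_{p,q}$ is an explicit linear form whose coefficients are $(p-q)$ times the ``first derivative of $\k\cdot g$ at $\B$'', i.e.\ combinations of $\k\cdot\bold{\A}_{1},\k\cdot\bold{\A}_{2}$ and of the $\k\cdot\bold{\B}_{j}$ weighted by the coordinates of $\B$; as those coordinates are at most $\tilde N$, the bound just obtained shows the $\k\cdot\bold{\B}_{j}$-terms already contribute at scale $1/\tilde N$, so the degree-$1$ part of $\Vert\ell\,\psi_{p,q}\Vert_{C^{\infty}}\le D$ yields $\Vert m'(\k\cdot\bold{\A}_{i})\Vert\le C'/\tilde N$ for $i=1,2$ and a bounded positive integer $m'$. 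Then $\eta:=mm'\k$ is a nonzero horizontal character of $X=\mathbb{T}^{s}$ with $\Vert\eta\Vert$ and $\Vert\eta\circ g\Vert_{C^{\infty}[\tilde N]^{2}}$ both bounded by some $D'=D'(s,\D)$, which by Theorem \ref{inv} is incompatible with $g$ being totally $\sigma$-equidistributed once $\sigma<C(X,2)D'^{-3}$. Choosing $\sigma=\sigma(s,\D)$ below this threshold (and $N_{0}$ large enough throughout) completes the argument.

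The main difficulty lies in the last paragraph: one must pass from a lower bound on the correlation sum to a \emph{single} bounded horizontal character of $X$ annihilating $g$, which requires retaining the degree-$1$ Taylor data of $\psi_{p,q}$ -- it is needed to contradict the full $C^{\infty}$ lower bound forced by equidistribution, not merely its degree-$2$ part -- in spite of the contamination by the shift $\B$, and it requires handling the $\vert z\vert^{2}$- and $z^{2}$-components separately in order to clear denominators. This is precisely where Gaussian-integer arithmetic enters and the argument departs from its one-dimensional counterpart in \cite{FH}; the bookkeeping for the summation region is routine by comparison.
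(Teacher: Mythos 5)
Your proposal follows the same skeleton as the paper's proof: Fourier-expand $\Phi$ to reduce to a single character $\k$, apply Lemma~\ref{katai} to pass to a quadratic exponential sum involving primes $p,q$ of distinct norm, extract near-rationality of all Taylor coefficients of $\k\cdot g$, and conclude with Theorem~\ref{inv}. The crucial inputs ($\mathcal{N}(p)\neq\mathcal{N}(q)$ giving $\mathcal{N}(p)^2\neq\mathcal{N}(q)^2$, and $p\neq\pm q$ giving $p^2\neq q^2$) are identical. Where you differ is in two places, one a genuine improvement and one a glossed-over gap.

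The improvement is the algebraic bookkeeping. The paper slices $\R/p\cap\R/q$ along horizontal, vertical and diagonal lines, extracts three conditions on $\k\cdot\bold{\B}_1,\k\cdot\bold{\B}_2,\k\cdot\bold{\B}_3$, and verifies the determinant $-(A+C)\bigl((A-C)^2+2B^2\bigr)\neq 0$. Your observation that multiplication by $p$ on $\mathbb{C}$ scales the $|z|^2$-component of a real quadratic form by $\mathcal{N}(p)^2$ and acts on the $z^2$-component by $p^2$ explains exactly \emph{why} such a determinant is nonzero: the relevant $3\times 3$ matrix is block-diagonal with blocks $\mathcal{N}(p)^2-\mathcal{N}(q)^2$ and the $2\times 2$ matrix of multiplication by $p^2-q^2$. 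This is a cleaner, conceptual account of the paper's computation, and the subsequent handling of the degree-1 coefficients (subtracting the contribution weighted by $\B$, which you correctly note is $O(C/\tilde N)$ once the $\k\cdot\bold{\B}_j$ are known to $O(C/\tilde N^2)$, then inverting the $2\times 2$ multiplication-by-$(\bar p-\bar q)$ matrix) matches the paper's.

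The gap is the step where you assert that ``a Weyl-differencing argument... shows that the average above is $<\d$ unless $\Vert\ell\psi_{p,q}\Vert_{C^\infty[\tilde N]^2}\le D$.'' This is not a routine invocation of anything already proved: Section~\ref{u2} only treats \emph{linear} exponential sums over convex sets, Theorem~\ref{Lei} is stated for products of arithmetic progressions (not convex sets), and a one-shot Weyl differencing over a 2D convex region gives you control of the degree-2 coefficients only, not the full $C^\infty$ norm that your subsequent argument uses for the degree-1 part. Making this rigorous requires either iterating (linearise on bounded subpieces, as you gesture at) or slicing the convex region into 1D progressions and invoking the abelian Leibman theorem in one variable, which is exactly what the paper does — three times, in three directions. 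In other words, the paper's slicing is not a stylistic alternative to your Weyl-differencing step; it is the execution of it. Filling in that step would essentially reproduce the paper's argument, at which point the proposal and the paper coincide.
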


\begin{proof}
  In this proof, $C_{1},C_{2},\dots$ are constants depending only on $s$ and $\D$.

 Suppose that
    \begin{equation}\nonumber
  \begin{split}
    \Bigl\vert\mathbb{E}_{\A\in \R}\bold{1}_{P}(\A)\chi(\A)\Phi(g'(\B+\A)\cdot e_{X})\Bigr\vert\geq\D
  \end{split}
  \end{equation}
  for some $\B=b_{1}+b_{2}i\in \R, \chi\in\mathcal{M}_{\Z}, \Phi\in Lip(X)$ with $\Vert\Phi\Vert_{Lip(X)}\leq 1$ and $\int\Phi dm_{X}=0$, and some $P=\{a+bi\colon a\in P_{1}, b\in P_{2}\}$, where $P_{i}$ is an arithmetic progression in $[\tilde{N}]$ for $i=1,2$.  Without loss of generality, we assume $\Vert\Phi\Vert_{\mathcal{C}^{2s}(X)}\leq 1$.
Indeed,
there exists a function $\Phi'$ with $\Vert\Phi-\Phi'\Vert_{\infty}\leq \D/2$ and 
$\Vert\Phi'\Vert_{\mathcal{C}^{2s}(X)}$ is bounded by a constant depending only on $s$ and $\D$.
 Then
 \begin{equation}\nonumber
  \begin{split}
    &\D\leq\Bigl\vert\mathbb{E}_{\A\in \R}\bold{1}_{P}(\A)\chi(\A)\Phi(g'(\B+\A)\cdot e_{X})\Bigr\vert
    =\Bigl\vert\sum_{\bold{k}\in\mathbb{Z}^{s}}\widehat{\Phi}(\bold{k})\mathbb{E}_{\A\in \R}\bold{1}_{P}(\A)\chi(\A)e(\bold{k}\cdot g'(\B+\A))\Bigr\vert
    \\&\leq\sum_{\bold{k}\in\mathbb{Z}^{s}\backslash{\{\bold{0}\}}}\frac{C_{0}}{1+\Vert\bold{k}\Vert^{2s}}\Bigl\vert\mathbb{E}_{\A\in \R}\bold{1}_{P}(\A)\chi(\A)e(\bold{k}\cdot g'(\B+\A))\Bigr\vert
  \end{split}
  \end{equation}
  for some constant $C_{0}=C_{0}(s)$.
   So there exist $C_{1}>0, \theta=\theta(s,\D)>0,$ and $\bold{k}\in\mathbb{Z}^{s}\backslash{\{\bold{0}\}}$ such that $\Vert \bold{k}\Vert\leq C_{1}$ and
  \begin{equation}\nonumber
  \begin{split}
    \Bigl\vert\mathbb{E}_{\A\in \R}\bold{1}_{P}(\A)\chi(\A)e(\bold{k}\cdot g'(\B+\A))\Bigr\vert\geq\theta.
  \end{split}
  \end{equation}
  Let $\d=\d(\theta,1),K=K(\theta,1)$ be the constants that appear in Lemma \ref{katai}. Then there exist $p=p_{1}+p_{2}i,q=q_{1}+q_{2}i\in\P, \mathcal{N}(p)<\mathcal{N}(q)\leq K$ such that
   \begin{equation}\nonumber
  \begin{split}
    \frac{1}{N^{2}}\Bigl\vert\sum_{\A\in \R/p\cap \R/q}\bold{1}_{P}(p\A)\bold{1}_{P}(q\A)e(\bold{k}\cdot (g'(\B+p\A)-g'(\B+q\A)))\Bigr\vert\geq\d.
  \end{split}
\end{equation}

Let $S=\R/p\cap \R/q$. For $j\in\mathbb{Z}$, set $V_{j}=\{\A\in S\colon Im(\A)=j\}$. Then $S=\bigcup_{j=-2N}^{2N}V_{j}$ and one can verify that $V_{j}=\{a+ji\colon a\in Q_{j}\}$ for some arithmetic progression $Q_{j}$ in $[\tilde{N}]$. So there exists $j\in\mathbb{Z}$ with $-2N\leq j\leq 2N$ such that
 \begin{equation}\nonumber
  \begin{split}
    \frac{1}{N}\Bigl\vert\sum_{\A\in V_{j}}e(\bold{k}\cdot (g'(\B+p\A)-g'(\B+q\A)))\Bigr\vert\geq\d/5.
  \end{split}
\end{equation}
Write
 \begin{equation}\nonumber
  \begin{split}
    h(a)=\bold{k}\cdot\Bigl(g(b_{1}-p_{2}j+p_{1}a,b_{2}+p_{1}j+p_{2}a)-g(b_{1}-q_{2}j+q_{1}a,b_{2}+q_{1}j+q_{2}a)\Bigr).
  \end{split}
\end{equation}
Then the sequence $(h(a))_{a\in[N]}$ is not totally $\d/5$-equidistributed in the circle. By the abelian version of Theorem \ref{Lei}, there exists $0<\ell\leq D=D(\d/5)$ such that 
 \begin{equation}\label{tempr1}
  \begin{split}
    \Vert \ell h\Vert_{C^{\infty}[N]}\leq D.
  \end{split}
\end{equation}

Write $\A'_{i}=\bold{k}\cdot\bold{\A}_{i}, i=0,1,2, \B'_{i}=\bold{k}\cdot\bold{\B}_{i}, i=1,2,3$. Let $A=p_{1}^{2}-q_{1}^{2}, B=2p_{1}p_{2}-2q_{1}q_{2}, C=p_{2}^{2}-q_{2}^{2}$. One can calculate that
 \begin{equation}\nonumber
  \begin{split}
    h(a)=\Bigl(\B'_{1}A+\B'_{2}B+\B'_{3}C\binom{a}{2}+(w+\A'_{1}(p_{1}-q_{1})+\A'_{2}(p_{2}-q_{2})\Bigr) a+r
  \end{split}
\end{equation}
for some $r\in\mathbb{R}/\mathbb{Z}$ independent of $a$, where
 \begin{equation}\nonumber
  \begin{split}
    &w=\B'_{1}\Bigl(\bigl(\binom{p_{1}}{2}-\binom{q_{1}}{2}\bigr)+b_{1}(p_{1}-q_{1})-j(p_{1}p_{2}-q_{1}q_{2})\Bigr)
    \\&\qquad+\B'_{2}\Bigl( j(p_{1}^{2}-p_{2}^{2}-q_{1}^{2}+q_{2}^{2})+b_{2}(p_{1}-q_{1})+b_{1}(p_{2}-q_{2})+(p_{1}p_{2}-q_{1}q_{2})\Bigr)
    \\&\qquad+\B'_{3}\Bigl(\bigl(\binom{p_{2}}{2}-\binom{q_{2}}{2}\bigr)+b_{2}(p_{2}-q_{2})+j(p_{1}p_{2}-q_{1}q_{2})\Bigr).
  \end{split}
\end{equation}
Thus by (\ref{tempr1}), we have
 \begin{equation}\label{c5}
  \begin{split}
    &\Bigl\Vert\ell(\B'_{1}A+\B'_{2}B+\B'_{3}C)\Bigr\Vert\leq\frac{D}{N^{2}}; \\&\Bigl\Vert\ell(w+\A'_{1}(p_{1}-q_{1})+\A'_{2}(p_{2}-q_{2}))\Bigr\Vert\leq\frac{D}{N}.
  \end{split}
\end{equation}
The first equation of (\ref{c5}) shows that $\B'_{1}A+\B'_{2}B+\B'_{3}C$ is at a distance $\leq C_{2}/N^{2}$ from a rational number with denominator $\leq C_{3}$.

If we set $U_{j}=\{\A\in S\colon Re(\A)=j\}$, then $S=\bigcup_{j=-2N}^{2N}U_{j}$ and a similar argument shows that $\B'_{1}C-\B'_{2}B+\B'_{3}A$ is at a distance $\leq C_{2}/N^{2}$ from a rational number with denominator $\leq C_{3}$. Setting $W_{j}=\{\A=\A_{1}+\A_{2}i\in S\colon \A_{1}-\A_{2}=j\}$, then $S=\bigcup_{j=-4N}^{4N}W_{j}$ and a similar argument shows that $-\B'_{1}B+\B'_{2}(A-C)+\B'_{3}B$ is at a distance $\leq C_{2}/N^{2}$ from a rational number with denominator $\leq C_{3}$. Since $\mathcal{N}(p)\neq \mathcal{N}(q)$, we have that $A+C\neq 0$. Thus
 \begin{equation}\nonumber
\begin{vmatrix}

A & B &  C\\

C & -B & A\\

-B & A-C & B\\

\end{vmatrix}=-(A+C)((A-C)^{2}+2B^{2})\neq 0,
\end{equation}
as $(A-C)^{2}+2B^{2}=0$ if and only if $p=\pm q$. This implies that $\B'_{i}$ is at a distance $\leq C_{4}/N^{2}$ of a rational number with denominator $\leq C_{5}$ for $i=1,2,3$. Combining the second equation of (\ref{c5}), we get that $\A'_{1}(p_{1}-q_{1})+\A'_{2}(p_{2}-q_{2})$ is at a distance $\leq C_{8}/N$ from a rational number with denominator $\leq C_{9}$ (here we use the fact that $-4N\leq j,b_{1},b_{2}\leq 4N$). If we consider the set $U_{j}=\{\A\in S\colon Re(\A)=j\}$, a similar argument shows that $-\A'_{1}(p_{2}-q_{2})+\A'_{2}(p_{1}-q_{1})$ is at a distance $\leq C_{8}/N$ from a rational number with denominator $\leq C_{9}$. Since
 \begin{equation}\nonumber
\begin{vmatrix}

p_{1}-q_{1} & p_{2}-q_{2}\\

-p_{2}+q_{2} & p_{1}-q_{1}\\

\end{vmatrix}=(p_{1}-q_{1})^{2}+(p_{2}-q_{2})^{2}\neq 0,
\end{equation}
we deduce that $\A'_{i}$ is at a distance $\leq C_{10}/N$ from a rational number with denominator $\leq C_{11}$ for $i=1,2$.

Thus we can find some non-zero integer $\ell'$ with $\vert\ell'\vert\leq C_{12}/C_{1}$ such that $\Vert\ell'\B'_{i}\Vert\leq C_{12}/N^{2}$, for $i=1,2,3$, and $\Vert\ell'\A'_{i}\Vert\leq C_{12}/N$, for $i=1,2$. Taking $\bold{k}'=\ell'\bold{k}$, we deduce that $\Vert\bold{k}'\cdot g\Vert_{\mathcal{C}^{\infty}[\tilde{N}]\times[\tilde{N}]}\leq C_{12}$. By Theorem \ref{inv}, $(g(m,n))_{(m,n)\in[\tilde{N}]\times[\tilde{N}]}$ is not totally $\sigma$-equidistributed for some $\sigma>0$ depending only on $s$ and $\D$. This finishes the proof.
\end{proof}

\subsection{Correlation of multiplicative functions with polynomial sequences: non-abelian case}

The goal of this subsection is to prove the following proposition:

\begin{prop}\label{est1}(Correlation property for non-abelian case).
  For any nilmanifold $X=G/\Gamma$ of order 2 with filtration $G_{\bullet}$ and any $\D>0$, there exist $\sigma=\sigma(X,\D)>0$ and $N_{0}=N_{0}(X,\D)\in\mathbb{N}$ such that for every $N\geq N_{0}$ and every totally $\sigma$-equidistributed degree 2 polynomial $(g(\vec{n}))_{\vec{n}\in[\tilde{N}]\times[\tilde{N}]}$ in $G$ with respect to $G_{\bullet}$, we have that
  \begin{equation}\nonumber
  \begin{split}
    \sup_{\B,\chi,\Phi,P}\Bigl\vert\mathbb{E}_{\A\in \R}\bold{1}_{P}(\A)\chi(\A)\Phi(g'(\B+\A)\cdot e_{X})\Bigr\vert<\D,
  \end{split}
  \end{equation}
  where $g'\colon \R\rightarrow G, g'(a+bi)=g(a,b)$, and the sup is taken over all $\B\in \R, \chi\in\mathcal{M}_{\Z}, \Phi\in Lip(X)$ with $\Vert\Phi\Vert_{Lip(X)}\leq 1$ and $\int\Phi dm_{X}=0$, and all $P=\{a+bi\colon a\in P_{1}, b\in P_{2}\}$, where $P_{i}$ is an arithmetic progression in $[\tilde{N}]$, for $i=1,2$.
\end{prop}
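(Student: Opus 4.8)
The plan is to mimic the proof of Proposition \ref{est2}, using Kátai's Lemma (Lemma \ref{katai}) together with the quantitative Leibman Theorem (Theorem \ref{Lei}) and its inverse (Theorem \ref{inv}), but with the additional structure coming from the non-abelian nilmanifold. We argue by contradiction: suppose that for some $\B=b_1+b_2i\in\R$, some $\chi\in\mathcal{M}_{\Z}$, some $\Phi\in Lip(X)$ with $\int\Phi\,dm_X=0$, and some box $P=P_1\times P_2$ of arithmetic progressions, we have $\bigl\vert\mathbb{E}_{\A\in\R}\bold{1}_P(\A)\chi(\A)\Phi(g'(\B+\A)\cdot e_X)\bigr\vert\geq\D$.

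\textbf{Step 1: Reduce to a nilcharacter.} First I would decompose $\Phi$ along the vertical torus. Since $X$ has a vertical torus of dimension $r$, write $\Phi=\sum_{\bold k\in\mathbb{Z}^r}\Phi_{\bold k}$ where $\Phi_{\bold k}$ is a nilcharacter with frequency $\bold k$; after smoothing $\Phi$ to have bounded $\mathcal{C}^{2m}$-norm (at the cost of shrinking $\D$ to $\D/2$), the Fourier coefficients decay fast enough that some single frequency $\bold k$ with $\Vert\bold k\Vert\leq C_1$ contributes at least $\theta=\theta(X,\D)>0$. Moreover we may assume $\bold k\neq\bold 0$: the $\bold k=\bold 0$ component is a function on the maximal torus (on $G/[G,G]\Gamma$), which is the abelian case and is handled by Proposition \ref{est2} applied to the Kronecker part of $g$, so this component is negligible for $\sigma$ small. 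Thus we reduce to estimating $\bigl\vert\mathbb{E}_{\A\in\R}\bold{1}_P(\A)\chi(\A)\Psi(g'(\B+\A)\cdot e_X)\bigr\vert$ where $\Psi$ is a nontrivial nilcharacter of bounded smoothness norm.

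\textbf{Step 2: Apply Kátai's Lemma.} Applying Lemma \ref{katai} with $f(\A)=\bold{1}_{P}(\A)\Psi(g'(\B+\A)\cdot e_X)$ (note $\vert f\vert\leq 1$), the hypothesis $\bigl\vert\mathbb{E}_{\A\in R_N}\chi(\A)f(\A)\bigr\vert\geq\theta$ forces the existence of $p=p_1+p_2i$, $q=q_1+q_2i\in\P$ with $K_0<\mathcal{N}(p)<\mathcal{N}(q)<K$ and
\begin{equation}\nonumber
\frac{1}{\vert R_N\vert}\Bigl\vert\sum_{\A\in R_N/p\cap R_N/q}\bold{1}_P(p\A)\bold{1}_P(q\A)\Psi(g'(\B+p\A)\cdot e_X)\overline{\Psi}(g'(\B+q\A)\cdot e_X)\Bigr\vert\geq\d.
\end{equation}
Here the key point (exploited in Proposition \ref{c4}) is that $\B\mapsto p\B$ is a bijection on $\R$ since $\mathcal{N}(p)^2<\tilde N$, so the substitution is legitimate. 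Slicing $R_N/p\cap R_N/q$ into horizontal fibers $V_j=\{\A\in S\colon \mathrm{Im}(\A)=j\}$ — each of which is an arithmetic progression after the affine parametrization — and fixing the worst $j$, we obtain a one-parameter polynomial sequence $a\mapsto g'(\B+p(a+ji))$ and $a\mapsto g'(\B+q(a+ji))$ whose ``difference'' correlates with $\Psi\otimes\overline{\Psi}$ on $X\times X$. The product $\Psi\otimes\overline{\Psi}$ is a nilcharacter of $X\times X$ with frequency $(\bold k,-\bold k)\neq\bold 0$, hence nontrivial on the vertical torus of $X\times X$.

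\textbf{Step 3: Equidistribution on $X\times X$ and descent to $X$.} By Theorem \ref{Lei} applied to the sequence $n\mapsto(g'(\B+pn),g'(\B+qn))$ on $X\times X$ (after restricting to the fiber), non-correlation would follow from total equidistribution, so we get a horizontal character $\eta$ of $X\times X$ of bounded norm with $\Vert\eta\circ(g_p,g_q)\Vert_{C^\infty[\tilde N]}$ bounded, where $g_p(n)=g'(\B+pn)$ etc. The main obstacle — and this is precisely the content of Proposition \ref{last} alluded to in the introduction — is to pass from this joint non-equidistribution on $X\times X$ back to the conclusion that $g$ itself fails to be totally equidistributed on $X$. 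One cannot directly invoke a single application of Leibman's theorem; instead one uses the iterated-descent argument: if $(g_p^n,g_q^n)$ (or rather the two-parameter sequence) is not totally equidistributed on $X\times X$, repeatedly pass to smaller sub-nilmanifolds $Y\subset X\times X$ on which a close-by sequence is non-equidistributed, using Lemma \ref{shift} to control the shifts, until one reaches a system small enough that the horizontal character information forces the coordinates of $g$ to be linearly dependent over $\mathbb{Q}$ with controlled denominators. Feeding that back into Lemma \ref{51} to strip off the affine change of variables $n\mapsto pn$, $n\mapsto qn$ (here $p,q$ have bounded norm, so $\vert a_i\vert\leq q=K$ in the notation of that lemma), and then the abelian-type determinant computation as in Proposition \ref{est2} over the slices $\mathrm{Im}(\A)=j$, $\mathrm{Re}(\A)=j$, and $\A_1-\A_2=j$, we conclude that some bounded nonzero multiple $\bold k'$ of the frequency data satisfies $\Vert\bold k'\cdot g\Vert_{C^\infty[\tilde N]\times[\tilde N]}\leq C$. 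By Theorem \ref{inv}, this contradicts the total $\sigma$-equidistribution of $g$ in $X$ once $\sigma$ is chosen small enough depending only on $X$ and $\D$.

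\textbf{Main difficulty.} The hard part is Step 3: converting joint non-equidistribution of the pair $(g_p,g_q)$ on $X\times X$ into non-equidistribution of $g$ on $X$. When the Kronecker (horizontal) parts of the relevant group elements are parallel this is essentially the argument of \cite{FH}, but in general the product nilmanifold $X\times X$ can have a much larger group than $X$, and a naive application of Leibman's theorem only yields one linear relation among the many coordinates, not enough to pin down $g$ on both ``copies.'' This is why the proof must invoke the repeated-descent machinery of Proposition \ref{last}, which is the technical heart of the paper; the remaining bookkeeping — the Fourier decomposition in Step 1, the fiber-slicing and determinant arguments — is a routine two-dimensional adaptation of the corresponding steps in \cite{FH} and in the proof of Proposition \ref{est2} above.
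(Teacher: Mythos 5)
Your overall roadmap — vertical Fourier decomposition to reduce to a nontrivial nilcharacter, reduction of the $\bold k=\bold 0$ component to Proposition \ref{est2}, Kátai's lemma, passage to $X\times X$ via $\Psi\otimes\overline\Psi$, and appeal to Proposition \ref{last} — does match the paper's skeleton, and you correctly identify Proposition \ref{last} as the technical heart. But the endgame of your Step~3 would not close the argument, for two linked reasons.

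First, you claim that after invoking Proposition \ref{last} one can finish with ``the abelian-type determinant computation as in Proposition \ref{est2} over the slices $\mathrm{Im}(\A)=j$, $\mathrm{Re}(\A)=j$, and $\A_1-\A_2=j$.'' In the abelian proof that computation works because each slice produces \emph{linear} relations in the unknown coefficients $\A_i',\B_i'$, and three independent slices give an invertible $3\times 3$ (resp.\ $2\times 2$) system. Proposition \ref{last} is dichotomous: alternative (i) gives two small horizontal characters $\eta_1,\eta_2$, which is the benign outcome, but alternative (ii) gives only a \emph{matrix} relation $\a^{\bullet}B_0^G M\approx\b^{\bullet}B_0^G$ with $M\in\Lambda(B_0^G)$ of bounded height. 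That is a fundamentally different kind of constraint, and a naive slice-and-solve argument cannot convert it into a bounded horizontal character of $X$. The paper's Step~3 instead slices the two-dimensional Kátai average by \emph{directions} $\C\in[L]^2$ (following Lemma~3.1 of ~\cite{er2}), choosing a base point $\B_\C$ along each line so the correlation persists, applies Proposition \ref{last} to the one-parameter sequence on each line, pigeonholes over the bounded family of possible $(\eta_1,\eta_2)$ or $M$ to get consistent data on a set of directions that spans $\mathbb{Q}^2$ (Lemma~3.2 of ~\cite{er2}), and then resolves Case~(ii) by a dedicated linear-algebra argument: setting $W=\binom{\a^{\bullet}}{\b^{\bullet}}B_0^G$ and $R=P^{-1}Q$, one exploits $WM\approx RW$ and compares the minimal polynomials of $M$ and $R$, splitting into sub-cases ($g(M)\neq 0$; $g(M)=0$, $f(R)\neq 0$; $f=g$) to ultimately force $B_0^G=0$ and a contradiction with $G$ non-abelian. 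None of this appears in your proposal, and without it Case~(ii) is unresolved.

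Second, a couple of lemma attributions are misplaced. The iterated descent inside Proposition \ref{last} is driven by the Approximation Lemma (Lemma \ref{appro2}), not by Lemma \ref{shift}, which is used elsewhere for shifting nilmanifolds. And Lemma \ref{51} is invoked in the paper's Step~2 to remove the constant shift $\B$ (reducing to $\B=0$, $g_0=\mathrm{id}_G$), not in Step~3 to strip off the multiplications by $p$ and $q$; the latter are absorbed into the definitions of $f_1(\C)$, $f_2(\C)$ before Proposition \ref{last} is applied.
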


Proposition \ref{est1} is the key difficulty of the paper, so we briefly explain the general strategy of the proof before we proceed. First of all, using the vertical Fourier transform, we may assume without loss of generality that $\Phi$ is a non-trivial nilcharacter. We may further simplify the argument to the case when $\B=0$ and $g'$ does not have the constant term. 
By Lemma \ref{katai}, it suffices to study the total equidistribution property of the sequence $(g'(p\A),g'(q\A))_{\A\in\R}$ on $G\times G$ with respect to the function $\Phi\otimes\overline{\Phi}$. This is studied carefully in Proposition \ref{last}, where we provide a necessary condition for the sequence $(g'(p\A),g'(q\A))_{\A\in\R}$. Finally, this condition leads to a contradiction by a linear algebraic argument.

It is worth making a few remarks about Proposition \ref{last}, which is the main technical part of the proof. Roughly speaking, we wish to give a necessary condition for a sequence $(g_{1}(\A),g_{2}(\A))_{\A\in\R}$ on $G\times G$ of degree 2 not being totally equidistributed. If $(g_{1}(\A),g_{2}(\A))_{\A\in\R}$ is not totally equidistributed, the Approximation Lemma (Lemma \ref{appro2}) allows us to replace any polynomial sequence on some sub-nilmanifold $H$ of $X\times X$ with another one lying on a sub-nilmanifold $H'$ of $H$ such that they have a similar total equidistribution behavior, unless $H$ satisfies some degeneracy property. This enables us to reduce the problem to the case where $(g_{1}(\A),g_{2}(\A))_{\A\in\R}$ lies in a submanifold of $X\times X$ with additional algebriac structures, which leads to the necessary condition we desire.

\begin{conv}\label{conv2}
Throughout this subsection, $X=G/\Gamma$ is a nilsystem of order 2 with filtration $G_{\bullet}$. We assume without generality that $\dim(G_{2})=1$ (we then show that this assumption can be dropped in Step 1 of the proof of Proposition \ref{est1}). We consider $G_{\bullet}$ as fixed in this subsection. We assume that $G=\mathbb{R}^{s}\times\mathbb{R},\Gamma=\mathbb{Z}^{s}\times\mathbb{Z}$, and under the Mal'cev basis, the group action on $G$ is given by
\begin{equation}\nonumber
    \begin{split}
      (x_{1},\dots,x_{s}; y)\cdot(x'_{1},\dots,x'_{s}; y')=(x_{1}+x'_{1},\dots,x_{s}+x'_{s}; y+y'+\sum_{1\leq j<i\leq s}B_{i,j}x_{i}x'_{j})
    \end{split}
  \end{equation}
  for some $B_{i,j}\in\mathbb{R}, 1\leq j<i\leq s$. So $G_{2}=[G,G]=\{0\}^{s}\times\mathbb{R}$. Let $B^{G}$ denote the $s\times s$ matrix given by $B^{G}_{i,j}=B_{i,j}$
for $i>j$, $B^{G}_{i,j}=-B_{j,i}$ for $i<j$, and $B^{G}_{i,i}=0$ for all $1\leq i\leq s$.
Since $B^{G}$ is skew symmetric, by changing coordinates if necessary, we can always assume that
\begin{equation}\nonumber
B^{G}=\begin{vmatrix}
 B_{0}^{G} & \bold{0} \\
 \bold{0}  & \bold{0} \\
\end{vmatrix}
\end{equation}
for some invertible $s'\times s'$ integer screw symmetric matrix $B_{0}^{G}$, for some $0<s'\leq s$.

For any
 $\bold{x}=(x_{1},\dots,x_{s};x_{s+1})\in G$, we denote $\widehat{\bold{x}}=(x_{1},\dots,x_{s})$
 and $\bold{x}^{\bullet}=(x_{1},\dots,x_{s'})$.

Set $H_{0}=G\times G=\mathbb{R}^{s+1}\times\mathbb{R}^{s+1}, \Gamma_{0}=\Gamma\times\Gamma=\mathbb{Z}^{s+1}\times\mathbb{Z}^{s+1}$. Then $Y_{0}=H_{0}/\Gamma_{0}$ is a nilmanifold of order 2 endowed with the filtration $(H_{0})_{\bullet}=\{G_{i}\times G_{i}\}_{i=0}^{3}$. So $[H_{0},H_{0}]\cong G_{2}\times G_{2}$. Each subgroup $H$ of $H_{0}$ is endowed with the filtration induced by $(H_{0})_{\bullet}$,
   and every subgroup $H$ of $H_{0}$ is identified with a subspace $V(H)$ of $\mathbb{R}^{2s+2}$ under the Mal'cev basis.
\end{conv}

 The following lemma follows via direct computation:

\begin{lem}\label{dege}
  For any $H<H_{0}$, we have that
\begin{equation}\nonumber
    \begin{split}
      [H,H]=\Bigl\{\bigl((\bold{0};\widehat{\bold{x}}B^{G}\widehat{\bold{y}}^{T}),(\bold{0};\widehat{\bold{z}}B^{G}\widehat{\bold{w}}^{T})\bigl)\colon (\bold{x},\bold{z}),
  (\bold{y},\bold{w})\in H\Bigr\}.
    \end{split}
  \end{equation}
\end{lem}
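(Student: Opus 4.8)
The plan is to prove Lemma \ref{dege} by a direct computation using the explicit group law recorded in Convention \ref{conv2}. First I would recall that $H < H_0 = G \times G$, so a general element of $H$ has the form $(\bold{x}, \bold{z})$ with $\bold{x} = (x_1,\dots,x_s; x_{s+1})$ and $\bold{z} = (z_1,\dots,z_s; z_{s+1})$ both in $G$; its image in the abelianization is determined by the horizontal coordinates $\widehat{\bold x}$ and $\widehat{\bold z}$. The group $[H_0, H_0]$ sits inside $G_2 \times G_2 = (\{0\}^s \times \mathbb{R}) \times (\{0\}^s \times \mathbb{R})$, which is central in $H_0$, so every commutator in $H_0$ is determined bilinearly by the horizontal parts of the two elements being commuted.

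The key computation is the commutator formula in $G$ itself: for $\bold{x}, \bold{y} \in G$ one has $[\bold{x}, \bold{y}] = \bold{x}\bold{y}\bold{x}^{-1}\bold{y}^{-1} = (\bold{0}; c)$ where $c$ is the antisymmetrization of $\sum_{1\le j<i\le s} B_{i,j} x_i y_j$, namely $c = \sum_{i,j} B^G_{i,j} x_i y_j = \widehat{\bold x} B^G \widehat{\bold y}^T$; this is exactly why $B^G$ was defined as the skew-symmetrization of the $B_{i,j}$. Carrying this out coordinate by coordinate: writing out $\bold{x}\bold{y}$, then $(\bold{x}\bold{y})\bold{x}^{-1}$, then multiplying by $\bold{y}^{-1}$, the horizontal coordinates all cancel and the vertical coordinate collects to $\widehat{\bold x} B^G \widehat{\bold y}^T$ after replacing the lower-triangular form $\sum_{j<i} B_{i,j}(\cdot)$ by its skew-symmetric version. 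Doing this in each of the two factors of $H_0$ gives $[(\bold x, \bold z), (\bold y, \bold w)] = \bigl((\bold 0; \widehat{\bold x} B^G \widehat{\bold y}^T), (\bold 0; \widehat{\bold z} B^G \widehat{\bold w}^T)\bigr)$.

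Finally I would observe that since $[H_0,H_0]$ is central, the commutator map $H \times H \to [H_0,H_0]$ is bilinear in the abelianized variables, so $[H,H]$ — which is the subgroup generated by all commutators $[(\bold x, \bold z),(\bold y, \bold w)]$ with both arguments in $H$ — is already equal to the \emph{set} of such commutators (a bilinear image of a group is a subgroup here because the target is a one-or-two-dimensional central vector subgroup and sums of values $\widehat{\bold x}B^G\widehat{\bold y}^T$ are again of that shape by bilinearity). This yields precisely the claimed description
\begin{equation}\nonumber
  \begin{split}
    [H,H] = \Bigl\{\bigl((\bold{0};\widehat{\bold{x}}B^{G}\widehat{\bold{y}}^{T}),(\bold{0};\widehat{\bold{z}}B^{G}\widehat{\bold{w}}^{T})\bigr)\colon (\bold{x},\bold{z}), (\bold{y},\bold{w})\in H\Bigr\}.
  \end{split}
\end{equation}
There is no real obstacle here; the only mild care needed is the bookkeeping that the horizontal parts genuinely cancel in the commutator (so that the result lands in $G_2 \times G_2$ as claimed) and the remark that centrality of $[H_0,H_0]$ lets one pass from ``subgroup generated by commutators'' to ``set of commutators'' without extra terms — which is why the statement can be written with a single pair of generators rather than products.
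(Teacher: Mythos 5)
Your core computation is correct and is exactly the ``direct computation'' the paper has in mind: expanding the commutator in the explicit coordinate group law of Convention \ref{conv2} shows $[\bold{x},\bold{y}]=(\bold{0};\widehat{\bold{x}}B^{G}\widehat{\bold{y}}^{T})$ in $G$, the skew-symmetrization $B^{G}$ arising because the lower-triangular quadratic term $\sum_{j<i}B_{i,j}x_{i}y_{j}$ antisymmetrizes when the horizontal parts cancel, and applying this factor by factor in $H_{0}=G\times G$ gives the claimed set.

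Your closing paragraph, however, is both unnecessary and not quite right as an argument. It is unnecessary because Definition \ref{filt} in this paper explicitly defines $[H,H']\coloneqq\{hh'h^{-1}h'^{-1}\colon h\in H,\,h'\in H'\}$ as a \emph{set}, not as the subgroup generated by commutators, so the lemma is literally the set identity your computation already establishes and there is no ``set $=$ subgroup'' step to bridge. And if such a step \emph{were} needed, the justification you give would not suffice: bilinearity in each vertical coordinate separately shows that each component's image is closed under addition in $\mathbb{R}$, but to conclude that the set $\bigl\{\bigl(\widehat{\bold{x}}B^{G}\widehat{\bold{y}}^{T},\widehat{\bold{z}}B^{G}\widehat{\bold{w}}^{T}\bigr)\colon(\bold{x},\bold{z}),(\bold{y},\bold{w})\in H\bigr\}\subset\mathbb{R}^{2}$ is closed under addition one would need to realize sums in both coordinates \emph{simultaneously} by a single pair of elements of $H$, which is not automatic from the componentwise bilinearity you invoke. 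None of this affects the lemma as stated, since it only asserts the set identity.
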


By Lemma \ref{dege}, each subgroup $H$ of $H_{0}$ must be one of the following three types:
\begin{enumerate}
    \item $[H,H]=(\{0\}^{s}\times\mathbb{R})\times(\{0\}^{s}\times\mathbb{R})$. In this case, we say that $H$ is of \emph{type 1};
    \item $[H,H]=\Bigl\{\bigl((\bold{0};\lambda_{1}t),(\bold{0};\lambda_{2}t)\bigr)\colon t\in\mathbb{R}\Bigr\}$ for some $\lambda_{1},\lambda_{2}\in\mathbb{R}$
        not all equal to 0. In this case, we say that $H$ is of \emph{type 2} for the pair $(\lambda_{1},\lambda_{2})$;
     \item $[H,H]=\{\bold{0}\}$. In this case, we say that $H$ is of \emph{type 3}, which is equivalent to saying that $H$ is abelian.
   \end{enumerate}

\begin{defn}[Types of vectors]
 An element $\k=(k_{1},\dots,k_{2s+2})\in\mathbb{Z}^{2s+2}$ is said to be a vector of \emph{type 1} if $k_{2s+1}=k_{2s+2}=0$ and of \emph{type 2} if $k_{2s+2}=0$. Every
$\k\in\mathbb{Z}^{2s+2}$ is said to be a vector of \emph{type 3}. For any subgroup $H$ of $H_{0}$, $\k\in\mathbb{Z}^{2s+2}$ is said to be \emph{of the same type as} $H$
if both $\k$ and $H$ are of type $j$ for some $j=1,2,3$.
\end{defn}

For any $\l=(\lambda_{1},\lambda_{2})\in\mathbb{R}^{2}\backslash \{(0,0)\}$, we say that a subgroup $H$ of $H_{0}$ is \emph{$\l$-shaped} if $H$ is of type 1 or 3, or $H$ is of type 2 for the pair $(\lambda_{1},\lambda_{2})$.
 For any $\l\in\mathbb{R}^{2},\l\neq (0,0)$, if $\lambda_{1}\neq 0$, we define $\theta_{\l}\colon \mathbb{R}^{2s+2}\rightarrow H_{0}$ by
   \begin{equation}\nonumber
    \begin{split}
\theta_{\l}(\bold{x},\bold{y},w,z)=\Bigl((\bold{x};\lambda_{1}z),(\bold{y};\lambda_{2}z+w)\Bigr);
    \end{split}
  \end{equation}
  if $\lambda_{1}=0, \lambda_{2}\neq 0$, we define
     \begin{equation}\nonumber
    \begin{split}
\theta_{\l}(\bold{x},\bold{y},w,z)=\Bigl((\bold{x};w),(\bold{y};\lambda_{2}z)\Bigr).
    \end{split}
  \end{equation}
  This defines a homeomorphism $\theta_{\l}$ for all $\l\neq (0,0)$.
  It is easy to see that for $i=1,2,3$, if $H$ is a $\lambda$-shaped subgroup of $H_{0}$ of type $i$,
then $\theta_{\l}^{-1}([H,H])=\{0\}^{2s+i-1}\times\mathbb{R}^{3-i}$. From now on we always use coordinates on this new basis to denote elements
in $(\lambda_{1},\lambda_{2})$-shaped subgroups of $H_{0}$.

It is easy to see that if $\k\in\mathbb{Z}^{2s+2}$ is of the same type as $H$, then
$\c \cdot e_{Y}\rightarrow \k\cdot\theta_{\l}^{-1}\c$
is a well-defined horizontal character on $Y=H/(H\cap(\Gamma\times\Gamma))$.
For any element $\c\in H$, we write
     \begin{equation}\nonumber
    \begin{split}
     &\tilde{\c}=\theta_{\l}^{-1}\c=(\a,\b,w,z)\in \theta_{\l}^{-1}H;
     \\&\c'=(\a,\b)\in\mathbb{R}^{2s}.
    \end{split}
  \end{equation}
In other words, $\tilde{\c}$ is $\c$ written in the new coordinate system, and $\c'$ denotes the first $2s$ coordinates of $\c$.
  The definition of $\tilde{\c}$ depends on the choice of $\l$, and we clarify the dependency on $\l$ when we use this notation.
Notice that viewed as vectors in $\mathbb{R}^{2s+2}$, the first $2s$ coordinates of $\tilde{\c}$ are independent of the choice of $\lambda$ and are the same as that of $\c$, so this change of
variable does not change the Mal'cev coordinates when $H$ is of type 1.

For convenience, we define a special family of frequency and the equidistribution property among these frequencies:
\begin{defn}[$S$-totally equidistribution]
Let $Y=H/(H\cap\Gamma_{0})$ be a submanifold of $H_{0}$.
If $H$ is of type 1, an integer vector $\bold{k}\in\mathbb{Z}^{2}$ is called an \emph{$S$-frequency} of $H$ if $\bold{k}=(k,k)$ for some $k\neq 0$.
If $H$ is of type 2, an integer $k\in\mathbb{Z}$ is called an \emph{$S$-frequency} of $H$ if $k\neq 0$.
An \emph{$S$-frequency nilcharacter} of $H$ is a nilcharacter whose frequency is an $S$-frequency of $H$.

   Let $N\in\mathbb{N}$ and $\e>0$. A sequence $g\colon [N]\rightarrow Y$ is  \emph{$S$-totally $\e$-equidistrbuted} if for any $S$-frequency nilcharacter (of $H$) $f$ with $f\in Lip(Y)$, $\Vert f\Vert_{Lip(Y)}\leq 1$, and any arithmetic progression $P\subset[N]$, we have that
  \begin{equation}\nonumber
    \begin{split}
      \Bigl\vert\mathbb{E}_{n\in[N]}\bold{1}_{P}(n)f(g(n)\cdot e_{Y})\Bigr\vert<\e.
    \end{split}
  \end{equation}
\end{defn}

\begin{defn} [Good pair]
Given two subgroups $H',H$ of $H_{0}$. Suppose that $H'$ is a subgroup of $H$. We say that the pair $(H,H')$ is \emph{good} if the restriction to $H'$ of any $S$-frequency nilcharacter of $H$ is an $S$-frequency nilcharacter of $H'$.
\end{defn}

\begin{lem}\label{good}(Criteria for good pairs).
(1) If $\Phi$ is a non-trivial nilcharacter of $G$, then $\Psi=\Phi\otimes\overline{\Phi}$ is an $S$-frequency nilcharacter of $H_{0}=G\times G$;

(2) Let $H'<H$ be two subgroups of $H_{0}$. If $H$ and $H'$ are both of type 1 or both of type 2, then $(H,H')$ is good;

(3) Let $H$ be a type 1 subgroup of $H_{0}$, and $H'$ be a $(\lambda_{1},\lambda_{2})$-shaped type 2 subgroup of $H$. Then $(H,H')$ is good if and only if $\lambda_{1}\neq\lambda_{2}$.
\end{lem}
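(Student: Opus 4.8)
The plan is to verify each of the three claims directly from the explicit model in Convention \ref{conv2} and the definition of $S$-frequency. For part (1), recall that a non-trivial nilcharacter $\Phi$ of $G$ has frequency $k\neq 0$ in $\mathbb{Z}=\mathbb{Z}^{r}$ (here $r=\dim G_{2}=1$), meaning $\Phi(\bold{u}\cdot x)=e(ku)\Phi(x)$ for $\bold{u}\in G_{2}/(G_{2}\cap\Gamma)$. Then $\Psi=\Phi\otimes\overline{\Phi}$ satisfies $\Psi((\bold{u},\bold{v})\cdot(x,y))=e(ku)\overline{e(kv)}\Psi(x,y)=e(ku-kv)\Psi(x,y)$, so under the identification $[H_{0},H_{0}]\cong G_{2}\times G_{2}\cong\mathbb{R}^{2}$, the character of the vertical torus of $H_{0}$ attached to $\Psi$ is $(u,v)\mapsto k(u-v)$, i.e. the integer vector $(k,-k)$. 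After the standard sign/coordinate normalization used in this subsection (the pair $(\lambda_1,\lambda_2)$ for $H_0=G\times G$ is $(1,1)$, and the vertical torus is parametrized so that the relevant frequency vector is $(k,k)$ with $k\neq0$), this is exactly the defining condition for an $S$-frequency of the type 1 group $H_{0}$. So $\Psi$ is an $S$-frequency nilcharacter, as claimed.

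For parts (2) and (3), the key observation is that whether the restriction of an $S$-frequency nilcharacter of $H$ to a subgroup $H'$ is again an $S$-frequency nilcharacter of $H'$ is governed entirely by what happens on the vertical tori: if $f$ has frequency $\bold{k}$ relative to $[H,H]/([H,H]\cap\Gamma_{0})$, then $f|_{H'}$ has frequency equal to the restriction of the linear functional $\bold{k}$ to $[H',H']\subseteq[H,H]$. So in each case I would compute $[H,H]$ and $[H',H']$ via Lemma \ref{dege}, express them in the $\theta_{\l}$-coordinates introduced after Lemma \ref{dege} (so that $[H,H]$ and $[H',H']$ become coordinate subspaces), and then check whether the linear functional defining an $S$-frequency of $H$ restricts to one for $H'$ and is nonzero there. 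For part (2): if $H,H'$ are both of type 1, then $[H,H]=[H',H']=(\{0\}^s\times\mathbb{R})^2$, an $S$-frequency of $H$ is $(k,k)$ with $k\neq0$, and it restricts to the same thing on $H'$, so it is again an $S$-frequency; if both are of type 2, say $H$ is $\l$-shaped and $H'$ is $\l'$-shaped, then since $H'\subseteq H$ we have $[H',H']\subseteq[H,H]$, forcing $\l'$ to be a scalar multiple of $\l$, and in the shared $\theta_{\l}$-coordinates both commutator subgroups equal $\{0\}^{2s+1}\times\mathbb{R}$; an $S$-frequency of $H$ is a nonzero integer $k$ (acting on that $\mathbb{R}$), and it restricts to the nonzero integer $k$ on $H'$. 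In both situations $(H,H')$ is good.

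For part (3), $H$ is type 1 and $H'\subseteq H$ is a $(\lambda_1,\lambda_2)$-shaped type 2 subgroup, so $[H,H]=(\{0\}^s\times\mathbb{R})\times(\{0\}^s\times\mathbb{R})$ while $[H',H']=\{((\bold 0;\lambda_1 t),(\bold 0;\lambda_2 t)):t\in\mathbb{R}\}$, the diagonal-type line inside $[H,H]$ with slope $(\lambda_1,\lambda_2)$. An $S$-frequency nilcharacter of $H$ has frequency $(k,k)$ with $k\neq0$; viewed as a linear functional on $[H,H]\cong\mathbb{R}^2$ it is $(u,v)\mapsto k(u-v)$ (the sign convention coming from the $\Phi\otimes\overline\Phi$ structure inherited through $\theta_{\l}$ for $\l=(1,1)$), and its restriction to the line $[H',H']$ is $t\mapsto k(\lambda_1-\lambda_2)t$. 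This restriction is a nonzero functional — hence, after identifying $[H',H']$ with $\mathbb{R}$ via $\theta_{(\lambda_1,\lambda_2)}$, a nonzero integer multiple of the generator, i.e. an $S$-frequency of $H'$ — if and only if $\lambda_1\neq\lambda_2$. Conversely if $\lambda_1=\lambda_2$ the restriction is identically zero, so $f|_{H'}$ is a nilcharacter with zero frequency, which is not an $S$-frequency; thus $(H,H')$ is good iff $\lambda_1\neq\lambda_2$.

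The bookkeeping obstacle here is not conceptual but notational: one must pin down exactly which sign and normalization conventions are in force for the frequency of $\Phi\otimes\overline{\Phi}$ after passing through the change of coordinates $\theta_{\l}$, so that "$(k,-k)$" and "$(k,k)$" are reconciled with the definition of $S$-frequency (which is stated as $(k,k)$ for type 1). Once that identification is made explicit — which amounts to tracking how $\theta_{\l}$ acts on the last two coordinates and on the vertical torus of $H_0$ — all three parts reduce to the one-line linear-algebra computation above. I expect part (3), and specifically getting the $\lambda_1-\lambda_2$ factor to come out with the right interpretation, to be the only place needing genuine care.
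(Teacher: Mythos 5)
Your proof is correct and follows essentially the same route as the paper: for (1) compute the frequency of $\Phi\otimes\overline{\Phi}$ directly to get $(k,-k)$, for (2) observe that $[H,H]=[H',H']$ in both the type~1 and type~2 cases so the frequency restricts unchanged, and for (3) restrict the linear functional $k(u-v)$ on $[H,H]$ to the line $\{(\bold{0};\lambda_1 t),(\bold{0};\lambda_2 t)\}$ to obtain $k(\lambda_1-\lambda_2)t$, which is nontrivial iff $\lambda_1\neq\lambda_2$. The sign issue you flag is real but minor --- the paper's definition of $S$-frequency is stated as $(k,k)$ while its own proofs (including the one for this lemma) work with $(k,-k)$ throughout --- and your reconciliation via the $\theta_\l$ change of coordinates is a reasonable way to square the convention.
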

\begin{proof}(1) Suppose $\Phi$ is a nilcharacter of frequency $k\neq 0$ of $G$. Then $\Phi(g\cdot x)=e(k\cdot g)\Phi(x)$ for all $g\in G_{2}$ and $x\in G/(G\cap\Gamma)$. So
  \begin{equation}\nonumber
    \begin{split}
      &\qquad\Phi\otimes\overline{\Phi}((g,g')\cdot (x,x'))=\Phi(g\cdot x)\overline{\Phi}(g'\cdot x')
      \\&=e(k\cdot g-k\cdot g')\Phi(x)\overline{\Phi}(x')
=e\Bigl((k,-k)\cdot(g,g')\Bigr)\Phi\otimes\overline{\Phi}(x,x')
    \end{split}
  \end{equation}
for all $g,g'\in G_{2}$ and $x,x'\in G/(G\cap\Gamma)$. So $\Psi=\Phi\otimes\overline{\Phi}$ is a nilcharacter of frequency $(k,-k)$ of $G\times G$.

  (2) If $H$ and $H'$ are both of type 1 or both of type 2, then $[H,H]=[H',H']$. Suppose $f$ is a nilcharacter of frequency $\bold{k}$ on $H$. Then $f(g\cdot x)=e(\bold{k}\cdot g)f(x)$ for all $g\in[H,H]$ and $x\in H/(H\cap\Gamma)$, where the product $\bold{k}\cdot g$ is taken as the inner product on $[H,H]$. So $f(g\cdot x)=e(\bold{k}\cdot g)f(x)$ for all $x\in H'/(H'\cap\Gamma)\subset H/(H\cap\Gamma)$ and $g\in [H,H]=[H',H']$. So $f|_{H'}$ is also a nilcharacter of frequency $\bold{k}$, which finishes the proof.

  (3) Suppose $f$ is a nilcharacter of frequency $\bold{k}=(k,-k)$ on $H$ ($k\neq 0$). Then $f(g\cdot x)=e(\bold{k}\cdot g)f(x)=e(k(a-b))f(x)$ for all $g=(\bold{0},a;\bold{0},b)\in[H,H]$ and  $x\in H/(H\cap\Gamma)$. Since $H'$ is $(\lambda_{1},\lambda_{2})$-shaped, every $h\in [H',H']$ can be written as $h=(\bold{0},\lambda_{1} a;\bold{0},\lambda_{2}a)$. So
    \begin{equation}\nonumber
    \begin{split}
   f(h\cdot x)=e(\bold{k}\cdot h)f(x)=e(k(\lambda_{1}-\lambda_{2})a)f(x)
    \end{split}
  \end{equation}
  for all $x\in Y'=H'/(H'\cap\Gamma)$. So $f|_{Y'}$ is an $S$-frequency character on $Y'$ if and only if $\lambda_{1}\neq\lambda_{2}$, which finishes the proof.
\end{proof}

We need some definitions before we state the next lemma:

\begin{defn}[$d$-polynomial]
  Let $d\in\mathbb{N}$. We say that a polynomial on a nilpotent group $G$ is a \emph{$d$-polynomial} if it can be written as
$g(n)=\prod_{i=0}^{d}g_{i}^{\binom{n}{i}}$ for some $g_{0},\dots,g_{d-1}\in G$ and $g_{d}\in G_{2}$.
\end{defn}
\begin{rem}
 A $d$-polynomial is a polynomial of degree $d$ with respect to the filtration $G'_{\bullet}=\{G'_{i}\}_{i=0}^{d+1}$ where $G'_{0}=G'_{1}=\dots G'_{d-1}=G, G'_{d}=G_{2}, G'_{d+1}=\{e_{G}\}$.
 A 2-polynomial is just a polynomial of degree 2 with respect to the filtration $G_{\bullet}$.
\end{rem}

\begin{defn}\label{subgroup}\label{sg}
  Let $\l\in\mathbb{R}^{2}, \l\neq(0,0)$. Set $\tilde{\A}=\theta^{-1}_{\l}\A$.
  For any $\k\in\mathbb{Z}^{2s+2}$, denote $\underline{\k}=\frac{1}{p}\k$, where $p$ is the greatest positive common divisor of entries of $\k$.
  For any subgroup $H<H_{0}$, denote $H^{\underline{\k}}=\{g\in H: \underline{\k}\cdot \tilde{g}=0\}$.
\end{defn}

\begin{rem}
 Note that $H^{\underline{\k}}$ depends on the choice of $\l$. We clarify the dependency on $\l$ when we use this notation.
\end{rem}

The following lemma allows us to approximate a polynomial sequence lacking equidistribution with a polynomial sequence on a subgroup:

\begin{lem}\label{appro2}(Approximation property).
Let $\l\in\mathbb{R}^{2}, \l\neq(0,0)$. Set $\tilde{\A}=\theta^{-1}_{\l}\A$.
Let $H$ be a subgroup of $H_{0}$. For every $C,D,\D>0, d\in\mathbb{N}$, there exist $C'=C'(X,C,\D,d), \D'=\D'(X,C,\D,d), D'=D'(X,C,D,\D,d)>0, N_{0}=N_{0}(X,C,\D,d)\in\mathbb{N}$ with the following property: Let $N\geq N_{0}$. Suppose that $g_{1}(n)=\prod_{i=0}^{d}\A_{i}^{\binom{n}{i}}, \A_{0},\dots,\A_{d-1}\in H, \A_{d}\in H_{2}$ is a $d$-polynomial which is not $S$-totally $\D$-equidistributed on $H$. Suppose also that
  $\k=p\underline{\k}\in\mathbb{Z}^{2s+2}$ is a vector of the same type as $H$ such that
   $H$ and $H^{\underline{\k}}$ are $\l$-shaped,
   and that
   $0<\Vert\k\Vert\leq C, \Vert\k\cdot\tilde{\A_{i}}\Vert_{\mathbb{R}/\mathbb{Z}}\leq\frac{C}{N^{i}}$ for all $1\leq i\leq d-1$.

  Suppose that both $H$ and $H^{\underline{\k}}$ are $\l$-shaped.
  Then there exists a polynomial sequence of the form $g_{2}(n)=\prod_{i=0}^{d}\B_{i}^{\binom{n}{i}}$, $\B_{0},\dots,\B_{d}\in H^{\underline{\k}}$ such that

  1) If the pair $(H,H^{\underline{\k}})$ is good, then $g_{2}$ is not $S$-totally $\D'$-equidistributed on $H^{\underline{\k}}$;

2) If there exists $\p\in\mathbb{Z}^{2s+2}$ of the same type as $H$ (and thus of $H^{\underline{\k}}$) such that $0<\Vert\p\Vert\leq D$ and $\Vert\p\cdot\tilde{\B_{1}}\Vert_{\mathbb{R}/\mathbb{Z}}\leq\frac{D}{N}$,
then there exists some $w\in\mathbb{Z}, 0<w\leq C^{2}$ such that $\Vert w\p\cdot\tilde{\A_{1}}\Vert_{\mathbb{R}/\mathbb{Z}}\leq\frac{D'}{N}$.

3) If $H_{2}=H_{2}^{\underline{\k}}$, then $\B_{d}\in H_{2}^{\underline{\k}}$.
\end{lem}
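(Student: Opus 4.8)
The plan is to factor the $d$-polynomial $g_{1}$ through the codimension-one subgroup $H^{\underline{\k}}$ in a quantitatively controlled way, and then to transfer each of the three conclusions from $g_{1}$ to the factor that lives on $H^{\underline{\k}}$. Write $\eta$ for the horizontal character $\c\cdot e_{Y}\mapsto\underline{\k}\cdot\theta_{\l}^{-1}\c$ on $Y=H/(H\cap\Gamma_{0})$; it is well defined because $\k$, hence $\underline{\k}$, is of the same type as $H$ and $H$ is $\l$-shaped, and the hypotheses $0<\Vert\k\Vert\leq C$ and $\Vert\k\cdot\tilde{\A_{i}}\Vert_{\mathbb{R}/\mathbb{Z}}\leq C/N^{i}$ for $1\leq i\leq d-1$ say exactly that $\Vert\eta\circ g_{1}\Vert_{C^{\infty}[N]}=O_{d}(C)$. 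If $H^{\underline{\k}}=H$ then $g_{1}$ already lies on $H^{\underline{\k}}$ and there is nothing to prove, so I assume $H^{\underline{\k}}\subsetneq H$.

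First I would construct $g_{2}$. Since $H$ is a rational subgroup (so $\l$ is projectively rational), one can fix $\v\in H$ of bounded Mal'cev height, with some bounded power in $\Gamma_{0}$, such that $p':=\underline{\k}\cdot\theta_{\l}^{-1}\v$ is a nonzero rational of bounded height. For $1\leq i\leq d-1$ write $\k\cdot\tilde{\A_{i}}=m_{i}+\theta_{i}$ with $m_{i}\in\mathbb{Z}$, $\vert\theta_{i}\vert\leq C/N^{i}$; then $t_{i}:=(\underline{\k}\cdot\theta_{\l}^{-1}\A_{i})/p'$ is a rational of bounded denominator plus a real of size $O(C)/N^{i}$, so $\v^{t_{i}}$ factors as $\gamma_{i}\e_{i}$ with $\gamma_{i}$ rational of bounded height and $\e_{i}$ an element of size $O(C)/N^{i}$, and setting $\B_{i}=\A_{i}(\v^{t_{i}})^{-1}$ (up to rearrangement) lands $\B_{i}$ in $H^{\underline{\k}}$, using throughout that $\underline{\k}$ annihilates $[H,H]$. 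When $H_{2}=H_{2}^{\underline{\k}}$ one has $\A_{d}\in H_{2}^{\underline{\k}}$ and takes $\B_{d}=\A_{d}$, which is part (3); in general $\B_{d}$ is obtained from $\A_{d}$ by a central correction of bounded size (after reducing $\A_{d}$ modulo $H_{2}\cap\Gamma_{0}$, which does not affect $g_{1}(n)\cdot e_{Y}$). A direct computation in the $2$-step group $H$ then gives a decomposition
\begin{equation}\nonumber
g_{1}(n)=\e(n)\,g_{2}(n)\,\gamma(n),\qquad g_{2}(n)=\prod_{i=0}^{d}\B_{i}^{\binom{n}{i}},
\end{equation}
with $\e(n)=\prod_{i}\e_{i}^{\binom{n}{i}}$ being $(O_{d}(C),N)$-smooth and $\gamma(n)=\prod_{i}\gamma_{i}^{\binom{n}{i}}$ being $O_{d,X}(C)$-rational with $\gamma(n)\cdot e_{Y}$ of bounded period. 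Part (2) follows at once: $\tilde{\B_{1}}$ and $\tilde{\A_{1}}$ differ only by $\theta_{\l}^{-1}(\gamma_{1}\e_{1})^{-1}$ modulo $\theta_{\l}^{-1}[H,H]$, so for $\p$ of the same type as $H$ with $0<\Vert\p\Vert\leq D$ and $\Vert\p\cdot\tilde{\B_{1}}\Vert_{\mathbb{R}/\mathbb{Z}}\leq D/N$, multiplying by the positive integer $w\leq C^{2}$ clearing the denominators of $\gamma_{1}$ and of $\p\cdot\theta_{\l}^{-1}\v$ gives $\Vert w\p\cdot\tilde{\A_{1}}\Vert_{\mathbb{R}/\mathbb{Z}}\leq D'/N$ for a suitable $D'=D'(X,C,D,\D,d)$.

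The bulk of the proof is part (1), and I would argue by contraposition. Suppose $g_{2}$ is $S$-totally $\D'$-equidistributed on $H^{\underline{\k}}$ for a small $\D'=\D'(X,C,\D,d)$ to be chosen; I will show $g_{1}$ is $S$-totally $\D$-equidistributed on $H$, contradicting the hypothesis. Fix an $S$-frequency nilcharacter $f$ of $H$ with $\Vert f\Vert_{Lip(Y)}\leq1$ and an arithmetic progression $P\subseteq[N]$. Partition $[N]$ into boundedly many (in terms of $C,d$ and a parameter $\delta$) subprogressions on each of which $\gamma(n)$ is constant, $=\gamma^{(j)}$ (by the bounded period), and $\e(n)$ varies by at most $\delta$ (by smoothness). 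On each such piece, replacing $\e(n)$ by a fixed $\e(n_{j})$ changes $f(g_{1}(n)\cdot e_{Y})$ by $O(\delta)$ (implied constant from Lemma \ref{cts}), and
\begin{equation}\nonumber
f\bigl(\e(n_{j})\,g_{2}(n)\,\gamma^{(j)}\cdot e_{Y}\bigr)=f_{\e(n_{j})}\bigl(g_{2}(n)\cdot e_{Y^{(j)}}\bigr),
\end{equation}
where $e_{Y^{(j)}}=\gamma^{(j)}\cdot e_{Y}$ and $f_{\e(n_{j})}$ is again an $S$-frequency nilcharacter of $H$, of controlled Lipschitz norm, since left translation preserves the vertical frequency. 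By the good-pair hypothesis the restriction of $f_{\e(n_{j})}$ to $H^{\underline{\k}}$ is an $S$-frequency nilcharacter of $H^{\underline{\k}}$, hence integrates to $0$ over $Y^{(j)}$; and by Lemma \ref{shift} the $\D'$-total equidistribution of $(g_{2}(n)\cdot e_{Y})$ on $H^{\underline{\k}}\cdot e_{Y}$ transfers, with a loss depending only on $X$, $H^{\underline{\k}}$ and the bounded $\gamma^{(j)}$, to $\D''$-total equidistribution of $(g_{2}(n)\cdot e_{Y^{(j)}})$ on $H^{\underline{\k}}\cdot e_{Y^{(j)}}$. Summing over the subprogressions ($P$ meets each in an arithmetic progression) and choosing first $\delta$, then $\D'$, small enough gives $\vert\mathbb{E}_{n\in[N]}\bold{1}_{P}(n)f(g_{1}(n)\cdot e_{Y})\vert<\D$, the desired contradiction. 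The constants $C'$ (a bound on the Mal'cev heights of $\B_{0},\dots,\B_{d}$, needed when the lemma is iterated in Proposition \ref{last}), $\D'$, $D'$ and the threshold $N_{0}$ are then read off from this argument.

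The step I expect to be the main obstacle is exactly this transfer: one must keep the test function an $S$-frequency nilcharacter after both the left translation by $\e(n_{j})$ and the restriction to $H^{\underline{\k}}$ — this is precisely why the good-pair hypothesis is imposed — and one must move the base point by the rational factor $\gamma$ without destroying total equidistribution, which is what Lemma \ref{shift} is for; moreover every loss (from smoothing $\e$, from the base-point shift, from the number of subprogressions) must be made independent of $N$. A secondary but tedious point is the bookkeeping attached to the coordinate change $\theta_{\l}$, ensuring that every subgroup produced is again $\l$-shaped of a correctly tracked type; this is routine given Lemmas \ref{dege} and \ref{good}.
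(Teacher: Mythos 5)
Your proposal takes a genuinely different route from the paper, and the two structural choices you make are precisely where the gaps lie.

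The paper's proof is \emph{forward}: it starts from the witness $(f,P)$ to the non-equidistribution of $g_1$, pigeonholes over residue classes to find $k_0$ and a subprogression on which $n\mapsto f(g_1(p\cdot d!\,n+k_0)\cdot e_Y)$ still has a large average, and only then performs a Hall--Petrescu factorization of the \emph{reparametrized} sequence $g_1(p\cdot d!\,n+k_0)$. The factor $p$ in the reparametrization is what turns the near-integer $\k\cdot\tilde\A_i$ into a near-integer $\underline{\k}\cdot\tilde{\A_i^p}$ (so the correction lands in $H^{\underline{\k}}$), the factor $d!$ keeps the binomial coefficients integral, and the constant $k_0$ (from the pigeonhole) supplies the element $h_0=h_0'\lambda$ whose $\Gamma_0$-part $\lambda$ is used to \emph{conjugate} the coefficients $\A_i'v_i\mapsto\B_i=\lambda\A_i'v_i\lambda^{-1}$. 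This conjugation is the crucial move: it keeps $g_2(n)\cdot e_Y$ based at $e_Y$, with test function $F=f(h_0'\cdot{})$, so no base-point shift ever enters the argument. Your proof avoids both the reparametrization and the conjugation, and each omission creates a gap.

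First, your claim that ``a direct computation in the $2$-step group $H$ then gives $g_1(n)=\e(n)g_2(n)\gamma(n)$'' with $\e(n)=\prod_i\e_i^{\binom{n}{i}}$, $\gamma(n)=\prod_i\gamma_i^{\binom{n}{i}}$ and $g_2(n)=\prod_{i=0}^{d}\B_i^{\binom{n}{i}}$ is not justified. Expanding $\prod_i(\B_i\gamma_i\e_i)^{\binom{n}{i}}$ and pushing the $\e$'s left and the $\gamma$'s right produces central corrections of two kinds: $[\gamma_i\e_i,\B_i]^{\binom{\binom{n}{i}}{2}}$ from the Hall--Petrescu identity, and cross-terms like $[\gamma_i,\B_j]^{\binom{n}{i}\binom{n}{j}}$ from interleaving. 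The commutators $[\gamma_i,\B_j]$ are neither small (so they cannot go into $\e$) nor rational of bounded order (so they cannot go into $\gamma$), and their polynomial exponents have degree up to $2(d-1)$, so absorbing them into $g_2$ breaks the $\prod_{i=0}^{d}$ form you need. This is precisely the obstruction that the paper's reparametrization-plus-pigeonhole step is designed to remove: in the paper the only objects with polynomial exponents outside $\binom{n}{i}$ are $\omega_i^{p_i(n)}$ and $\gamma_i^{q_i(n)}$ (which are, by construction, small and trivial on $Y$ respectively), while the central corrections are the elements $v_i\in H_2\subset H^{\underline{\k}}$ that are folded into $\B_i=\lambda\A_i'v_i\lambda^{-1}$ without raising the degree.

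Second, your contrapositive argument for part~1) relies on transferring the $S$-total $\D'$-equidistribution of $(g_2(n)\cdot e_Y)$ to $(g_2(n)\cdot e_{Y^{(j)}})$ with $e_{Y^{(j)}}=\gamma^{(j)}\cdot e_Y$. You cite Lemma~\ref{shift}, but that lemma is stated for ordinary total equidistribution; its hypothesis is that the sequence is totally $\delta$-equidistributed, which is strictly stronger than $S$-total equidistribution. An $S$-frequency version of Lemma~\ref{shift} is not available in the paper and would need its own proof (one must also check that pulling the test function back through the base-point shift stays in the class of $S$-frequency nilcharacters; this is plausible because $H_2$ is central, but it is not free). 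The paper's conjugation-by-$\lambda$ device is exactly what makes this whole step unnecessary, which is why the paper's argument is shorter despite being forward rather than contrapositive.

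Your construction of the coefficients $\B_i=\A_i(\v^{t_i})^{-1}$ via a fixed rational $\v$ with $\underline{\k}\cdot\theta_\l^{-1}\v\neq 0$ is a genuine alternative to the paper's passage to $\A_i^p$, and parts~2) and~3) go through on either construction. But part~1) is where the real content lives, and there your argument as written does not close; the reparametrization and conjugation in the paper's proof are not cosmetic, they are what makes the factorization have the correct form and what removes the base-point shift.
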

Roughly speaking, this lemma says that if all the coefficients of a polynomial sequence on $H$ are ``close to" a sub-manifold $H'$, then this polynomial sequence can be approximated by a polynomial sequence whose coefficients are exactly contained in $H'$, and both sequences have a similar equidistribution property if the pair $(H,H')$ is good.

\begin{rem}
 It is worth noting that 3) implies that $g_{2}$ is also a $d$-polynomial when $H_{2}=H_{2}^{\underline{\k}}$, but it could be a $(d+1)$-polynomial if this condition does not hold.
\end{rem}
\begin{proof} Since $\k$ is of the same type as $H$, it is easy to see that $H_{2}\subset H^{\underline{\k}}$.

In the proof, $C_{1},C_{2}$ are constants depending only on $X,C,\D$ and $d$. 
  We assume without loss of generality that $g_{1}(n)$ has no constant term, i.e. we assume $g_{1}(n)=\prod_{i=1}^{d}\A_{i}^{\binom{n}{i}}$.
  Let $F_{0}\subset H$ be a bounded fundamental domain of the projection $H\rightarrow Y$ (we assume $F_{0}$ is fixed given $Y$). Then by Lemma \ref{cts}, there exists $C_{1}>0$ such that
   \begin{equation}\nonumber
   \begin{split}
     d_{Y}(h\cdot y,h\cdot z)\leq C_{1}d_{Y}(y,z)
   \end{split}
 \end{equation}
for all $h\in F_{2}$ and $y,z\in Y$.

Since $\k=p\underline{\k}$,
 we have that $\Vert\underline{\k}\cdot\tilde{\A_{i}^{p}}\Vert_{\mathbb{R}/\mathbb{Z}}
 =\Vert\k\cdot\tilde{\A_{i}}\Vert_{\mathbb{R}/\mathbb{Z}}\leq\frac{C}{N^{i}}$ for $1\leq i\leq d-1$. So there exist $C_{2}>0$ and $\omega_{1},\dots,\omega_{d-1}\in H$ such that for all $1\leq i\leq d-1$, $\underline{\k}\cdot\tilde{(\omega_{i}^{-1}\A_{i}^{p})}\in\mathbb{Z}$ and
 \begin{equation}\nonumber
   \begin{split}
     d_{H}(\omega_{i},id_{H})\leq\frac{C_{2}}{N^{i}}.
   \end{split}
 \end{equation}
 Since the entries of $\underline{\k}$ are relatively prime integers, we deduce that there exist $\gamma_{1},\dots,\gamma_{d-1}\in\Gamma\times\Gamma$ such that
 $\underline{\k}\cdot\tilde{(\omega_{i}^{-1}\A_{i}^{p}\gamma_{i}^{-1})}=0$ for all $1\leq i\leq d-1$. In other words,
 \begin{equation}\nonumber
    \begin{split}
     \A'_{i}\coloneqq \omega_{i}^{-1}\A_{i}^{p}\gamma_{i}^{-1}\in H^{\underline{\k}}, \text{ for all } 1\leq i\leq d-1.
    \end{split}
  \end{equation}
  By assumption, there exists an $S$-frequency nilcharacter $f$ with $\Vert f\Vert_{Lip(H)}\leq 1$ such that
   \begin{equation}\label{temp41}
    \begin{split}
     \Bigl\vert\mathbb{E}_{n\in[N]}\bold{1}_{P}(n)f(g_{1}(n)\cdot e_{Y})\Bigr\vert>\D
    \end{split}
  \end{equation}
  for some arithmetic progression $P\subset[N]$.
We define
 \begin{equation}\nonumber
    \begin{split}
     L\coloneqq \lfloor\frac{N}{10K\cdot d!}\min\{\frac{\D}{C_{2}},\frac{1}{C}\}\rfloor
    \end{split}
  \end{equation}
  and assume that $N$ is chosen to be sufficiently large such that $L\geq 1$. Here $K>1/5$ is some universal constant depending only on $d$ to be specified latter. Since $K>1/5>p/5C$, we have that $N\geq 2pL\cdot d!$, we can make a partition of the interval $[N]$ into arithmetic
 progressions of step $p\cdot d!$ and length between $L$ and $2L$. Then we deduce from (\ref{temp41}) that there exist $k_{0}\in[N]$ and an arithmetic
progression $P'\subset[N]$ such that
   \begin{equation}\label{temp71}
    \begin{split}
     \Bigl\vert\mathbb{E}_{n\in[N]}\bold{1}_{P'}(n)f(g_{1}(p\cdot d!n+k_{0})\cdot e_{Y})\Bigr\vert>\D.
    \end{split}
  \end{equation}
  Since $g_{1}(n)=\prod_{i=0}^{d}\A_{i}^{\binom{n}{i}}$, it is easy to verify that there exist elements $h_{0}\in H, v_{1},\dots,v_{d}\in H_{2}\subset H^{\underline{\k}}$, a universal constant $K>0$ depending only on $d$, and polynomials $p_{1},\dots,p_{d-1}$, $q_{1},\dots,q_{d-1}$
  satisfying $\vert p_{i}(n)\vert\leq Kn^{i}$ for all $1\leq i\leq d-1$
such that
     \begin{equation}\label{temp51}
    \begin{split}
     g_{1}(p\cdot d!n+k_{0})=\Bigl(\prod_{i=1}^{d-1}\omega_{i}^{p_{i}(n)}\Bigr)\cdot h_{0}\cdot\Bigl(\prod_{i=1}^{d-1}(\A'_{i}v_{i})^{\binom{n}{i}}\Bigr)\cdot v_{d}^{\binom{n}{d}}\cdot\Bigl(\prod_{i=1}^{d-1}\gamma_{i}^{q_{i}(n)}\Bigr)
    \end{split}
  \end{equation}
  for every $n\in\mathbb{N}$.
  We now pick the constant $K$ that satisfies the above condition and the condition $K>\frac{1}{5}$.
   Choose $h'_{0}\in F_{0}$ and $\lambda\in\Gamma\times\Gamma$ such that $h_{0}=h'_{0}\lambda$. Define
    \begin{equation}\label{temp61}
    \begin{split}
     & \B_{i}=\lambda\A'_{i} v_{i}\lambda^{-1}, 1\leq i\leq d-1;
     \\& \B_{d}=\lambda v_{d}\lambda^{-1}, \B_{d+1}=id_{H^{\underline{\k}}};
     \\& g_{2}(n)=\prod_{i=0}^{d}\B_{i}^{\binom{n}{i}}, \forall n\in[N];
     \\& F(y)=f(h_{0}'\cdot y), \forall y\in Y.
    \end{split}
  \end{equation}
  We remark that for $1\leq i\leq d-1$, $\B_{i}$ belong to $H^{\underline{\k}}$ because
  $\B_{i}=[\lambda,\A'_{i}]v_{i}\A'_{i}$ and  $[\lambda,\A'_{i}]v_{i}\in H_{2}\subset H^{\underline{\k}}$. Also $\B_{d}=[\lambda,v_{d}]v_{d}\in H_{2}\subset H^{\underline{\k}}$.
Therefore, $g_{2}(n)$ is a polynomial sequence in $H^{\underline{\k}}$ having the form as stated.
Note that in the special case when $H_{2}=H_{2}^{\underline{\k}}$, we have that $\B_{i}\in H_{2}^{\underline{\k}}$ for all $1\leq i\leq d$. So Property 3) is satisfied.

By (\ref{temp51}) and (\ref{temp61}), we have that
       \begin{equation}\nonumber
    \begin{split}
     g_{1}(p\cdot d!n+k_{0})\cdot e_{Y}=\Bigl(\prod_{i=1}^{d-1}\omega_{i}^{p_{i}(n)}\Bigr)h_{0}'g_{2}(n)\cdot e_{Y}.
    \end{split}
  \end{equation}
  For $n\leq 2L$, by the right invariance of the metric $d_{H}$, we have that
       \begin{equation}\nonumber
    \begin{split}
     d_{H}(\prod_{i=1}^{d-1}\omega_{i}^{p_{i}(n)},id_{H})\leq \sum_{i=1}^{d-1}KC_{2}L^{i}/N^{i}\leq dKC_{2}L/N\leq\D/2.
    \end{split}
  \end{equation}
So
  \begin{equation}\nonumber
    \begin{split}
     d_{Y}\Bigl(g_{1}(p \cdot d!n+k_{0})\cdot e_{Y},h_{0}'g_{2}(n)\cdot e_{Y}\Bigr)\leq\D/2
    \end{split}
  \end{equation}
  Since $\Vert f\Vert_{Lip(X)}\leq 1$, it follows from (\ref{temp71}) that
     \begin{equation}\nonumber
    \begin{split}
     &\quad\Bigl\vert\mathbb{E}_{n\in[N]}\bold{1}_{P'}(n)F(g_{2}(n)\cdot e_{Y})\Bigr\vert
=\Bigl\vert\mathbb{E}_{n\in[N]}\bold{1}_{P'}(n)f(h_{0}'g_{2}(n)\cdot e_{Y})\Bigr\vert
\\&\geq \Bigl\vert\mathbb{E}_{n\in[N]}\bold{1}_{P'}(n)f(g_{1}(p \cdot d!n+k_{0})\cdot e_{Y})\Bigr\vert-\D/2>\D/2,
    \end{split}
  \end{equation}
  and $F$ has a bounded Lipschitz constant since $h_{0}'\in F_{0}$. Moreover, by assumption, $f|_{H^{\underline{\k}}}$ is of $S$-frequency and thus so is $F|_{H^{\underline{\k}}}$, which proves
  Property 1).

   We are left with proving Property 2). Note that $\B_{1}=v'\A'_{1}$ for some $v'\in H_{2}$, which implies that $\bold{p}\cdot\tilde{v'}=0$ since $\bold{p}$ is of the same type as $H$. Therefore,
  \begin{equation}\nonumber
    \begin{split}
    \Vert p\bold{p}\cdot\tilde{\A_{1}}\Vert_{\mathbb{R}/\mathbb{Z}}=
  \Vert\bold{p}\cdot\tilde{(\omega_{1}\B_{1})}\Vert_{\mathbb{R}/\mathbb{Z}}
  \leq
  \Vert\bold{p}\cdot\tilde{\omega_{1}}\Vert_{\mathbb{R}/\mathbb{Z}}
  +\Vert\bold{p}\cdot\tilde{\B_{1}}\Vert_{\mathbb{R}/\mathbb{Z}}
  \leq\frac{(C_{2}+1)D}{N}.
      \end{split}
  \end{equation}
\end{proof}

For convenience, we also need the following definitions:
\begin{defn}[Height]
  The \emph{height} of a rational number $\frac{p}{q} (p,q\in\mathbb{Z},(p,q)=1)$ is $\max\{\vert p\vert,\vert q\vert\}$. We denote the height of an irrational number to be $\infty$.

  The \emph{height} of a matrix is the maximum of the heights of entries of $M$.

  If $A$ is a subspace of $\mathbb{R}^{n}$ with dimension $r$, let $\mathcal{C}(A)$ denote the collection of all $n\times r$ matrices $B$ such that $A$ can
  be written as $A=\{B\vec{t}\in\mathbb{R}^{n}\colon\vec{t}\in\mathbb{R}^{r}\}$. The \emph{height} of $A$ is the minimum of the heights of $B\in\mathcal{C}(A)$. Since the integers are discrete, the height of $A$ is always well-defined.
 \end{defn}

  \begin{defn}[Non-trivial vector]
 Let $A$ be a subspace and $\bold{v}$ be a vector of $\mathbb{R}^{n}$. We say that $\bold{v}$ is \emph{non-trivial} with respect to $A$ if $A$ is not contained in the orthogonal complement of $\bold{v}$. In particular, $\bold{v}$ is non-trivial with respect to $\mathbb{R}^{n}$ if and only if $\bold{v}\neq \bold{0}$.
\end{defn}

\begin{defn}[Group of automorphisms]
    Let $n\in\mathbb{N}$ and $M_{n\times n}$ be the collection of all $n\times n$ real matrices.
For any $A\in M_{n\times n}$, write $\Lambda(A)=\{P\in M_{n\times n}\colon P^{T}AP=A\}$.
\end{defn}

  The key to the proof of Proposition \ref{est1} is to extract as much information as possible from the non-equidistribution of a sequence on $H_{0}$. We start with the following proposition:

\begin{prop}\label{last}(Non-equidistribution on $H_{0}$ implies non-equidistribution on $G$). Suppose $G$ is not abelian.
  Then for any $\D>0$, there exist $D=D(X,\D)>0$ and $N_{0}=N_{0}(X,\D)\in\mathbb{N}$ such that for any $N\geq N_{0}$, any $\a=(a_{1},\dots,a_{s};a_{s+1}),
\b=(b_{1},\dots,b_{s};b_{s+1})\in G,$ and any $a',b'\in G_{2}$,
 if there exist a non-trivial nilcharacter $\Phi$ such that $\Vert\Phi\Vert_{Lip(X)}\leq 1$ and an arithmetic progression $P\subset[N]$ such that
  \begin{equation}\nonumber
    \begin{split}
      \Bigl\vert\mathbb{E}_{n\in[N]}\bold{1}_{P}(n)\Phi(\a^{n}a'^{\binom{n}{2}}\cdot e_{X})\overline{\Phi(\b^{n}b'^{\binom{n}{2}}\cdot e_{X})}\Bigr\vert>\D,
    \end{split}
  \end{equation}
  then one of the following is true:


  (i) there exist horizontal characters $\eta_{1}$ and $\eta_{2}$ of $X$ such that $0<\Vert\eta_{1}\Vert, \Vert\eta_{2}\Vert\leq D$ and $\Vert\eta_{1}\circ g_{1}\Vert_{C^{\infty}[N]},\Vert\eta_{2}\circ g_{2}\Vert_{C^{\infty}[N]}\leq D$,
where $g_{1}(n)=\a^{n}, g_{2}(n)=\b^{n}$;

  (ii) there exists an $s'\times s'$ matrix $M\in \Lambda(B_{0}^{G})$ of height at most $D$, such that $\a^{\bullet}B_{0}^{G}M$ is at most $\frac{D}{N}$-away from $\b^{\bullet}B_{0}^{G}$ (recall that $B_{0}^{G}$,$s'$, $\a^{\bullet}$ and $\b^{\bullet}$ are defined in Convention \ref{conv2}).
\end{prop}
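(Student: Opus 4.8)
The plan is to extract rational-dependence information from the failure of equidistribution of the joint sequence on $H_{0}=G\times G$ by descending a bounded chain of $\l$-shaped rational subgroups, and then to transport that information back to the linear coefficients $\a,\b$. To set up, write $g(n)=(\a^{n}a'^{\binom{n}{2}},\b^{n}b'^{\binom{n}{2}})$; since $a',b'\in G_{2}$ are central, $g$ is a $2$-polynomial on $H_{0}$ with linear coefficient $(\a,\b)$, quadratic coefficient $(a',b')\in(H_{0})_{2}$, and no constant term. By Lemma \ref{good}(1), $\Psi\coloneqq\Phi\otimes\overline{\Phi}$ is an $S$-frequency nilcharacter of $H_{0}$, so the hypothesis says exactly that $g$ is not $S$-totally $\D$-equidistributed on $Y_{0}=H_{0}/\Gamma_{0}$, which is of type $1$ and hence $\l$-shaped for every $\l$.

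First I would run the descent: build a strictly decreasing chain $H_{0}\supsetneq H^{(1)}\supsetneq\cdots\supsetneq H^{(J)}$ of $\l$-shaped subgroups and $2$-polynomials $g^{(j)}$ on $H^{(j)}$ with no constant term, all of bounded complexity. Given $g^{(j)}$ not $S$-totally $\D_{j}$-equidistributed on $H^{(j)}$ with $\dim H^{(j)}$ above a threshold, apply the quantitative Leibman theorem (Theorem \ref{Lei}) to obtain a non-zero horizontal character of $H^{(j)}$, which, padded with zeros in the commutator directions, is a vector $\k_{j+1}\in\mathbb{Z}^{2s+2}$ of the same type as $H^{(j)}$ with $0<\Vert\k_{j+1}\Vert$ bounded and $\Vert\k_{j+1}\cdot\widetilde{\A^{(j)}_{1}}\Vert_{\mathbb{R}/\mathbb{Z}}=O(1/N)$ for the linear coefficient $\A^{(j)}_{1}$ of $g^{(j)}$; set $H^{(j+1)}=(H^{(j)})^{\underline{\k_{j+1}}}$. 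Since $\k_{j+1}$ has the type of $H^{(j)}$ one checks $H^{(j)}_{2}\subset H^{(j+1)}$, so $H^{(j)}_{2}=H^{(j+1)}_{2}$, both subgroups are $\l$-shaped for a common $\l$, and Lemma \ref{appro2} applies: it produces $g^{(j+1)}$ on $H^{(j+1)}$ with no constant term, which by part $1$ is not $S$-totally $\D_{j+1}$-equidistributed \emph{provided the pair $(H^{(j)},H^{(j+1)})$ is good}, and whose linear coefficient is tied to that of $g^{(j)}$ through part $2$. As passing from $H^{(j)}$ to $(H^{(j)})^{\underline{\k}}$ can only preserve the type or lower it in the order $1\to2\to3$, and the dimension strictly drops, the descent has length bounded in terms of $\dim X$; by Lemma \ref{good}(2),(3) the only non-good transitions are (b1) a type-$1$ subgroup followed by a type-$2$ subgroup for a pair $(\lambda_{1},\lambda_{2})$ with $\lambda_{1}=\lambda_{2}$, or (b2) a subgroup followed by an abelian one.

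Then comes the dichotomy. If every transition is good, the chain reaches $H^{(J)}$ with $\dim H^{(J)}$ below the threshold; choosing the threshold small enough, a sequence on $H^{(J)}$ that fails $S$-total equidistribution has its linear coefficient $O(1/N)$-close, in $\theta_{\l}$-coordinates, to a bounded-height rational point in every direction of the small horizontal torus of $H^{(J)}$, and combining this with the constraints carried by $\k_{1},\dots,\k_{J}$ and transporting them back up through the $\theta_{\l}$-changes of coordinates and Lemma \ref{appro2}(2) forces $\widehat{\a}$ and $\widehat{\b}$ \emph{each} to be $O(1/N)$-close to a bounded-height rational point; taking $\eta_{1},\eta_{2}$ to be suitable multiples of standard characters and using that $\eta_{i}$ kills $G_{2}$ gives $0<\Vert\eta_{i}\Vert\leq D$ and $\Vert\eta_{1}\circ g_{1}\Vert_{C^{\infty}[N]}=N\Vert\eta_{1}(\a)\Vert\leq D$, similarly for $\eta_{2}$, which is conclusion (i). If instead some transition is non-good we are in case (b1) or (b2): in (b1) the subgroup $H^{(j+1)}$ is of type $2$ for $(\lambda_{1},\lambda_{2})$ with $\lambda_{1}=\lambda_{2}$, so its commutator sits diagonally in $G_{2}\times G_{2}$, which forces the Mal'cev linear data of $g^{(j+1)}$, and hence (via Lemma \ref{appro2}(2) up the type-$1$ part of the chain) the Kronecker parts $\a^{\bullet},\b^{\bullet}$, to be related by a bounded-height linear map preserving the skew form $B^{G}$ exactly (the equality $\lambda_{1}=\lambda_{2}$ is precisely what makes the induced action on $G_{2}$ trivial, so $B^{G}$ is scaled by $1$); restricting to the non-degenerate block yields $M\in\Lambda(B_{0}^{G})$ of bounded height with $\a^{\bullet}B_{0}^{G}M$ within $O(1/N)$ of $\b^{\bullet}B_{0}^{G}$, which is conclusion (ii). Case (b2), and the ``parallel Kronecker parts'' situation underlying (b1), is in essence the one already treated in ~\cite{FH}, and is handled the same way.

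The hard part will be the all-good case. A priori the characters $\k_{1},\dots,\k_{J}$ may all mix the two copies of $G$, so joint rational constraints on $(\widehat{\a},\widehat{\b})$ need not split into separate constraints on $\widehat{\a}$ and on $\widehat{\b}$; the resolution is the quantitative fact that on the \emph{small} terminal subgroup $H^{(J)}$ a non-$S$-equidistributed sequence is rational in all remaining horizontal directions, so that together with $\k_{1},\dots,\k_{J}$ these span enough of $\mathbb{Z}^{2s}$ to pin down both coordinates — this is exactly why one iterates Leibman rather than applying it once. Getting the dimension count and the threshold right, and controlling the growth of the constants $\D_{j},D_{j}$ down the descent, is the main technical burden; a secondary one is the linear-algebra bookkeeping in case (b1), namely isolating precisely when the induced map on $\mathbb{R}^{s}$ lands in $\Lambda(B_{0}^{G})$ rather than merely in its similitude group, which is why the non-good transitions must be separated out with care.
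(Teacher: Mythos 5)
Your high-level plan matches the paper's: form $\Psi=\Phi\otimes\overline{\Phi}$, iterate the quantitative Leibman theorem together with Lemma \ref{appro2} to descend through $\l$-shaped rational subgroups $H^{(j)}$ while tracking the linear coefficient via Property 2), and split on how the descent terminates. The descent structure, the use of good pairs, and the observation that the only problematic transitions are type-1-to-type-2 with $\lambda_1=\lambda_2$ or drops to abelian, are all as in the paper. But there is a genuine gap in how you extract the conclusions at termination, and one of your termination cases is organized in a way that would not go through.

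The paper does \emph{not} handle the terminal abelian group by applying Leibman on it to pin down all horizontal directions (in fact it notes that condition $(I)$ — non-equidistribution — may fail there). Instead, it writes $\theta_\l^{-1}H_{i+1}=\{(\bold{t}A,\bold{t}A';\bold{t}\xi,\bold{t}\xi')\colon\bold{t}\in\mathbb{R}^{r}\}$ and uses the commutator computation from Lemma \ref{dege} to derive the algebraic constraint $AB^{G}A^{T}=A'B^{G}A'^{T}=0$ (resp. $AB^{G}A^{T}=A'B^{G}A'^{T}$ in the non-good type-1-to-type-2 case), and then runs a \textbf{rank dichotomy on $A$}. If $\mathrm{rank}(A)<s$, one finds an integer vector $\p_{0}$ in the kernel, giving $\p_{0}\cdot\widehat{\a_{i+1}}=0$ exactly, which is pushed up the chain by Property 2) to yield conclusion (i). If $\mathrm{rank}(A)=s$: in the abelian case $AB^{G}A^{T}=0$ forces $B^{G}=0$, contradicting that $G$ is non-abelian; in the non-good case one factors $A'=Y\binom{I}{0}$, $A=Y\binom{A_{1}}{A_{2}}$, extracts $A_{1}^{T}\in\Lambda(B^{G})$ and $A_{2}B^{G}=0$, and computes $\widehat{\a_{i+1}}B^{G}A_{1}^{T}=\widehat{\b_{i+1}}B^{G}$, then restricts to the non-degenerate block to get conclusion (ii). None of this linear algebra appears in your write-up: you replace it with the claim that a small enough terminal $H^{(J)}$ forces the linear coefficient close to a rational point ``in every direction'' (Leibman only produces one horizontal character per application, and the descent stops at abelian, so you cannot accumulate a full basis this way), and with the assertion that ``the constraints $\k_{1},\dots,\k_{J}$ span enough of $\mathbb{Z}^{2s}$'' — which is exactly the kind of counting that the rank argument is designed to bypass, since the $\k_{j}$ genuinely may not decouple $\widehat{\a}$ from $\widehat{\b}$.

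Two further points. First, the claim that the case (b2) (and the parallel-Kronecker situation underlying (b1)) is ``in essence the one already treated in [FH] and handled the same way'' is inconsistent with the paper, which states explicitly that only the parallel case was implicit in [FH] and that the general case (precisely the rank and commutator analysis above) requires new techniques. Second, your dichotomy does not account for the fact that both conclusions (i) and (ii) can emerge from \emph{either} termination mode, depending on $\mathrm{rank}(A)$: the non-good termination with $\mathrm{rank}(A)<s$ gives (i), not (ii), and the abelian termination with $\mathrm{rank}(A)=s$ is impossible. Without the rank dichotomy and the relations $AB^{G}A^{T}=A'B^{G}A'^{T}(=0)$, the proof cannot be completed along the route you outline.
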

\begin{rem}
  It is worth noting that every matrix $M\in \Lambda(B_{0}^{G})$ induces a natural automorphism $\sigma_{M}\colon G\rightarrow G$ given by
    \begin{equation}\nonumber
    \begin{split}
      \sigma_{M}(a_{1},\dots,a_{s};a_{s+1})=\bigl((a_{1},\dots,a_{s'})M,a_{s'+1},\dots,a_{s};a_{s+1}\bigr), \forall (a_{1},\dots,a_{s};a_{s+1})\in G
    \end{split}
  \end{equation}
  such that $[\a,\b]=[\sigma_{M}\a,\sigma_{M}\b]$ for all $\a,\b\in G$.
\end{rem}
\begin{proof}
Throughout the proof, all the numbers $\D_{1},\D_{2},\dots,D_{1},D_{2},\dots,F_{1},F_{2},\dots$ depend only on $X$ and $\D$, and $D_{1}',D_{2}',\dots\colon
\mathbb{R}\rightarrow\mathbb{R}$ are functions depending only on $X$ and $\D$. $N$ is always assumed to be large enough depending only on the above constants, $X$, 
and $\D$ (and therefore depending only on $X$ and $\D$).  

By Lemma \ref{good}, $\Psi=\Phi\otimes\overline{\Phi}$ is an $S$-frequency nilcharacter on $H_{0}=G\times G$. Since the boundedness of the Lipschitz norm
of $\Phi$ implies the boundedness of the Lipschitz norm of $\Psi$, by ignoring a scale depending only on $X$, we may assume without loss of generality that the Lipschitz norm of $\Psi$ is less than 1.
 Then the sequence $h_{0}(n)=\Bigl(\a^{n}a'^{\binom{n}{2}},\b^{n}b'^{\binom{n}{2}}\Bigr)$ is not $S$-totally $\D$-equidistributed on $H_{0}$.

Denote $\a_{0}=\a, \b_{0}=\b, a'_{0}=a', b'_{0}=b',a_{0}''=b_{0}''=e_{G},v_{0}=e_{H}$.
For convenience, throughout the proof, we write $\c_{j}=(\a_{j},\b_{j}), c'_{j}=(a'_{j},b'_{j}),c''_{j}=(a''_{j},b''_{j})$ for all $j\in\mathbb{N}$ whenever the notation $\a_{j},\b_{j},a'_{j},b'_{j},a''_{j},b''_{j}$ is introduced.

We prove the proposition by induction. For any $i\geq 0$, we say that \emph{hypothesis $P(i)$} is satisfied if:
 for all $0\leq j\leq i$, there exist a subgroup $H_{j}$ of $H_{0}$, $\Gamma_{j}=\Gamma_{0}\cap H_{j}=(\Gamma\times\Gamma)\cap H_{j}, Y_{j}=H_{j}/\Gamma_{j}$, and a 3-polynomial
$h_{j}(n)=v_{j}\Bigl(\a_{j}^{n}a_{j}'^{\binom{n}{2}}a_{j}''^{\binom{n}{3}},\b_{j}^{n}b_{j}'^{\binom{n}{2}}b_{j}''^{\binom{n}{3}}\Bigr)$ on $H_{j}$ ($a_{j}'',b_{j}''\in G_{2}$)
such that for all $0\leq j\leq i$, we have that

$(I)$ $h_{j}(n)$ is not $S$-totally $\D_{j}$-equidistributed on $H_{j}$;

$(II)$ If $H_{j}$ is of type 1, then $h_{j}$ is a 2-polynomial;

$(III)$ If $j\geq 1$, then $H_{j}=H_{j-1}^{\underline{\v_{j}}}$ for some $\v_{j}$ non-trivial with respect of $H_{j-1}$ (recall Definition \ref{sg}) and of the same type as $H_{j-1}$. Therefore, for all $0\leq j\leq i$, if we let $V_{j}$ denote the subspace of $\mathbb{R}^{2s+2}$ of $H_{j}$ under the Mal'cev basis, then $V_{j}$ is of co-dimension $j$. Moreover, there exists $\l\in\mathbb{R}^{2}, \l\neq (0,0)$ such that each $H_{j}$ is $\l$-shaped;

$(IV)$ The coefficients of the pair $(h_{j-1},h_{j})$ satisfy Property 2) of Lemma \ref{appro2} , for some function $D'_{j}$ and integer $w_{j}$ bounded by
some constant depending only on $X$ and $\D$.

\

We first note that hypothesis $P(0)$ is satisfied. Suppose that hypothesis $P(i)$ is satisfied for some $i\in\mathbb{N}$, we wish to prove that hypothesis $P(i+1)$ is satisfied. Notice that the element $\l$ appearing in $(III)$ is fixed throughout the induction step,
and so we always use the notation $\tilde{\A}$ to denote $\theta^{-1}_{\l}\A$ for this fixed $\l=(\lambda_{1},\lambda_{2})$ in the proof.

Suppose that $H_{i}$ is not abelian. We distinguish cases depending on the type of $H_{i}$:

\textbf{Case that $H_{i}$ is of type 1.} By hypothesis $(II)$, $h_{i}$ is a 2-polynomial. Then
using Theorem \ref{Lei} on $H_{i}$,
there exist $D_{i+1}>0$ and a vector $\v_{i+1}\in\mathbb{Z}^{2s+2}$ non-trivial with respect to $V_{i}$ of the same type as $H_{i}$ with length no larger than $D_{i+1}$ such that
\begin{equation}\nonumber
    \begin{split}
     \Vert \v_{i+1}\cdot\tilde{\c_{i}}\Vert_{\mathbb{R}/\mathbb{Z}}\leq\frac{D_{i+1}}{N}.
    \end{split}
  \end{equation}
  Let $H_{i+1}=H^{\underline{\v_{i+1}}}$ (recall Definition \ref{subgroup}). Then $V(H_{i+1})=V_{i+1}$ is of co-dimension $i+1$ since $\v_{i+1}$ is non-trivial with respect to $V_{i}$. Let
 $\Gamma_{i+1}=\Gamma_{0}\cap H_{i+1}, Y_{i+1}=H_{i+1}/\Gamma_{i+1}$.

 If $H_{i+1}$ is of type 1, then $(H_{i})_{2}=(H_{i+1})_{2}$. By Lemma \ref{appro2}, there exists
 a polynomial of the form $h_{i+1}(n)=v_{i+1}\Bigl(\a_{i+1}^{n}a_{i+1}'^{\binom{n}{2}},\b_{i+1}^{n}b_{i+1}'^{\binom{n}{2}}\Bigr)$
on $H_{i+1}$ ($a_{i+1}'(n),b_{i+1}'(n)\in G_{2}$) such that it is not $S$-totally $\D_{i+1}$-equidistributed on $H_{i+1}$ and the coefficients of the pair
$(h_{i},h_{i+1})$ satisfy Property 2) of Lemma \ref{appro2}
for some function $D'_{i+1}$ and integer $w_{i+1}$ bounded by some constant depending only on $X$ and $\D$. Thus $(IV)$ is
satisfied for the system $Y_{i+1}$ and the sequence $h_{i+1}$. By Property 1) of Lemma \ref{appro2}, $(I)$ is true since the pair $(H_{i},H_{i+1})$ is good by
 Lemma \ref{good}. $(II)$ also holds since $h_{i+1}$ is a 2-polynomial.

If $H_{i+1}$ is of type 2 and the pair $(H_{i},H_{i+1})$ is good, by Lemma \ref{appro2}, there exists
 a 3-polynomial $h_{i+1}(n)=v_{i+1}\Bigl(\a_{i+1}^{n}a_{i+1}'^{\binom{n}{2}}a_{i+1}''^{\binom{n}{3}},\b_{i+1}^{n}b_{i+1}'^{\binom{n}{2}}b_{i+1}''^{\binom{n}{3}}\Bigr)$
on $H_{i+1}$ ($a_{i+1}''(n),b_{i+1}''(n)\in G_{2}$) such that it is not $S$-totally $\D_{i+1}$-equidistributed on $H_{i+1}$ and the coefficients of the pair $(h_{i},h_{i+1})$ satisfy Property 2) of Lemma \ref{appro2}
for some function $D'_{i+1}$ and integer $w_{i+1}$ bounded by some constant depending only on $X$ and $\D$.  Thus $(IV)$ is
satisfied for the system $Y_{i+1}$ and the sequence $h_{i+1}$. $(I)$ is true by assumption. Condition $(II)$ is trivial since $H_{i+1}$ is of type 2.

If $H_{i+1}$ is of type 2 and the pair $(H_{i},H_{i+1})$ is not good, then we stop the induction procedure.

If $H_{i+1}$ is abelian, then we stop the induction procedure.

\textbf{Case that $H_{i}$ is of type 2.} This time $h_{i}$ is a 3-polynomial and $H_{i}$ is $\l$-shaped. Using Theorem \ref{Lei} on $H_{i}$,
there exist $D_{i+1}>0$ and a vector $\v_{i+1}\in\mathbb{Z}^{2s+2}$ non-trivial with respect to $V_{i}$ of the same type as $H_{i}$ with length no larger than $D_{i+1}$ such that
\begin{equation}\nonumber
    \begin{split}
     \Vert \v_{i+1}\cdot\tilde{\c_{i}}\Vert_{\mathbb{R}/\mathbb{Z}}\leq\frac{D_{i+1}}{N}, \Vert \v_{i+1}\cdot \tilde{c'_{i}}\Vert_{\mathbb{R}/\mathbb{Z}}\leq\frac{D_{i+1}}{N^{2}}.
    \end{split}
  \end{equation}
  Let $H_{i+1}=H^{\underline{V_{i+1}}}$. Then $V_{i+1}$ is of codimension $i+1$ since $\v_{i+1}$ is non-trivial with respect to $V_{i}$. Let
 $\Gamma_{i+1}=\Gamma_{0}\cap H_{i+1}, Y_{i+1}=H_{i+1}/\Gamma_{i+1}$.

 Since $H_{i}$ is of type 2, we have that $H_{i+1}$ is either of type 2 or abelian. If $H_{i+1}$ is of type 2, then $(H_{i})_{2}=(H_{i+1})_{2}$. By
 Lemma \ref{appro2}, there exists
 a 3-polynomial of the form $h_{i+1}(n)=v_{i+1}\Bigl(\a_{i+1}^{n}a_{i+1}'^{\binom{n}{2}}a_{i+1}''^{\binom{n}{3}},\b_{i+1}^{n}b_{i+1}'^{\binom{n}{2}}b_{i+1}''^{\binom{n}{3}}\Bigr)$
on $H_{i+1}$ ($a_{i+1}''(n),b_{i+1}''(n)\in G_{2}$)  such that it is not $S$-totally $\D_{i+1}$-equidistributed on $H_{i+1}$ and the coefficients of the pair
$(h_{i},h_{i+1})$ satisfy Property 2) of Lemma \ref{appro2}
for some function $D'_{i+1}$ and integer $w_{i+1}$ bounded by some constant depending only on $X$ and $\D$. Thus $(IV)$ is
satisfied for the system $Y_{i+1}$ and the sequence $h_{i+1}$. By Property 1) of Lemma \ref{appro2}, $(I)$ is true since the pair $(H_{i},H_{i+1})$ is good by
Lemma \ref{good}. Condition $(II)$ is trivial since $H_{i+1}$ is of type 2.

If $H_{i+1}$ is abelian, then we stop the induction procedure.

\

In conclusion, the above procedure can be continued and thus hypothesis $P(i)$ is satisfied unless either $H_{i+1}$ appearing in the above construction is abelian or the pair $(H_{i},H_{i+1})$ is not good. Since the dimension of $H_{0}$ is finite, the above procedure must stop within $2s+2$ steps.

\

If $H_{i+1}$ is abelian, by a similar discussion,
we can still construct the system $Y_{i+1}$ and the sequence $h_{i+1}$ (possibly with different degree) such that $(II),(III)$ and $(IV)$ hold (but $(I)$ may not hold). Denoting $r=2s-i+1, H_{i+1}$ can be written as
\begin{equation}\nonumber
 \begin{split}
  \theta_{\l}^{-1}H_{i+1}=\Bigl\{ (\bold{t}A,\bold{t}A';\bold{t}\xi,\bold{t}\xi')\colon\bold{t}\in\mathbb{R}^{r}\Bigr\},
 \end{split}
\end{equation}
where $A,A',\xi,\xi'$ are respectively $r\times s, r\times s, r\times 1, r\times 1$ matrices with height at most $F_{1}$.
By Lemma \ref{dege}, we have that (recall that $B^{G}$ is defined in Convention \ref{conv2})
\begin{equation}\nonumber
 \begin{split}
  [H_{i+1},H_{i+1}]=\Bigl\{ (\bold{0},\bold{s}AB^{G}A^{T}\bold{t}^{T};\bold{0},\bold{s}A'B^{G}A'^{T}\bold{t}^{T})\colon\bold{s},\bold{t}\in\mathbb{R}^{r}\Bigr\}.
 \end{split}
\end{equation}
So we deduce that
\begin{equation}\label{ABA2}
 \begin{split}
  AB^{G}A^{T}=A'B^{G}A'^{T}=0.
 \end{split}
\end{equation}

First suppose that $rank(A)<s$. Since $\tilde{\c_{i+1}}=\tilde{(\a_{i+1},\b_{i+1})}=(\bold{t}A,\bold{t}A';\bold{t}\xi,\bold{t}\xi')$ for some $\bold{t}\in\mathbb{R}^{r}$, we deduce that there exist $F_{3}>0$ and $\p=(\p_{0},\bold{0})\in\mathbb{Z}^{s}\times\{0\}^{s+2}$ with $0<\Vert\p\Vert\leq F_{3}$ such that
$\Vert\p\cdot\tilde{\c_{i+1}}\Vert_{\mathbb{R}/\mathbb{Z}}=\Vert\p_{0}\cdot\widehat{\a_{i+1}}\Vert_{\mathbb{R}/\mathbb{Z}}\leq\frac{F_{3}}{N}$ (recall that
$\widehat{\a}$ is the first $s$ coordinates of $\a$ by Convention \ref{conv2}).
Since the coefficients of each pair $(h_{j},h_{j+1}) (0\leq j\leq i)$ satisfy Property 2) of Lemma \ref{appro2} and $\p$ is of type 1 (and thus is of the same type
as all subgroups of $H_{0}$), there exists $w\in\mathbb{Z}$ with $0<\vert w\vert\leq F_{4}$ such that
$\Vert w\p\cdot\tilde{\c_{0}}\Vert_{\mathbb{R}/\mathbb{Z}}=\Vert w\p_{0}\cdot\widehat{\a_{0}}\Vert_{\mathbb{R}/\mathbb{Z}}\leq\frac{F_{4}}{N}$,
where $F_{4}=D'_{1}(F_{1}\cdot D'_{2}(\dots F_{1}\cdot D'_{i}(0)\dots))$. So the existence of $\eta_{1}$ in (i) is proved.
The existence of $\eta_{2}$ in (i) holds similarly and therefore conclusion (i) holds.

If $rank(A)= s$, by a simple computation using linear algebra, (\ref{ABA2}) implies that $B^{G}=0$. So $G$ is abelian, a contradiction.

\

If the pair $(H_{i},H_{i+1})$ is not good, by Lemma \ref{good}, we must have that $H_{i}$ is of type 1 and $H_{i+1}$ is of type 2,
and $H_{i+1}$ must be $(1,1)$-shaped. By a similar discussion,
we can still construct the system $Y_{i+1}$ and the sequence $h_{i+1}$ such that $(II),(III)$ and $(IV)$ hold (but $(I)$ may not hold).
Set $r=2s-i+1$. Since in this case $\v_{j}$ is of type 1 for all $0\leq j\leq i$, $H_{i+1}$ can be written as
\begin{equation}\nonumber
 \begin{split}
  H_{i+1}=\Bigl\{ (\bold{t}A,w;\bold{t}A',w')\colon\bold{t}\in\mathbb{R}^{r}, w,w'\in\mathbb{R}\Bigr\},
 \end{split}
\end{equation}
where $A$ and $A'$ are some full rank $r\times s$ matrices with height at most $F_{1}$. By Lemma \ref{dege}, we have that
\begin{equation}\nonumber
 \begin{split}
  [H_{i+1},H_{i+1}]=\Bigl\{ (\bold{0},\bold{s}AB^{G}A^{T}\bold{t}^{T};\bold{0},\bold{s}A'B^{G}A'^{T}\bold{t}^{T})\colon\bold{s},\bold{t}\in\mathbb{R}^{r}\Bigr\}.
 \end{split}
\end{equation}
So we deduce that
\begin{equation}\label{ABA}
 \begin{split}
  AB^{G}A^{T}=A'B^{G}A'^{T}.
 \end{split}
\end{equation}

If $rank(A)<s$, similar to the previous case, there exist $F_{3}>0$ and $\p=(\p_{0},\bold{0})\in\mathbb{Z}^{s}\times\{0\}^{s+2}$ with $0<\Vert\p\Vert\leq F_{3}$ such that
$\Vert\p\cdot\tilde{\c_{i+1}}\Vert_{\mathbb{R}/\mathbb{Z}}=\Vert\p_{0}\cdot\widehat{\a_{i+1}}\Vert_{\mathbb{R}/\mathbb{Z}}\leq\frac{F_{3}}{N}$.
Since the coefficients of each pair $(h_{j},h_{j+1}) (0\leq j\leq i)$ satisfy Property 2) of Lemma \ref{appro2} and $\p$ is of type 1 (and thus is of the same type
as all subgroups of $H_{0}$), there exists
$w\in\mathbb{Z}$ with $0<\vert w\vert\leq F_{4}$ such that
$\Vert w\p\cdot\tilde{\c_{0}}\Vert_{\mathbb{R}/\mathbb{Z}}=\Vert w\p_{0}\cdot\widehat{\a_{0}}\Vert_{\mathbb{R}/\mathbb{Z}}\leq\frac{F_{4}}{N}$,
where $F_{4}=D'_{1}(F_{1}\cdot D'_{2}(\dots F_{1}\cdot D'_{i}(0)\dots))$. So the existence of $\eta_{1}$ in (i) is proved.
The existence of $\eta_{2}$ in (i) holds similarly and therefore conclustion (i) holds.

Now we assume that $rank(A)=s$. In this case $r\geq s$. Suppose that $A'=Y\binom{I_{s\times s}}{0_{(r-s)\times s}}$ for some
invertible $r\times r$ matrix $Y$. Denote $A=Y\binom{A_{1}}{A_{2}}$, where $A_{1}$ and $A_{2}$ are respectively $s\times s$ and $(r-s)\times s$ matrices.
Then (\ref{ABA}) implies that
\begin{equation}\nonumber
 \begin{split}
  & A_{1}B^{G}A_{1}^{T}=B^{G};
  \\& A_{2}B^{G}(A_{1}^{T},A_{2}^{T})=0.
 \end{split}
\end{equation}
Note that $A_{1}B^{G}A_{1}^{T}=B^{G}$ implies that $A_{1}^{T}\in\Lambda(B^{G})$. Since $rank(A)=s$, $A_{2}B^{G}(A_{1}^{T},A_{2}^{T})=0$ implies that $A_{2}B^{G}=0$. Let $A''$ be the upper left $s'\times s'$ block of the matrix $A_{1}^{T}$. Since $A_{1}^{T}\in\Lambda(B^{G})$, we deduce that $A''\in\Lambda(B_{0}^{G})$.

Suppose that $\widehat{\a_{i+1}}=\bold{t}A, \widehat{\b_{i+1}}=\bold{t}A'$ for some $\bold{t}\in\mathbb{R}^{r}$. Then
$\widehat{\a_{i+1}}=\bold{s}\binom{A_{1}}{A_{2}}, \widehat{\b_{i+1}}=\bold{s}\binom{I_{s\times s}}{0_{s\times (r-s)}}$, where $\bold{s}=\bold{t}Y$. Denote $\bold{s}=(\bold{s}_{1},\bold{s}_{2}), \bold{s}_{1}\in\mathbb{R}^{s}, \bold{s}_{2}\in\mathbb{R}^{r-s}$.
Then $\widehat{\b_{i+1}}=\bold{s}_{1}, \widehat{\a_{i+1}}=\bold{s}_{1}A_{1}+\bold{s}_{2}A_{2}=\widehat{\b_{i+1}}A_{1}+\bold{s}_{2}A_{2}$. So $\widehat{\a_{i+1}}B^{G}=\widehat{\b_{i+1}}A_{1}B^{G}$.
Thus
$\widehat{\a_{i+1}}B^{G}A_{1}^{T}=\widehat{\b_{i+1}}A_{1}B^{G}A_{1}^{T}= \widehat{\b_{i+1}}B^{G}$.
Suppose that $\bold{s}_{3}\in\mathbb{R}^{s'}$ is the vector consisting of the first $s'$ entries of $\bold{s}_{1}$. Then
considering the first $s'$ entries of $\widehat{\a_{i+1}}B^{G}A_{1}^{T}$ and $\widehat{\b_{i+1}}B^{G}$, we get that
$\a_{i+1}^{\bullet}B_{0}^{G}A''=\b_{i+1}^{\bullet}B_{0}^{G}$.
Since the coefficients of each pair $(h_{j},h_{j+1}) (0\leq j\leq i)$ satisfy Property 2) of Lemma \ref{appro2}, we
deduce that $\a^{\bullet}B_{0}^{G}A''$ is at most $\frac{F_{5}}{N}$-away from $\b^{\bullet}B_{0}^{G}$ for some $F_{3}>0$ depending only on $X$ and $\D$, and $A''\in\Lambda(B_{0}^{G})$ is of height at most $F_{5}$. Therefore, conclusion
(ii) holds.
\end{proof}

\begin{proof}[Proof of Proposition \ref{est1}.]
In this proof $K,C_{0},C_{1},C_{2},\dots,\D_{1},\D_{2},\dots,\sigma_{1},\sigma_{2},\dots$ are constants depending only on $X$ and $\D$.

Suppose that
    \begin{equation}\label{B1}
  \begin{split}
    \Bigl\vert\mathbb{E}_{\A\in \R}\bold{1}_{P}(\A)\chi(\A)\Phi(g'(\B+\A)\cdot e_{X})\Bigr\vert\geq\D
  \end{split}
  \end{equation}
  for some $\B\in \R, \chi\in\mathcal{M}_{\Z}, \Phi\in Lip(X)$ with $\Vert\Phi\Vert_{Lip(X)}\leq 1$ and $\int\Phi dm_{X}=0$, and some $P=\{a+bi\colon a\in P_{1}, b\in P_{2}\}$, where $P_{i}$ is an arithmetic progression in $[\tilde{N}], i=1,2$. 
Our goal is to show there exist $\sigma=\sigma(X,\D), N_{0}=N_{0}(X,\D)$ such that if (\ref{B1}) holds for some $N\geq N_{0}$, then
  \begin{equation}\label{B2}
  \begin{split}
    (g(m,n))_{(m,n)\in[\tilde{N}]\times[\tilde{N}]} \text{ is not totally $\sigma$-equidistributed in $X$}.
  \end{split}
  \end{equation}

\textbf{Step 1: Reduction to some particular nilmanifold.}
 Write $r=\dim(G_{2}), m=s+r=\dim(G)$ and identify the vertical torus $G_{2}/(G_{2}\cap\Gamma)$ with $\mathbb{T}^{r}$.
We may assume without loss of generality that $\Vert\Phi\Vert_{\mathcal{C}^{2m}(X)}\leq 1$. Indeed,
there exists a function $\Phi'$ with $\Vert\Phi-\Phi'\Vert_{\infty}\leq \D/2$ and 
$\Vert\Phi'\Vert_{\mathcal{C}^{2m}(X)}$ is bounded by a constant depending only on $X$ and $\D$.

   We start with some definitions. For $\k\in\mathbb{Z}^{r}$, the character $\k$ of $G_{2}/(G_{2}\cap\Gamma)$
induces a character of $G_{2}$ given by a linear function $\phi_{\k}\colon G_{2}=\mathbb{R}^{r}\to\mathbb{R}$.
Let $G^{\k}$ denote the quotient of $G$ by $ker(\phi_{\k})\subset G_{2}$ and let $\Gamma^{\k}$ be the 
image of $\Gamma$ under this quotient. Then $\Gamma^{\k}$ is a discrete co-compact subgroup of $G^{\k}$.
Denote $X_{\k}=G^{\k}/\Gamma^{\k}$ and let $\pi_{\k}\colon X\to X_{\k}$ be the natural projection. 

If $\k$ is non-zero, then $X_{\k}$ is a non-abelian nilmanifold of order 2, and the vertical torus of $X_{\k}$ 
has dimension 1. If $\k$ is zero, then $X_{\k}$ is the maximum torus of $X$ and so is a compact abelian Lie group.

   We recall the definition of the vertical Fourier transform. The restriction to $G_{2}\cap\Gamma$ of the action by translation of $G$ on $X$ is trivial, and thus this action induces an action of the vertical torus on $X$ by
$(\bold{u},x)\mapsto\bold{u}\cdot x$ for $\bold{u}\in\mathbb{T}^{r}$ and $x\in X$. The vertical Fourier series of the function $\Phi$ is
\begin{equation}\nonumber
  \begin{split}
    \Phi=\sum_{\k\in\mathbb{Z}^{r}}\Phi_{\k}, \text{where }
    \Phi_{\k}(x)=\int_{\mathbb{T}^{r}}\Phi(\bold{u}\cdot x)e(-\k\cdot\bold{u}) dm_{\mathbb{T}^{r}}(\bold{u}), \k\in\mathbb{Z}^{r}.
  \end{split}
\end{equation}
The function $\Phi_{\k}$ is a nilcharacter with frequency $\k$ and
thus can be written as
\begin{equation}\nonumber
  \begin{split}
    \Phi_{\k}=\Psi_{\k}\circ\pi_{\k}
  \end{split}
\end{equation}
for some function $\Psi_{\k}$ on $X_{\k}$.
If $\k\neq 0$, then $\Phi_{\k}$ is a nilcharacter of $X_{\k}$ with frequency equal to 1. Moreover, since $\Vert\Phi\Vert_{\mathcal{C}^{2m}(X)}\leq 1$, we have that $\Vert\Phi_{\k}\Vert_{\mathcal{C}^{2m}(X)}\leq 1$, and there exists $C_{0}>0$ such that and
$\vert\Phi_{\k}(x)\vert\leq C_{0}(1+\Vert\k\Vert)^{-2m}$ for every $\k\in\mathbb{Z}^{r}$ and every $x\in X$. Since $m>r$, there exists a constant $C_{1}$ such that
\begin{equation}\nonumber
  \begin{split}
    \sum_{\k\colon\Vert\k\Vert>C_{1}}\vert\Phi_{\k}(x)\vert<\D/2
    \text{ for every } x\in X.
  \end{split}
\end{equation}
 Replacing $\Phi$ in (\ref{B1}) by its vertical Fourier series, this last bound implies that there exists $\k\in\mathbb{Z}^{r}=\widehat{\mathbb{T}^{r}}$
   such that
    \begin{equation}\nonumber
  \begin{split}
    \Vert\k\Vert\leq C_{1}, \Vert\Phi_{\k}\Vert\leq 1, \Bigl\vert\mathbb{E}_{\A\in \R}\bold{1}_{P}(\A)\chi(\A)\Phi_{\k}(g'(\B+\A)\cdot e_{X})\Bigr\vert\geq\D_{1}
  \end{split}
  \end{equation}
  for some constants $C_{1},\D_{1}>0$.
   If $\k=\bold{0}$, then the conclusion follows from Proposition \ref{est2}. So we may assume $\k\neq\bold{0}$ and continue our proof with the assumptions that
\begin{equation}\label{B3}
  \begin{split}
    & r=\dim(G_{2})=1;
    \\& \Phi \text{ is a nilcharacter of frequency } 1;
    \\& \Vert\Phi\Vert_{Lip(X)}\leq 1;
    \\& \Bigl\vert\mathbb{E}_{\A\in \R}\bold{1}_{P}(\A)\chi(\A)\Phi(g'(\B+\A)\cdot e_{X})\Bigr\vert\geq\D_{2}
  \end{split}
  \end{equation}
   for some constants $\D_{2}>0$.

\textbf{Step 2: Reduction to some particular polynomial.} We make some further reductions. Suppose that the conclusion (\ref{B2}) holds for some $\sigma$ and $N_{0}$ under the stronger assumption that (\ref{B1}) holds for $\B=0$ and a sequence given by $g'(m+ni)=g(m,n)=g_{1,1}^{m}g_{1,2}^{n}g_{2,1}^{\binom{m}{2}}g_{2,2}^{mn}g_{2,3}^{\binom{n}{2}}$ for all $m,n\in\mathbb{N}$, where $g_{1,1},g_{1,2}\in G, g_{2,1},g_{2,2},g_{2,3}\in G_{2}$.

  Let $\D>0$ and $N\geq N_{0}$. Let $F_{1}\subset G$ be a bounded fundamental domain of the projection $G\rightarrow X$ (we assume that $F_{1}$ is fixed given $X$). By the first statement of Lemma
  \ref{cts}, there exists a constant $C_{2}>0$ such that
  \begin{equation}\label{B4}
  \begin{split}
   d_{X}(g\cdot x,g\cdot x')\leq C_{2}d_{X}(x,x')
  \end{split}
  \end{equation}
for all $g\in F_{1}$ and $x,x'\in X$. Given $g_{0},g_{1,1},g_{1,2}\in G, g_{2,1},g_{2,2},g_{2,3}\in G_{2}$ and $\B=m'+n'i\in \R$, write
  \begin{equation}\nonumber
  \begin{split}
   & g_{0}g_{1,1}^{m'}g_{1,2}^{n'}g_{2,1}^{\binom{m'}{2}}g_{2,2}^{m'n'}g_{2,3}^{\binom{n'}{2}}=a_{\B}t_{\B}, a_{\B}\in F, t_{\B}\in\Gamma;
   \\& g_{\B,1,1}=t_{\B}g_{1,1}g_{2,1}^{m'}g_{2,2}^{n'}t_{\B}^{-1};
   \\& g_{\B,1,2}=t_{\B}g_{1,1}g_{2,2}^{m'}g_{2,3}^{n'}t_{\B}^{-1}.
  \end{split}
  \end{equation}
Then for $\A=m+ni\in \R$, we have that
  \begin{equation}\nonumber
  \begin{split}
  g'(\A+\B)\cdot e_{X}=g(m+m',n+n')\cdot e_{X}=a_{\B}g_{\B,1,1}^{m}g_{\B,1,2}^{n}g_{2,1}^{\binom{m}{2}}g_{2,2}^{mn}g_{2,3}^{\binom{n}{2}}\cdot e_{X}.
  \end{split}
  \end{equation}
Set
  \begin{equation}\nonumber
  \begin{split}
 \Phi_{\B}(x)=\Phi(a_{\B}\cdot x).
  \end{split}
  \end{equation}
Then for every $\B\in \R, \Phi_{\B}$ is a nilcharacter of frequency 1. Since $a_{\B}\in F_{1}$ and $\Vert\Phi\Vert_{Lip(X)}\leq 1$, we get by (\ref{B4}) that  $\Vert\Phi_{\B}\Vert_{Lip(X)}\leq C_{2}$. Estimate (\ref{B3}) can be rewritten as
     \begin{equation}\nonumber
  \begin{split}
    \Bigl\vert\mathbb{E}_{\A\in \R}\bold{1}_{P}(\A)\chi(\A)\Phi_{\B}(g'_{\B}(\A)\cdot e_{X})\Bigr\vert\geq\D_{2},
  \end{split}
  \end{equation}
where $g'_{\B}(m+ni)=g_{\B,1,1}^{m}g_{\B,1,2}^{n}g_{2,1}^{\binom{m}{2}}g_{2,2}^{mn}g_{2,3}^{\binom{n}{2}}$. 
By assumption, we deduce that the sequence $(g'_{\B}(m+ni))_{(m,n)\in[\tilde{N}]\times[\tilde{N}]}$ is not totally
 $\sigma_{1}$-equidistributed in $X$ for some $\sigma_{1}>0$. 
Let $\eta$ be the horizontal character provided by Theorem \ref{Lei}. Then 
$\eta(g'_{\B}(\A))=\eta(a_{\B})^{-1}\eta(g'(\A+\B))$.
Applying Lemma \ref{51} with $\phi(\A)=\eta(g'(\A+\B))$ and $\psi(\A)=\eta(g'(\A))$ and then applying
Theorem \ref{inv}, we deduce that there exist an integer $N_{0}'$ and a constant 
$\sigma_{2}>0$ such that if $N\geq N_{0}'$, then the sequence $(g'(m+ni))_{(m,n)\in[\tilde{N}]\times[\tilde{N}]}$ 
is not totally $\sigma_{2}$-equidistributed in $X$. Hence in the rest of the proof, we can take $\B=0$ 
and $g_{0}=id_{G}$.

\textbf{Step 3: Non-equidistribution on $X\times X$.}
Combining (\ref{B3}) with Lemma \ref{katai}, there exist a positive integer $K$, primes $p=p_{1}+p_{2}i,q=q_{1}+q_{2}i\in\P$ with $\mathcal{N}(p)<\mathcal{N}(q)<K$, and a positive constant $\D_{3}$ such that
\begin{equation}\label{B00}
  \begin{split}
   \frac{1}{N^{2}}\Bigl\vert\sum_{\A\in \R/p\cap \R/q}\bold{1}_{P}(p\A)\bold{1}_{P}(q\A)\Phi(g'(p\A)\cdot e_{X})\overline{\Phi}
   (g'(q\A)\cdot e_{X})\Bigr\vert>\D_{3}.
  \end{split}
  \end{equation}
Since $p,q$ belong to a finite set whose cardinality depends only on $\D$, we may consider these numbers as fixed. Let $L$ be a sufficiently large integer depending only on $X$ and $\D$ to be chosen later. Let $N>L^{2}$. By (\ref{B00}) and a similar argument to that of Lemma 3.1 in ~\cite{er2}, for each $\C=r_{1}+r_{2}i$ with $r_{1},r_{2}\in[L]$, there exists $\B_{\C}\in \R$ such that
\begin{equation}\label{B001}
  \begin{split}
   \frac{L^{2}}{N}\Bigl\vert\sum_{n\in[N/L^{2}],\B_{\C}+n\C\in \R}\bold{1}_{P_{\C}}(n)\Phi(g'(p(\B_{\C}+n\C))\cdot e_{X})\overline{\Phi}
   (g'(q(\B_{\C}+n\C))\cdot e_{X})\Bigr\vert>\D_{3}/2
  \end{split}
  \end{equation}
for some arithmetic progression $P_{\C}$. Let $f_{1}(\C)=g_{1,1}^{r_{1}p_{1}-r_{2}p_{2}}g_{1,2}^{r_{1}p_{2}+r_{2}p_{1}},
f_{2}(\C)=g_{1,1}^{r_{1}q_{1}-r_{2}q_{2}}g_{1,2}^{r_{1}q_{2}+r_{2}q_{1}}$ be maps from $[L]^{2}$ to $G$. Then by Proposition \ref{last} (replacing $\a$ and $\b$ with $f_{1}(\C)$ and $f_{2}(\C)$), $(f_{1}(\C),f_{2}(\C))$ satisfies one of the two claims (with $N$ replaced by $N/L^{2}$). So there exist $D=D(X,\D)>0, C(L)>1$ and a subset $W$ of $[L]^{2}$ such that $\vert W\vert\geq L^{2}/4$, and one of the following holds:


  (i) For all $\C\in W$, there exist horizontal characters $\eta_{1,\C},\eta_{2,\C}$ such that
$0<\Vert\eta_{1,\C}\Vert,\Vert\eta_{2,\C}\Vert\leq D$ and $\Vert\eta_{1,\C}\circ h_{1,\{\C\}}\Vert_{C^{\infty}[N]},\Vert\eta_{1,\C}\circ h_{2,\{\C\}}\Vert_{C^{\infty}[N]}\leq C(L)D$, where $h_{1,\{\C\}}(n)=f_{1}({\C})^{n},h_{2,\{\C\}}(n)=f_{2}({\C})^{n}$;

  (ii) For all $\C\in W$, there exists a matrix $M(\C)\in \Lambda(B_{0}^{G})$ of height at most $D$ such that $f_{1}({\C})^{\bullet}B_{0}^{G}M(\C)=_{C(L)D}f_{2}({\C})^{\bullet}B_{0}^{G}$. 

Here the notation $A=_{D}B$ means that $A$ is at most $\frac{D}{N}$-away from $B$, and we use this notation throughout the proof.

Since the number of choices of $\eta_{1},\eta_{2}$ and $M$ is bounded by a constant depending only on $X$ and $\D$,
if $L$ is sufficiently large, there exist $\e=\e(X,\D)>0$ and a subset $V$ of $W$ such that $\vert V\vert>\e L^{2}$ and for all
 $\C\in W$, the corresponding $\eta_{1,\C},\eta_{2,\C},M(\C)$, depending on which of the above 
situations occurs, are the the same. We denote this vector or matrix by $\eta_{1},\eta_{2}$ or $M$ depending on which of the above two situations occurs.
By Lemma 3.2 in ~\cite{er2}, the vectors in $V$ spans $\mathbb{Q}^{2}$.

Write $D_{1}=C(L)D$, which depends only $X$ and $\D$ (since $L$ depends only on $X$ and $\D$). Suppose $\widehat{g_{1,1}}=\a, \widehat{g_{1,2}}=\b$, and
$\a^{\bullet},\b^{\bullet}$ are the first $s'$ coordinates of $g_{1,1}$ and $g_{1,2}$, respectively.

\textbf{Case (i).} Now we have that $\Vert\eta_{1}\circ h_{1,\{1\}}\Vert,\Vert\eta_{1}\circ h_{1,\{i\}}\Vert\leq D_{1}$. Suppose that $\eta_{1}(\bold{x})=\bold{k}\cdot\widehat{\bold{x}}, 0<\Vert \bold{k}\Vert\leq D_{1}$. Then
\begin{equation}\nonumber
  \begin{split}
  &\Vert p_{1}\bold{k}\cdot\a+p_{2}\bold{k}\cdot\b\Vert_{\mathbb{R}/\mathbb{Z}}\leq D_{1}/N;
    \\& \Vert -p_{2}\bold{k}\cdot\a+p_{1}\bold{k}\cdot\b\Vert_{\mathbb{R}/\mathbb{Z}}\leq D_{1}/N.
  \end{split}
\end{equation}
This implies $\Vert \Delta\bold{k}\cdot\a\Vert_{\mathbb{R}/\mathbb{Z}}, \Vert \Delta\bold{k}\cdot\b\Vert_{\mathbb{R}/\mathbb{Z}}\leq D_{2}/N$, where $\Delta=p_{1}^{2}+p_{2}^{2}\neq 0$. Then by Theorem \ref{inv}, $(g(m,n))_{(m,n)\in[\tilde{N}]\times[\tilde{N}]}$ is not totally $\D_{4}$-equidistributed, a contradiction.

\textbf{Case (ii).} Now we have that $f_{1}(1)^{\bullet}B_{0}^{G}M=_{D_{1}}f_{2}(1)^{\bullet}B_{0}^{G},f_{1}(i)^{\bullet}B_{0}^{G}M=_{D_{1}}f_{2}(i)^{\bullet}B_{0}^{G}$ (recall that $A=_{D}B$ means that $A$ is at most $\frac{D}{N}$-away from $B$). This is equivalent to saying that
\begin{equation}\label{rep}
\begin{split}
  WM=_{D_{1}}RW,
\end{split}
\end{equation}
where
\begin{equation}\nonumber
W=\binom{\a^{\bullet}}{\b^{\bullet}}B_{0}^{G}, R=P^{-1}Q,
P=\begin{vmatrix}

p_{1} &  p_{2}\\

-p_{2} & p_{1}\\
\end{vmatrix},
Q=\begin{vmatrix}

q_{1} &  q_{2}\\

-q_{2} & q_{1}\\
\end{vmatrix}.
\end{equation}
Let $f$ and $g$ be the minimal polynomials of the matrices $M$ and $R$, respectively, i.e.
the integer polynomials with leading coefficient 1 and with the smallest
degree such that $f(M)=0$ and $g(R)=0$. Since $M$ and $R$ are integer matrices of height at most $D_{1}$, $f$ and $g$ are polynomials
with coefficients of height at most $D_{2}$. Since
\begin{equation}\nonumber
R=\begin{vmatrix}

x &  y\\

-y & x\\
\end{vmatrix}
\end{equation}
for some rational numbers $x$ and $y$ not both 0, we deduce that $\deg g=2$.

If $g(M)\neq 0$, then (\ref{rep}) implies that $0=g(R)W=_{D_{3}}Wg(M)$. Since $g(M)\neq 0$, there exists at least one column of $g(M)$ which is not 0.
So there exists
$\bold{v}\in\mathbb{Z}^{s'},0<\Vert\bold{v}\Vert\leq D_{4}$ such that $\Vert\bold{v}\cdot\a^{\bullet} B_{0}^{G}\Vert,\Vert\bold{v}\cdot\b^{\bullet} B_{0}^{G}\Vert\leq\frac{D_{4}}{N}$. By Theorem \ref{inv} and the fact that $B_{0}^{G}$ is invertible, $(g(m,n))_{(m,n)\in[\tilde{N}]\times[\tilde{N}]}$ is not totally $\D_{4}$-equidistributed, a contradiction.

If $g(M)=0$ but $f(R)\neq 0$, then (\ref{rep}) implies that $0=Wf(M)=_{D_{3}}f(R)W$. By the minimality of $f$, we have that $f$ divides $g$. Since $\deg g=2$, we deduce that $\deg f\leq 1$. This means that $M$ is diagonal. Thus $\lambda f_{1}(1)^{\bullet}B_{0}^{G}=_{D_{1}}f_{2}(1)^{\bullet}B_{0}^{G},\lambda f_{1}(i)^{\bullet}B_{0}^{G}=_{D_{1}}f_{2}(i)^{\bullet}B_{0}^{G}$ for some $\lambda\in\mathbb{Z}, \vert\lambda\vert\leq D$. This implies that
\begin{equation}\nonumber
  \begin{split}
  &\lambda p_{1}\a^{\bullet} B_{0}^{G}+\lambda p_{2}\b^{\bullet} B_{0}^{G}=_{D_{1}}q_{1}\a^{\bullet} B_{0}^{G}+q_{2}\b^{\bullet} B_{0}^{G};
    \\& -\lambda p_{2}\a^{\bullet} B_{0}^{G}+\lambda p_{1}\b^{\bullet} B_{0}^{G}=_{D_{1}}-q_{2}\a^{\bullet} B_{0}^{G}+q_{1}\b^{\bullet} B_{0}^{G}.
  \end{split}
\end{equation}
Thus $\a^{\bullet} B_{0}^{G}=_{D_{2}}\b^{\bullet} B_{0}^{G}=_{D_{2}}\bold{0}$. Since $G$ is not abelian, there exists at least one row $\bold{v}$ of $B_{0}^{G}$ such that $\bold{v}\neq\bold{0}$.
Then $\bold{v}\cdot\a^{\bullet}=_{D_{2}}\bold{v}\cdot\b^{\bullet}=_{D_{2}}0$. So $(g(m,n))_{(m,n)\in[\tilde{N}]\times[\tilde{N}]}$ is not totally $\D_{4}$-equidistributed, a contradiction.

If $f(R)=0$ and $g(M)=0$, then $f=g$. Since
$\deg g=2,$ we may assume that $f(x)=g(x)=x^{2}+c_{1}x+c_{0}, c_{0},c_{1}\in\mathbb{Z}$ and let $\mu_{1},\mu_{2}$
be its roots. Since $R$ has no real eigenvalue, $\mu_{1}$ and $\mu_{2}$ are conjugate complex numbers. Since $M^{2}+c_{1}M+c_{0}I=0$, the eigenvalues of $M$ are $\mu_{1}$ and $\mu_{2}$, and from the fact that $c_{0},c_{1}\in\mathbb{Z}$ and $\mu_{1}, \mu_{2}$ are not real numbers, we can easily
deduce that the Jordan normal form $J$ of $M$ must be a diagonal matrix with each entry on the diagonal either $\mu_{1}$ or $\mu_{2}$. So
there exists an $m\times m$ invertible complex-valued matrix $S$ such that
\begin{equation}\nonumber
\begin{split}
M=SJS^{-1}.
\end{split}
\end{equation}
Since $M\in\Lambda(B_{0}^{G})$, we deduce that
\begin{equation}\label{zxc}
\begin{split}
J(S^{T}B_{0}^{G}S)J=S^{T}B_{0}^{G}S.
\end{split}
\end{equation}
Since $\mu_{1}, \mu_{2}$ are not real numbers, $\mu_{1}^{2}\neq 1$ and $\mu_{2}^{2}\neq 1$. Since we also have that $\mu_{1}\mu_{2}=\det R=\det Q/\det P\neq 1$, we can deduce from (\ref{zxc}) and the fact that $J$ is diagonal that $S^{T}B_{0}^{G}S=0$, i.e., $B_{0}^{G}=0$. In other words, $G$ is abelian, a contradiction.
\end{proof}

\section{Proof of the decomposition result} The purpose of this section is to prove Theorem \ref{nU3s}.
The following lemma generalizes Lemma A.6 in ~\cite{FH}:

\begin{lem}\label{1p}
  Let $N\in\mathbb{N}$. For every function $a\colon \R\rightarrow\mathbb{C}$ and all arithmetic progressions $P_{1}, P_{2}\subset[\tilde{N}]$, we have that
  \begin{equation}\nonumber
    \begin{split}
      \Bigl\vert\mathbb{E}_{\A\in \R}\bold{1}_{P}(\A)a(\A)\Bigr\vert\leq c_{1}\Vert a\Vert_{U^{2}(\R)}
    \end{split}
  \end{equation}
  for some universal constant $c_{1}$, where $P=\{a+bi\in \R\colon a\in P_{1}, b\in P_{2}\}$.
\end{lem}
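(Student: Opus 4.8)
The plan is to run the two–dimensional version of the argument behind Lemma A.6 in \cite{FH}: pass to the Fourier side, apply H\"older's inequality, and bound the $\ell^{4/3}$–mass of $\widehat{\bold{1}_P}$ by a universal constant, exploiting the product structure of $P$. Writing $\A=\A_1+\A_2 i$ and using the Fourier inversion formula on $\R$ together with the fact that $\bold{1}_P$ is real–valued, one has
\[
\mathbb{E}_{\A\in\R}\bold{1}_P(\A)a(\A)=\sum_{\x\in\R}\widehat{a}(\x)\,\overline{\widehat{\bold{1}_P}(\x)},
\]
and H\"older with exponents $4$ and $4/3$ together with identity (\ref{F}) gives
\[
\Bigl\vert\mathbb{E}_{\A\in\R}\bold{1}_P(\A)a(\A)\Bigr\vert\le\Vert a\Vert_{U^{2}(\R)}\Bigl(\sum_{\x\in\R}\vert\widehat{\bold{1}_P}(\x)\vert^{4/3}\Bigr)^{3/4}.
\]
So the whole statement reduces to the claim that $\sum_{\x\in\R}\vert\widehat{\bold{1}_P}(\x)\vert^{4/3}$ is bounded by an absolute constant.

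For this I would use that $\A\circ_N\x=\tfrac1{\tilde N}(\A_1\xi_1+\A_2\xi_2)$ and that $P=\{a+bi\colon a\in P_1,\ b\in P_2\}$ factorizes in the real and imaginary coordinates, so that $\widehat{\bold{1}_P}(\x)=g_1(\xi_1)\,g_2(\xi_2)$, where $g_j(\eta)=\mathbb{E}_{t\in\mathbb{Z}_{\tilde N}}\bold{1}_{P_j}(t)e(-t\eta/\tilde N)$. Hence $\sum_{\x\in\R}\vert\widehat{\bold{1}_P}(\x)\vert^{4/3}=\bigl(\sum_{\eta}\vert g_1(\eta)\vert^{4/3}\bigr)\bigl(\sum_{\eta}\vert g_2(\eta)\vert^{4/3}\bigr)$, and it suffices to prove the one–dimensional estimate $\sum_{\eta\in\mathbb{Z}_{\tilde N}}\vert g_j(\eta)\vert^{4/3}\le C_0$ for an absolute constant $C_0$ and $j=1,2$.

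The one–dimensional bound is classical. A progression $P_j\subset[\tilde N]$ is either a single point, in which case the estimate is immediate, or it has a common difference $r$ with $0<\vert r\vert<\tilde N$, hence coprime to the prime $\tilde N$; then the bijection $t\mapsto b+rt$ of $\mathbb{Z}_{\tilde N}$ turns $P_j$ into an interval $\{0,1,\dots,L-1\}$ while changing $g_j$ only by a unimodular factor and a bijective relabelling of the frequency variable, leaving $\sum_\eta\vert g_j(\eta)\vert^{4/3}$ unchanged. For an interval the elementary geometric–sum estimate gives $\vert g_j(\eta)\vert\le\tfrac1{\tilde N}\min\bigl(L,\tfrac1{2\Vert\eta/\tilde N\Vert}\bigr)$; grouping the frequencies according to the integer $d(\eta)=\tilde N\Vert\eta/\tilde N\Vert$, each value $k$ occurring for at most two $\eta$, yields
\[
\sum_{\eta\in\mathbb{Z}_{\tilde N}}\vert g_j(\eta)\vert^{4/3}\le\Bigl(\frac{L}{\tilde N}\Bigr)^{4/3}+2\sum_{k\ge1}\min\Bigl(\frac{L}{\tilde N},\frac1{2k}\Bigr)^{4/3},
\]
and splitting the last sum at $k\approx\tilde N/(2L)$ bounds it by an absolute constant. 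Combining the three steps proves the lemma, with $c_1$ depending only on $C_0$.

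All of this is standard harmonic analysis and I do not anticipate a genuine obstacle; the only point where the argument departs from the one–dimensional statement of \cite{FH} is the factorization $\widehat{\bold{1}_P}=g_1\otimes g_2$, which is immediate from the product form of $P$ and of the pairing $\circ_N$, and which is precisely what reduces the two–dimensional $\ell^{4/3}$–estimate to the one–dimensional one.
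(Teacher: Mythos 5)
Your proof is correct and follows essentially the same route as the paper's: reduce to intervals using primality of $\tilde{N}$, bound $\widehat{\mathbf{1}_{P}}$ pointwise via geometric sums, show $\Vert\widehat{\mathbf{1}_{P}}\Vert_{\ell^{4/3}(\R)}$ is bounded by a universal constant, and then conclude by Parseval, H\"older with exponents $4$ and $4/3$, and identity (\ref{F}). The only cosmetic difference is that you make the tensor-product factorization $\widehat{\mathbf{1}_{P}}=g_{1}\otimes g_{2}$ explicit and bound the two one-dimensional $\ell^{4/3}$ sums separately, whereas the paper writes the two-dimensional pointwise bound $\vert\widehat{\mathbf{1}_{P}}(x+yi)\vert\leq 4/(\min\{x,\tilde{N}-x\}\min\{y,\tilde{N}-y\})$ directly; these are the same computation.
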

\begin{proof}
  Since $\tilde{N}$ is a prime, the norm $\Vert a\Vert_{U^{2}(\A)}$ is invariant under any change of variables of the form $m+ni\rightarrow (am+c)+(bn+d)i$, where $a,b,c,d\in\mathbb{N}$ and $\tilde{N}\nmid a, \tilde{N}\nmid b$. So we may assume without loss of generality that $P_{i}$ is an interval $\{1,\dots,d_{i}\}$, for $i=1,2$. A direct computation shows that
    \begin{equation}\nonumber
    \begin{split}
     \vert\widehat{\bold{1}_{P}}(x+yi)\vert\leq\frac{4}{\tilde{N}^{2}}\Vert x/\tilde{N}\Vert\cdot\Vert y/\tilde{N}\Vert=\frac{4}{\min\{x,\tilde{N}-x\}\cdot\min\{y,\tilde{N}-y\}}
    \end{split}
  \end{equation}
  for all $x+yi\in \R$. Thus
      \begin{equation}\nonumber
    \begin{split}
     \Bigl\Vert\widehat{\bold{1}_{P}}(x+yi)\Bigr\Vert_{\ell^{4/3}(\R)}\leq c_{1}
    \end{split}
  \end{equation}
  for some universal constant $c_{1}$. Then by Parseval's identity, H\"{o}lder's inequality, and identity (\ref{F}), we deduce that
    \begin{equation}\nonumber
    \begin{split}
      \Bigl\vert\mathbb{E}_{\A\in \R}\bold{1}_{P}(\A)a(\A)\Bigr\vert
      =\Bigl\vert\sum_{\A\in \R}\widehat{\bold{1}_{P}}(\A)\widehat{a}(\A)\Bigr\vert
      \leq c_{1}\Bigl(\sum_{\xi\in \R}\vert \widehat{a}(\xi)\vert^{4}\Bigr)^{1/4}
      \leq c_{1}\Vert a\Vert_{U^{2}(\R)}.
    \end{split}
  \end{equation}
\end{proof}

Before the proof of Theorem \ref{nU3s}, we show:
\begin{thm}\label{nU3w}(Weak $U^{3}$ decomposition theorem).
  For every $\theta_{0},\e>0$, there exist positive integers $Q=Q(\e,\theta_{0}),R=R(\e,\theta_{0}), N_{0}=(\e,\theta_{0})$ and $0<\theta<\theta_{0}$ such that for every $N\geq N_{0}$ and every $\chi\in\mathcal{M}_{\Z}$, the function $\chi_{N}$ can be written as
  \begin{equation}\nonumber
   \begin{split}
     \chi_{N}(\A)=\chi_{N,s}(\A)+\chi_{N,u}(\A),
   \end{split}
  \end{equation}
where

  (i) $\chi_{N,s}=\chi_{N}*\phi_{N,\theta}$, where $\phi_{N,\theta}$ is the kernel of $\R$ defined in Theorem \ref{nU2}
  which is independent of $\chi$, and the convolution product is defined on $\R$;

  (ii)$\vert\chi_{N,s}(\A+Q)-\chi_{N,s}(\A)\vert, \vert\chi_{N,s}(\A+Qi)-\chi_{N,s}(\A)\vert\leq\frac{R}{N}$ for every $\A\in\R$;

  (iii)$\Vert\chi_{N,u}\Vert_{U^{3}(\R)}\leq\e$.
\end{thm}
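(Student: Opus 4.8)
The plan is to bootstrap from the $U^2$ decomposition theorem (Theorem \ref{nU2}) using the Modified $U^3$-inverse theorem (Corollary \ref{mIU3}). First I would fix $\theta_0, \e > 0$ and apply Theorem \ref{nU2} with a parameter $\theta < \theta_0$ to be chosen: this gives integers $Q(\theta), R(\theta), N_0(\theta)$ and a kernel $\phi_{N,\theta}$ with the property that $\chi_{N,s} = \chi_N * \phi_{N,\theta}$ satisfies the approximate periodicity in (ii), and the complementary piece $\chi_{N,u} = \chi_N - \chi_{N,s}$ has small $U^2$-norm, say $\Vert\chi_{N,u}\Vert_{U^2(\R)} \le \theta$. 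Item (i) and (ii) are then immediate from Theorem \ref{nU2}. The substance is to prove (iii): that for $\theta$ small enough (and $N$ large enough), $\Vert\chi_{N,u}\Vert_{U^3(\R)} \le \e$.

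The argument for (iii) proceeds by contradiction. Suppose $\Vert\chi_{N,u}\Vert_{U^3(\R)} > \e$. Since $\vert\chi_{N,u}\vert$ is bounded (uniformly, say by $2$, or by $1$ after normalization — one should check the bound on $\chi_N * \phi_{N,\theta}$, which holds since $\phi_{N,\theta}$ is a kernel), we may apply Corollary \ref{mIU3} with input $\e$: we obtain $\d = \d(\e) > 0$, a bound $M = M(\e)$, and a finite family $\mathcal{H}(\e)$ of nilmanifolds of order $2$ with one-dimensional vertical torus and dimension at most $M$. The corollary gives two alternatives. Alternative (i) says $\Vert\chi_{N,u}\Vert_{U^2(\R)} \ge \d$; since we arranged $\Vert\chi_{N,u}\Vert_{U^2(\R)} \le \theta$, this is impossible once $\theta < \d$. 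So alternative (ii) must hold: there is a nilmanifold $X \in \mathcal{H}(\e)$, a degree-$2$ polynomial map $g$, and a nilcharacter $\ps$ of frequency $1$ with $\Vert\ps\Vert_{\mathcal{C}^{2m}(X)} \le 1$ such that $\vert\mathbb{E}_{m+ni \in \R}\chi_{N,u}(m+ni)\ps(g(m,n)\cdot e_X)\vert \ge \d$.

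Next I would replace $\chi_{N,u}$ by $\chi_N$ in this correlation, at the cost of an error controlled by $\Vert\chi_{N,s}\Vert$ correlating against the nilsequence. Here one uses that $\chi_{N,s} = \chi_N * \phi_{N,\theta}$ has small $U^2$-norm growth control — more precisely, by Lemma \ref{1p}, correlation of a function against $\bold{1}_P$ times a bounded function is controlled by its $U^2$-norm after decomposing the nilcharacter into its Fourier modes on the maximal torus; alternatively, since $\chi_{N,u}$ itself has small $U^2$-norm we correlate $\chi_{N,u}$ directly, and the whole point is that the $U^3$-correlation with a genuinely two-step nilsequence must be killed. The cleanest route is: apply Corollary \ref{mF} to factor $g = \e(m,n) g'(m,n) \gamma(m,n)$ with $g'$ totally $\omega(M)$-equidistributed on a sub-nilmanifold $X'$ which is either abelian or non-abelian with one-dimensional commutator. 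Splitting the smooth part $\e$ and the rational periodic part $\gamma$ into boundedly many progressions (on which $\e$ is nearly constant and $\gamma$ is constant), we reduce the correlation to a bounded number of correlations of $\chi_N$ (hence of $\chi$) against $\Phi(g'(\B + \A)\cdot e_{X'})$ over arithmetic-progression boxes, exactly the quantity estimated by Proposition \ref{est2} (abelian case) and Proposition \ref{est1} (non-abelian case). Choosing $\omega$ decaying fast enough that $\omega(M) < \sigma(X', \d')$ for the relevant $\d'$ and all $X'$ in the (finite) collection, these propositions force the correlation below $\d$, a contradiction.

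The main obstacle is the bookkeeping of quantifiers: the family $\mathcal{H}(\e)$ is finite and each member spawns a finite family $\mathcal{F}(M)$ of sub-nilmanifolds via Corollary \ref{mF}, and for each of these we need the equidistribution threshold $\sigma$ from Propositions \ref{est2}/\ref{est1}; we must choose $\omega$ (used in Corollary \ref{mF}) smaller than the minimum of all these $\sigma$'s, which is legitimate precisely because everything in sight is finite and depends only on $\e$. Then $M \le M_1 = M_1(M_0, X, \omega)$ is bounded, so $Q, R$ (coming from periods of $\gamma$ and $\e$, together with the $Q(\theta), R(\theta)$ from Theorem \ref{nU2}) are bounded in terms of $\e$ and $\theta_0$ only; and we finally fix $\theta$ small enough (below $\d(\e)$ and below the $U^2$-thresholds that appear), which back-propagates to a choice of $Q(\e,\theta_0), R(\e,\theta_0), N_0$. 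One subtlety worth flagging: when passing from $\chi_{N,u}$ to $\chi_N$ in the correlation, or when splitting along progressions, each elementary reduction must be uniform in $\chi \in \mathcal{M}_{\Z}$, which it is since all the tools invoked (Katai's Lemma, the Leibman theorems, Propositions \ref{est2} and \ref{est1}) are stated with suprema over $\chi$.
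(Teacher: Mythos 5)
Your proposal correctly identifies the overall architecture of the proof and matches the paper's at that level: apply Theorem \ref{nU2} with a small parameter $\theta$ to get (i) and (ii), rule out Alternative (i) of Corollary \ref{mIU3} by taking $\theta<\d(\e)$, and then aim for a contradiction with Alternative (ii) by factoring via Corollary \ref{mF}, splitting into progressions on which $\gamma$ is constant and $\e$ nearly constant, and invoking Propositions \ref{est2} and \ref{est1}. The discussion of quantifier bookkeeping (finiteness of $\mathcal{H}$ and of $\mathcal{F}(M)$, choosing $\omega$ below all the relevant $\sigma$'s, $M\le M_{1}$ bounded in terms of $\e$ only) is also in line with the paper.

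There is, however, a genuine gap in how you pass from a large correlation of $\chi_{N,u}$ with the nilsequence to a large correlation of an actual multiplicative function $\chi$ with a nilsequence on a progression box — the form that Propositions \ref{est2} and \ref{est1} require. Your first route, ``replace $\chi_{N,u}$ by $\chi_{N}$ at the cost of an error controlled by $\Vert\chi_{N,s}\Vert$ via Lemma \ref{1p},'' does not work: Lemma \ref{1p} bounds $\bigl\vert\mathbb{E}_{\A\in\R}\bold{1}_{P}(\A)\,a(\A)\bigr\vert\le c_{1}\Vert a\Vert_{U^{2}(\R)}$, i.e.\ it controls correlations against the indicator $\bold{1}_{P}$ \emph{alone}, not against $\bold{1}_{P}$ times an arbitrary nilsequence; and $\chi_{N,s}$ has large $U^{2}$-norm anyway. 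In the paper, Lemma \ref{1p} is used only to remove the \emph{constant} component $z$ of the nilcharacter after restriction to the sub-nilmanifold $Y$ (this is why $\theta_{1}$ has to be chosen with $c_{1}\theta_{1}\le\d_{1}(M_{1})/2$), leaving a mean-zero Lipschitz function $\Psi''$ to feed into the correlation propositions. The step you actually need, and do not supply, is the convolution identity $\chi_{N,u}=\chi_{N}*\psi$ for a function $\psi$ with $\mathbb{E}_{\A\in\R}\vert\psi(\A)\vert\le 2$: this expresses the $\chi_{N,u}$–nilsequence correlation as an average over shifts $(m',n')$ of correlations of $\chi_{N}$ against shifted nilsequences; pigeonhole then fixes a single good shift, and a case split over four wrap-around regions of $[\tilde{N}]^{2}$ produces a correlation of $\chi$ itself, over a product of arithmetic progressions, with a mean-zero function on $Y$ — exactly the hypothesis of Propositions \ref{est2}/\ref{est1}. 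Without this convolution-and-pigeonhole step, the appeal to the correlation propositions at the end is unjustified, since those propositions are stated for $\chi\in\mathcal{M}_{\Z}$, and $\chi_{N,u}$ is not multiplicative.
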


By using an iterative argument of energy increment, we can deduce that Theorem \ref{nU3w} implies Theorem \ref{nU3s}. As the method is identical to Section 8.10 in ~\cite{FH}, we omit the proof. Therefore, it suffices to prove Theorem \ref{nU3w}. The method of Theorem \ref{nU3w} is similar to the discussion of Section 8 in ~\cite{FH}. We include the proof in this section for completeness.

\subsection{Set up}
Throughout this section, let $\e$ be fixed. Let
  \begin{equation}\nonumber
   \begin{split}
     \mathcal{H}=\mathcal{H}(\e), \d=\d(\e), m=m(\e)
   \end{split}
  \end{equation}
be defined by Corollary \ref{mIU3}. In the sequel we implicitly assume that $N$ is sufficiently large (thus so is $\tilde{N}$) depending only on $\e$. So Corollary \ref{mIU3} always holds.

Let $M\in\mathbb{N}$ and $X=G/\Gamma$ be a nilmanifold in $\mathcal{H}$. By Corollary B.3 in ~\cite{FH}, for every $M\in\mathbb{N}$, there exists a finite subset $\Sigma=\Sigma(M,X)\subset G$ of $M$-rational elements such that for every $M$-rational element $g\in G$, there exists $h\in\Sigma$ with $h^{-1}g\in\Gamma$, i.e. $g\cdot e_{X}=h\cdot e_{X}$. We assume that $\bold{1}_{G}\in\Sigma$.

Let $\mathcal{F}=\mathcal{F}(M,X)$ be the family of submanifolds of $X$ defined by Corollary \ref{mF}. We define a larger family of nilmanifolds
  \begin{equation}\nonumber
   \begin{split}
     \mathcal{F}'=\mathcal{F}'(M,X)=\{Y=G'\cdot e_{Y}\cong G'/(h\Gamma h^{-1}\cap G')\colon X'=G'/\Gamma'\in\mathcal{F}, h\in\Sigma, e_{Y}=h\cdot e_{X}\}.
   \end{split}
  \end{equation}
By Lemma \ref{cts}, there exists a positive real number $H=H(M,X)$ such that

\

(i) $d_{X}(gh\cdot x,gh\cdot x')\leq Hd_{X}(x,x')$ for all $x,x'\in X, h\in\Sigma$ and all $g\in G$ with $d_{G}(g,\bold{1}_{G})\leq M$;

(ii) for every $f\in\mathcal{C}^{2m}(X), h\in\Sigma$ and every $g\in G$ with $d_{G}(g,\bold{1}_{G})\leq M$, writing $f_{gh}(x)=f(gh\cdot x)$, we have that $\Vert f_{gh}\Vert_{\mathcal{C}^{2m}(X)}\leq H\Vert f\Vert_{\mathcal{C}^{2m}(X)}$.

\

The distance on a nilmanifold $Y\in\mathcal{F}'$ is not the one induced by inclusion in $X$. However, the inclusion map $i\colon Y\rightarrow X$ is smooth and thus we can assume that

\

(iii) for every nilmanifold $Y\in\mathcal{F}'$ and every $x,x'\in Y$, we have that $d_{X}(x,x')\leq H d_{Y}(x,x')$;

(iv) for every nilmanifold $Y\in\mathcal{F}'$ and every function $f$ on $X$, we have that $\Vert f|_{Y}\Vert_{\mathcal{C}^{2m}(Y)}\leq H\Vert f\Vert_{\mathcal{C}^{2m}(X)}$. Here the Mal'cev homeomorphism of $Y$ is taken to be the induced Mal'cev homeomorphism from $X$ to $Y$.

\

By Lemma \ref{shift}, for every $X'\in\mathcal{F},\zeta>0$ and $h\in\Sigma$, there exists $\rho=\rho(M,X,X',h,\zeta)$ such that

\

(v) Let $X'=G'/\Gamma'\in\mathcal{F},h\in\Sigma, e_{Y}=h\cdot e_{X}$, and $(g'(m,n))_{(m,n)\in[\tilde{N}]\times[\tilde{N}]}$ be a polynomial sequence in $G'$ of degree at most 2. If $(g'(m,n)e_{X})_{(m,n)\in[\tilde{N}]\times[\tilde{N}]}$ is totally $\rho$-equidistributed in $X'$, then $(g'(m,n)e_{Y})_{(m,n)\in[\tilde{N}]\times[\tilde{N}]}$ is totally $\zeta$-equidistributed in $Y'=G'\cdot e_{Y}$.

\

Define

  \begin{equation}\nonumber
   \begin{split}
     &\mathcal{F}'(M)=\bigcup_{X\in\mathcal{H}}\mathcal{F}'(M,X);
     \\& H(M)=\max_{X\in\mathcal{H}}H(M,X);
     \\& \rho(M,\zeta)=\min_{X\in\mathcal{H},X'\in\mathcal{F}(M,X),h\in\Sigma(M,X)}\rho(M,X,X',h,\zeta);
     \\& \d_{1}(M)=\frac{\d^{3}}{257M^{4}};
     \\& \theta(M)=\min\Bigl\{\frac{\d}{2},\frac{\d_{1}(M)}{2c_{1}}\Bigr\},
   \end{split}
  \end{equation}
where $c_{1}$ is the universal constant in Lemma \ref{1p} (recall $\d=\d(\e)$). To every $\D>0$ and every nilmanifold $Y$ in the finite collection $\mathcal{F}'(M)$, either Proposition \ref{est2} or Proposition \ref{est1} (applied with $Y$ instead of $X$) associates a positive number $\sigma(Y,\D)$ (depending on whether $Y$ is abelian or not). Let
  \begin{equation}\nonumber
   \begin{split}
   &\tilde{\sigma}(M)=\min_{Y\in\mathcal{F}'(M)}\sigma\Bigl(Y,\frac{\d_{1}(M)}{17H(M)^{2}}\Bigr);
   \\& \omega(M)=\rho(M,\tilde{\sigma}(M));
   \\& M_{0}=\lceil 2/\e\rceil;
   M_{1}=\max_{X\in\mathcal{H}}M_{1}(M_{0},X,\omega),
   \end{split}
  \end{equation}
where $M_{1}(M_{0},X,\omega)$ is defined in Corollary \ref{mF} ($\omega$ is viewed as a function on $\mathbb{N}$). Note that $M_{1}$ depends only on $\e$.
Write
  \begin{equation}\nonumber
   \begin{split}
   \theta_{1}=\theta(M_{1}).
   \end{split}
  \end{equation}
\subsection{Weak $U^{3}$ decomposition}
Replacing $\e$ by $\theta_{1}$ in Theorem \ref{nU2}, we deduce that there exist $Q=Q(M_{1}),R=R(M_{1})$ (depending only on $\e$) such that for sufficiently large $N$ and every $\chi\in\mathcal{M}_{\Z}$, the decomposition $\chi_{N}=\chi_{N,s}+\chi_{N,u}$ satisfies the conclusions of Theorem \ref{nU2}. In particular, $\Vert\chi_{N,u}\Vert_{U^{2}(\R)}\leq \theta_{1}$. We claim that this decomposition also satisfies the conclusions of Theorem \ref{nU3w}. Note that (i) and (ii) follow from the conclusion of Theorem \ref{nU2}, so we are left with checking (iii).

Suppose on the contrary that (iii) does not hold, then
  \begin{equation}\nonumber
   \begin{split}
   \Vert\chi_{N,u}\Vert_{U^{3}(\R)}>\e.
   \end{split}
  \end{equation}
By the choice of $\theta_{1}$ and Corollary \ref{mIU3}, there exist a nilmanifold $X=G/\Gamma$ of order 2 belonging to the family $\mathcal{H}$, a nilcharacter $\Psi$ on $X$ with frequency 1, and a polynomial map $g\colon\mathbb{Z}^{2}\rightarrow G$ of degree at most 2, such that $\Vert\Psi\Vert_{\mathcal{C}^{2m}(X)}\leq 1$ and
  \begin{equation}\nonumber
   \begin{split}
   \Bigl\vert\mathbb{E}_{\A=a+bi\in \R}\chi_{N,u}(\A)\Psi(g(a,b)\cdot e_{X})\Bigr\vert\geq\d,
   \end{split}
  \end{equation}
  where $m$ is the dimension of $X$. Applying Corollary \ref{mF} to $(g(m,n))_{(m,n)\in [\tilde{N}]\times[\tilde{N}]}$ and $\omega, M_{0}, M_{1}$ defined in the previous subsection, we get an integer $M_{0}\leq M\leq M_{1}$, a nilmanifold $X'=G'/\Gamma'$ belonging to the family $\mathcal{F}(M,X)$, and a factorization
    \begin{equation}\nonumber
   \begin{split}
   g(m,n)=\e(m,n)g'(m,n)\gamma(m,n)
   \end{split}
  \end{equation}
  into sequences satisfying Properties (i)-(v) of Corollary \ref{mF}. From now on, we work with this specifically chosen $M$. Note that $M$ is bounded below and above by constants depending only on $\e$.

  Denote $L=\lfloor\frac{\d\tilde{N}}{8M^{2}}\rfloor$. By Property (v) of Corollary \ref{mF}, we can partition $[\tilde{N}]\times[\tilde{N}]$ into products of arithmetic progressions of step $p_{1}$ and $p_{2}$ and of size $L\times L$ for some $p_{1},p_{2}\leq M$, and a leftover set of size $C\d/M$ which can be ignored (upon replacing $\d$ with $\d/2$ below) such that $\gamma(m,n)$ is a constant on each piece. So there exists a product of arithmetic progressions $P$ of step $p_{1}$ and $p_{2}$ and of size $L\times L$ such that
  \begin{equation}\nonumber
   \begin{split}
   \Bigl\vert\mathbb{E}_{(m,n)\in [\tilde{N}]\times[\tilde{N}]}\bold{1}_{P}(m,n)\chi_{N,u}(m+ni)\Psi(g(m,n)\cdot e_{X})\Bigr\vert\geq\frac{\d L^{2}}{2\tilde{N}^{2}}\geq
   \frac{\d^{3}}{128M^{4}}-\frac{\d^{2}}{8M^{2}\tilde{N}}.
   \end{split}
  \end{equation}

Pick $(m,n),(m_{0},n_{0})\in P$. By the right invariance of $d_{G}$ and the $(M,(\tilde{N},\tilde{N}))$-smoothness of $(\e(m,n))_{(m,n)\in \R}$, we have that
  \begin{equation}\nonumber
   \begin{split}
   &\qquad d_{G}(g(m,n),\e(m_{0},n_{0})g'(m,n)\gamma(m_{0},n_{0}))\leq d_{G}(\e(m,n),\e(m_{0},n_{0}))
   \\&\leq (p_{1}+p_{2})L\frac{\sqrt{2}M}{\tilde{N}}\leq \frac{2\sqrt{2}M^{2}L}{\tilde{N}}.
   \end{split}
  \end{equation}
  Then
  \begin{equation}\nonumber
   \begin{split}
   \Bigl\vert\Psi(g(m,n)\cdot e_{X})-\Psi(\e(m_{0},n_{0})g'(m,n)\gamma(m_{0},n_{0})\cdot e_{X})\Bigr\vert\leq \frac{2\sqrt{2}M^{2}L}{\tilde{N}}.
   \end{split}
  \end{equation}
Thus
  \begin{equation}\nonumber
   \begin{split}
   &\qquad\mathbb{E}_{(m,n)\in [\tilde{N}]\times[\tilde{N}]}\bold{1}_{P}(m,n)\Bigl\vert\chi_{N,u}(m+ni)\Bigr\vert\cdot\Bigl\vert\Psi(g(m,n)\cdot e_{X})
   -\Psi(\e(m_{0},n_{0})g'(m,n)\gamma(m_{0},n_{0}))\cdot e_{X})\Bigr\vert
   \\&\leq
   \frac{L^{2}}{\tilde{N}^{2}}\frac{2\sqrt{2}M^{2}L}{\tilde{N}}\leq\frac{\d^{3}}{256M^{4}}.
   \end{split}
  \end{equation}
  So we deduce that
  \begin{equation}\label{temp1}
   \begin{split}
   \Bigl\vert\mathbb{E}_{(m,n)\in [\tilde{N}]\times[\tilde{N}]}\bold{1}_{P}(m,n)\chi_{N,u}(m+ni)\Psi(\e(m_{0},n_{0})g'(m,n)\gamma(m_{0},n_{0})\cdot e_{X})\Bigr\vert\geq\frac{\d^{3}}{257M^{4}}=\d_{1}(M)
  \end{split}
  \end{equation}
  provided that $N$ is sufficiently large depending on $\e$.

Since $\gamma(m_{0},n_{0})$ is $M$-rational, there exists $h_{0}\in\Sigma(M,X)$ such that $\gamma(m_{0},n_{0})\cdot e_{X}=h_{0}\cdot e_{X}$. Let
$e_{Y}=h_{0}\cdot e_{X}, Y=G'\cdot e_{Y}\cong G'/(h_{0}\Gamma h_{0}^{-1}\cap G'), \Psi'(x)=\Psi(\e(m_{0},n_{0})\cdot x)$. Note that $Y$ belongs to the family $\mathcal{F}'$. For every $m,n\in\mathbb{N}$, we have that
\begin{equation}\nonumber
   \begin{split}
   \Psi(\e(m_{0},n_{0})g'(m,n)\gamma(m_{0},n_{0})\cdot e_{X})=\Psi'(g'(m,n)\cdot e_{Y}),
  \end{split}
  \end{equation}
  and by (\ref{temp1}), we deduce that
  \begin{equation}\label{temp2}
   \begin{split}
   \Bigl\vert\mathbb{E}_{(m,n)\in [\tilde{N}]\times[\tilde{N}]}\bold{1}_{P}(m,n)\chi_{N,u}(m+ni)\Psi'(g'(m,n)\cdot e_{Y})\Bigr\vert\geq\d_{1}(M).
  \end{split}
  \end{equation}
Since $(\e(m,n))_{(m,n)\in\R}$ is $(M,(\tilde{N},\tilde{N}))$-smooth, we have that $d_{G}(\e(m_{0},n_{0}),\bold{1}_{G})\leq M$. Furthermore, since $\Vert\Psi\Vert_{\mathcal{C}^{2m}(X)}\leq 1$, by the choice of $H(M)$ (property (ii)), $\Vert\Psi'\Vert_{\mathcal{C}^{2m}(X)}\leq H(M)$.
Thus (property (iv))
\begin{equation}\nonumber
   \begin{split}
   \Vert\Psi'|_{Y}\Vert_{\mathcal{C}^{2m}(Y)}\leq H(M)^{2}.
  \end{split}
  \end{equation}
Recall that $(g'(m,n)\cdot e_{X})_{(m,n)\in[\tilde{N}]\times[\tilde{N}]}$ is totally $\omega(M)$-equidistributed in $X$, and by the definition of $\omega$ and $\rho$, we deduce that $(g'(m,n)\cdot e_{Y})_{(m,n)\in[\tilde{N}]\times[\tilde{N}]}$ is totally $\tilde{\sigma}(M)$-equidistributed in $Y$.

Let $z=\int_{\mathbb{T}^{m}}\Psi'\circ\psi dm$ and $\Psi''=\Psi'-z$. By Lemma \ref{1p}, we get that
\begin{equation}\nonumber
   \begin{split}
   \Bigl\vert\mathbb{E}_{(m,n)\in [\tilde{N}]\times[\tilde{N}]}\bold{1}_{P}(m,n)z\chi_{N,u}(m+ni)\Bigr\vert
   \leq c_{1}\Vert\chi_{N,u}\Vert_{U^{2}(\R)}\leq c_{1}\theta_{1}=c_{1}\theta(M_{1})\leq \d_{1}(M_{1})/2\leq \d_{1}(M)/2.
  \end{split}
  \end{equation}
  Thus by (\ref{temp2}), we have that
  \begin{equation}\label{temp3}
   \begin{split}
   \Bigl\vert\mathbb{E}_{(m,n)\in [\tilde{N}]\times[\tilde{N}]}\bold{1}_{P}(m,n)\chi_{N,u}(m+ni)\Psi''(g'(m,n)\cdot e_{Y})\Bigr\vert\geq\d_{1}(M)/2,
  \end{split}
  \end{equation}
where
  \begin{equation}\nonumber
   \begin{split}
   \Vert\Psi''|_{Y}\Vert_{\mathcal{C}^{2m}(Y)}\leq H(M)^{2}, \int_{\mathbb{T}^{m}}\Psi''\circ\phi dm =0.
  \end{split}
  \end{equation}

Recall that $\chi_{N,s}=\chi_{N}*\psi'$, where $\psi'$ is a kernel of $\R$. So we may write $\chi_{N,u}=\chi_{N}*\psi$
for some function $\psi$ on $\R$. Since $\mathbb{E}_{\A\in \R}\psi'(\A)=1$, we have that $\mathbb{E}_{\A\in \R}\vert\psi'(\A)\vert\leq 2$.
We deduce from (\ref{temp3}) that there exists $(m',n')\in [\tilde{N}]\times[\tilde{N}]$ such that
  \begin{equation}\nonumber
   \begin{split}
   &\Bigl\vert\mathbb{E}_{(m,n)\in [\tilde{N}]\times[\tilde{N}]}\bold{1}_{P}((m+m',n+n')\mod [\tilde{N}]\times[\tilde{N}])
   \\&\cdot\chi_{N}(m+ni)\Psi''(g'((m+m',n+n')\mod [\tilde{N}]\times[\tilde{N}])\cdot e_{Y})\Bigr\vert\geq\d_{1}(M)/4,
  \end{split}
  \end{equation}
  where the residue class $(m+m',n+n')\mod [\tilde{N}]\times[\tilde{N}]$ is taken in $[1,\tilde{N}]\times[1,\tilde{N}]$ instead of the more usual set $[0,\tilde{N}-1]\times[0,\tilde{N}-1]$. Then the average
  \begin{equation}\nonumber
   \begin{split}
   \Bigl\vert\mathbb{E}_{(m,n)\in [\tilde{N}]\times[\tilde{N}]}\bold{1}_{P}(m+m'',n+n'')\bold{1}_{J}(m,n)\bold{1}_{[N]\times[N]}(m,n)\chi(m+ni)\Psi''(g'(m+m'',n+n'')\cdot e_{Y})\Bigr\vert
  \end{split}
  \end{equation}
    is at least $\d_{1}(M)'/16$, where the pair $(J,(m'',n''))$ is one of the following 4 combinations:

  (i) $J=[\tilde{N}-m']\times[\tilde{N}-n'], m''=m', n''=n'$;

  (ii) $J=[\tilde{N}-m']\times(\tilde{N}-n',\tilde{N}], m''=m', n''=n'-\tilde{N}$;

  (iii) $J=(\tilde{N}-m',\tilde{N}]\times[\tilde{N}-n'], m''=m'-\tilde{N}, n''=n'$;

  (iv) $J=(\tilde{N}-m',\tilde{N}]\times(\tilde{N}-n',\tilde{N}], m''=m'-\tilde{N}, n''=n'-\tilde{N}$.

  Note that $\bold{1}_{P}(m+m'',n+n'')\bold{1}_{J}(m,n)\bold{1}_{[N]\times[N]}(m,n)=\bold{1}_{P'}(m,n)$ for some $P'$ which is a product of arithmetic progressions. So
    \begin{equation}\nonumber
   \begin{split}
   \Bigl\vert\mathbb{E}_{(m,n)\in [\tilde{N}]\times[\tilde{N}]}\bold{1}_{P'}(m,n)\chi(m+ni)\Psi''(g'(m+m'',n+n'')\cdot e_{Y})\Bigr\vert\geq\d_{1}(M)/16.
  \end{split}
  \end{equation}

Since $\Vert\Psi''|_{Y}\Vert_{Lip(Y)}\leq\Vert\Psi''|_{Y}\Vert_{\mathcal{C}^{2m}(Y)}\leq H(M)^{2}$,
$(g'(m,n)\cdot e_{Y})_{(m,n)\in[\tilde{N}]\times[\tilde{N}]}$ is not totally $\frac{\d_{1}(M)}{16H(M)^{2}}$-equidistributed in $Y$.
This contradicts the fact that $(g'(m,n)\cdot e_{Y})_{(m,n)\in[\tilde{N}]\times[\tilde{N}]}$ is totally $\tilde{\sigma}(M)$-equidistributed in $Y$. Therefore, $\Vert\chi_{N,u}\Vert_{U^{3}(\R)}\leq\e$, completing the proof of Theorem \ref{nU3w}.

\end{document}